\DeclareFontFamily{T1}{pzc}{}
\DeclareFontShape{T1}{pzc}{m}{it}{1.8 <-> pzcmi8t}{}
\DeclareMathAlphabet{\mathpzc}{T1}{pzc}{m}{it}
\theoremstyle{plain}
\newtheorem{prop}{Proposition}[section]
\newtheorem{lem}[prop]{Lemma}
\newtheorem{cor}[prop]{Corollary}
\newtheorem{thm}[prop]{Theorem}
\theoremstyle{definition}
\newtheorem{defn}[prop]{Definition}
\newtheorem{empt}[prop]{}
\newtheorem{exm}[prop]{Example}
\newtheorem{rem}[prop]{Remark}
\newtheorem{cond}{Condition}        
\newtheorem{fact}[prop]{Fact}
\newtheorem*{notn}{Notation}        
\DeclareMathOperator{\Dom}{Dom}              
\newcommand{\Dslash}{{D\mkern-11.5mu/\,}}    
\newcommand{\vertiii}[1]{{\left\vert\kern-0.25ex\left\vert\kern-0.25ex\left\vert #1
    \right\vert\kern-0.25ex\right\vert\kern-0.25ex\right\vert}}
\newcommand{\Ga}{\Gamma}                     
\newcommand{\Coo}{C^\infty}                  
\newbox\ncintdbox \newbox\ncinttbox 
\newcommand{\ncint}{\mathop{\mathchoice{\copy\ncintdbox}%
           {\copy\ncinttbox}{\copy\ncinttbox}%
           {\copy\ncinttbox}}\nolimits}  
\newcommand{\oxyox}{\otimes\cdots\otimes}  
\newcommand{\wyw}{\wedge\cdots\wedge}      
\newcommand{\id}{\mathrm{id}}                
\newcommand{\A}{\mathcal{A}}                 
\newcommand{\braket}[2]{\langle#1\mathbin|#2\rangle} 
\newcommand{\C}{\mathbb{C}}                  
\newcommand{\cc}{\mathbf{c}}                 
\DeclareMathOperator{\Cl}{C\ell}             
\renewcommand{\H}{H}               
\newcommand{\hookto}{\hookrightarrow}        
\newcommand{\ketbra}[2]{|#1\rangle\langle#2|} 
\newcommand{\La}{\Lambda}                    
\newcommand{\la}{\lambda}                    
\newcommand{\nb}{\nabla}                     
\newcommand{\om}{\omega}                     
\newcommand{\opp}{{\mathrm{op}}}             
\newcommand{\ox}{\otimes}                    
\newcommand{\eps}{\varepsilon}                    
\newcommand{\set}[1]{\{\,#1\,\}}             
\DeclareMathOperator{\spec}{sp}              
\DeclareMathOperator{\tr}{tr}                
\newcommand{\del}{\partial}                  
\DeclareMathOperator{\tsum}{{\textstyle\sum}} 
\newcommand{\w}{\wedge}                      
\newcommand{\x}{\times}                      
\renewcommand{\:}{\colon}                    
\newcommand{\sA}{\mathcal{A}}       
\newcommand{\sB}{\mathcal{B}}       
\newcommand{\sE}{\mathcal{E}}       
\newcommand{\sH}{\mathcal{H}}       
\newcommand{\sK}{\mathcal{K}}       
\newcommand{\sL}{\mathcal{L}}       
\newcommand{\sS}{\mathcal{S}}       
\newcommand{\sT}{\mathcal{T}}       
\newcommand{\Om}{\Omega}       
\newcommand{\inv}[1]{\frac{1}{#1}}                 
\newcommand{\bC}{\mathbb{C}}       
\newcommand{\bN}{\mathbb{N}}       
\newcommand{\bR}{\mathbb{R}}       
\newcommand{\bZ}{\mathbb{Z}}       
\newcommand{\al}{\alpha}          
\newcommand{\bt}{\beta}           
\newcommand{\dl}{\delta}          
\newcommand{\ga}{\gamma}          
\newcommand{\ka}{\kappa}          
\newcommand{\sg}{\sigma}          
\newcommand{\gX}{\mathfrak{X}}      
\DeclareMathOperator{\rH}{H}        
\DeclareMathOperator{\rK}{K}        
\DeclareMathOperator{\spn}{span}       
\newcommand{\cz}{{\bullet}}         
\newcommand{\isom}{\cong}          
\newcommand{\ul}{\underline}        
\renewcommand{\:}{\colon}           
\newcommand{\pairing}[2]{(#1\stroke #2)} 
\def\<#1|#2>{\langle#1\stroke#2\rangle} 
\def\?#1|#2?{\{#1\stroke#2\}}        
\renewcommand{\Tilde}[1]{\widetilde{#1}} 
\DeclareMathOperator{\bCl}{\bC l}   
\newcommand{\word}[1]{\quad\text{#1}\enspace} 
\newcommand{\words}[1]{\quad\text{#1}\quad} 
\def\<#1,#2>{\langle#1,#2\rangle}            
\def\ee_#1{e_{{\scriptscriptstyle#1}}}       
\def\wick:#1:{\mathopen:#1\mathclose:}       
\newbox\ncintdbox \newbox\ncinttbox 
\newcommand{\stroke}{\mathbin|}   
\newcommand{\bra}[1]{\langle{#1}\rvert} 
\newcommand{\End}{\mathrm{End}}       
\newcommand{\Hom}{\mathrm{Hom}}       
\newcommand{\Spin}{\mathrm{Spin}}       
\newcommand{\Aut}{\mathrm{Aut}}       
\title{Inverse Limits of Spectral Triples}
\begin{document}
\maketitle  \setlength{\parindent}{0pt}
\begin{center}
\author{
{\textbf{Petr R. Ivankov*}\\
e-mail: * monster.ivankov@gmail.com \\
}
}
\end{center}

\vspace{1 in}

\noindent

\paragraph{}

Gelfand - Na\u{i}mark theorem supplies a one to one correspondence between commutative $C^*$-algebras and locally compact Hausdorff spaces. So any noncommutative $C^*$-algebra can be regarded as a generalization of a topological space.  Similarly a spectral triple is a generalization of a Riemannian manifold. An (infinitely listed) covering of a Riemannian manifold has natural structure of  Riemannian manifold. Here we will consider the noncommutative generalization of this result.

\tableofcontents

\section{Motivation. Preliminaries}

\paragraph{} 
Some notions of the geometry have noncommutative generalizations based from the Gelfand-Na\u{i}mark theorem.
\begin{thm}\label{gelfand-naimark}\cite{arveson:c_alg_invt} (Gelfand-Na\u{i}mark). 
Let $A$ be a commutative $C^*$-algebra and let $\mathcal{X}$ be the spectrum of A. There is the natural $*$-isomorphism $\gamma:A \to C_0(\mathcal{X})$.
\end{thm}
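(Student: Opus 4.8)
The plan is to realize the isomorphism as the Gelfand transform and then verify that it is an isometric $*$-homomorphism onto $C_0(\sX)$. First I would fix the model for the spectrum: take $\sX$ to be the set of nonzero characters of $A$, i.e. nonzero $*$-homomorphisms $\vf \colon A \to \C$, equipped with the weak-$*$ topology inherited from the dual space $A^*$. A standard argument shows that each such $\vf$ is automatically contractive, so $\sX$ sits inside the unit ball of $A^*$; by Banach--Alaoglu, together with the observation that $0$ is the only possible weak-$*$ limit point lying outside $\sX$, the space $\sX$ is locally compact Hausdorff (and compact precisely when $A$ is unital). I would then define $\gamma$ by $\gamma(a)(\vf) = \vf(a)$ and dispatch the easy structural facts: continuity of $\gamma(a)$ is exactly the definition of the weak-$*$ topology, and multiplicativity $\gamma(ab) = \gamma(a)\gamma(b)$ is immediate from the multiplicativity of each $\vf$.

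Next I would treat the $*$-compatibility and the isometry, which is where the $C^*$-structure does the real work. The key point is that for self-adjoint $a$ the spectrum is real, every character evaluates $a$ to a point of $\spec(a)$, and conversely the values $\vf(a)$ exhaust $\spec(a)$; this yields $\vf(a^*) = \overline{\vf(a)}$ for all $a$, so that $\gamma$ is a $*$-homomorphism, and it identifies $\sup_{\vf}|\gamma(a)(\vf)|$ with the spectral radius $r(a)$. Invoking the spectral radius formula $r(a) = \lim_n \nor{a^n}^{1/n}$ together with the $C^*$-identity $\nor{a^*a} = \nor{a}^2$, I would first establish $r(a) = \nor{a}$ for self-adjoint $a$, and then bootstrap to an arbitrary $a$ via
\[
\nor{\gamma(a)}_\infty^2 = \nor{\gamma(a)^*\gamma(a)}_\infty = \nor{\gamma(a^*a)}_\infty = r(a^*a) = \nor{a^*a} = \nor{a}^2 .
\]
Hence $\gamma$ is isometric, in particular injective, and its range is closed.

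Finally I would obtain surjectivity by a density argument. The range $\gamma(A)$ is a $*$-subalgebra of $C_0(\sX)$ that separates the points of $\sX$ (two distinct characters differ on some $a$) and vanishes nowhere (no character annihilates all of $A$). By the locally compact Stone--Weierstrass theorem, $\gamma(A)$ is dense in $C_0(\sX)$; being also closed by the isometry, it must equal $C_0(\sX)$. For the non-unital case I would run the unital version on the unitization $\widetilde{A} = A \oplus \C$, whose character space is the one-point compactification $\sX \cup \{\infty\}$, and then restrict. The main obstacle is the isometry step: everything hinges on $r(a) = \nor{a}$ for self-adjoint elements, which is precisely the place where the axiom $\nor{a^*a} = \nor{a}^2$ is indispensable and where the spectral-radius and holomorphic functional calculus machinery has to be brought in.
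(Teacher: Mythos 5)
Your proof is correct and is essentially the standard argument (Gelfand transform, $r(a)=\nor{a}$ for self-adjoint elements via the $C^*$-identity, then Stone--Weierstrass and the unitization for the non-unital case), which is the same route taken in the paper's cited source \cite{arveson:c_alg_invt}; the paper itself states the theorem without proof. No gaps: the character space topology, the $*$-compatibility via real spectra, the isometry bootstrap, and the closed-dense-range argument are all handled properly.
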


\paragraph{}From the theorem it follows that a (noncommutative) $C^*$-algebra may be regarded as a generalized (noncommutative)  locally compact Hausdorff topological space. 
\paragraph{} Any Riemannian manifold $M$ with a Spin structure defines the standard spectral triple $\left(M, L^2\left(M, S\right), \slashed D\right)$ \cite{hajac:toknotes}, i.e. the spectral triple is pure algebraic construction of manifold with a Spin structure. If $\widetilde{M} \to M$ is an infinitely listed covering projection then there is a sequence of finitely listed covering projections

\begin{equation*}
... \to M_2 \to M_1 \to M_0 = M
\end{equation*}
which induces natural covering projections $\pi_n : \widetilde{M} \to M_n$ and for any $\widetilde{x}_1, \widetilde{x}_2 \in \widetilde{M}$ from $\widetilde{x}_1 \neq \widetilde{x}_2$ it follows that there is $k\in \mathbb{N}$  which satisfies to the condition $\pi_k\left(\widetilde{x}_1\right)\neq \pi_k\left(\widetilde{x}_2\right)$. From the topology and the  differential geometry one can construct properties of $\widetilde{M}$ from the above sequence of finite covering projections. We will develop a pure algebraic construction of $\widetilde{M}$. If we have a sequence of spectral triples $\left\{\left(C^\infty\left(M\right), L^2\left(M_n, S_n\right), \slashed D_n\right)\right\}_{n \in \mathbb{N}}$ then we will construct a triple $\left(C^\infty\left(\widetilde{M}\right), L^2\left(\widetilde{M}, \widetilde{S}\right), \widetilde{\slashed D}\right)$ which reflects properties of the  Riemannian manifold $\widetilde{M}$ with its Spin structure. Since our construction is pure algebraic we can apply it to the noncommutative case. A  noncommutative example of this construction is presented in the Section \ref{nt_sec}.
\paragraph{} This article assumes elementary knowledge of following subjects:
 \begin{enumerate}
 \item Set theory \cite{halmos:set},
 \item Category theory  \cite{spanier:at},
 \item General topology \cite{munkres:topology},
 \item Algebraic topology including $K$-theory, \cite{spanier:at,karoubi:k},
 \item Differential geometry \cite{brickell_clark:diff_m,do_carmo:rg,koba_nomi:fgd},
 \item $C^*$-algebras, $C^*$-Hilbert modules and $K$-theory  \cite{blackadar:ko,kakariadis:corr,pedersen:ca_aut}.
  
 \end{enumerate}
 \paragraph{}
 The terms ``set'', ``family'' and ``collection'' are synonyms.
  
\paragraph{}Following table contains a list of special symbols.
\newline
\begin{tabular}{|c|c|}
\hline
Symbol & Meaning\\
\hline
$A^+$  & Unitization of $C^*-$ algebra $A$\\
$A^0$  & Opposite algebra of  $A$  consisting of elements
$\{a^0 : a \in A\}$ \\
& with product $a^0b^0 = (ba)^0$.\\
$\widehat{A}$ & Spectrum of  $C^*$ - algebra $A$  with the hull-kernel topology \\
 & (or Jacobson topology)\\
$A^G$  & Algebra of $G$ invariants, i.e. $A^G = \{a\in A \ | \ ga=a, \forall g\in G\}$\\
$\mathrm{Aut}(A)$ & Group * - automorphisms of $C^*$  algebra $A$\\
$B(H)$ & Algebra of bounded operators on Hilbert space $H$\\
$\mathbb{C}$ (resp. $\mathbb{R}$)  & Field of complex (resp. real) numbers \\
$C(\mathcal{X})$ & $C^*$-algebra of continuous complex valued \\
 & functions on a compact space $\mathcal{X}$\\
$C_0(\mathcal{X})$ & $C^*$-algebra of continuous complex valued functions\\ 
 &  on a locally compact topological  space $\mathcal{X}$ equal to $0$ at infinity\\
$G(\widetilde{\mathcal{X}} | \mathcal{X})$ & Group of covering transformations of a covering projection  $\widetilde{\mathcal{X}} \to \mathcal{X}$ \cite{spanier:at}  \\
$\mathfrak{cl}\left(\mathcal U\right)\subset \mathcal X$ & The closure of subset $\mathcal U$ of topological space $\mathcal X$.   \\
$\delta_{ij}$ & Delta symbol. If $i = j$ then $\delta_{ij}=1$.  If $i \neq j$ then $\delta_{ij}=0$  \\
$\Ga(\mathcal X, E)$ & A $C(\mathcal{X})$-module of sections of a locally trivial vector bundle $E \in \mathrm{Vect}(\mathcal{X})$ \\
$H$ & Hilbert space \\
$\sH_A$ & Hilbert space over  $A$ (Definition \ref{hilb_space_a}) \\
$\mathcal{K}= \mathcal{K}(H)$ & $C^*$ - algebra of compact operators \\
$\mathcal{K}(X_A)$ & $C^*$-algebra of compact operators of a Hilbert $A$ module $X_A$ \\
$K_i(A)$ ($i = 0, 1$) & $K$ groups of $C^*$-algebra $A$\\
$\varprojlim$ & Inverse limit \\
$M(A)$  & A multiplier algebra of $C^*$-algebra $A$\\
$\mathbb{M}_n(A)$  & The $n \times n$ matrix algebra over $C^*-$ algebra $A$\\
$\mathbb{N}$  & A set of positive integer numbers\\
$\mathbb{N}^0$  & A set of nonnegative integer numbers\\
$\mathscr P\left(A\right)$ & A category of finitely generated projective modules over $A$\\
$S^n$ & The $n$-dimensional sphere\\
$SU(n)$ & Special unitary group \\



$TM$ (resp. $T^*M$) & Tangent (resp. cotangent) bundle of differentiable manifold $M$ \cite{koba_nomi:fgd}\\$U(H) \subset \mathcal{B}(H) $ & Group of unitary operators on Hilbert space $H$\\
$U(A) \subset A $ & Group of unitary operators of algebra $A$\\
$U(n) \subset GL(n, \mathbb{C}) $ & Unitary subgroup of general linear group\\
$\mathrm{Vect}(\mathcal{X})$ & A category of locally trivial vector bundles over a topological space $\mathcal X$ \cite{karoubi:k}\\ 
$\mathbb{Z}$ & Ring of integers \\

$\mathbb{Z}_n$ & Ring of integers modulo $n$ \\
$\overline{k} \in \mathbb{Z}_n$ & An element in $\mathbb{Z}_n$ represented by $k \in \mathbb{Z}$  \\
$X \backslash A$ & Difference of sets  $X \backslash A= \{x \in X \ | \ x\notin A\}$\\
$|X|$ & Cardinal number of the finite set\\ 
$f|_{A'}$& Restriction of a map $f: A\to B$ to $A'\subset A$, i.e. $f|_{A'}: A' \to B$\\ 
\hline
\end{tabular}

\break


  


\begin{defn}\label{g_cov_defn}
Let 
\begin{equation}\label{group_cov_defn}
G_1 \leftarrow G_2 \leftarrow ...
\end{equation}
 be an infinite sequence of finite groups and epimorphisms, and let $G$ be a group with epimorpisms $h_n: G \to G_n$. The sequence is said to be {\it coherent} if $\bigcap\mathrm{ker} \ h_n$ is trivial and the following diagram is commutative.

\begin{tikzpicture}\label{borel_local_comm}
  \matrix (m) [matrix of math nodes,row sep=3em,column sep=4em,minimum width=2em]
  {
       & G  &  \\
    G_n &  &  G_{n-1} \\};
  \path[-stealth]
    (m-1-2) edge node [left] {$h_n$} (m-2-1)
    (m-1-2) edge node [right] {$h_{n-1}$} (m-2-3)
    (m-2-1) edge node [left] {}  (m-2-3);
  
\end{tikzpicture}

A family $\left\{G^n\subset G\right\}_{n \in \mathbb{N}}$ is said to be a $G$-{\it covering} of the sequence \eqref{group_cov_defn} if following conditions hold:
\begin{enumerate}
\item If $m < n$ then $G^m \subset G^n$,
\item $\left|G^n\right|=\left|G_n\right|$ for any $n \in \mathbb{N}$,
\item $h_n\left(G^n\right)=G_n$,
\item $\bigcup_{n \in \mathbb{N}}G^n = G$.
\end{enumerate}
\end{defn}
\begin{defn}
Let us consider a sequence  \eqref{group_cov_defn} groups and epimorphisms. There are an inverse limit $\overline{G}=\varprojlim G_n$ \cite{spanier:at} and natural epimorphisms $h_n: \overline{G}\to G_n$. We say that  element $\overline{g}$ \textit{is represented} by the sequence $\left\{g_n \in G_n\right\}_{n \in \mathbb{N}}$ if $g_n=h_n\left(\overline{g}\right)$. We will write $\overline{g} = \mathfrak{Rep}_G\left(\left\{g_n \right\}\right)$.
\end{defn}

Henceforth  $\left\{x_{\iota}\right\}_{\iota \in I}$ means a set indexed by finite or countable set $I$ of indexes. 
\begin{defn}\label{ext_of_scalars_funct}\cite{bass}
Any homomorphisms of rings  $A \to B$ induces functor $\mathscr P(A) \to \mathscr P(B)$ of categories of finitely generated projective modules. The functor is given by $P \mapsto B \otimes_A P$ on objects, and $f \mapsto \mathrm{Id}_B \otimes f$ on morphisms. We call it the  {\it extensions of scalars} functor.
\end{defn}

\subsection{Topology}
\subsubsection{Covering projections and partition of unity}
\begin{defn}\cite{spanier:at}
	Let $\widetilde{\pi}: \widetilde{\mathcal{X}} \to \mathcal{X}$ be a continuous map. An open subset $\mathcal{U} \subset \mathcal{X}$ is said to be {\it evenly covered } by $\widetilde{\pi}$ if $\widetilde{\pi}^{-1}(\mathcal U)$ is the disjoint union of open subsets of $\widetilde{\mathcal{X}}$ each of which is mapped homeomorphicaly onto $\mathcal{U}$ by $\widetilde{\pi}$. A continuous map $\widetilde{\pi}: \widetilde{\mathcal{X}} \to \mathcal{X}$ is called a {\it covering projection} if each point $x \in \mathcal{X}$ has an open neighborhood evenly covered by $\widetilde{\pi}$. $\widetilde{\mathcal{X}}$ is called the {
		\it covering space} and $\mathcal{X}$ the {\it base space} of covering projection.
\end{defn}
\begin{defn}\label{one_one_cov_defn}
   Let $\widetilde{\pi}: \widetilde{\mathcal{X}} \to \mathcal{X}$ be a covering projection. A connected open subset $\widetilde{\mathcal{U}} \subset \widetilde{\mathcal{X}}$ is said to be a {\it one-to-one subset} with respect to $\widetilde{\pi}$ if the restriction $\widetilde{\pi}_{\widetilde{\mathcal{U}}}: \widetilde{\mathcal{U}} \to \widetilde{\pi}\left(\widetilde{\mathcal{U}}\right)$ is a homeomorphism. The family $\left\{\widetilde{\mathcal{U}}_\iota\right\}_{\iota \in I}$ of one-to-one subsets with respect to $\widetilde{\pi}$ such that $\widetilde{\mathcal{X}} = \bigcap_{\iota \in I}\widetilde{\mathcal{U}}_\iota$ is said to be a {\it one-to-one covering}  with respect to $\widetilde{\pi}$.
\end{defn}
\begin{defn}\cite{spanier:at}
	A fibration $p: \mathcal{\widetilde{X}} \to \mathcal{X}$ with unique path lifting is said to be  {\it regular} if, given any closed path $\omega$ in $\mathcal{X}$, either every lifting of $\omega$ is closed or none is closed.
\end{defn}
\begin{defn}\label{cov_proj_cov_grp}\cite{spanier:at}
	Let $p: \mathcal{\widetilde{X}} \to \mathcal{X}$ be a covering projection.  A self-equivalence is a homeomorphism $f:\mathcal{\widetilde{X}}\to\mathcal{\widetilde{X}}$ such that $p \circ f = p$). We denote this group by $G(\mathcal{\widetilde{X}}|\mathcal{X})$. This group is said to be the {\it group of covering transformations} of $p$ or the {\it covering group}.
\end{defn}
\begin{prop}\cite{spanier:at}
	If $p: \mathcal{\widetilde{X}} \to \mathcal{X}$ is a regular covering projection and $\mathcal{\widetilde{X}}$ is connected and locally path connected, then $\mathcal{X}$ is homeomorphic to space of orbits of $G(\mathcal{\widetilde{X}}|\mathcal{X})$, i.e. $\mathcal{X} \approx \mathcal{\widetilde{X}}/G(\mathcal{\widetilde{X}}|\mathcal{X})$. So $p$ is a principal bundle.
\end{prop}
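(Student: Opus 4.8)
The plan is to realise the homeomorphism by letting the covering group act and passing to the orbit space. Since every $g \in G(\mathcal{\widetilde{X}}|\mathcal{X})$ satisfies $p \circ g = p$, the map $p$ is constant on the orbits of $G(\mathcal{\widetilde{X}}|\mathcal{X})$, so by the universal property of the quotient topology it factors as $p = \overline{p} \circ q$, where $q: \mathcal{\widetilde{X}} \to \mathcal{\widetilde{X}}/G(\mathcal{\widetilde{X}}|\mathcal{X})$ is the orbit map and $\overline{p}: \mathcal{\widetilde{X}}/G(\mathcal{\widetilde{X}}|\mathcal{X}) \to \mathcal{X}$ is continuous. It then suffices to prove that $\overline{p}$ is a bijection and an open map. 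Surjectivity is easy: the image $p(\mathcal{\widetilde{X}})$ is open (as $p$ is a local homeomorphism) and, via evenly covered neighbourhoods, also closed; being a nonempty open-closed subset of the connected base $\mathcal{X}$, it is all of $\mathcal{X}$, so $\overline{p}$ is onto.

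The core of the argument is injectivity of $\overline{p}$, which is equivalent to the statement that $G(\mathcal{\widetilde{X}}|\mathcal{X})$ acts transitively on each fibre $p^{-1}(x)$. Fix $x \in \mathcal{X}$ and two lifts $\widetilde{x}_1, \widetilde{x}_2 \in p^{-1}(x)$; I must produce a covering transformation sending $\widetilde{x}_1$ to $\widetilde{x}_2$. By the lifting criterion for maps into a covering space (applied to $p$ itself as the map to be lifted, with $\mathcal{\widetilde{X}}$ connected and locally path connected), such a deck transformation exists if and only if the subgroups $p_*\pi_1(\mathcal{\widetilde{X}}, \widetilde{x}_1)$ and $p_*\pi_1(\mathcal{\widetilde{X}}, \widetilde{x}_2)$ of $\pi_1(\mathcal{X}, x)$ coincide. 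This is exactly where regularity is used: for a loop $\omega$ based at $x$, the lift of $\omega$ starting at $\widetilde{x}_i$ is closed precisely when $[\omega] \in p_*\pi_1(\mathcal{\widetilde{X}}, \widetilde{x}_i)$, so the regularity condition that every lift of $\omega$ is closed or none is closed says precisely that $[\omega]$ lies in the first subgroup if and only if it lies in the second. As $\omega$ ranges over all loops this gives $p_*\pi_1(\mathcal{\widetilde{X}}, \widetilde{x}_1) = p_*\pi_1(\mathcal{\widetilde{X}}, \widetilde{x}_2)$ (equivalently, this common subgroup is normal in $\pi_1(\mathcal{X}, x)$), and the lifting criterion then yields the required deck transformation. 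Hence the action is transitive and $\overline{p}$ is injective.

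For openness I would use that both $p$ and $q$ are open. The map $p$ is open since it is a local homeomorphism, and $q$ is open because the saturation $\bigcup_{g \in G(\mathcal{\widetilde{X}}|\mathcal{X})} g(\mathcal{U})$ of any open $\mathcal{U} \subset \mathcal{\widetilde{X}}$ is open. Given an open set $\mathcal{W}$ in the orbit space, $q^{-1}(\mathcal{W})$ is open, so $p(q^{-1}(\mathcal{W}))$ is open; but $p(q^{-1}(\mathcal{W})) = \overline{p}(q(q^{-1}(\mathcal{W}))) = \overline{p}(\mathcal{W})$ by surjectivity of $q$ and the relation $p = \overline{p}\circ q$. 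Thus $\overline{p}$ is open. A continuous open bijection is a homeomorphism, giving $\mathcal{X} \approx \mathcal{\widetilde{X}}/G(\mathcal{\widetilde{X}}|\mathcal{X})$.

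Finally, to conclude that $p$ is a principal bundle I would check that the action of $G(\mathcal{\widetilde{X}}|\mathcal{X})$ is free and properly discontinuous. Freeness follows because a deck transformation is a lift of $p$ through $p$, and two such lifts that agree at a single point of the connected space $\mathcal{\widetilde{X}}$ must coincide; hence a nonidentity element has no fixed point. Proper discontinuity follows from the evenly covered neighbourhoods: over such a neighbourhood the sheets are permuted freely, so each sheet $\mathcal{U}$ satisfies $g(\mathcal{U}) \cap \mathcal{U} = \emptyset$ for $g \neq e$. Combined with the homeomorphism $\mathcal{X} \approx \mathcal{\widetilde{X}}/G(\mathcal{\widetilde{X}}|\mathcal{X})$, this exhibits $p$ as a principal $G(\mathcal{\widetilde{X}}|\mathcal{X})$-bundle. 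I expect the transitivity step of the second paragraph to be the main obstacle, since it is the only place where the regularity hypothesis is genuinely indispensable.
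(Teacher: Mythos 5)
Your argument is correct and is essentially the classical proof from Spanier, which the paper cites without reproducing: factor $p$ through the orbit map, prove the induced map is a continuous open bijection, and obtain injectivity from transitivity of $G(\mathcal{\widetilde{X}}|\mathcal{X})$ on fibres, where regularity enters exactly as you say, via the equivalence between a lift of a loop at $\widetilde{x}_i$ being closed and $[\omega]\in p_*\pi_1(\mathcal{\widetilde{X}},\widetilde{x}_i)$, combined with the lifting criterion (this is where connectedness and local path connectedness of $\mathcal{\widetilde{X}}$ are used). The only implicit addition is connectedness of $\mathcal{X}$ (equivalently, surjectivity of $p$) in your open-closed image argument, which is part of the standing conventions for this result and harmless here.
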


\paragraph*{}In this article we consider second-countable locally compact Hausdorff spaces only \cite{munkres:topology}. So we will say a "topological space" (resp. "compact  space" ) instead "locally compact second-countable Hausdorff space" (resp. "compact second-countable Hausdorff space").

\begin{thm}\label{comp_normal}\cite{munkres:topology}
	Every compact Hausdorff space is normal.
\end{thm}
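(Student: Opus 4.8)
The plan is to establish normality in the standard two-stage fashion: first I would show that in a Hausdorff space a point can be separated from a disjoint compact set by disjoint open sets, and then bootstrap this to separate two disjoint closed sets. Throughout I would use that a Hausdorff space is $T_1$, so singletons are closed and, since $\mathcal{X}$ is compact, every closed subset of $\mathcal{X}$ is itself compact; thus verifying the definition of normality reduces to producing the separating open sets for disjoint closed $A, B \subset \mathcal{X}$.

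The key lemma I would prove first is the following: if $\mathcal{X}$ is Hausdorff, $K \subset \mathcal{X}$ is compact, and $x \in \mathcal{X} \setminus K$, then there are disjoint open sets $U \ni x$ and $V \supset K$. To see this, for each $y \in K$ the Hausdorff property supplies disjoint open sets $U_y \ni x$ and $V_y \ni y$. The family $\{V_y\}_{y \in K}$ is an open cover of $K$, so by compactness it admits a finite subcover $V_{y_1}, \dots, V_{y_n}$. Setting $V = \bigcup_{i=1}^n V_{y_i}$ and $U = \bigcap_{i=1}^n U_{y_i}$, the set $U$ is open as a finite intersection, and since $U \subset U_{y_i}$ is disjoint from $V_{y_i}$ for each $i$, one gets $U \cap V = \emptyset$.

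With the lemma in hand, let $A, B \subset \mathcal{X}$ be disjoint closed sets; both are compact. For each $a \in A$ I would apply the lemma to the point $a$ and the compact set $B$ (using $a \notin B$) to obtain disjoint open sets $U_a \ni a$ and $V_a \supset B$. The family $\{U_a\}_{a \in A}$ covers $A$, so compactness yields a finite subcover $U_{a_1}, \dots, U_{a_m}$. Putting $U = \bigcup_{j=1}^m U_{a_j}$ and $V = \bigcap_{j=1}^m V_{a_j}$, the sets $U \supset A$ and $V \supset B$ are open, and the same disjointness bookkeeping as before applies: since $V \subset V_{a_j}$ and $U_{a_j} \cap V_{a_j} = \emptyset$, each $U_{a_j}$ misses $V$, whence $U \cap V = \emptyset$. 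This would complete the verification of normality.

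There is no serious obstacle here, the argument being entirely classical; the only point requiring care is the order of the quantifiers in the two compactness reductions. The essential mechanism is that each appeal to compactness replaces an infinite intersection of open sets, which need not be open, by a finite one: performing this reduction once produces point-to-compact-set separation, and performing it a second time upgrades this to separation of two disjoint closed sets.
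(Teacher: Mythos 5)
Your proof is correct and is precisely the classical two-stage argument from Munkres that the paper cites for this theorem (the paper itself gives no proof, only the reference): point-versus-compact-set separation by a finite subcover of the Hausdorff neighborhoods, then the same compactness reduction applied a second time to separate two disjoint closed (hence compact) sets. Nothing is missing; both applications of compactness and the disjointness bookkeeping are handled correctly.
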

\begin{thm}\label{urysohn_lem}\cite{munkres:topology}\textbf{ Urysohn lemma.}
	Let $\mathcal X$ be a normal space, let $\mathcal A$, $\mathcal B$ be disjoint closed subsets of $\mathcal X$. Let $\left[a,b\right]$ be a closed interval in the real line. Then there exist a continuous map $f: \mathcal X \to \left[a, b\right]$ such that $f(\mathcal A)=\{a\}$ and $f(\mathcal B)=\{b\}$.
\end{thm}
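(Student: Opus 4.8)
The plan is to run the classical dyadic-interpolation argument. First I would reduce to the normalised case $[a,b]=[0,1]$: if $g\:\mathcal X\to[0,1]$ is continuous with $g(\mathcal A)=\{0\}$ and $g(\mathcal B)=\{1\}$, then the affine rescaling $t\mapsto a+(b-a)t$, which is a homeomorphism of $[0,1]$ onto $[a,b]$, turns $g$ into the required map $f(x)=a+(b-a)g(x)$. Hence I may assume $a=0$ and $b=1$. The engine of the proof is a reformulation of normality: whenever $C$ is closed, $U$ is open and $C\subset U$, there is an open $V$ with $C\subset V\subset\overline{V}\subset U$. This follows by applying normality to the disjoint closed sets $C$ and $\mathcal X\setminus U$ and letting $V$ be the open set containing $C$.

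Next I would index a nested family of open sets by the dyadic rationals $D=\set{k/2^n \mid n\in\N^0,\ 0\le k\le 2^n}\subset[0,1]$. Set $U_1=\mathcal X\setminus\mathcal B$, and apply the interposition lemma with $C=\mathcal A$ and $U=U_1$ to obtain an open $U_0$ with $\mathcal A\subset U_0\subset\overline{U_0}\subset U_1$. Proceeding by induction on $n$, for each odd $k$ I would define $U_{k/2^n}$ by interpolating between its already-constructed neighbours: the lemma yields an open set with $\overline{U_{(k-1)/2^n}}\subset U_{k/2^n}\subset\overline{U_{k/2^n}}\subset U_{(k+1)/2^n}$. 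The induction is organised so as to preserve the monotonicity invariant $p<q\Rightarrow\overline{U_p}\subset U_q$ for all $p,q\in D$.

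Finally I would extend the family by $U_p=\emptyset$ for rational $p<0$ and $U_p=\mathcal X$ for rational $p>1$, and define $f(x)=\inf\set{p\mid x\in U_p}$. One checks directly that $f$ takes values in $[0,1]$, that $f\equiv 0$ on $\mathcal A$ (as $\mathcal A\subset U_0$) and $f\equiv 1$ on $\mathcal B$ (as $\mathcal B$ meets no $U_p$ with $p\le 1$). Continuity is the delicate point, and I would establish it via the two implications $x\in\overline{U_r}\Rightarrow f(x)\le r$ and $x\notin U_r\Rightarrow f(x)\ge r$; these give $f^{-1}\bigl([0,c)\bigr)=\bigcup_{p<c}U_p$ and $f^{-1}\bigl((c,1]\bigr)=\bigcup_{p>c}\bigl(\mathcal X\setminus\overline{U_p}\bigr)$, both open. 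Since the half-open intervals $[0,c)$ and $(c,1]$ form a subbasis for the topology of $[0,1]$, continuity of $f$ follows.

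I expect the main obstacle to be the inductive construction of $\{U_p\}_{p\in D}$ together with the passage to continuity. Rephrasing normality makes each individual interposition step routine, but carrying the invariant $\overline{U_p}\subset U_q$ through every dyadic level, and then using the density of $D$ in $[0,1]$ to squeeze $f(x)$ between consecutive levels, is where the real care is required.
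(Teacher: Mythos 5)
Your proof is correct and is precisely the classical dyadic-interpolation argument of Munkres, which is the source the paper cites for this statement (the paper itself gives no proof, deferring entirely to \cite{munkres:topology}). The reduction to $[0,1]$, the interposition lemma, the inductive construction of $\{U_p\}$ over dyadic rationals with the invariant $p<q\Rightarrow\overline{U_p}\subset U_q$, and the continuity check via the subbasic sets $[0,c)$ and $(c,1]$ all match the standard treatment, so nothing further is needed.
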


\begin{thm}\cite{munkres:topology}\label{urysohn_metr_lem}\textbf{ Urysohn metrization theorem.}
	Every regular space with a countable basis is metrizable.
\end{thm}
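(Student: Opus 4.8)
The plan is to realise $\mathcal X$ as a subspace of a metrizable space. Concretely I would embed $\mathcal X$ homeomorphically into the countable product $\left[0,1\right]^{\mathbb N}$ equipped with the product topology, which is metrized by $D(x,y)=\sum_{n\in\mathbb N}2^{-n}|x_n-y_n|$. Since a subspace of a metrizable space is metrizable, producing such an embedding finishes the proof. The whole argument therefore reduces to constructing an embedding $F\colon\mathcal X\hookrightarrow\left[0,1\right]^{\mathbb N}$ out of a well-chosen countable family of continuous real-valued functions.

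First I would upgrade regularity to normality, so that Urysohn's lemma (Theorem \ref{urysohn_lem}) becomes applicable. A space with a countable basis is Lindel\"of, and a regular Lindel\"of space is normal: given disjoint closed sets $\mathcal A,\mathcal B$, regularity lets one cover $\mathcal A$ by open sets whose closures miss $\mathcal B$, second countability extracts a countable subcover $\{U_n\}$, the symmetric construction gives $\{V_n\}$ covering $\mathcal B$ with closures missing $\mathcal A$, and the standard shrinking $U_n'=U_n\setminus\bigcup_{k\le n}\mathfrak{cl}(V_k)$, $V_n'=V_n\setminus\bigcup_{k\le n}\mathfrak{cl}(U_k)$ separates $\mathcal A$ and $\mathcal B$ by the disjoint open sets $\bigcup_n U_n'$ and $\bigcup_n V_n'$.

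Next I would build the separating family. Let $\{B_n\}_{n\in\mathbb N}$ be a countable basis. For each ordered pair $(n,m)$ with $\mathfrak{cl}(B_n)\subset B_m$, normality together with Theorem \ref{urysohn_lem} supplies a continuous $g_{n,m}\colon\mathcal X\to[0,1]$ that equals $1$ on $\mathfrak{cl}(B_n)$ and $0$ on $\mathcal X\setminus B_m$; reindexing the (countably many) $g_{n,m}$ by $\mathbb N$ yields a family $\{f_k\}_{k\in\mathbb N}$. The point to verify is that $\{f_k\}$ separates points from closed sets: if $x\notin C$ with $C$ closed, then regularity and the basis provide indices with $x\in B_n\subset\mathfrak{cl}(B_n)\subset B_m\subset\mathcal X\setminus C$, so the corresponding $f_k$ satisfies $f_k(x)=1$ while $f_k|_C=0$.

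Finally I would set $F(x)=\bigl(f_k(x)\bigr)_{k\in\mathbb N}$ and check that $F$ is an embedding. Continuity is immediate from the definition of the product topology, and injectivity follows because $\mathcal X$ is Hausdorff and $\{f_k\}$ separates points. The main obstacle—and the crux of the whole argument—is showing $F$ is open onto its image rather than merely a continuous bijection: given an open $U\subset\mathcal X$ and $x\in U$, the separating property furnishes $f_k$ with $f_k(x)=1$ and $f_k|_{\mathcal X\setminus U}=0$, so that $\pi_k^{-1}\bigl((0,1]\bigr)$ (with $\pi_k$ the $k$-th coordinate projection) is an open neighbourhood of $F(x)$ whose intersection with $F(\mathcal X)$ lies in $F(U)$. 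Hence $F(U)$ is open in $F(\mathcal X)$, $F$ is a homeomorphism onto its image, and $\mathcal X$, being homeomorphic to a subspace of the metrizable space $\left[0,1\right]^{\mathbb N}$, is metrizable.
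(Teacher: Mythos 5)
Your proof is correct and is precisely the standard argument of the cited source \cite{munkres:topology}: upgrade regularity to normality via the Lindel\"of property, extract a countable family of Urysohn functions separating points from closed sets (with $T_1$ guaranteeing points are closed, so the family also separates points), and embed into the metrizable cube $\left[0,1\right]^{\mathbb N}$. The paper itself gives no proof, quoting the theorem from Munkres, so there is nothing to compare beyond noting that your route coincides with the referenced one.
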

From the Theorems \ref{comp_normal} and \ref{urysohn_lem} it follows that if $\mathcal X$ is locally compact Hausdorff space $x \in \mathcal X$, and $\mathcal B$ is closed subset of $\mathcal X$, such that $x \notin \mathcal B$ then there exist a continuous map $f: \mathcal X \to \left[a, b\right]$ such that $f(x)=a$ and $f(\mathcal B)=\{b\}$. It means that locally compact Hausdorff space is completely regular, whence $\mathcal X$ is regular (See \cite{munkres:topology}), and from the Theorem \ref{urysohn_metr_lem} it follows next corollary.
\begin{cor}\label{loc_comp_haus_metr_col}
	Every locally compact second-countable Hausdorff space is metrizable.
\end{cor}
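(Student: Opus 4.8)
The plan is to deduce the statement from the Urysohn metrization theorem (Theorem \ref{urysohn_metr_lem}), whose hypotheses are that $\mathcal X$ be regular and possess a countable basis. The countable basis is granted by the standing assumption of second-countability, so the entire task reduces to verifying that a locally compact Hausdorff space is regular. I would obtain regularity from the stronger property of complete regularity, exactly as announced in the paragraph preceding the corollary, and complete regularity I would in turn extract from Theorems \ref{comp_normal} and \ref{urysohn_lem}.

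First I would establish complete regularity. Fix $x \in \mathcal X$ and a closed set $\mathcal B \subset \mathcal X$ with $x \notin \mathcal B$. Using local compactness together with the Hausdorff property, I choose an open neighbourhood $\mathcal U$ of $x$ whose closure $\mathfrak{cl}\left(\mathcal U\right)$ is compact and disjoint from $\mathcal B$, by shrinking a compact neighbourhood of $x$ inside the open set $\mathcal X \setminus \mathcal B$. Then $\mathfrak{cl}\left(\mathcal U\right)$ is a compact Hausdorff space, hence normal by Theorem \ref{comp_normal}. Inside it the singleton $\{x\}$ and the frontier $\mathcal F = \mathfrak{cl}\left(\mathcal U\right) \setminus \mathcal U$ are disjoint closed sets, so Theorem \ref{urysohn_lem} furnishes a continuous $g : \mathfrak{cl}\left(\mathcal U\right) \to [0,1]$ with $g(x) = 0$ and $g\left(\mathcal F\right) = \{1\}$. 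Defining $f : \mathcal X \to [0,1]$ by $f = g$ on $\mathcal U$ and $f = 1$ on $\mathcal X \setminus \mathcal U$ produces a globally continuous function with $f(x) = 0$ and $f\left(\mathcal B\right) = \{1\}$, which is precisely the separation asserted before the corollary.

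Next I would pass from complete regularity to regularity: given $x$ and a closed $\mathcal B$ not containing it, the separating function $f$ above satisfies $f(x) = 0$ and $f \equiv 1$ on $\mathcal B$, so $f^{-1}([0,\tfrac{1}{2}))$ and $f^{-1}((\tfrac{1}{2},1])$ are disjoint open sets separating $x$ from $\mathcal B$. Thus $\mathcal X$ is regular, and being second-countable it meets the hypotheses of Theorem \ref{urysohn_metr_lem}, which yields metrizability.

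I expect the one delicate point to be the gluing step in the construction of $f$: one must check that the two defining formulas agree where the pieces overlap and that $f$ is continuous across the frontier $\mathcal F$. This holds because $g$ takes the constant value $1$ on $\mathcal F$, the set $\mathcal X \setminus \mathcal U$ is closed, and $f$ is continuous on each of the two closed pieces $\mathfrak{cl}\left(\mathcal U\right)$ and $\mathcal X \setminus \mathcal U$ whose union is $\mathcal X$, so the pasting lemma applies. Everything else is a direct citation of the three quoted theorems.
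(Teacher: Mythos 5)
Your proposal is correct and follows essentially the same route as the paper, which likewise deduces complete regularity from Theorems \ref{comp_normal} and \ref{urysohn_lem}, passes to regularity, and then invokes the Urysohn metrization theorem \ref{urysohn_metr_lem}. The only difference is that you supply the details (the relatively compact neighbourhood, the frontier, and the pasting-lemma gluing) that the paper leaves as a sketch in the paragraph preceding the corollary.
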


\begin{thm}\cite{munkres:topology}
	Every metrizable space is paracompact.
\end{thm}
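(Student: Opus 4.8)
The plan is to prove A.~H.~Stone's theorem by the explicit construction due to M.~E.~Rudin, which produces a locally finite open refinement of an arbitrary open cover directly. Fix a metric $d$ on the metrizable space $\mathcal{X}$ inducing its topology (so $\mathcal{X}$ is automatically Hausdorff), and let $\mathcal{U} = \{U_\alpha\}_{\alpha \in A}$ be an arbitrary open cover; the goal is a locally finite open refinement that still covers $\mathcal{X}$. By the well-ordering theorem I would well-order the index set $A$. Writing $B(x,r)$ for the open $d$-ball of radius $r$ about $x$, I would then define by recursion on $n \in \mathbb{N}$ a family of open sets $\{V_{n,\alpha}\}$, where $V_{n,\alpha}$ is the union of all balls $B(x, 2^{-n})$ whose centre $x$ satisfies the three conditions: (a) $\alpha$ is the least index with $x \in U_\alpha$; (b) $x \notin V_{m,\beta}$ for every $m < n$ and every $\beta \in A$; and (c) $B(x, 3 \cdot 2^{-n}) \subseteq U_\alpha$.

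Granting the construction, three of the four required properties follow quickly. Each $V_{n,\alpha}$ is open, being a union of open balls. The family refines $\mathcal{U}$: if $y \in B(x, 2^{-n})$ for a centre $x$ satisfying (c), then $d(x,y) < 2^{-n}$ gives $y \in B(x, 3 \cdot 2^{-n}) \subseteq U_\alpha$, so $V_{n,\alpha} \subseteq U_\alpha$. And the family covers $\mathcal{X}$: given $z \in \mathcal{X}$, let $\alpha$ be the least index with $z \in U_\alpha$ and choose $n$ with $B(z, 3 \cdot 2^{-n}) \subseteq U_\alpha$ (possible since $U_\alpha$ is open); then either $z$ already lies in some $V_{m,\beta}$ with $m < n$, or $z$ itself satisfies (a)--(c) at stage $n$, whence $z \in B(z, 2^{-n}) \subseteq V_{n,\alpha}$.

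The real work, and the step I expect to be the main obstacle, is local finiteness, which rests on two distance estimates extracted from (a)--(c). First, for a fixed $n$ and indices $\alpha < \beta$, any centre $x$ contributing to $V_{n,\alpha}$ and any centre $y$ contributing to $V_{n,\beta}$ satisfy $d(x,y) \geq 3 \cdot 2^{-n}$: indeed $\beta$ being least for $y$ forces $y \notin U_\alpha$, while $B(x, 3 \cdot 2^{-n}) \subseteq U_\alpha$ by (c); hence $d(V_{n,\alpha}, V_{n,\beta}) \geq 2^{-n}$, so any ball of radius $2^{-n-1}$ meets at most one set $V_{n,\alpha}$ at a fixed level $n$. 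Second, condition (b) gives that if $x$ contributes to $V_{i,\beta}$ with $i > n$, then $x \notin V_{n,\gamma}$ for every $\gamma$. To assemble these, given $z \in \mathcal{X}$ I would take the least $n$ with $z \in V_{n,\alpha}$ for some $\alpha$ together with a $j$ satisfying $B(z, 2^{-j}) \subseteq V_{n,\alpha}$, and then examine the ball $B(z, 2^{-n-j})$: the second estimate (with $n \geq 1$) shows it misses every $V_{i,\beta}$ with $i \geq n+j$, while the first estimate shows it meets at most one $V_{i,\beta}$ for each of the finitely many levels $i < n+j$. Thus $z$ has a neighbourhood meeting only finitely many members of $\{V_{n,\alpha}\}$, establishing local finiteness and completing the proof. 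The delicate point throughout is the bookkeeping inherent in the recursive definition, namely verifying that conditions (b) and (c) interact so that both estimates hold simultaneously with the stated constants; I would therefore isolate each of the two estimates as a separate lemma before carrying out the final local-finiteness argument.
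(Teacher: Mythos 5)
Your proposal is correct: it is precisely M.~E.~Rudin's one-page proof of A.~H.~Stone's theorem, and all the quantitative details check out. Condition (c) with radius $3\cdot 2^{-n}$ gives the level-wise separation $d(x,y)\geq 3\cdot 2^{-n}$ between centres for distinct indices $\alpha<\beta$, hence $d(V_{n,\alpha},V_{n,\beta})\geq 2^{-n}$ after subtracting the two ball radii, so a ball of radius $2^{-n-j}\leq 2^{-i-1}$ meets at most one set at each level $i<n+j$; and condition (b) forces any centre at level $i\geq n+j$ to lie outside $V_{n,\alpha}\supseteq B(z,2^{-j})$, whence $d(z,y)\geq 2^{-j}\geq 2^{-n-j}+2^{-i}$ (using $n\geq 1$, which is automatic here since $\mathbb{N}$ starts at $1$), so $B(z,2^{-n-j})$ misses all higher levels. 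One cosmetic remark: taking the \emph{least} $n$ with $z\in V_{n,\alpha}$ is unnecessary --- any pair $(n,\alpha)$ with $z\in V_{n,\alpha}$ works once $j$ is chosen with $B(z,2^{-j})\subseteq V_{n,\alpha}$. The paper itself gives no proof, citing Munkres, and it is worth noting that your route differs from the one in the cited source: Munkres constructs only a \emph{countably} locally finite open refinement (the sets $E_n(U)$ built from $1/n$-shrinkings) and then invokes a separate lemma that, for regular spaces, countably locally finite open refinements already imply paracompactness --- a lemma whose proof involves a further chain of refinement manipulations. Your argument buys directness: the inductive condition (b), discarding points already covered at earlier stages, does inside the recursion exactly the work that Munkres's auxiliary lemma does externally, yielding a genuinely locally finite refinement in a single construction at the modest cost of the more delicate bookkeeping you correctly flag as the heart of the matter.
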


\begin{defn}\cite{munkres:topology}
	Let $\left\{\mathcal U_\alpha\in \mathcal X\right\}_{\alpha \in J}$ be an indexed open covering of $\mathcal{X}$. An indexed family of functions 
	\begin{equation*}
	\phi_\alpha : \mathcal X \to \left[0,1\right]
	\end{equation*}
	is said to be a {\it partition of unity }, dominated by $\left\{\mathcal{U}_\alpha \right\}_{\alpha \in J}$, if:
	\begin{enumerate}
		\item $\phi_\alpha\left(\mathcal X \backslash \mathcal U_\alpha\right)= \{0\}$
		\item The family $\left\{\mathrm{Support}\left(\phi_\alpha\right) = \mathfrak{cl}\left(\left\{x \in \mathcal X \ | \ \phi_\alpha > 0 \right\}\right)\right\}$ is locally finite.
		\item $\sum_{\alpha \in J}\phi_\alpha\left(x\right)=1$ for any $x \in \mathcal X$.
	\end{enumerate}
\end{defn}

\begin{thm}\cite{munkres:topology}
	Let $\mathcal X$ be a paracompact Hausdorff space; let $\left\{\mathcal U_\alpha\in \mathcal X\right\}_{\alpha \in J}$ be an indexed open covering of $\mathcal{X}$. Then there exists a partition of unity, dominated by $\left\{\mathcal{U}_\alpha \right\}$.  
\end{thm}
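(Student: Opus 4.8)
The plan is to follow the classical route via the shrinking lemma, the Urysohn lemma, and a normalization of a locally finite sum of nonnegative functions. Throughout I use that a paracompact Hausdorff space is regular and normal; this is a standard consequence of the separation axioms, parallel in spirit to Theorem \ref{comp_normal} but valid in the paracompact setting. The construction will produce the partition of unity indexed by the \emph{same} set $J$ as the given cover, which is exactly what the definition of ``dominated by $\{\mathcal U_\alpha\}$'' requires.

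First I would establish the shrinking lemma in the paracompact setting: given the open covering $\{\mathcal U_\alpha\}_{\alpha\in J}$ there is a locally finite open covering $\{\mathcal V_\alpha\}_{\alpha\in J}$, indexed by the same set $J$, with $\mathfrak{cl}(\mathcal V_\alpha)\subset\mathcal U_\alpha$ for every $\alpha$. The argument is to first cover $\mathcal X$ by the open sets whose closures lie inside some $\mathcal U_\alpha$ (possible because $\mathcal X$ is regular), then extract a locally finite open refinement by paracompactness, and finally amalgamate the members of this refinement over $J$: assign each member to an index $\alpha$ whose $\mathcal U_\alpha$ contains its closure and take the union of those assigned to $\alpha$. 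Here one uses that closure commutes with locally finite unions, so the closure of each amalgamated set still sits inside $\mathcal U_\alpha$. This combined regularity-plus-paracompactness step is the principal obstacle; everything after it is routine.

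Next I would apply the shrinking lemma a second time, now to $\{\mathcal V_\alpha\}$, obtaining a locally finite open covering $\{\mathcal W_\alpha\}_{\alpha\in J}$ with $\mathfrak{cl}(\mathcal W_\alpha)\subset\mathcal V_\alpha$, which yields the nested chain
\[
\mathfrak{cl}(\mathcal W_\alpha)\subset\mathcal V_\alpha\subset\mathfrak{cl}(\mathcal V_\alpha)\subset\mathcal U_\alpha .
\]
By normality and the Urysohn lemma (Theorem \ref{urysohn_lem}) applied to the disjoint closed sets $\mathcal X\setminus\mathcal V_\alpha$ and $\mathfrak{cl}(\mathcal W_\alpha)$ with the interval $[0,1]$, I obtain continuous functions $\psi_\alpha:\mathcal X\to[0,1]$ with $\psi_\alpha(\mathcal X\setminus\mathcal V_\alpha)=\{0\}$ and $\psi_\alpha(\mathfrak{cl}(\mathcal W_\alpha))=\{1\}$. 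Then $\mathrm{Support}(\psi_\alpha)\subset\mathfrak{cl}(\mathcal V_\alpha)\subset\mathcal U_\alpha$, so $\psi_\alpha$ vanishes on $\mathcal X\setminus\mathcal U_\alpha$ (condition (1) of the definition), and since $\{\mathfrak{cl}(\mathcal V_\alpha)\}$ is locally finite, the family of supports is locally finite (condition (2)).

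Finally I would form the sum $\Psi=\sum_{\alpha\in J}\psi_\alpha$. Local finiteness of the supports makes this a finite sum in a neighborhood of each point, hence $\Psi$ is well-defined and continuous; and because $\{\mathcal W_\alpha\}$ covers $\mathcal X$, every $x$ lies in some $\mathcal W_\alpha$ where $\psi_\alpha(x)=1$, so $\Psi(x)\geq 1>0$ everywhere. Setting $\phi_\alpha=\psi_\alpha/\Psi$ then gives the required partition of unity: each $\phi_\alpha$ still vanishes off $\mathcal U_\alpha$ and has the same (locally finite) support as $\psi_\alpha$, while $\sum_{\alpha}\phi_\alpha\equiv 1$ by construction (condition (3)). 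The only points to verify beyond the shrinking lemma are that local finiteness passes to $\Psi$ and survives division by the everywhere-positive continuous function $\Psi$, both of which are immediate.
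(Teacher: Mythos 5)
Your proof is correct and is essentially the standard argument from the cited reference \cite{munkres:topology} (shrinking lemma applied twice, Urysohn functions, then normalization by the locally finite sum); the paper itself states this theorem without proof, deferring to that source. Nothing to fix.
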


\begin{thm}\label{pavlov_troisky_thm}\cite{pavlov_troisky:cov}
	Suppose $X$ and $Y$ are compact Hausdorff connected spaces and $p : Y \to X$
	is a continuous surjection. If $C(Y )$ is a projective finitely generated Hilbert module over
	$C(X)$ with respect to the action
	\begin{equation*}
	(f\xi)(y) = f(y)\xi(p(y)), \ f \in  C(Y ), \xi \in  C(X),
	\end{equation*}
	then $p$ is a finite-fold (or a finilely listed) covering.
\end{thm}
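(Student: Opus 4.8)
The plan is to translate the module-theoretic hypothesis into the language of vector bundles via the Serre--Swan correspondence and then to read off the covering structure fibrewise. Since $C(Y)$ is a finitely generated projective $C(X)$-module, Serre--Swan produces a complex vector bundle $E \to X$ with $C(Y) \isom \Ga(X, E)$ as $C(X)$-modules, the action being the one induced by $p^{*}\colon C(X) \to C(Y)$, $\xi \mapsto \xi \circ p$. Because $X$ is connected, the rank of $E$ is a constant $n$. The goal then becomes to show that $p$ is a covering projection with exactly $n$ sheets; equivalently, that every fibre $p^{-1}(x)$ consists of exactly $n$ points and that each $x$ has an evenly covered neighbourhood.

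First I would show the fibres are finite. Let $f_1, \dots, f_m \in C(Y)$ generate $C(Y)$ over $C(X)$. For fixed $x$, the set $p^{-1}(x)$ is closed in the compact Hausdorff space $Y$, so by normality (Theorem \ref{comp_normal}) and the Urysohn lemma (Theorem \ref{urysohn_lem}) the restriction map $C(Y) \to C(p^{-1}(x))$ is surjective. Restricting a relation $g = \sum_i (\xi_i \circ p) f_i$ to $p^{-1}(x)$ shows every element of $C(p^{-1}(x))$ is a $\C$-linear combination of the $f_i|_{p^{-1}(x)}$, whence $\dim_{\C} C(p^{-1}(x)) \le m$ and $p^{-1}(x)$ is finite. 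To pin down the cardinality, consider the maximal ideal $\gm_x = \{\xi \in C(X) : \xi(x) = 0\}$ and the fibre $E_x = C(Y)/\gm_x C(Y)$, of dimension $n$. Restriction induces a natural surjection $E_x \to C(p^{-1}(x))$, and the heart of the matter is that it is an isomorphism, i.e. that $\gm_x C(Y)$ equals the ideal $J_x$ of functions vanishing on $p^{-1}(x)$. The inclusion $\gm_x C(Y) \subseteq J_x$ is clear. For the reverse, use that $p$ is closed (being continuous from a compact to a Hausdorff space): given $h \in J_x$ and $\eps > 0$, the set $\{|h| < \eps\}$ is an open neighbourhood of $p^{-1}(x)$, so there is an open $U \ni x$ with $p^{-1}(U) \subseteq \{|h| < \eps\}$ and a Urysohn function $\xi \in C(X)$ with $\xi(x) = 0$, $\xi \equiv 1$ off $U$; then $(\xi\circ p)h \in \gm_x C(Y)$ approximates $h$ within $\eps$, so $h \in \overline{\gm_x C(Y)}$. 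Finally, because $\Ga(X,E)$ is finitely generated projective, $\gm_x C(Y)$ is exactly the kernel of the evaluation $\Ga(X,E) \to E_x$, hence already closed; therefore $\gm_x C(Y) = \overline{\gm_x C(Y)} = J_x$, and $C(p^{-1}(x)) \isom E_x$ has dimension $n$. Thus $|p^{-1}(x)| = n$ for every $x$.

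It remains to produce evenly covered neighbourhoods. Fix $x_0$ with $p^{-1}(x_0) = \{y_1, \dots, y_n\}$ and choose pairwise disjoint open sets $V_i \ni y_i$. Since $p$ is closed, $U = X \setminus p\big(Y \setminus \bigcup_i V_i\big)$ is an open neighbourhood of $x_0$ with $p^{-1}(U) \subseteq \bigsqcup_i V_i$, and we may shrink $U$ so that $E|_U$ is trivial. Localising $C(Y) \isom \Ga(X,E)$ over such a $U$ gives $C(p^{-1}(U)) \isom \Ga(U, E|_U) \isom C(U)^{n}$ as $C(U)$-modules. Here I would exploit that each fibre $E_{x'} \isom C(p^{-1}(x'))$ carries not merely a vector-space but a commutative algebra structure with exactly $n$ distinct characters; the $n$ minimal idempotents of these fibre algebras vary continuously over $U$ and assemble into $n$ orthogonal idempotents of $C(p^{-1}(U))$ summing to $1$. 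Their supports split $p^{-1}(U)$ into $n$ disjoint clopen pieces, each mapped homeomorphically onto $U$ by $p$; this is precisely the statement that $U$ is evenly covered. Hence $p$ is a covering projection, and by constancy of the fibre it is $n$-fold, i.e. finite.

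I expect the genuine obstacle to be this last upgrade from ``every fibre has the same finite cardinality $n$'' to ``$p$ is a local homeomorphism.'' Constancy of the fibre cardinality together with closedness of $p$ does not by itself force local injectivity, since the distribution of the $n$ preimages among the sets $V_i$ could a priori be uneven; the argument must use local triviality of $E$ and, crucially, the \emph{algebra} structure of the fibres $E_{x'} \isom C(p^{-1}(x'))$ to separate the sheets. It is worth noting that projectivity enters twice in an essential way --- to guarantee constant rank and to guarantee that $\gm_x C(Y)$ is closed; for a merely finitely generated, non-projective module (for instance a branched covering such as $z \mapsto z^{2}$) the fibre dimension $\dim_{\C} E_x$ can strictly exceed $|p^{-1}(x)|$ at the ramification points, and the conclusion fails. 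This confirms that projectivity is exactly the hypothesis ruling out ramification.
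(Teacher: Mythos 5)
The paper does not prove this theorem at all --- it is imported verbatim from Pavlov--Troitsky \cite{pavlov_troisky:cov} --- so your proposal can only be judged on its own merits, not against an in-paper argument. Your Serre--Swan strategy is sound, and the first two stages are correct and essentially complete: finiteness of each fibre from a finite generating set combined with surjectivity of the restriction $C(Y) \to C\left(p^{-1}(x)\right)$ (Tietze for the closed set $p^{-1}(x)$), and then the identification $\gm_x C(Y) = J_x$ via the closed-map approximation argument plus closedness of $\gm_x C(Y)$ as the kernel of evaluation of a finitely generated projective module, yielding $\left|p^{-1}(x)\right| = \dim_{\C} E_x = n$ for every $x$. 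The only small gloss there is that closedness of the evaluation kernel in the sup norm of $C(Y)$ presupposes that the module isomorphism $C(Y) \isom \Ga(X,E)$ is topological; one line about frames for the Hilbert module (e.g.\ via \cite{frank:frames}) supplies the needed automatic boundedness.

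The genuine gap sits in the final even-covering step, exactly where you predicted trouble, and it is twofold. First, the localization $C\left(p^{-1}(U)\right) \isom \Ga\left(U, E|_U\right)$ is asserted, not proved; it is another instance of your ideal computation, now run over a compact neighbourhood $K \subset U$ of $x_0$ (show $\mathfrak{I}_K\, C(Y)$ equals the ideal of functions vanishing on $p^{-1}(K)$, by the same closed-map approximation plus closedness from projectivity). Second --- the real issue --- the claim that the minimal idempotents of the fibre algebras ``vary continuously'' does not follow from anything established: a trivialization of $E$ is a \emph{vector-bundle} trivialization and carries no algebra information, so local triviality of $E$ alone cannot separate the sheets. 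Fortunately the repair stays inside your framework and is cleaner than the idempotent language suggests. Since $p^{-1}(U) \subseteq \bigsqcup_i V_i$, the pieces $W_i = p^{-1}(U) \cap V_i$ are automatically clopen, so the $n$ orthogonal idempotents exist for free; what must actually be shown is that each $W_i$ meets every nearby fibre \emph{exactly once}, and constancy of the total fibre cardinality does not give this. Instead, note that $C\left(p^{-1}(K)\right) = \bigoplus_i C\left(p^{-1}(K)\cap V_i\right)$, each summand is finitely generated projective over $C(K)$ (a direct summand of a projective module), and your own step-two computation, applied verbatim to the restriction $p\colon p^{-1}(K)\cap V_i \to K$ (still a closed map of compacta), identifies the fibre dimension of the $i$-th summand at $x'$ with $\left|p^{-1}(x')\cap V_i\right|$. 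Fibre dimension of a finitely generated projective module is locally constant, equals $1$ at $x_0$, hence equals $1$ on a neighbourhood of $x_0$; then $p|_{W_i}$ is a continuous closed bijection onto that neighbourhood, hence a homeomorphism, and the covering structure follows. With this substitution your argument closes, and your closing diagnosis --- that projectivity enters precisely to force constant rank and closedness of $\gm_x C(Y)$, ruling out ramification as in $z \mapsto z^2$ --- is accurate.
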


\subsubsection{Vector bundles and projective modules}

\begin{defn}\cite{karoubi:k}
Let $\mathcal X$ be a topological space.
A {\it quasi-vector bundle with base}  $\mathcal X$ is given by
\begin{enumerate}
\item a finite dimensional $\mathbb{C}$-vector space $E_x$ for any $x \in \mathcal X$,
\item a topology on the disjoint union $E= \bigsqcup E_x$ 
\end{enumerate}
A quasi-vector bundle is denoted by $\xi = \left(E, \pi, \mathcal X\right)$ or simply $E$ if there is no risk of confusion. The space $E$ is the {\it total space} of $\xi$ and $E_x$ is the {\it fiber} of $\xi$ at the point $x$.
\end{defn}

\begin{rem}
Above definition is a specific case of discussed in \cite{karoubi:k} definition which includes real vector bundles. This article discusses complex vector bundles only.
\end{rem}
\begin{defn}\label{quasi_bundle_category} \cite{karoubi:k}
Let $\xi = \left(E, \pi, \mathcal X\right)$ and $\xi = \left(E', \pi', \mathcal X'\right)$ be quasi-vector bundles. A {\it general morphism} from $\xi$ to $\xi'$ is given by a pair $\left(f,g\right)$ of continuous maps such that
\begin{enumerate}
\item the diagram 
 \newline
 \begin{tikzpicture}
   \matrix (m) [matrix of math nodes,row sep=3em,column sep=4em,minimum width=2em]
   {
     E  & E'   \\
     \mathcal X &  \mathcal X' \\};
   \path[-stealth]
     (m-1-1) edge node [above] {$g$} (m-1-2)
     (m-2-1) edge node [above] {$f$} (m-2-2)
     (m-1-1) edge node [left]  {$\pi$} (m-2-1)
     (m-1-2) edge node [right] {$\pi'$} (m-2-2);
  \end{tikzpicture}
  
  is commutative.
  \item The map $g_x: E_x \to E'_{f(x)}$ induced by $g$ is $\mathbb{C}$-linear. General morphism can be composed by obvious way. 
\end{enumerate}
If $\xi$, $\xi'$ have the same base $\mathcal X=\mathcal X'$, a  {\it morphism} between  $\xi$ and $\xi'$ is a general morphism $\left(f,g\right)$ such that $f = \mathrm{Id}_{\mathcal X}$. Such a morphism will be simply called $g$ in the sequel. The quasi-vector bundles with the same base $\mathcal X$ are objects of a category, whose arrows we have just defined.
\end{defn}

\begin{empt}\cite{karoubi:k}
Let $\xi = \left(E, \pi, \mathcal X\right)$ be a quasi-vector bundle, and let $\mathcal X'$ be a subspace of $\mathcal X$. The triple $\left(\pi^{-1}\left(\mathcal X'\right), \pi |_{\pi^{-1}\left(\mathcal X'\right)}, \mathcal X'\right)$ defines a quasi-vector bundle $\xi'$ which is called a restriction of $\xi$ to $\mathcal X'$. We denote it by $\xi |_{\mathcal X'}$, $E |_{\mathcal X'}$ or even simply $E_{\mathcal X'}$.
\end{empt}
\begin{defn}\label{inv_image_top_defn}\cite{karoubi:k}
More generally, let $f: \mathcal X' \to\mathcal X$ be any continuous map. For any $x' \in \mathcal X'$, let $E'_{x'}=E_{f(x')}$. Then the set $E'= \bigsqcup_{x' \in \mathcal X'}E'_{x'}$ may be identified with the {\it fiber product} $\mathcal X' \times_{\mathcal X} E$, i.e. with the subset of $\mathcal X' \times E$ formed by pairs $\left(x', e\right)$ such that $f\left(x'\right)=\pi(e)$. If $\pi': E'\to X'$ is defined by $\pi'(x', e) = x'$, then it is clear that the triple $\left(E', \pi', \mathcal X'\right)$ defines a quasi-vector bundle over $\mathcal X'$, when we provide $E'$ with the topology induced by $\mathcal X' \times E$. We write $\xi'$ as $f^*(\xi)$, or $f^*(E)$: this is the {\it inverse image} (or the {\it pullback}) of $\xi$ by $f$. We have $f^*(\xi)= \xi$ for $f=\mathrm{Id}_{\mathcal X}$, and also $\left(f \circ f'\right)^*(\xi)=f'^*\left(f^*\left(\xi\right)\right)$ if $f': \mathcal X'' \to\mathcal X'$ is another continuous map. We also say the $E'$ is the pullback of $E$ by $f$.
\end{defn}
\begin{empt}\label{inv_image_functor_top}\cite{karoubi:k}
Let us now consider two quasi-vector bundles over $\mathcal{X}$ and a morphism $\al: E \to F$. If we let $E'=f^*(E)$ as in \ref{inv_image_top_defn} and  $F'=f^*(F')$ we can also define a morphism $\al'=f^*(\al)$ from $E'$ to $F'$ by the formula $\al'_{x'}=\al_{f(x')}$. If we identify $E'$ with $\mathcal X' \times_{\mathcal X} E$ and $F'$ with $\mathcal X' \times_{\mathcal X} F$, then $\al'$ is identified with  $\mathrm{Id}_{\mathcal X'} \times_{\mathcal X} \al$ which proves the continuity of the map $\al'$.

 \begin{tikzpicture}
   \matrix (m) [matrix of math nodes,row sep=3em,column sep=4em,minimum width=2em]
   {
  \  &  f^*(F) & \  & F   \\
 f^*(E)  & \ &  E &  \\ 
\  &  \mathcal X'  & \ & \mathcal X \\};
   \path[-stealth]
     (m-1-2) edge node [above] {$ $} (m-1-4)
     (m-2-1) edge node [above] {$ $} (m-2-3)
     (m-2-1) edge node [above] {$f^*(\al)$} (m-1-2)
     (m-2-1) edge node [above] {$ $} (m-3-2)
     (m-2-3) edge node [above] {$\al$} (m-1-4)
     (m-1-2) edge node [above] {$ $} (m-3-2)
     (m-1-4) edge node [above] {$ $} (m-3-4)
     (m-2-3) edge node [above] {$ $} (m-3-4)
     (m-3-2) edge node [above] {$ $} (m-3-4);
  \end{tikzpicture}

\end{empt}

\begin{prop}\cite{karoubi:k}
Let $f: \mathcal X' \to\mathcal X$ be a continuous map. Then the correspondence $E \mapsto f^*(E)$ and $\al \mapsto f^*(\al)$ induces a functor between the category of quasi-vector bundles over $\mathcal X$ and the category of quasi-vector bundles over $\mathcal X'$
\end{prop}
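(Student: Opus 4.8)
The plan is to verify the two defining axioms of a functor---preservation of identities and of composition---after first confirming that $f^*$ is well defined on objects and on morphisms. Well-definedness on objects is already contained in Definition~\ref{inv_image_top_defn}: for each quasi-vector bundle $\xi = \left(E, \pi, \mathcal X\right)$ over $\mathcal X$ the triple $\left(f^*(E), \pi', \mathcal X'\right)$ with fibers $f^*(E)_{x'} = E_{f(x')}$ is again a quasi-vector bundle over $\mathcal X'$, so I would simply invoke that definition rather than re-prove it.

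Next I would check that $f^*$ sends a morphism $\alpha\colon E \to F$ over $\mathcal X$ to a genuine morphism $f^*(\alpha)\colon f^*(E) \to f^*(F)$ over $\mathcal X'$ in the sense of Definition~\ref{quasi_bundle_category}. By construction $f^*(\alpha)$ is fiber-preserving, hence covers $\mathrm{Id}_{\mathcal X'}$ and makes the required square commute; and on the fiber over $x' \in \mathcal X'$ it is $f^*(\alpha)_{x'} = \alpha_{f(x')}\colon E_{f(x')} \to F_{f(x')}$, which is $\mathbb{C}$-linear because $\alpha$ is a morphism, so under the identifications $f^*(E)_{x'} = E_{f(x')}$ and $f^*(F)_{x'} = F_{f(x')}$ the fiberwise linearity is immediate. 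The continuity of $f^*(\alpha)$ is the one point that genuinely uses the topology on the fiber product, and here I would reuse the observation recorded in~\ref{inv_image_functor_top}: under the identifications $f^*(E) = \mathcal X' \times_{\mathcal X} E$ and $f^*(F) = \mathcal X' \times_{\mathcal X} F$ the map $f^*(\alpha)$ coincides with $\mathrm{Id}_{\mathcal X'} \times_{\mathcal X} \alpha$, which is continuous as the restriction of the continuous product map $\mathrm{Id}_{\mathcal X'} \times \alpha$ to the fiber product (the restriction lands in $\mathcal X' \times_{\mathcal X} F$ precisely because $\pi_F\circ\alpha = \pi_E$).

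With well-definedness in hand, the functor axioms follow by a fiberwise computation. For the identity, $f^*(\mathrm{Id}_E)_{x'} = \left(\mathrm{Id}_E\right)_{f(x')} = \mathrm{Id}_{E_{f(x')}} = \mathrm{Id}_{f^*(E)_{x'}}$ for every $x'$, so $f^*(\mathrm{Id}_E) = \mathrm{Id}_{f^*(E)}$. For composition, given $\alpha\colon E \to F$ and $\beta\colon F \to G$ over $\mathcal X$, one computes fiber by fiber
\begin{equation*}
\left(f^*(\beta) \circ f^*(\alpha)\right)_{x'} = \beta_{f(x')} \circ \alpha_{f(x')} = \left(\beta \circ \alpha\right)_{f(x')} = f^*(\beta \circ \alpha)_{x'}.
\end{equation*}
Since both sides cover $\mathrm{Id}_{\mathcal X'}$ and agree on every fiber, they coincide as morphisms over $\mathcal X'$, giving $f^*(\beta \circ \alpha) = f^*(\beta) \circ f^*(\alpha)$.

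The argument is essentially bookkeeping, and I do not anticipate a real obstacle: the only step carrying genuine content is the continuity of $f^*(\alpha)$, which has already been reduced in~\ref{inv_image_functor_top} to the continuity of $\mathrm{Id}_{\mathcal X'} \times_{\mathcal X} \alpha$. The one thing to watch is that the identifications $f^*(E)_{x'} = E_{f(x')}$ are applied consistently throughout, so that composition of the fiber maps matches composition of the pulled-back morphisms; once this is fixed, all three verifications are direct.
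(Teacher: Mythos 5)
Your proof is correct and follows essentially the same route the paper takes: the proposition is quoted from Karoubi, and the only substantive point---continuity of $f^*(\al)$ via the identification of $f^*(E)$ with the fiber product $\mathcal X' \times_{\mathcal X} E$, under which $f^*(\al)$ becomes $\mathrm{Id}_{\mathcal X'} \times_{\mathcal X} \al$---is exactly the observation recorded in \ref{inv_image_functor_top}, which you correctly identify and reuse. The remaining fiberwise verifications of the identity and composition axioms are the routine bookkeeping the paper leaves implicit, and you carry them out correctly.
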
 

\begin{empt}\cite{karoubi:k}
Let $V= \mathbb{C}^n$ a finite dimensional vector space. There is a natural structure of quasi-vector bundle on the product $\mathcal{X}  \times V$. Such bundles are called {\it trivial vector bundles}.
\end{empt}
\begin{defn}\cite{karoubi:k}
Let $\xi = \left(E, \pi, \mathcal X\right)$ be a quasi-vector bundle. Then $\xi$ is said to be {\it "locally trivial"} or a {\it "vector bundle"} if for any $x \in \mathcal{X}$ there exists a neighborhood $\mathcal U$ such that restriction $\xi |_{\mathcal{U}}$ is isomorphic to a trivial bundle.
\end{defn}
\begin{fact}
Vector bundles are in fact the objects of a full subcategory of the category of quasi-vector bundles considered in the Definition \ref{quasi_bundle_category}. We will denote this category by $\mathscr E(\mathcal X)$. If $f: \mathcal X' \to \mathcal X$ is a continuous map then functor $f^*$ defined in \ref{inv_image_top_defn} defines a functor from $\mathscr E(\mathcal X)$ to $\mathscr E(\mathcal X')$, because an inverse image of any vector bundle is also a vector bundle (See \cite{karoubi:k}).
\end{fact}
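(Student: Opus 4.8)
The plan is to verify the two assertions of the statement separately: first, that the vector bundles over $\mathcal X$ are the objects of a full subcategory $\mathscr E(\mathcal X)$ of the category of quasi-vector bundles over $\mathcal X$; and second, that for a continuous map $f\colon \mathcal X' \to \mathcal X$ the pullback functor $f^*$ of \ref{inv_image_top_defn} carries locally trivial bundles to locally trivial bundles, so that it restricts to a functor $\mathscr E(\mathcal X) \to \mathscr E(\mathcal X')$.

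The first assertion is essentially a matter of unwinding definitions. By definition a vector bundle is precisely a quasi-vector bundle satisfying the extra local-triviality condition, so the class of vector bundles over $\mathcal X$ is a subclass of the objects of the category introduced in Definition \ref{quasi_bundle_category}. Declaring $\mathscr E(\mathcal X)$ to be the subcategory whose objects are the vector bundles and whose morphisms are all the quasi-vector bundle morphisms between them makes $\mathscr E(\mathcal X)$ a full subcategory by construction; the only thing to observe is that the identity morphisms and the composites of morphisms between locally trivial bundles are again morphisms of quasi-vector bundles, which is immediate.

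For the second assertion the one substantive point is that $f^*(E)$ is locally trivial whenever $E$ is, and I would argue this pointwise. Fix $x' \in \mathcal X'$ and put $x = f(x')$. Since $E = \left(E, \pi, \mathcal X\right)$ is locally trivial there is an open neighborhood $\mathcal U \ni x$ together with an isomorphism $E|_{\mathcal U} \cong \mathcal U \times V$ of quasi-vector bundles, with $V = \mathbb{C}^n$. Then $f^{-1}(\mathcal U)$ is an open neighborhood of $x'$ by continuity of $f$, and I claim $f^*(E)|_{f^{-1}(\mathcal U)}$ is trivial. Two elementary compatibilities do the work: pullback commutes with restriction, so that $f^*(E)|_{f^{-1}(\mathcal U)} = \left(f|_{f^{-1}(\mathcal U)}\right)^*\left(E|_{\mathcal U}\right)$; and the pullback of a trivial bundle is trivial, since the fiber product $f^{-1}(\mathcal U) \times_{\mathcal U} (\mathcal U \times V)$ is canonically homeomorphic to $f^{-1}(\mathcal U) \times V$ via $\left(x', (x, v)\right) \mapsto (x', v)$, where necessarily $x = f(x')$. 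Applying $\left(f|_{f^{-1}(\mathcal U)}\right)^*$ to the chosen trivialization of $E|_{\mathcal U}$ therefore yields a trivialization of $f^*(E)$ over $f^{-1}(\mathcal U)$, and since $x'$ was arbitrary, $f^*(E)$ is locally trivial.

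The functoriality itself — preservation of identities and composites and the prescription $\al \mapsto f^*(\al)$ on morphisms — was already established for quasi-vector bundles in \ref{inv_image_functor_top}, and $f^*(\al)$ is again a morphism of vector bundles once its source and target are known to be locally trivial, as just shown. The only genuine obstacle is thus the local-triviality step, and even there the difficulty is purely in spelling out the two compatibilities (pullback versus restriction, and triviality under pullback) rather than in any analytic input; this is why the statement is recorded as a \emph{fact} with reference to \cite{karoubi:k}.
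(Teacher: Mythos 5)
Your proposal is correct, and it is essentially the argument the paper defers to: the paper offers no proof of this Fact, citing \cite{karoubi:k}, and your two steps (fullness by construction, plus local triviality of $f^*(E)$ via the compatibility of pullback with restriction and the canonical homeomorphism $f^{-1}(\mathcal U) \times_{\mathcal U} (\mathcal U \times V) \cong f^{-1}(\mathcal U) \times V$) are precisely the standard argument found there. The only point worth making explicit is that restriction to an open set is itself pullback along the inclusion, so your first compatibility is just the functoriality identity $\left(f \circ f'\right)^*(\xi) = f'^*\left(f^*\left(\xi\right)\right)$ already recorded in Definition \ref{inv_image_top_defn}.
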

\begin{empt}\cite{karoubi:k}
Let $A =  C\left(\mathcal X\right)$ be a ring of continuous complex valued functions on a compact space $\mathcal X$. If $E$ is a vector bundle with base $\mathcal X$ then the set $\Ga\left(\mathcal X, E\right)$ of continuous sections is an $A$-module under the operation $(\la\cdot s)(x)=\la(x)s(x)$ where $\la \in A$, $s \in \Ga\left(\mathcal X, E\right)$. 
\end{empt}
\begin{thm}\cite{karoubi:k} {\it Serre - Swan theorem}.
Let $A = C\left(\mathcal X\right)$ be a ring of continuous complex valued functions on a compact space $\mathcal X$. Then the section functor $\Ga$ induces an equivalence of categories $\mathscr E(\mathcal{X}) \approx \mathscr P(A)$, where $\mathscr P(A)$ is a category of finitely generated projective $A$-modules.
\end{thm}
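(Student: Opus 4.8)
The plan is to verify directly the three properties that characterize an equivalence of categories for the section functor $\Ga$: that it takes values in $\mathscr P(A)$, that it is fully faithful, and that it is essentially surjective. For the first point, I must show that for any vector bundle $E \in \mathscr E(\mathcal X)$ the section module $\Ga(\mathcal X, E)$ is finitely generated and projective. The crucial geometric input is that over a compact space every vector bundle is a direct summand of a trivial one. To establish this I would use local triviality together with compactness to extract a finite open cover $\left\{\mathcal U_i\right\}_{i=1}^k$ of $\mathcal X$ on which $E$ is trivial, with local trivializations $\phi_i : E|_{\mathcal U_i} \to \mathcal U_i \times \C^m$. Choosing a partition of unity $\left\{\rho_i\right\}$ subordinate to this cover (available since compact Hausdorff spaces are paracompact), the assignment $e \mapsto \left(\pi(e), \rho_1(\pi(e))\phi_1(e), \dots, \rho_k(\pi(e))\phi_k(e)\right)$ defines a fiberwise injective bundle morphism $E \hookto \mathcal X \times \C^{km}$. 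Equipping the trivial bundle with a Hermitian metric and letting $F$ be the orthogonal complement bundle yields $E \oplus F \cong \mathcal X \times \C^{km}$; applying $\Ga$ and using that it preserves finite direct sums gives $\Ga(\mathcal X, E) \oplus \Ga(\mathcal X, F) \cong A^{km}$, so $\Ga(\mathcal X, E)$ is a direct summand of a free module, hence finitely generated projective.

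Next I would establish full faithfulness, i.e.\ that for vector bundles $E, F$ the induced map $\Hom_{\mathscr E(\mathcal X)}(E, F) \to \Hom_A(\Ga(\mathcal X, E), \Ga(\mathcal X, F))$ is a bijection. The key is locality: any $A$-linear map $T$ of section modules is determined pointwise, since multiplying a section by a function vanishing near $x$ shows that $T(s)(x)$ depends only on $s(x)$. This produces fiberwise linear maps $g_x : E_x \to F_x$, and one checks continuity of the resulting $g$ (so that it is a genuine bundle morphism) by testing against local frames. Conversely, every bundle morphism induces an $A$-linear map on sections, and the two constructions are mutually inverse.

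For essential surjectivity, I would begin with a finitely generated projective module $P$, realize it as a direct summand of $A^n$, and encode the projection onto $P$ by an idempotent $e = e^2 \in \mathbb{M}_n(A) = \mathbb{M}_n(C(\mathcal X))$, that is, a continuous family $x \mapsto e(x)$ of idempotents in $\mathbb{M}_n(\C)$. Setting $E_x := e(x)\C^n$ and $E := \bigsqcup_x E_x \subset \mathcal X \times \C^n$ defines a quasi-vector bundle; local triviality follows because $\rank e(x)$ is a continuous integer-valued, hence locally constant, function, which allows one to conjugate $e$ to a constant projection in a neighborhood of each point. One then verifies $\Ga(\mathcal X, E) \cong \im(e) \cong P$ as $A$-modules, completing the argument.

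I expect the main obstacle to be the first step, the construction of the complementary bundle $F$, since this is where compactness is genuinely used and where the partition of unity must be combined with a choice of Hermitian metric to produce a global splitting. Once that is in hand, full faithfulness is a formal consequence of locality, and essential surjectivity reduces to the idempotent calculus together with the local-constancy of the rank; both are comparatively routine.
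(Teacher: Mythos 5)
Your proposal is correct and is essentially the canonical argument: the paper states this theorem without proof, citing \cite{karoubi:k}, and the proof given there proceeds exactly along your lines (embedding a bundle over a compact base as a direct summand of a trivial bundle via a finite trivializing cover and a partition of unity, full faithfulness by locality of $A$-linear maps, and essential surjectivity by realizing a finitely generated projective module as the image of an idempotent $e \in \mathbb{M}_n(C(\mathcal X))$ and trivializing locally by conjugating $e$ to a constant projection). The only minor imprecision is in your last step: local triviality of $E_x = e(x)\C^n$ comes from the conjugation lemma itself (invertibility of $u(x) = e(x)e(x_0) + (1-e(x))(1-e(x_0))$ near $x_0$, by continuity of $e$), with local constancy of the rank a consequence rather than the enabling hypothesis.
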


\begin{empt}\label{tensor_bundle}
Let $f: \mathcal X' \to \mathcal X$ a continuous map, there is an inverse image functor $f^*$ from $\mathscr E(\mathcal X) \to \mathscr E(\mathcal X')$ described in \ref{inv_image_functor_top}. If we identify $\mathscr E(\mathcal X)$ (resp. $\mathscr E(\mathcal X')$) with $\mathscr P\left(C\left(\mathcal{X}\right)\right)$ (resp. $\mathscr P\left(C\left(\mathcal{X}'\right)\right)$) then $f^*$ may be interpreted as the "extensions of scalars" functor $\mathscr P\left(C\left(\mathcal{X}\right)\right)\to \mathscr P\left(C\left(\mathcal{X}'\right)\right)$ defined by $P \mapsto C\left(\mathcal{X}'\right) \otimes P$ and $h \mapsto 1_{C\left(\mathcal X'\right)} \otimes h$ for any morphism $h$ in $\mathscr P\left(C\left(\mathcal{X}\right)\right)$. 
\end{empt}
\begin{defn}\label{gluing_sections_def}
 Let $\xi = \left(E, \pi, \mathcal X\right)$ be a vector bundle, and let $\left\{\mathcal{U}_\iota\subset\mathcal{X}\right\}_{\iota \in I}$ be a family of open subsets such that $\mathcal{X} = \bigcap_{\iota \in I} \mathcal{U}_\iota$. If $\left\{s_\iota \in \Ga\left(\mathcal{U}_\iota, E|_{\mathcal{U}_\iota}\right)\right\}_{\iota \in I}$ is a family of sections such that for any $\iota', \iota'' \in I$ following condition hold
 \begin{equation*}
 s_{\iota'}|_{\mathcal{U}_{\iota'} \bigcap \mathcal{U}_{\iota''}}=s_{\iota''}|_{\mathcal{U}_{\iota'} \bigcap \mathcal{U}_{\iota''}}
 \end{equation*}
 then there is the unique section $s \in  \Ga\left(\mathcal{X}, E\right)$ such that
  \begin{equation*}
  s|_{\mathcal{U}_{\iota}}=s_{\iota};~\forall \iota \in I.
  \end{equation*}
  The section $s$ is said to be the {\it gluing} of $\left\{s_\iota\right\}_{\iota \in I}$ and we will write $s= \mathfrak{Gluing}\left(\left\{s_\iota\right\}_{\iota \in I}\right)$.
  
\end{defn}
\begin{rem}
	The Definition \ref{gluing_sections_def} describes the gluing of  sections of the sheaf (See \cite{hartshorne:ag} for details).
\end{rem}
\begin{defn}\label{pullback_op_def}
 Let $\widetilde{\pi}:\widetilde{\mathcal X}	\to \mathcal X$ be a covering projection and let $\widetilde{\mathcal U} \subset \widetilde{\mathcal X}$ be a one-to-one subset. Let $\xi = \left(E, \pi, \mathcal X\right)$ be a vector bundle and let $\widetilde{\xi} = \left(\widetilde{E}, \pi', \widetilde{\mathcal X}\right)$ be a pullback of $\xi$. There is a natural isomorphism
 \begin{equation*}
\varphi: \Ga\left(\widetilde{\pi}\left(\widetilde{\mathcal U}\right),E|_{\widetilde{\pi}\left(\widetilde{\mathcal U}\right)}\right) \xrightarrow{\approx} \Ga\left(\widetilde{\mathcal U},\widetilde{E}|_{\widetilde{\mathcal U}}\right).
 \end{equation*}
 If $s \in \Ga\left(\widetilde{\pi}\left(\widetilde{\mathcal U}\right),E|_{\widetilde{\pi}\left(\widetilde{\mathcal U}\right)}\right)$ is a section then the section $\widetilde{s}=\varphi\left(s\right)\in \Ga\left(\widetilde{\mathcal U},\widetilde{E}|_{\widetilde{\mathcal U}}\right)$ is said to be the {\it $\widetilde{\mathcal U}$-pullback} of $s$ and we will write $\widetilde{s}= \mathfrak{pullback}_{\widetilde{\mathcal{U}}}\left(s\right)$. Elements of  $C_0\left(\mathcal X\right)$ and $C_0\left(\widetilde{\mathcal X}\right)$ can be regarded as sections of one dimensional trivial bundles, so we will use notation $\widetilde{f}= \mathfrak{pullback}_{\widetilde{\mathcal{U}}}\left(f\right)$ where $f \in C_0\left(\mathcal X\right)$ and $\widetilde{f}\in C_0\left(\widetilde{\pi}\left(\widetilde{\mathcal U}\right)\right)$. 
\end{defn}
\begin{defn}\label{pullback_global_def}
Let us consider situation of the Definition \ref{pullback_op_def}. If be $s \in \Ga\left(\mathcal X,E\right)$ is a section then following condition hold
\begin{equation*}
\mathfrak{pullback}_{\widetilde{\mathcal{U}}'}\left(s\right)|_{\widetilde{\mathcal{U}}'\bigcap \widetilde{\mathcal{U}}''} = \mathfrak{pullback}_{\widetilde{\mathcal{U}}''}\left(s\right)|_{\widetilde{\mathcal{U}}'\bigcap \widetilde{\mathcal{U}}''}
\end{equation*}
for any $\widetilde{\mathcal{U}}'$ (resp. $\widetilde{\mathcal{U}}''$) one-to-one subsets. Hence there is the unique section $\widetilde{s}=\Ga\left(\widetilde{\mathcal X},\widetilde{E}\right)$ such that 
\begin{equation*}
\widetilde{s}|_{\widetilde{\mathcal{U}}} = \mathfrak{pullback}_{\widetilde{\mathcal{U}}}\left(s\right)
\end{equation*}
for any  one-to-one subset $\widetilde{\mathcal{U}}$. The section $\widetilde{s}$ is said to be the $\widetilde{\pi}${\it -pullback} of $s$ and we will write 
\begin{equation*}
\widetilde{s} = \mathfrak{pullback}_{\widetilde{\pi}}\left(s\right).
\end{equation*}
It is clear that following condition hold
\begin{equation*}
\mathfrak{pullback}_{\widetilde{\pi}}\left(s\right) = 1_{C\left(\widetilde{\mathcal X}\right)} \otimes_{C\left(\mathcal X\right)} s.
\end{equation*}
\end{defn}
\subsection{Hilbert modules}
\paragraph{} We refer to \cite{blackadar:ko,kakariadis:corr} for the definition of Hilbert $C^*$-modules, or simply Hilbert modules. Denote by $X_A$ a right Hilbert $A$-module. The sesquilinear product on a Hilbert module $X_A$ will be denoted by $\langle \cdot,\cdot\rangle_{X_A}$. 
 For any $\xi, \zeta \in X_A$ let us define an $A$-endomorphism $\theta_{\xi, \zeta}$ given by  $\theta_{\xi, \zeta}(\eta)=\xi \langle \zeta, \eta \rangle_{X_A}$ where $\eta \in X_A$. Operator  $\theta_{\xi, \zeta}$ shall be denoted by $\xi \rangle\langle \zeta$. Norm completion of algebra generated by operators $\theta_{\xi, \zeta}$ is said to be an algebra of compact operators $\mathcal{K}(X_A)$. We suppose that there is a left action of $\mathcal{K}(X_A)$ on $X_A$ which is $A$-linear, i.e. action of  $\mathcal{K}(X_A)$ commutes with action of $A$.
 \begin{defn}\cite{kakariadis:corr}
  \textup{An \emph{$A$-$B$-correspondence $X$} is a right Hilbert $B$-module together with a $*$-homomorphism $\phi_X\colon A \rightarrow \mathscr{L}(X)$. We will denote this by $_A X_B$.}
  \end{defn}
  \begin{defn}\label{hilb_space_a}\cite{blackadar:ko}
Let $A$ be a $C^*$-algebra, let $\sH_{A}$ be the completion of the direct sum of a countable number of copies of $A$, i.e. $\sH_{A}$ consists of all sequences $\left\{a_n \in A\right\}_{n \in \mathbb{N}}$ such that $\sum_{n=1}^{\infty}a^*_na_n$ converges, with inner product
\begin{equation*}
\left\langle \left\{a_n \right\},\left\{a_n \right\}\right\rangle_{\sH_{A}}=\sum_{n=1}^{\infty}a^*_nb_n.
\end{equation*}
$\sH_{A}$ is said to be the \textit{Hilbert space over} $A$. Henceforth the Hilbert space over $A$ will by denoted by $\sH_{A}$.
  \end{defn}
  
  \paragraph*{}The sesquilinear product on a Hilbert space $H$ will be denoted by $( \cdot,\cdot)$. 
  For any $\xi, \zeta \in H$ let us define an operator $\Theta_{\xi, \zeta}\in B(H)$ given by  $\Theta_{\xi, \zeta}(\eta)= ( \zeta, \eta )\xi$ where $\eta \in H$. Operator  $\theta_{\xi, \zeta}$ shall be denoted by $\xi )( \zeta$.

 
   \subsection{Riemannian manifolds and covering projections}
   
   \begin{prop}\label{comm_cov_mani}(Proposition 5.9 \cite{koba_nomi:fgd})
   	\begin{enumerate}
   		\item Given a connected manifold $M$ there is a unique (unique up to isomorphism) universal covering manifold, which will be denoted by $\widetilde{M}$.
   		\item The universal covering manifold $\widetilde{M}$ is a principal fibre bundle over $M$ with group $\pi_1(M)$ and projection $p: \widetilde{M} \to M$, where $\pi_1(M)$ is the first homotopy group of $M$.
   		\item The isomorphism classes of covering spaces over $M$ are in 1:1 correspondence with the conjugate classes of subgroups of $\pi_1(M)$. The correspondence is given as follows. To each subgroup $H$ of $\pi_1(M)$, we associate $E=\widetilde{M}/H$. Then the covering manifold $E$ corresponding to $H$ is a fibre bundle over $M$ with fibre $\pi_1(M)/H$ associated with the principal bundle  $\widetilde{M}(M, \pi_1(M))$. If $H$ is a normal subgroup of $\pi_1(M)$, $E=\widetilde{M}/H$ is a principal fibre bundle with group $\pi_1(M)/H$ and is called a regular covering manifold of $M$.
   	\end{enumerate}
   \end{prop}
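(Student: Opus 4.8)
The plan is to prove the three assertions in turn, relying on the standard machinery of covering space theory (developed for the topological setting in \cite{spanier:at}) and then transporting the differentiable structure. First I would establish part 1. A manifold is locally Euclidean, hence locally path connected and semi-locally simply connected, so the universal covering space $\widetilde{M}$ exists; concretely one may take $\widetilde{M}$ to be the set of homotopy classes of paths in $M$ issuing from a fixed base point $x_0$ (relative to endpoints), with the projection $p$ sending a class to the endpoint of any representative and with the standard topology making $p$ a covering projection. To upgrade $\widetilde{M}$ to a differentiable manifold I would use that $p$ is a local homeomorphism: pulling back an atlas of $M$ through the local inverses of $p$ yields an atlas on $\widetilde{M}$ whose transition maps coincide locally with those of $M$, hence are smooth, and $p$ becomes a local diffeomorphism. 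Uniqueness up to isomorphism follows from the lifting criterion together with unique path lifting, since any two simply connected covers admit mutually inverse lifts over $M$, and these lifts are automatically smooth because $p$ is a local diffeomorphism.

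For part 2 I would identify the covering group $G(\widetilde{M}|M)$ with $\pi_1(M,x_0)$ via the standard monodromy isomorphism, which for the universal cover is both onto and injective because $\widetilde{M}$ is simply connected. The deck action of $\pi_1(M)$ on $\widetilde{M}$ is free and properly discontinuous: two distinct deck transformations never agree at a point, and each point has a neighborhood whose translates are disjoint, since $p$ is evenly covered. Freeness together with proper discontinuity is exactly the condition ensuring that $p:\widetilde{M}\to M\cong\widetilde{M}/\pi_1(M)$ is a principal fibre bundle with discrete structure group $\pi_1(M)$, the required local triviality being furnished by the evenly-covered neighborhoods.

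For part 3 I would set up the Galois correspondence. Given a subgroup $H\subset\pi_1(M)$, the restricted action of $H$ on $\widetilde{M}$ remains free and properly discontinuous, so $E=\widetilde{M}/H$ is a manifold, the quotient map $\widetilde{M}\to E$ is a covering, and the induced map $E\to M$ is a covering projection with $p_*\pi_1(E)$ conjugate to $H$. Conversely every connected covering of $M$ arises this way up to isomorphism, and two subgroups give isomorphic covers precisely when they are conjugate; this subgroup--cover dictionary I would quote from \cite{spanier:at}. To obtain the fibre bundle description I would exhibit $E$ as the associated bundle $\widetilde{M}\times_{\pi_1(M)}\bigl(\pi_1(M)/H\bigr)$, the balanced product over the principal bundle of part 2, whose typical fibre is the coset space $\pi_1(M)/H$. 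When $H$ is normal, $\pi_1(M)/H$ is a group acting freely and properly discontinuously on $E$ with quotient $M$, so the argument of part 2 shows $E\to M$ is a principal $\pi_1(M)/H$-bundle, i.e. a regular covering.

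The main obstacle I anticipate is not a single deep step but the verification that the deck action and its restrictions to $H$ are properly discontinuous, combined with the transport of the differentiable structure through the quotients: one must check that the smooth atlas pulled back to $\widetilde{M}$ descends to $E=\widetilde{M}/H$, which requires the quotient map to be a local diffeomorphism and the $H$-action to be by diffeomorphisms. Once local triviality is secured from the evenly-covered neighborhoods, the identification of $E$ with the associated bundle and the bijection with conjugacy classes of subgroups are formal consequences of the monodromy correspondence.
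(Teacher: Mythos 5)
Your proposal is correct and coincides with the standard argument: the paper itself offers no proof of this proposition, quoting it directly from Kobayashi--Nomizu \cite{koba_nomi:fgd} (with the topological covering-space machinery delegated to \cite{spanier:at}), and your sketch --- path-space construction of $\widetilde{M}$, transport of the smooth atlas through the local inverses of $p$, identification of the deck group with $\pi_1(M)$, and the Galois correspondence realizing $E=\widetilde{M}/H$ as the associated bundle $\widetilde{M}\times_{\pi_1(M)}\left(\pi_1(M)/H\right)$ --- is precisely the argument found in those references. No gaps of substance; the points you flag (proper discontinuity of the deck action and descent of the smooth structure to quotients) are indeed the only verifications needed, and they go through exactly as you describe.
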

   \begin{prop}\label{smooth_part_unity_prop}\cite{brickell_clark:diff_m}
   	A differential manifold $M$ admits a (smooth) partition of unity if and only if it is paracompact. 
   \end{prop}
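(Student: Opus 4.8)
The plan is to prove the two implications separately, the substantive content lying in the construction of smooth bump functions on a paracompact manifold. Throughout I use that a manifold is, in particular, locally Euclidean, hence locally compact and Hausdorff, and I take ``admits a partition of unity'' to mean that every open cover has a subordinate (dominated) smooth partition of unity.

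First I would dispose of the easier implication: if $M$ admits smooth partitions of unity, then it is paracompact. Given an arbitrary open cover $\left\{\mathcal U_\alpha\right\}_{\alpha \in J}$, choose a subordinate smooth partition of unity $\left\{\phi_\alpha\right\}$. The open sets $\mathcal V_\alpha = \left\{x \in M : \phi_\alpha(x) > 0\right\}$ satisfy $\mathcal V_\alpha \subset \mathcal U_\alpha$ by condition 1 of the partition-of-unity definition, they cover $M$ by condition 3 since the values sum to $1$, and they form a locally finite family because $\mathcal V_\alpha \subset \mathrm{Support}\left(\phi_\alpha\right)$ and the supports are locally finite by condition 2. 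Hence every open cover admits a locally finite open refinement, which is exactly paracompactness.

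For the converse I would start from an arbitrary open cover and produce a subordinate smooth partition of unity in three moves. Move one: using paracompactness together with local compactness, pass to a locally finite open refinement $\left\{\mathcal V_\beta\right\}$ in which each $\mathcal V_\beta$ is relatively compact and contained in a single coordinate chart. Move two: apply the shrinking lemma, valid because a paracompact Hausdorff space is normal, to obtain a further locally finite open cover $\left\{\mathcal W_\beta\right\}$ with $\mathfrak{cl}\left(\mathcal W_\beta\right) \subset \mathcal V_\beta$. Move three: on each chart carry a smooth bump function $\psi_\beta$ equal to $1$ on $\mathfrak{cl}\left(\mathcal W_\beta\right)$ and supported in $\mathcal V_\beta$, which exists because $\mathbb R^n$ admits smooth bumps. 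Since $\left\{\mathcal V_\beta\right\}$ is locally finite, the sum $\psi = \sum_\beta \psi_\beta$ is a locally finite sum, hence a well-defined smooth function, and it is strictly positive everywhere because $\left\{\mathcal W_\beta\right\}$ already covers $M$. Then $\phi_\beta = \psi_\beta / \psi$ is the desired smooth partition of unity subordinate to $\left\{\mathcal V_\beta\right\}$, and regrouping over the original index set makes it subordinate to $\left\{\mathcal U_\alpha\right\}$.

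The main obstacle is this second direction, and within it the passage from abstract paracompactness to a locally finite refinement that is simultaneously subordinate to coordinate charts and shrinkable with closures contained; once such a refinement and the Euclidean bump functions are in hand, the verification that the locally finite sum is smooth and positive and that normalization yields the partition-of-unity axioms is routine. In the setting of this article, where $M$ is second-countable, one may simplify the first move by invoking an exhaustion of $M$ by compact sets to build the refinement explicitly, so that paracompactness is witnessed concretely rather than abstractly; indeed, by Corollary \ref{loc_comp_haus_metr_col} such an $M$ is metrizable and therefore automatically paracompact.
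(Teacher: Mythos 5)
The paper offers no internal proof to compare against: Proposition \ref{smooth_part_unity_prop} is imported verbatim from \cite{brickell_clark:diff_m} and used as a black box (e.g.\ in Example \ref{circle_fin} and Lemma \ref{l2_sub_x_inf_lem}). Judged on its own merits, your argument is the standard textbook proof and is essentially correct: the easy direction via the open sets $\{\phi_\alpha>0\}$ is sound (a family of subsets of a locally finite family of supports is locally finite), and the converse via a chart-subordinate relatively compact locally finite refinement, the shrinking lemma for point-finite open covers of a normal space, Euclidean bump functions equal to $1$ on $\mathfrak{cl}\left(\mathcal W_\beta\right)$, the locally finite sum $\psi=\sum_\beta\psi_\beta\geq 1$, and normalization followed by regrouping over the original index set is exactly the classical construction. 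Your closing observation that in this paper's setting paracompactness is automatic --- second countability plus Corollary \ref{loc_comp_haus_metr_col} gives metrizability, hence paracompactness --- is a correct and relevant simplification for how the proposition is actually deployed here.

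One imprecision is worth fixing, though it does not break the proof under this paper's standing conventions. You assert that a manifold is ``locally Euclidean, hence locally compact and Hausdorff''; local Euclidean-ness does \emph{not} imply Hausdorff (the line with two origins is the standard counterexample), so Hausdorffness must be taken as part of the definition of manifold, as it implicitly is throughout this article. The hypothesis is genuinely used twice in your converse direction: paracompact \emph{Hausdorff} is needed to get normality for the shrinking lemma, and Hausdorffness is what makes the compact support of each chart-level bump closed in $M$, so that extension by zero is smooth. (In the forward direction nothing is lost: if every open cover admits a subordinate partition of unity, then applying this to the cover $\left\{M\setminus\{x\},\,M\setminus\{y\}\right\}$ produces a continuous function separating $x$ and $y$, so Hausdorffness comes for free there.)
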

   \begin{empt}
   	If $\widetilde{M}$ is a covering space of Riemannian manifold $M$ then it is possible to give $\widetilde{M}$ a Riemannian structure such that $\pi: \widetilde{M} \to M$ is a local isometry (this metric is called the {\it covering metric}).  See \cite{do_carmo:rg} for details.
   \end{empt}

\subsection{Strong and/or weak extension}
   \paragraph{}In this section I follow to \cite{pedersen:ca_aut}.
   \begin{defn}\cite{pedersen:ca_aut}
    Let $A$ be a $C^*$-algebra.  The {\it strict topology} on $M(A)$ is the topology generated by seminorms $\vertiii{x}_a = \|ax\| + \|xa\|$, ($a\in A$). If $x \in M(A)$  and a sequence of partial sums $\sum_{i=1}^{n}a_i$ ($n = 1,2, ...$), ($a_i \in A$) tends to $x$ in the strict topology then we shall write
   \begin{equation*}
    x = \sum_{i=1}^{\infty}a_i.
    \end{equation*}
    \end{defn}
 \begin{defn}\cite{pedersen:ca_aut} Let $H$ be a Hilbert space. The {\it strong} topology on $B\left(H\right)$ is the locally convex vector space topology associated with the family of seminorms of the form $x \mapsto \|x\xi\|$, $x \in B(H)$, $\xi \in H$.
    \end{defn}
 \begin{defn}\cite{pedersen:ca_aut} Let $H$ be a Hilbert space. The {\it weak} topology on $B\left(H\right)$ is the locally convex vector space topology associated with the family of seminorms of the form $x \mapsto \left|\left(x\xi, \eta\right)\right|$, $x \in B(H)$, $\xi, \eta \in H$.
          \end{defn}
    
    \begin{thm}\label{vN_thm}\cite{pedersen:ca_aut}
 Let $M$ be a $C^*$-subalgebra of $B(H)$, containing the identity operator. The following conditions are equivalent:
 \begin{enumerate}
 \item $M=M''$ where $M''$ is the bicommutant of $M$.
 \item $M$ is weakly closed.
 \item $M$ is strongly closed.
 \end{enumerate}
    \end{thm}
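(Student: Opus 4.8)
The plan is to prove the cycle of implications $(1) \Rightarrow (2) \Rightarrow (3) \Rightarrow (1)$, of which the first two are soft topological facts and the whole weight rests on $(3) \Rightarrow (1)$, the von Neumann density theorem. For $(1) \Rightarrow (2)$ I would first observe that the commutant $S'$ of any subset $S \subset B(H)$ is weakly closed: for fixed $a \in S$ and $\xi, \eta \in H$ the functional $x \mapsto (ax\xi, \eta) - (xa\xi, \eta)$ is weakly continuous, so its zero set is weakly closed, and $S'$ is the intersection of these zero sets over all $a, \xi, \eta$. Applying this to $S = M'$ shows $M'' = (M')'$ is weakly closed, hence so is $M = M''$. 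For $(2) \Rightarrow (3)$ I would note that each weak seminorm $x \mapsto |(x\xi, \eta)|$ is dominated, via Cauchy--Schwarz, by the strong seminorm $x \mapsto \|x\xi\|$; thus the weak topology is coarser than the strong one, and every weakly closed set is strongly closed.

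The core is $(3) \Rightarrow (1)$. The inclusion $M \subseteq M''$ is automatic, so the task is $M'' \subseteq M$. Since $M$ is strongly closed, it suffices to show that each $T \in M''$ lies in the strong closure of $M$, i.e.\ that for every $\xi_1, \dots, \xi_n \in H$ and every $\epsilon > 0$ there is $S \in M$ with $\|(T - S)\xi_i\| < \epsilon$ for all $i$. I would first settle the single-vector case $n = 1$. For a vector $\xi$, set $K = \mathfrak{cl}(M\xi)$ and let $P \in B(H)$ be the orthogonal projection onto $K$. Because $M$ is a $*$-algebra, $K$ is invariant under every $S \in M$ and every $S^*$, so $P$ commutes with $M$, i.e.\ $P \in M'$. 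As $T \in M'' = (M')'$, the operator $T$ commutes with $P$ and hence preserves $K$. Since $1 \in M$, we have $\xi = 1 \cdot \xi \in M\xi \subseteq K$, so $T\xi \in K = \mathfrak{cl}(M\xi)$, and choosing $S \in M$ with $\|T\xi - S\xi\| < \epsilon$ disposes of $n = 1$.

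For general $n$ I would use the amplification trick. Let $H^n$ be the direct sum of $n$ copies of $H$, identify $B(H^n)$ with $\mathbb{M}_n(B(H))$, and embed $M$ diagonally as $\widetilde M = \{S^{(n)} : S \in M\}$, where $S^{(n)} = \mathrm{diag}(S, \dots, S)$. A direct matrix-unit computation shows $\widetilde M' = \mathbb{M}_n(M')$ and consequently $\widetilde M'' = \{T^{(n)} : T \in M''\}$. Thus for $T \in M''$ the amplification $T^{(n)}$ lies in $\widetilde M''$, and applying the already-proved single-vector case to the $*$-algebra $\widetilde M$ on $H^n$ with the one vector $(\xi_1, \dots, \xi_n)$ yields $S \in M$ with $\sum_i \|(T - S)\xi_i\|^2 < \epsilon^2$, which gives the required estimate. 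As $M$ is strongly closed this forces $T \in M$, completing $M'' \subseteq M$.

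The main obstacle is exactly the bookkeeping behind $\widetilde M' = \mathbb{M}_n(M')$ and the resulting identification of $\widetilde M''$ with the diagonal amplifications of $M''$; getting the matrix-unit computation right is what legitimizes the reduction from $n$ vectors to a single vector. The remaining ingredients are either routine observations about the two operator topologies or the geometrically transparent invariant-subspace argument for $n = 1$, neither of which should present difficulty.
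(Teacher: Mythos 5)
The paper does not prove this theorem at all---it is quoted as background directly from Pedersen's book \cite{pedersen:ca_aut}---so there is no in-paper argument to compare against; your proof is the standard von Neumann bicommutant/density argument and it is correct as written. Every step checks out: the commutant of any subset of $B(H)$ is weakly closed (giving $(1)\Rightarrow(2)$), the weak topology is coarser than the strong (giving $(2)\Rightarrow(3)$), and for $(3)\Rightarrow(1)$ the projection $P$ onto $\mathfrak{cl}\left(M\xi\right)$ lies in $M'$ because $M$ is a $*$-algebra, the hypothesis $1\in M$ is used exactly where needed to get $\xi\in\mathfrak{cl}\left(M\xi\right)$, and the amplification identities $\widetilde M'=\mathbb{M}_n\left(M'\right)$ and $\widetilde M''=\left\{T^{(n)} : T\in M''\right\}$ (the latter via the matrix units, which belong to $\mathbb{M}_n\left(M'\right)$ since $1\in M'$) correctly reduce the $n$-vector approximation to the single-vector case.
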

    
    \begin{defn}
   Any $C^*$-algebra $M$ is said to be a {\it von Neumann algebra} or a {\it $W^*$- algebra} if $M$ satisfies to the conditions of the Theorem \ref{vN_thm}.
      \end{defn}

    \begin{defn}\cite{pedersen:ca_aut}
   Let  $B \subset B(H)$ be a $C^*$-algebra and  $B$ acts non-degenerately on $H$. Denote by $B''$ the strong closure of $B$ in $B(H)$. $B''$ is an unital weakly closed $C^*$-algebra . The algebra  $B''$ is said to be the {\it bicommutant},  or the {\it enveloping von Neumann algebra} or  the {\it enveloping $W^*$-algebra}  of $B$. 
    \end{defn}

     \begin{empt}
    Any separable $C^*$-algebra $A$ has a state $\tau$ which induces a faithful {\it GNS} representation  \cite{murphy}. There is a $\mathbb{C}$-valued product on $A$ given by
           \begin{equation*}
          \left(a, b\right)=\tau\left(a^*b\right).
           \end{equation*}
          This product induces a product on $A/\mathcal{I}_\tau$ where $\mathcal{I}_\tau =\left\{a \in A \ | \ \tau(a^*a)=0\right\}$. So $A/\mathcal{I}_\tau$ is a phe-Hilbert space. Let denote by $L^2\left(A, \tau\right)$ the Hilbert  completion of $A/\mathcal{I}_\tau$.  The Hilbert space  $L^2\left(A, \tau\right)$ is a space of a faithful GNS representation of $A$.
       \end{empt}
          
    \begin{empt}\label{l2_mu}
    If $\mathcal X$ is a second-countable locally compact Hausdorff space then $C_0\left(\mathcal X\right)$ is a separable algebra \cite{chun-yen:separability}. Therefore $C_0\left(\mathcal X\right)$ has a state $\tau$ such that associated with $\tau$  {\it GNS} representation  \cite{murphy} is faithful. From \cite{bogachev_measure_v2} it follows that the state $\tau$ can be represented as the following integral
    \begin{equation}\label{hilb_integral}
  \tau\left(a\right)= \int_{\mathcal X}a \ d\mu
    \end{equation}
    where $\mu$ is a positive measure. In analogy with the Riemann integration, one can define the integral of a
    bounded continuous function $a$ on $\mathcal{X}$. There is a $\mathbb{C}$ valued product on $C_0\left(\mathcal X\right)$ given by
    \begin{equation*}
   \left(a, b\right)=\tau\left(a^*b\right)= \int_{\mathcal X}a^*b \ d\mu,
    \end{equation*}
    whence $C_0\left(\mathcal X\right)$ is a phe-Hilbert space. Denote by $L^2\left(C_0\left(\mathcal X\right), \tau\right)$ or $L^2\left(\mathcal X, \mu\right)$ the Hilbert space completion of $C_0\left(\mathcal X\right)$. From  \cite{murphy,takesaki:oa_ii} it follows that $W^*$-enveloping algebra $C_0\left(\mathcal X\right)$ is isomorphic to the algebra $L^{\infty}\left(\mathcal X, \mu\right)$ (of classes of) essentially bounded complex-valued measurable functions. The  $L^{\infty}\left(\mathcal X, \mu\right)$ is a $C^*$-algebra with the pointwise-defined opreations and the essential norm $f \mapsto \|f\|_\infty$. 
    \end{empt}
 \subsection{Nonstandard analysis}\label{nsa_sec} 

 \subsubsection{Riemannian integration}\label{riemann_int_sub_sec}
\paragraph{} Nonstandard analysis operates with actual infinitesimally small parameters. This procedure enables us replace the Riemannian integration with the summation of infinitesimally small elements. Strict explanation of nonstandard analysis and its applications are contained in \cite{hermann:nonstandard}. Here is an informal explanation. Suppose that $f \in C([0,1])$ is a continuous function and we would like to define the integral
\begin{equation*}
\int_{0}^{1}f\left(x\right) dx.
\end{equation*}
Let $Q$ be a countable set given by $Q = \left\{x \in \mathbb{Q} \bigcap [0,1]~\&~\exists m,n \in \mathbb{N}^0 ~~x = \frac{m}{2^n}\right\}$. Then the integral can be represented as a sum of infinitesimally small numbers
\begin{equation*}
\int_{0}^{1}f\left(x\right) dx = \sum_{q \in Q}a^q.
\end{equation*}
Indeed infinitesimally small number $x$ is a sequence $\{x_n \in \mathbb{C}\}_{n \in \mathbb{N}} $ such that $\lim_{n \to \infty}x_n = 0$. Suppose that $a_q$ is represented by the sequence $\{a^q_n \in \mathbb{C}\}_{n \in \mathbb{N}}$ such that
 		\begin{equation*}
 		a^q_n=\left\{
 		\begin{array}{c l}
 		0 & \mathrm{den}(q) < 2^n\\
 		\frac{f(q)}{2^n} & \mathrm{den}(q) \ge 2^n
 		\end{array}\right.
 		\end{equation*}
where $\mathrm{den}$ means denominator of the irreducible fraction, i.e. $\mathrm{den}\left(\frac{x}{y}\right)=y$. It is clear that
\begin{equation*}
\int_{0}^{1}f\left(x\right) dx = \lim_{n \to \infty}\sum_{q \in Q}a^q_n = \sum_{q \in Q}a^q
\end{equation*}
and
\begin{equation*}
 \lim_{n \to \infty} a^q_n = 0.
\end{equation*}
Above equations mean that the Riemannian integral can be represented as a sum of infinitesimally small numbers.

 \subsubsection{Application to infinitely listed covering projections}
  \paragraph{} Let us consider the following sequence of covering projections
   \begin{equation}
   S^1 \xleftarrow[]{f} S^1 ...  \xleftarrow[]{f} S^1 \xleftarrow[]{f} S^1 \xleftarrow[]{f} ...\xleftarrow[]{f} .... \xleftarrow[]{} \mathbb{R}.
   \end{equation}
 Roughly speaking there is a sequence of homomorphisms  
   \begin{equation*}
   C(S^1)\to C(S^1)\to ...  \to C(S^1)  \to C(S^1)  \to ...\to .... \to C_0(\mathbb{R}).
   \end{equation*}
 Indeed there is no a natural homomorphism $C(S^1)   \to C_0(\mathbb{R})$, there is a correspondence $_{C_0(\mathbb{R})} X_{C(S^1)}$. We would like represent functions in $C_0(\mathbb{R})$ by  functions on $S^1$. If $\mathbb{R}\to S^1$ is a covering projection then any $f \in C\left(S^1\right)$ can be represented by a $2\pi$ periodic function $\widetilde{f} \in C_b(\mathbb{R})$ given by Fourier series
 \begin{equation*}
 \widetilde{f}(\xi)=\sum_{m\in \mathbb{Z}}a_{m} e^{i m \xi}.
 \end{equation*}
 If $\mathbb{R}\to\mathcal{X}_n \xrightarrow{n-\text{listed}}S^1$ then any $f_n \in C(\mathcal{X}_n)$ can be represented as a $2 \pi  n$ periodic function $\widetilde{f}_n$ given by
 \begin{equation*}
 \widetilde{f}_n(\xi)=\sum_{m\in \mathbb{Z}}a_{m} e^{\frac{ i m \xi}{n}}.
 \end{equation*}
 
 All finite listed covering projections from the sequence  give following functions
 \begin{equation}\label{four_ser}
 \widetilde{f}(\xi)=\sum_{q\in \mathbb{Q}}a_{q} e^{ i q \xi}.
 \end{equation}
 These functions cannot represent any nontrivial function in $C_0(\mathbb{R})$. But $C_0(\mathbb{R})$ can be represented by a Fourier transform
 \begin{equation*}
 f(\xi) = \int_{\mathbb{R}}\widehat{f}(x)e^{-2\pi ix\xi}
 \end{equation*}
 where $\widehat{f}\in L^1(\mathbb{R})$.
 However the Fourier transform can be regarded as the series \eqref{four_ser} with infinitesimally small coefficients. Let us consider a dependent on $n\in \mathbb{N}$  series  
 \begin{equation*}
 \sum_{q\in \mathbb{Q}}a^n_q e^{i q\xi}
 \end{equation*}
 such that
 \begin{equation*}
 a^n_{q} \to 0; \ \sum_{q\in \mathbb{Q}}a^n_q e^{ i q\xi} \to f(\xi); \text{ as }n\to \infty,
 \end{equation*}
 i.e. $a^n_{q}$ can be regarded as infinitesimally small coefficients. Any $f \in C_0\left(\mathbb{R}\right)$ is a weak (pointwise) limit of the sequence of  periodic functions $\left\{f_n \in   C_b\left(\mathbb{R}\right)\right\}_{n \in \mathbb{N}}$ given by
 \begin{equation*}
 \begin{split}
 f_n(\xi)=f(\xi)-\left(\xi - n\pi\right)\frac{f\left(n\pi\right)-f\left(-n\pi\right)}{2\pi n} -f\left(-n\pi\right),  \ \forall \xi \in \left[-n\pi, n\pi\right];\\
 f_n(\xi + 2\pi n) = f(\xi), \ \forall \xi \in \mathbb{R}.
 \end{split}
 \end{equation*}
 From periodicity of $f_n$ it follows that
 \begin{equation*}
 f_n = \sum_{q\in \mathbb{Q}}a^n_q e^{2\pi i q\xi}
 \end{equation*}
 and it is clear that $a^n_q \to 0$ as $n \to \infty$ for any $q \in \mathbb{Q}$.

\section{Spin manifolds and spectral triples}
 
 \paragraph{}
 This section contains citations of  \cite{hajac:toknotes}. 
 \subsection{Clifford algebras}
 \begin{empt}\label{sec:Cliff-alg}{\it Clifford algebras}. We start with $(V, g)$, where $V \isom \bR^n$ and $g$ is a
 	\emph{nondegenerate symmetric bilinear form}. If $q(v) = g(v,v)$,
 	then $2g(u, v) = q(u + v) - q(u) - q(v)$. Thus $g$ is determined by 
 	the corresponding ``quadratic form''~$q$.

 	\begin{defn}\cite{hajac:toknotes}
 		\label{df:Cliff-alg}
 		The \emph{\index{Clifford!algebra} Clifford algebra} $\Cl(V, g)$ is an algebra (over~$\bR$)
 		generated by the vectors $v\in V$ subject to the relations
 		$uv + vu = 2g(u, v)1$ for $u, v\in V$.
 	\end{defn}
 	
 	The existence of this algebra can be seen in two ways. First of all, 
 	let $\sT(V)$ be the tensor algebra on $V$, that is, 
 	$\sT(V) := \bigoplus_{k=0}^\infty V^{\ox n}$. Then
 	\begin{equation}
 	\Cl(V, g) := \sT(V)/\operatorname{Ideal}\langle
 	u \ox v + v \ox u - 2g(u,v)\,1 : u,v \in V\rangle.
 	\label{eq:Cliff-defn}
 	\end{equation}
 	Since the relations are \textit{not} homogeneous, the $\bZ$-grading of
 	$\sT(V)$ is lost, only a $\bZ_2$-grading remains:
 	\[
 	\Cl(V, g) = \Cl^0(V, g) \oplus \Cl^1(V, g).
 	\]
 	
 	The second option is to define $\Cl(V, g)$ as a subalgebra of
 	$\End_\bR(\La^\cz V)$ generated by all expressions
 	$c(v) = \eps(v) + \iota(v)$ for $v\in V$, where
 	\begin{align*}
 	\eps(v)\: u_1 \wyw u_k & \mapsto v \w u_1 \wyw u_k
 	\\
 	\iota(v)\: u_1 \wyw u_k & \mapsto \sum_{j=1}^k (-1)^{j-1} g(v,u_j)
 	u_1 \wyw \widehat{u_j} \wyw u_k.
 	\end{align*}
 	Note that $\eps(v)^2 = 0$, $\iota(v)^2 = 0$, and
 	$\eps(v)\iota(u) + \iota(u)\eps(v) = g(v,u)\,1$. Thus
 	\begin{align*}
 	c(v)^2 &= g(v, v)\,1  \words{for all}  v \in V,
 	\\
 	c(u)c(v) + c(v)c(u) &= 2g(u, v)\,1  \words{for all}  u,v \in V.
 	\end{align*}
 	Thus these operators on $\La^\cz V$ do provide a representation of the
 	algebra \eqref{eq:Cliff-defn}.
 	
 	Dimension count: suppose $\{e_1, \dots, e_n\}$ is an orthonormal basis
 	for $(V, g)$, i.e., $g(e_k, e_k) = \pm 1$ and $g(e_j, e_k) = 0$ for
 	$j \neq k$. Then the $c(e_j)$ anticommute and thus a basis for
 	$\Cl(V, g)$ is
 	$\{c(e_{k_1})\dots c(e_{k_r}) : 1 \leq k_1 <\cdots< k_r \leq n\}$,
 	labelled by $K = \{k_1,\dots,k_r\} \subseteq \{1,\dots,n\}$. Indeed,
 	\[
 	c(e_{k_1})\dots c(e_{k_r})\: 1 \mapsto e_{k_1} \wyw e_{k_r}
 	\equiv e_K \in \La^\cz V
 	\]
 	and these are linearly independent. Thus the dimension of the 
 	subalgebra of $\End_\bR(\La^\cz V)$ generated by all $c(v)$ is just
 	$\dim\La^\cz V = 2^n$. Now, a moment's thought shows that in the 
 	abstract presentation \eqref{eq:Cliff-defn}, the algebra $\Cl(V, g)$ 
 	is generated as a vector space by the $2^n$ products
 	$e_{k_1} e_{k_2}\dots e_{k_r}$, and these are linearly independent
 	since the operators $c(e_{k_1}) \dots c(e_{k_r})$ are linearly
 	independent in $\End_\bR(\La^\cz V)$. Therefore, this representation
 	of $\Cl(V, g)$ is faithful, and $\dim\Cl(V, g) = 2^n$.

 	The so-called ``symbol map'':
 	$$
 	\sg : a \mapsto a(1) : \Cl(V, g) \to \La^\cz V
 	$$
 	is inverted by a ``quantization map'': 
 	\begin{equation}
 	Q : u_1 \w u_2 \wyw u_r \longmapsto \frac{1}{r!} \sum_{\tau\in S_r}
 	(-1)^\tau \, c(u_{\tau(1)}) c(u_{\tau(2)}) \dots c(u_{\tau(r)}).
 	\label{eq:quantn-map}
 	\end{equation}
 	To see that it is an inverse to $\sg$, one only needs to check it
 	on the products of elements of an orthonormal basis of $(V, g)$. From now, we write $uv$ instead of $c(u)c(v)$, etc., in $\Cl(V, g)$.
 \end{empt}
 \begin{prop}\label{cl_trace_prop}\cite{hajac:toknotes}
There is an unique trace $\tau:\Cl(V, g) \to \C$ such that $\tau(1)=1$  and $\tau(a)=0$ for $a$ odd.
\end{prop}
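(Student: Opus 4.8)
The plan is to work in the basis $\{e_K : K \subseteq \{1,\dots,n\}\}$ described in \ref{sec:Cliff-alg}, where $e_K = e_{k_1}\cdots e_{k_r}$ for $K = \{k_1 < \cdots < k_r\}$ and $e_\emptyset = 1$. Since any trace is a $\C$-linear functional, it is determined by its values $\tau(e_K)$, so both existence and uniqueness reduce to statements about these values. Two elementary facts will drive everything. First, from the relations $e_i e_j = -e_j e_i$ (for $i\ne j$) and $e_i^2 = g(e_i,e_i) = \pm 1$ one obtains a product rule $e_K e_L = \lambda_{K,L}\, e_{K\triangle L}$ for a nonzero scalar $\lambda_{K,L}$, where $K \triangle L$ is the symmetric difference. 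Second, a basis element $e_K$ has nonzero scalar part (coefficient of $1 = e_\emptyset$) if and only if $K = \emptyset$. Note also that $e_K$ is odd exactly when $|K|$ is odd.

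For uniqueness, I would first treat the case $|K|$ odd: then $e_K$ is odd and the hypothesis forces $\tau(e_K) = 0$. Next suppose $K$ is nonempty with $|K| = r$ even. Peeling off the first factor, write $e_K = e_{k_1}\,b$ with $b = e_{k_2}\cdots e_{k_r}$. The trace property gives $\tau(e_K) = \tau(e_{k_1} b) = \tau(b\, e_{k_1})$, and since $e_{k_1}$ anticommutes with each of the $r-1$ distinct factors of $b$ we have $b\,e_{k_1} = (-1)^{r-1} e_{k_1} b = -\,e_K$, using that $r$ is even. Hence $\tau(e_K) = -\tau(e_K)$, and working over $\C$ this yields $\tau(e_K) = 0$. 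Together with $\tau(1) = 1$ this shows that $\tau$ is completely determined on the basis, so a trace with the stated properties is unique.

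For existence, I would define $\tau(a)$ to be the scalar part of $a$, that is, $\tau(e_\emptyset) = 1$ and $\tau(e_K) = 0$ for $K \ne \emptyset$, extended $\C$-linearly. Then $\tau(1) = 1$ by construction, and $\tau$ annihilates every odd element since those are spanned by $e_K$ with $|K|$ odd, all of which have $K\ne\emptyset$. It remains to check $\tau(ab) = \tau(ba)$, and by bilinearity it suffices to verify $\tau(e_K e_L) = \tau(e_L e_K)$. By the product rule both $e_K e_L$ and $e_L e_K$ are scalar multiples of $e_{K\triangle L}$, so by the scalar-part criterion both traces vanish unless $K \triangle L = \emptyset$, i.e.\ $K = L$; and when $K = L$ the two expressions coincide. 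Thus $\tau$ is a trace with the required properties.

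The only genuinely computational point is the product rule $e_K e_L = \lambda_{K,L} e_{K\triangle L}$ together with the control of the sign $(-1)^{r-1}$ in the uniqueness step; both are routine bookkeeping with the anticommutation relations and the scalars $g(e_i,e_i) = \pm 1$, so I do not anticipate a real obstacle. The subtle point worth emphasizing is that vanishing on odd elements is essential input and \emph{not} a consequence of the trace property alone: for odd $r$ the cyclic manipulation produces the sign $(-1)^{r-1} = +1$ and yields no information, which is precisely why the case of odd $|K|$ must be handled by hypothesis.
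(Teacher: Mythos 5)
Your proof is correct and complete. The paper itself gives no proof of this proposition --- it is stated with a citation to the lecture notes of \cite{hajac:toknotes} --- so there is no in-paper argument to compare against; but your argument is precisely the standard one: existence via the ``scalar part'' functional $e_\emptyset \mapsto 1$, $e_K \mapsto 0$ for $K \neq \emptyset$, with the trace property reduced by bilinearity to $\tau(e_K e_L) = \tau(e_L e_K)$ and settled by the product rule $e_K e_L = \lambda_{K,L}\, e_{K \triangle L}$; and uniqueness via the cyclic-anticommutation computation $\tau(e_K) = \tau(b\,e_{k_1}) = (-1)^{r-1}\tau(e_K) = -\tau(e_K)$ for $|K| = r \geq 2$ even, with the odd case supplied by hypothesis. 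Your closing remark is also correct and worth having: for $|K|$ odd the sign is $(-1)^{r-1} = +1$ and the trace identity gives nothing, and indeed the hypothesis $\tau(a) = 0$ for $a$ odd is indispensable --- already for $n = 1$ the algebra $\Cl(V,g)$ is commutative, so \emph{every} normalized linear functional is a trace, and uniqueness fails without the odd-vanishing condition.
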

There is a useful universality property of \index{Clifford!algebra} Clifford algebras, which is an immediate
consequence of their definition.

\begin{lem}
	\label{lm:Cliff-univ}\cite{hajac:toknotes}
	Any $\bR$-linear map $f\: V \to A$ (an $\bR$-algebra) that satisfies
	\[
	f(v)^2 = g(v,v)\,1_A \words{for all} v \in V
	\]
	extends to an unique unital $\bR$-algebra homomorphism
	$\tilde f\: \Cl(V, g) \to A$.
\end{lem}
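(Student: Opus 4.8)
The plan is to lift $f$ to the tensor algebra, verify that the lift annihilates the defining ideal of \eqref{eq:Cliff-defn}, and then read off both existence and uniqueness from the quotient description. First I would invoke the universal property of the tensor algebra $\sT(V) = \bigoplus_{k=0}^\infty V^{\ox k}$: since $\sT(V)$ is the free unital $\bR$-algebra on the vector space $V$, the $\bR$-linear map $f\: V \to A$ extends uniquely to a unital $\bR$-algebra homomorphism $F\: \sT(V) \to A$, determined on homogeneous elements by $F(v_1 \ox \cdots \ox v_k) = f(v_1)\cdots f(v_k)$ together with $F(1) = 1_A$. This is the free/forgetful adjunction and requires no computation.

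Next I would polarize the hypothesis. Writing $q(v) = g(v,v)$ and using $f(v)^2 = g(v,v)\,1_A$, I expand $f(u+v)^2 = g(u+v, u+v)\,1_A$ and subtract $f(u)^2$ and $f(v)^2$; since $g$ is symmetric this yields
\[
f(u)f(v) + f(v)f(u) = 2g(u,v)\,1_A \words{for all} u,v \in V.
\]
Applying $F$ to a generic generator of the ideal $\mathcal{I}$ appearing in \eqref{eq:Cliff-defn} then gives $F(u \ox v + v \ox u - 2g(u,v)\,1) = f(u)f(v) + f(v)f(u) - 2g(u,v)\,1_A = 0$. Because $F$ is an algebra homomorphism and $\mathcal{I}$ is the two-sided ideal \emph{generated} by these elements, it follows that $F(\mathcal{I}) = 0$.

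Therefore $F$ factors through the quotient $\Cl(V,g) = \sT(V)/\mathcal{I}$, inducing a unital $\bR$-algebra homomorphism $\tilde f\: \Cl(V,g) \to A$ by the well-defined rule $\tilde f(a + \mathcal{I}) = F(a)$, and by construction $\tilde f(v) = f(v)$ for $v \in V$. For uniqueness I would observe that, by the basis count established in \ref{sec:Cliff-alg}, the monomials $e_{k_1}\cdots e_{k_r}$ span $\Cl(V,g)$, so the image of $V$ generates $\Cl(V,g)$ as a unital algebra; hence any two unital homomorphisms restricting to $f$ on $V$ agree on a generating set and thus coincide.

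I do not anticipate a genuine obstacle, as this is the standard ``factor through the quotient'' argument. The only step demanding care is the polarization, where one must confirm that the purely quadratic hypothesis $f(v)^2 = g(v,v)\,1_A$ really does encode the full bilinear relation $f(u)f(v) + f(v)f(u) = 2g(u,v)\,1_A$; this recovery of the symmetric bilinear form from its quadratic form is exactly the identity $2g(u,v) = q(u+v) - q(u) - q(v)$ recorded in \ref{sec:Cliff-alg}, valid since $\operatorname{char}\bR \neq 2$.
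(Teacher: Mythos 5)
Your proof is correct and is precisely the argument the paper alludes to when it calls the lemma ``an immediate consequence of their definition'': lift $f$ to $\sT(V)$ by freeness, polarize $f(v)^2 = g(v,v)\,1_A$ to kill the generators of the ideal in \eqref{eq:Cliff-defn}, factor through the quotient, and get uniqueness because $V$ generates $\Cl(V,g)$ as a unital algebra. The polarization step you flag is handled exactly as in \ref{sec:Cliff-alg} via $2g(u,v) = q(u+v) - q(u) - q(v)$, so there is nothing missing.
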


Here are a few applications of universality that yield several
useful operations on the \index{Clifford!algebra} Clifford algebra.

\begin{enumerate}
	\item
	\textit{Grading}:
	take $A = \Cl(V, g)$ itself; the linear map $v \mapsto -v$ on~$V$
	extends to an \textit{automorphism} $\chi \in \Aut(\Cl(V,g))$
	satisfying $\chi^2 = \id_A$, given by
	\[
	\chi(v_1 \dots v_r) := (-1)^r \, v_1 \dots v_r.
	\]
	This operator gives the $\bZ_2$-grading
	\[
	\Cl(V, g) =: \Cl^0(V, g) \oplus \Cl^1(V, g).
	\]
	\item
	\textit{Reversal}:
	take $A = \Cl(V, g)^\opp$, the opposite algebra. Then the map
	$v \mapsto v$, considered as the inclusion $V \hookto A$, extends to
	an \textit{antiautomorphism} $a \mapsto a^!$ of $\Cl(V,g)$, given by
	$(v_1 v_2 \dots v_r)^! := v_r \dots v_2 v_1$.
	\item
	\textit{Complex conjugation}:
	the \emph{complexification} of $\Cl(V,g)$ is $\Cl(V, g) \ox_\bR \bC$,
	which is isomorphic to $\Cl(V^\bC, g^\bC)$ as a $\bC$-algebra. Now
	take $A$ to be $\Cl(V, g) \ox_\bR \bC$ and define
	$f\: v \mapsto \bar v : V^\bC \to V^\bC \hookto A$ (a real-linear map).
	It extends to an \textit{antilinear automorphism} of $A$. 
	\item
	\textit{Adjoint}:
	Also, $a^* := (\bar a)^!$ is an \textit{antilinear involution} on
	$\Cl(V, g) \ox_\bR \bC$.
	
	\item
	\textit{Charge conjugation}:
	$\ka(a) := \chi(\bar a) : v_1 \dots v_r
	\mapsto (-1)^r \bar v_1 \dots \bar v_r$ is an antilinear automorphism
	of $\Cl(V, g) \ox_\bR \bC$.
\end{enumerate}
{}From now on, $n = 2m$ for $n$ even, $n = 2m + 1$ for $n$ odd. We
take $\bCl(V) \isom \Cl(V,g) \ox_\bR \bC$ with $g$ always positive
definite. Suppose $\{e_1,\dots,e_n\}$ is an \textit{oriented} orthonormal
basis for $(V,g)$. If $e_k' = \sum_{j=1}^n h_{jk}\,e_j$ with
$H^t H = 1_n$, then $e_1'\dots e_n' = (\det H)\,e_1\dots e_n$, and
$\det H = \pm 1$. We restrict to the oriented case $\det H = +1$, so
the expression $e_1 e_2\dots e_n$ is independent of
$\{e_1,e_2,\dots,e_n\}$. Thus
\[
\ga := (-i)^m e_1 e_2\dots e_n
\]
is well-defined in $\bCl(V)$. Now
\[
\ga^* = i^m e_n\dots e_2 e_1 = (-i)^m(-1)^m(-1)^{n(n-1)/2}e_1 e_2\dots e_n = 
(-1)^m(-1)^{n(n-1)/2}\ga,
\]
and
$$
\frac{n(n-1)}{2} = \begin{cases}
m(2m - 1), & \text{$n$ even} \\ (2m + 1)m, & \text{$n$ odd}
\end{cases} \Biggr\} \equiv m \bmod 2,
$$
so $\ga^* = \ga$. But also
$\ga^*\ga = (e_n\dots e_2 e_1)(e_1 e_2\dots e_n) = (+1)^n = 1$, so
$\ga$ is ``unitary''. Hence $\ga^2 = 1$, so $\frac{1+\ga}{2}$,
$\frac{1-\ga}{2}$ are ``orthogonal projectors'' in $\bCl(V)$.

Since $\ga e_j = (-1)^{n-1} e_j\ga$, we get that if $n$ is odd, then
$\ga$ is \textit{central} in $\bCl(V)$; and for $n$ even, $\ga$
anticommutes with $V$, but is central in the even subalgebra
$\Cl^0(V)$. Moreover, when $n$ is even and $v \in V$, then
$\ga v \ga = -v$, so that $\ga(\cdot)\ga = \chi \in \Aut(\bCl(V))$.

\begin{prop}
	\label{pr:Cliff-centre}\cite{hajac:toknotes}
	The centre of $\bCl(V)$ is $\bC 1$ if $n$ is even; and it is
	$\bC 1 \oplus \bC\ga$ if $n$ is odd.
\end{prop}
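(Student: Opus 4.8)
The plan is to reduce centrality to commutation with the generators and then exploit the linear independence of the monomial basis $\{e_K\}$ established in the dimension count of \ref{sec:Cliff-alg}. Since the vectors $e_1,\dots,e_n$ generate $\bCl(V)$ as an algebra, an element $a$ lies in the centre if and only if $e_j a = a e_j$ for every $j$, equivalently (using $e_j^2 = g(e_j,e_j)\,1 = 1$, so $e_j^{-1} = e_j$) if and only if the conjugate $e_j\, a\, e_j$ equals $a$ for all $j$. So the whole argument comes down to understanding how conjugation by a single generator acts on the basis monomials $e_K = e_{k_1}\cdots e_{k_r}$, where $K = \{k_1 < \cdots < k_r\}$.

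First I would record the key sign computation. Using only the anticommutation $e_j e_k = -e_k e_j$ for $j \neq k$ together with $e_j^2 = 1$, a direct count of transpositions gives
\begin{equation*}
e_j\, e_K\, e_j = (-1)^{|K|}\, e_K \quad\text{if } j \notin K, \qquad e_j\, e_K\, e_j = (-1)^{|K|-1}\, e_K \quad\text{if } j \in K.
\end{equation*}
In either case conjugation by $e_j$ sends the basis monomial $e_K$ to a scalar multiple of itself, so for $a = \sum_K c_K e_K$ one has $e_j\, a\, e_j = \sum_K c_K\,\varepsilon_{j,K}\, e_K$ with $\varepsilon_{j,K} = \pm 1$ the signs above. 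By linear independence of the $e_K$, centrality of $a$ is then equivalent to the condition that for every $K$ in the support of $a$ one has $\varepsilon_{j,K} = +1$ for all $j$.

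Next I would run the case analysis on a single surviving $K$ of size $r = |K|$. For the indices $j \in K$ the requirement is $(-1)^{r-1} = 1$, i.e. $r$ odd; for the indices $j \notin K$ it is $(-1)^{r} = 1$, i.e. $r$ even. If $1 \le r \le n-1$ both kinds of index occur, forcing $r$ to be simultaneously odd and even, which is impossible; hence the only admissible subsets are $K = \varnothing$ and $K = \{1,\dots,n\}$. The empty set always qualifies and yields the scalars $\bC 1$. The full set $K = \{1,\dots,n\}$ has $r = n$ and possesses only indices $j \in K$, so it qualifies precisely when $n$ is odd, and then $e_1\cdots e_n$ is central. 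Since $\ga = (-i)^m e_1\cdots e_n$ differs from $e_1\cdots e_n$ by the nonzero scalar $(-i)^m$, this is exactly the statement that the centre is $\bC 1$ for $n$ even and $\bC 1 \oplus \bC\ga$ for $n$ odd; note that for $n$ odd the inclusion $\bC\ga \subseteq \text{centre}$ is already visible from the relation $\ga e_j = (-1)^{n-1} e_j \ga$ recorded above, so the real content is the reverse inclusion supplied by the monomial count.

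The main obstacle is not conceptual but the bookkeeping of signs in the conjugation formula: one must correctly track the $|K|$ (respectively $|K|-1$) transpositions needed to carry $e_j$ through $e_K$ and back, and be sure that conjugation genuinely fixes $e_K$ up to a sign rather than mixing basis elements, since it is precisely this that makes the linear-independence step legitimate. Once that formula is pinned down, the parity dichotomy and the identification with $\ga$ are immediate.
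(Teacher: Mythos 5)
Your proof is correct, and it is essentially the canonical argument: the paper itself states Proposition \ref{pr:Cliff-centre} only as a citation to \cite{hajac:toknotes} (the surrounding text supplies just the easy inclusion, via $\ga e_j = (-1)^{n-1}e_j\ga$), and your argument matches the proof in that source. Your reduction of centrality to commutation with the generators is legitimate since $V$ generates $\bCl(V)$ and $e_j^{-1}=e_j$ for positive definite $g$; your conjugation signs $e_j e_K e_j = (-1)^{|K|}e_K$ for $j\notin K$ and $(-1)^{|K|-1}e_K$ for $j\in K$ check out (for $j=k_s\in K$ the two passes contribute $(-1)^{s-1}(-1)^{|K|-s}=(-1)^{|K|-1}$); and the parity dichotomy, combined with linear independence of the $e_K$ established in \ref{sec:Cliff-alg}, correctly leaves only $K=\varnothing$ in general and $K=\{1,\dots,n\}$ exactly when $n$ is odd, identifying the centre as $\bC 1$ or $\bC 1\oplus\bC\ga$ respectively.
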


	
	
	

 \subsection{\index{Clifford!algebra} Clifford algebra bundles}
 
\paragraph*{} A real vector bundle $E \to M$ is a \emph{Euclidean
 	bundle} if, with $\sE = \Ga(M, E^\bC)$, there is a symmetric
 $A$-bilinear form $g \: \sE \x \sE \to A = C(M)$ such that
 \begin{enumerate}
 	\item $g(s,t) \in C(M;\bR)$ when $s,t$ lie in $\Ga(M, E)$
 	---the real sections;
 	\item $g(s,s) \geq 0$ for $s \in \Ga(M, E)$, with
 	$g(s,s) = 0 \implies s = 0$.
 \end{enumerate}
 By defining $\pairing{s}{t} := g(s^*,t)$, we get a
 \emph{hermitian pairing} with values in $A$:
 \begin{itemize}
 	\item $\pairing{s}{t}$ is $A$-linear in $t$;
 	\item $\pairing{t}{s} = \overline{\pairing{s}{t}} \in A$;
 	\item $\pairing{s}{s} \geq  0$, with
 	$\pairing{s}{s} = 0 \implies s = 0$ in $\sE$;
 	\item $\pairing{s}{ta} = \pairing{s}{t}\,a$ for all $s,t \in \sE$
 	and $a \in A$.
 \end{itemize}
 These properties make $\sE$ a (right) $C^*$-module over $A$, with
 $C^*$-norm given by
 \[
 \|s\|_{\sE} := \sqrt{\|\pairing{s}{s}\|_A}  \word{for} s \in \sE.
 \]
 For each $x \in M$, we can form
 $\bCl(E_x) := \Cl(E_x,g_x) \ox_\bR \bC$. Using the linear isomorphisms
 $\sg_x \: \bCl(E_x) \to (\La^\cz E_x)^\bC$, we see that these are
 fibres of a vector bundle $\bCl(E) \to M$, isomorphic to
 $(\La^\cz E)^\bC \to M$ as $\bC$-vector bundles (but not as algebras!).
 Under $(\ka\la)(x) := \ka(x)\la(x)$, the sections of $\bCl(E)$ also
 form an algebra $\Ga(M, \bCl(E))$. It has an $A$-valued pairing
 \[
 \pairing{\ka}{\la} \: x \mapsto \tau(\ka(x)^* \la(x)).
 \]
 By defining $\|\ka\| := \sup_{x\in M} \|\ka(x)\|_{\bCl(E_x)}$, this
 becomes a C*-algebra.

 \begin{defn}\cite{hajac:toknotes}
 	A \emph{\index{Clifford!module} Clifford module} over $(M,g)$ is a finitely generated
 	projective $A$-module, with $A = C(M)$, of the form $\sE = \Ga(M, E)$
 	for $E$ a (complexified) Euclidean bundle, together with an $A$-linear
 	homomorphism $c \: B \to \Ga(M, \End E)$, where
 	$B := \Ga(M, \bCl(T^*M))$ is the \index{Clifford!algebra} Clifford algebra bundle generated by
 	$\sA^1(M)$, such that
 	\[
 	\pairing{s}{c(\ka)t} = \pairing{c(\ka^*)s}{t}
 	\words{for all} s,t \in \sE,\ \ka \in B.
 	\]
 \end{defn}


 \subsection{Riemannian geometry}
 \paragraph*{}
 Let $M$ be a \textit{compact} $\Coo$ manifold \textit{without
 	boundary}, of dimension $n$. Compactness is not crucial for some of
 our arguments (although it may be for others), but is very convenient,
 since it means that the algebras $C(M)$ and $\Coo(M)$ are
 \textit{unital}: the unit is the constant function~$1$. For
 convenience we use the function algebra $A = C(M)$ ---a commutative
 $C^*$-algebra--- at the beginning. We will change to $\sA = \Coo(M)$
 later, when the differential structure becomes important.
 Any $A$-module (or more precisely, a ``symmetric $A$-bimodule'') which
 is \textit{finitely generated and projective} is of the form
 $\sE = \Ga(M, E)$ for $E \to M$ a (complex) vector bundle. Two
 important cases are
 \begin{align*}
 \gX(M) &= \Ga(M, T_\bC M) = \text{ (continuous) vector fields on } M;
 \\
 \sA^1(M) &= \Ga(M, T^*_\bC M) = \text{ (continuous) 1-forms on } M.
 \end{align*}
 These are \textit{dual} to each other:
 $\sA^1(M) \isom \Hom_A(\gX(M), A)$, where $\Hom_A$ means ``$A$-module
 maps'' commuting with the action of~$A$ (by multiplication).

 \begin{defn}
 	\label{df:Riem-metric}\cite{hajac:toknotes}
 	A \emph{Riemannian metric} on $M$ is a symmetric bilinear form
 	\[
 	g \: \gX(M) \x \gX(M) \to C(M)
 	\]
 	such that:
 	\begin{enumerate}
 		\item $g(X, Y)$ is a real function if $X, Y$ are real vector fields;
 		\item $g$ is \emph{$C(M)$-bilinear}:\quad
 		$g(fX, Y) = g(X, fY) = f \, g(X, Y)$, \ if $f \in C(M)$;
 		\item $g(X, X) \geq $ for $X$ real, with $g(X, X) = 0\implies X = 0$
 		in $\gX(M)$.
 	\end{enumerate}
 	The second condition entails that $g$ is given by a continuous family
 	of symmetric bilinear maps $g_x \: T_x^\bC M \x T_x^\bC M \to \bC$ or
 	$g_x \: T_x M \x T_x M \to \bR$; the latter version is positive
 	definite.
 \end{defn}

 
 

 \subsection{The existence of $\Spin^{\mathbf{c}}$ structures}
 
 Suppose that $n = 2m + 1 = \dim M$ is odd. If $n$ is odd then the fibres of $B$ are
 semisimple but not simple:
 $\bCl(T_x^* M) \isom M_{2^m}(\bC) \oplus M_{2^m}(\bC)$ and we shall
 restrict to the even subalgebras, $\bCl^0(T_x^*M) \isom M_{2^m}(\bC)$,
 by demanding \textit{that $c(\ga)$ act as the identity in all cases}.
 Then we may adopt the convention that
 \[
 c(\ka) := c(\ka\ga) \quad\text{when $\ka$ is odd}.
 \]
 Notice here that $\ka\ga$ is even; and $c(\ga) = c(\ga^2) = +1$ is
 required for consistency of this rule.
 
 We take $A = C(M)$, but for $B$ we now take
 \begin{equation}
 B := \begin{cases}
 \Ga(M, \bCl(T^*M)),   & \text{if $\dim M$ is even}, \\
 \Ga(M, \bCl^0(T^*M)), & \text{if $\dim M$ is odd}.  \end{cases}
 \label{eq:full-even}
 \end{equation} 
 The fibres of these bundles are \textit{central simple algebras} of
 finite dimension $2^{2m}$ in all cases.
 
 We classify the algebras $B$ as follows. Taking
 \[
 \ul{B} := \begin{cases}
 \set{B_x = \bCl(T_x^* M) : x \in M},   & \text{if $\dim M$ is even} \\
 \set{B_x = \bCl^0(T_x^* M) : x \in M}, & \text{if $\dim M$ is odd}
 \end{cases}
 \]
 to be the collection of fibres, we can say that $\ul{B}$ is a
 ``continuous field of simple matrix algebras'', which moreover is
 \textit{locally trivial}. There is an invariant
 \[
 \delta(\ul{B}) \in \rH^3(M;\bZ)
 \]
 for such fields, found by Dixmier and Douady.  
 	The {\it Dixmier--Douady} class $\delta(\ul{B})$ is described in \cite{hajac:toknotes}. 
 Now we would like to find a bundle $S \to M$ such that there are natural isomorphisms
 \begin{equation}\label{spinor_morita}
 \End_A\left(\Ga\left(M,S\right)\right) \isom B;~ \End\left(\Ga\left(M,S\right)\right)_B \isom A.
 \end{equation}
 If $x \in M$, take $p_x \in B_x$ to be a \textit{projector of rank 
 	one}, that is,
 \[
 p_x = p_x^* = p_x^2  \words{and}  \tr p_x = 1.
 \]
 On the left ideal $S_x := B_x p_x$, we introduce a hermitian scalar
 product
 \begin{equation}
 \braket{a_x p_x}{b_x p_x} := \tr(p_x a_x^* b_x p_x ).
 \label{eq:local-pairing}
 \end{equation}
 Notice that the recipe
 \[
 \ketbra{a_x p_x}{b_x p_x} : c_x p_x \mapsto
 (a_x p_x)(b_x p_x)^*(c_x p_x) = (a_x p_x b_x^*)(c_x p_x)
 \]
 identifies $\sL(S_x)$ ---or $\sK(S_x)$ in the infinite-dimensional
 case--- with $B_x$, since the two-sided ideal
 $\spn\set{a_x p_x b_x^* : a_x, b_x \in B_x}$ equals $B_x$ by
 simplicity.

 \vspace{6pt}
 
Following proposition gives necessary and sufficient conductions  of the existence of the \textit{globally} defined  the Hilbert spaces $S_x \isom \bC^{2^m}$  such that
 $\sL(S_x) \isom B_x$, for any $x \in M$.

 

 \begin{prop}\cite{hajac:toknotes}
 	\label{pr:spinc-module}
 	Let $(M,g)$ be a compact Riemannian manifold. With $A = C(M)$ and $B$
 	the algebra of Clifford sections given by~\eqref{eq:full-even}, the
 	Dixmier--Douady class $\delta(\ul{B})$ vanishes, i.e., $\delta(\ul{B}) = 0$, if and
 	only if there is a finitely generated projective $A$-module $\sS$,
 	carrying a selfadjoint action of $B$ by $A$-linear operators, such
 	that $\End_A(S) \isom B$.
 \end{prop}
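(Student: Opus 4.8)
The plan is to translate the algebraic assertion into the geometry of the locally trivial field of matrix algebras $\ul{B}$ and then invoke the obstruction theory that defines the Dixmier--Douady class. By the Serre--Swan theorem every finitely generated projective $A$-module is of the form $\sS = \Ga(M, S)$ for a complex vector bundle $S \to M$, and under this identification $\End_A(\sS) \isom \Ga(M, \End S)$. Since each fibre $B_x$ is a central simple algebra isomorphic to $M_{2^m}(\bC)$, an isomorphism $\End_A(\sS) \isom B$ forces $S$ to have rank $2^m$ and amounts fibrewise to an algebra isomorphism $\End(S_x) \isom B_x$; the required self-adjointness of the $B$-action is precisely the compatibility of this isomorphism with the $C^*$-structures, i.e. with the pairing \eqref{eq:local-pairing}. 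Thus the proposition reduces to: the bundle of algebras $\ul{B}$ is isomorphic to $\End(S)$ for some rank-$2^m$ vector bundle $S$ if and only if $\delta(\ul{B}) = 0$.

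For the ``only if'' direction I would argue with transition functions. The automorphism group of $M_{2^m}(\bC)$ is $\mathrm{PU}(2^m) = \mathrm{U}(2^m)/\mathrm{U}(1)$, so the locally trivial field $\ul{B}$ is classified by a Čech $1$-cocycle valued in $\mathrm{PU}(2^m)$, and by construction the Dixmier--Douady class is its image under the connecting map $\delta\colon \cechH^1(M; \underline{\mathrm{PU}(2^m)}) \to \cechH^2(M; \underline{\mathrm{U}(1)}) \isom \rH^3(M; \bZ)$ of the central extension $1 \to \mathrm{U}(1) \to \mathrm{U}(2^m) \to \mathrm{PU}(2^m) \to 1$. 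If $\ul{B} \isom \End(S)$, the $\mathrm{PU}(2^m)$-cocycle is the projectivization of the genuine $\mathrm{U}(2^m)$-cocycle of transition functions of $S$; hence it already lifts to $\mathrm{U}(2^m)$, the central $2$-cocycle measuring $\delta$ is trivial, and $\delta(\ul{B}) = 0$ by exactness.

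The substance is the converse. Starting from local triviality, I would cover $M$ by open sets $\mathcal{U}_\iota$ on which $\ul{B}|_{\mathcal{U}_\iota}$ is trivial and choose over each a continuous field of rank-one projectors $p_\iota = p_\iota^* = p_\iota^2$ with $\tr p_\iota = 1$; over a trivialization $\ul{B}|_{\mathcal{U}_\iota}\isom \mathcal{U}_\iota\times M_{2^m}(\bC)$ one may simply take the constant projector $\mathrm{diag}(1,0,\dots,0)$. Following the fibrewise recipe preceding the statement, the left ideals $S^\iota := \ul{B}\,p_\iota$ equipped with the pairing \eqref{eq:local-pairing} assemble into local rank-$2^m$ Hilbert bundles with $\sL(S^\iota) \isom \ul{B}|_{\mathcal{U}_\iota}$. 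On each overlap $\mathcal{U}_\iota \cap \mathcal{U}_{\iota'}$ simplicity of the fibres provides a unitary $g_{\iota\iota'}$, unique up to $\mathrm{U}(1)$, identifying $S^\iota$ with $S^{\iota'}$; the induced $\mathrm{PU}(2^m)$-cocycle is exactly the one classifying $\ul{B}$. The hypothesis $\delta(\ul{B}) = 0$ says precisely that the $g_{\iota\iota'}$ can be rescaled by $\mathrm{U}(1)$-valued functions so as to satisfy the strict cocycle identity $g_{\iota\iota'}g_{\iota'\iota''} = g_{\iota\iota''}$, and the resulting $\mathrm{U}(2^m)$-cocycle glues the $S^\iota$ into a global vector bundle $S$ with $\End(S) \isom \ul{B}$. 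Setting $\sS = \Ga(M, S)$ then furnishes the required module, the pairing \eqref{eq:local-pairing} making the $B$-action self-adjoint.

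The main obstacle is this gluing step: converting the cohomological identity $\delta(\ul{B}) = 0$ into an explicit, consistent choice of $\mathrm{U}(2^m)$-valued transition functions. The delicate point is that the naive unitaries $g_{\iota\iota'}$ fail the cocycle condition by a $\mathrm{U}(1)$-valued $2$-cochain whose class is $\delta(\ul{B})$; one must check that this cochain is a cocycle and that its vanishing in $\cechH^2(M; \underline{\mathrm{U}(1)}) \isom \rH^3(M; \bZ)$ is equivalent to the existence of the correcting $1$-cochain. This rests on $M$ being paracompact---guaranteed by Corollary \ref{loc_comp_haus_metr_col} and the ensuing metrizability---so that Čech cohomology computes sheaf cohomology and the exponential sheaf sequence $0 \to \bZ \to \underline{\bR} \to \underline{\mathrm{U}(1)} \to 0$ applies, the sheaf $\underline{\bR}$ of germs of continuous real functions being fine.
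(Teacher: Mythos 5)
Your proposal is correct and follows essentially the same route the paper sets up: the fibrewise rank-one projectors $p_x$, the left ideals $S_x = B_x p_x$ with the pairing \eqref{eq:local-pairing}, and the identification $\sL(S_x)\isom B_x$ by simplicity are exactly the ingredients the paper introduces just before the statement (the proposition itself is cited to \cite{hajac:toknotes}, where the proof is the standard Dixmier--Douady gluing argument you give). Your completion of the gluing step --- local unitaries unique up to $\mathrm{U}(1)$, the central $2$-cocycle representing $\delta(\ul{B})$, and its trivialization via paracompactness and the fineness of the sheaf of continuous real functions --- matches that argument.
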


Let denote $\sS =\Ga\left(M,S\right)$ and $\sS^\sharp = \Hom_A(\sS, A)$. From the \eqref{spinor_morita} it follows that if $\sS^\sharp = \Hom_A(\sS, A)$ then $\sS \ox_A \sS^\sharp \isom B$ and
$\sS^\sharp \ox_B \sS \isom A$.
 Since $\sS^\sharp \isom \Ga(M,S^*)$ where $S^* \to M$ is the dual vector 
 bundle to $S \to M$, we can write this equivalence fibrewise:
 $\sS_x \ox_\bC \sS_x^* = \End_\bC(\sS_x) \isom B_x$ and then
 $\sS_x^* \ox_{B_x} \sS_x \isom \bC$, for $x \in M$. To proceed, we explain how $B$ acts on $\sS^\sharp = \Hom_A(\sS, A)$. The
 spinor module $\sS$ carries an $A$-valued Hermitian pairing
 \eqref{eq:local-pairing} given by the local scalar products defined in
 the construction of $\sS$, that may be written
 \begin{equation}
 \pairing{\psi}{\phi} \ : \ x \mapsto \langle\psi_x|\phi_x\rangle,
 \word{for} x \in M.
 \label{eq:spinor-pairing}
 \end{equation}
 We can identify elements of $\sS^\sharp$ with ``bra-vectors'' $\bra{\psi}$
 using this pairing, namely, we define $\bra{\psi}$ to be the map
 $\phi \mapsto \pairing{\psi}{\phi} \in A$. Since $A$ is unital, there
 is a ``Riesz theorem'' for $A$-modules showing that all elements of
 $\sS^\sharp$ are of this form. Now the left $B$-action is defined by
 \[
 b\,\bra{\psi} := \bra{\psi} \circ \chi(b^!).
 \]
 Recall that $b \mapsto \chi(b!)$ is a linear antiautomorphism of~$B$.
We also require triviality  of the  second Stiefel–Whitney class  $\ka(B)  = w_2(TM) = w_2(T^*M)$ of the tangent (or cotangent) bundle described in \cite{hajac:toknotes}. The condition $\ka(B) = 0$ is hold if
 \[
 \sS^\sharp \isom \sS  \quad\text{as $B$-$A$-bimodules}.
 \]
 We now reformulate this condition in terms of a certain antilinear 
 operator~$C$; later on, in the context of \index{spectral triple} spectral triples, we shall 
 rename it to~$J$.

 \begin{prop}
 	\label{pr:charge-conj}\cite{hajac:toknotes}
 	There is a $B$-$A$-bimodule isomorphism $\sS^\sharp \isom \sS$ if and only
 	if there is an \textit{antilinear} endomorphism $C$ of $\sS$ such that
 	\begin{enumerate}
 		\item[(a)]
 		$C(\psi\,a) = C(\psi) \,\bar a$\quad
 		for $\psi \in \sS$, $a \in A$;
 		\item[(b)]
 		$C(b\,\psi) = \chi(\bar b)\, C(\psi)$\quad
 		for $\psi \in \sS$, $b \in B$;
 		\item[(c)]
 		$C$ is \emph{antiunitary} in the sense that
 		$\pairing{C\phi}{C\psi} = \pairing{\psi}{\phi} \in A$,\quad
 		for $\phi, \psi \in \sS$;
 		\item[(d)]
 		$C^2 = \pm 1$ on $\sS$ whenever $M$ is connected.
 	\end{enumerate}
 \end{prop}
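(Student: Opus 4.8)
The plan is to prove the equivalence by passing through the antilinear ``Riesz'' bijection $R\colon\sS\to\sS^\sharp$, $R(\psi)=\bra{\psi}$, whose surjectivity is exactly the ``Riesz theorem'' for $A$-modules quoted above. The whole argument rests on three bookkeeping identities which I would record first. From sesquilinearity of the spinor pairing one has $R(\psi a)=\bar a\,R(\psi)$ and $\bra{\psi}\,a=\bra{\psi\bar a}$; from the Clifford self-adjointness $\pairing{s}{c(\ka)t}=\pairing{c(\ka^*)s}{t}$ one obtains $\bra{c(\ka)\psi}=\bra{\psi}\circ c(\ka^*)$; and from the facts that the grading $\chi$ and the reversal $(\cdot)^!$ commute and that $\chi$ is real, one computes $\chi(\bar b)^*=\chi(b^!)$. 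These three identities convert the twisted $B$-action $b\,\bra{\psi}=\bra{\psi}\circ c(\chi(b^!))$ into something expressible through $C$.

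For the direction ``$C$ exists $\Rightarrow$ isomorphism'', I would set $\Phi:=R\circ C$, i.e.\ $\Phi(\psi)=\bra{C\psi}$. Since $C$ and $R$ are both antilinear, $\Phi$ is $\C$-linear; condition~(a) together with $\bra{\psi}a=\bra{\psi\bar a}$ shows $\Phi(\psi a)=\Phi(\psi)a$, so $\Phi$ is right $A$-linear; and condition~(b), the identity $\bra{c(\ka)\psi}=\bra{\psi}\circ c(\ka^*)$, and $\chi(\bar b)^*=\chi(b^!)$ give $\Phi(b\psi)=\bra{c(\chi(\bar b))C\psi}=\bra{C\psi}\circ c(\chi(b^!))=b\,\Phi(\psi)$, so $\Phi$ is left $B$-linear. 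Finally~(d) yields $C^{-1}=\pm C$, so $C$ and hence $\Phi$ are bijective; thus $\Phi$ is a $B$-$A$-bimodule isomorphism $\sS\to\sS^\sharp$, and $\sS^\sharp\isom\sS$.

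For the converse I would start from a bimodule isomorphism $\Psi\colon\sS\to\sS^\sharp$ and put $C_0:=R^{-1}\circ\Psi$, an antilinear bijection. Running the computations of the previous paragraph backwards (using injectivity of $R$) shows that $C_0$ already satisfies~(a) and~(b). The subtlety is that $C_0$ need not be antiunitary, so (c) must be forced by a normalization. Here I would introduce the sesquilinear form $h(\phi,\psi):=\pairing{C_0\phi}{C_0\psi}$, which is linear in $\phi$ and antilinear in $\psi$, the same variance as $\pairing{\psi}{\phi}$; by the Riesz theorem it is represented as $h(\phi,\psi)=\pairing{\psi}{S\phi}$ for a unique $A$-linear $S$, which by \eqref{spinor_morita} lies in $\End_A(\sS)\isom B$. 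Using (b) and Clifford self-adjointness one checks $h(b\phi,\psi)=h(\phi,b^*\psi)$, which by nondegeneracy forces $Sb=bS$ for all $b\in B$; hence $S\in\End_B(\sS)\isom A$, i.e.\ $S$ is multiplication by a function $\rho\in C(M)$. Positivity of $h(\phi,\phi)$ and bijectivity of $C_0$ (an isomorphism cannot annihilate a whole fibre) give $\rho>0$ everywhere, so $\rho^{-1/2}\in C(M)$. Setting $C:=\rho^{-1/2}C_0$ preserves (a) and (b), since $\rho^{-1/2}$ is real and central, and makes $C$ antiunitary, which is~(c).

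It then remains to deduce~(d) from (a)--(c) for the bijective operator $C$. Since $C$ is antilinear, $C^2$ is $\C$-linear; (a) shows $C^2$ is $A$-linear and (b) together with $\chi^2=\id$ shows $C^2$ commutes with the $B$-action, so as above $C^2$ is multiplication by a function $f\in C(M)$. Applying (c) twice gives $\pairing{C^2\phi}{C^2\psi}=\pairing{\phi}{\psi}$, whence $|f|^2=1$; and comparing $C^3=C\circ C^2$ with $C^3=C^2\circ C$, using $C(f\psi)=\bar f\,C\psi$ from (a), gives $\bar f=f$. Thus $f\colon M\to\{\pm1\}$ is continuous, hence constant on the connected manifold $M$, i.e.\ $C^2=\pm1$. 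The heart of the argument---and the step I expect to be most delicate---is the normalization in the converse: identifying the Gram operator $S$ as a strictly positive central function and verifying that rescaling by $\rho^{-1/2}$ simultaneously preserves (a), (b) and produces antiunitarity. Everything else is formal manipulation with the three recorded identities.
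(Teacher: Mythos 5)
Your proposal is correct, and since the paper states Proposition \ref{pr:charge-conj} without proof (it is quoted from \cite{hajac:toknotes}), the comparison is with the standard argument in that reference, which your proof reproduces essentially verbatim: transport the bimodule structure through the antilinear Riesz bijection $R(\psi)=\bra{\psi}$ to get $C_0$ satisfying (a)--(b), identify the Gram operator of $C_0$ via the two Morita equivalences \eqref{spinor_morita} as a strictly positive $\rho\in C(M)$ and normalize $C=\rho^{-1/2}C_0$ to force (c), then observe that $C^2$ is $A$-linear and $B$-central, hence a real unitary function on $M$, equal to $\pm 1$ on a connected manifold. All the delicate steps you flag (variance of the form $h$, $Sb=bS$ by nondegeneracy, $\rho>0$ from bijectivity of $C_0$, and compatibility of the rescaling with (a)--(b) because $\rho^{-1/2}$ is real and central) check out against the definitions recorded in the paper, so there is no gap.
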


 The antilinear operator $C\: \sS \to \sS$, which becomes an
 \textit{antiunitary} operator on a suitable Hilbert-space completion
 of $\sS$, is called the \emph{charge conjugation}. It exists if and
 only if $\ka(B) = 0$.
 What, then, are $\mathrm{Spin}^{\mathbf{c}}$ and Spin structures on $M$? We choose on $M$
 a metric (without losing generality), and also an \textit{orientation}
 $\eps$, which organizes the action of $B$, in that a change
 $\eps \mapsto -\eps$ induces $c(\ga) \mapsto -c(\ga)$, which either
 \begin{enumerate}
 	\item[(i)] reverses the $\bZ_2$-grading of $\sS = \sS^+ \oplus \sS^-$,
 	in the even case; or
 	\item[(ii)] changes the action on $\sS$ of each $c(\al)$ to $-c(\al)$,
 	for $\al \in \sA^1(M)$, in the odd case ---recall that
 	$c(\al) := c(\al\ga)$ in the odd case.
 \end{enumerate}

 \begin{defn}
 	\label{df:spin-structure}\cite{hajac:toknotes}
 	Let $(M,\eps)$ be a compact boundaryless orientable manifold, 
 	together with a chosen orientation~$\eps$. Let $A = C(M)$ and let $B$ 
 	be specified as before (in terms of a fixed but arbitrary Riemannian 
 	metric on~$M$). If $\dl(B) = 0$ in $\rH^3(M;\bZ)$, a 
 	\emph{$\mathrm{Spin}^{\mathbf{c}}$ structure} on $(M,\eps)$ is an isomorphism
 	class $[\sS]$ of equivalence $B$-$A$-bimodules.
 	If $\dl(B) = 0$ and if $\ka(B) = 0$ in $\rH^2(M;\bZ_2)$, a pair
 	$(\sS,C)$ give data for a spin structure, when $\sS$ is an equivalence
 	$B$-$A$-bimodule such that $\sS^\sharp \isom \sS$, and $C$ is a charge
 	conjugation operator on~$\sS$. A \emph{Spin structure} on $(M,\eps)$
 	is an isomorphism class of such pairs. A vector bundle $S \to M$ such that $\sS = \Gamma(M, S)$ is said to be the \emph{spinor bundle}.
 \end{defn}

 \subsection{The Spin connection}
 
 \paragraph*{}We now leave the topological level and introduce differential
 structure. Thus we replace $A = C(M)$ by $\sA = \Coo(M)$, and
 continuous sections $\Ga_{\mathrm{cont}}$ by smooth sections
 $\Ga_{\mathrm{smooth}}$. Thus $\sS = \Ga_{\mathrm{smooth}}(M, S)$ will henceforth denote
 the $\sA$-module of \textit{smooth} spinors.
 
 Our treatment of \index{Morita equivalence} Morita equivalence of \emph{unital} algebras passes
 without change to the smooth level. We can go back with the functor
 $- \ox_{\Coo(M)} C(M)$, if desired.

 \begin{defn}\cite{hajac:toknotes}
 	A \emph{connection} on a (finitely generated projective)
 	$\sA$-module $\sE = \Ga(M, E)$ is a $\bC$-linear map
 	$\nb \: \sE \to \sA^1(M) \ox_{\sA} \sE = \Ga(M, T^*M \ox E)
 	\equiv \sA^1(M, E)$, satisfying the Leibniz rule
 	\[
 	\nb(fs) = df \ox s + f\,\nb s.
 	\]
 	It extends to an \textit{odd derivation} of degree +1 on
 	$\sA^\cz(M) \ox_{\sA} \sE = \Ga(M, \La^\cz T^*M\ox E) \equiv \sA^\cz(M,E)$
 	with grading
 	inherited from that of $\sA^\cz(M)$, leaving $\sE$ trivially graded, so
 	that $\nb(\om \w \sg) = d\om \w \sg + (-1)^{|\om|} \om \w \nb\sg$
 	for $\om \in \sA^\cz(M)$, $\sg \in \sA^\cz(M, E)$. 
 \end{defn}


 

 \begin{defn}
 	\label{df:Herm-conn}\cite{hajac:toknotes}
 	If $\sE$  an $\sA$-module equipped with an $\sA$-valued Hermitian 
 	pairing, we say that a \emph{connection} $\nb$ on~$\sE$ is 
 	\emph{Hermitian} if
 	\begin{align*}
 	\pairing{\nb s}{t} + \pairing{s}{\nb t} &= d\,\pairing{s}{t},
 	\word{or, in other words,}
 	\\
 	\pairing{\nb_X s}{t} + \pairing{s}{\nb_X t} &= X\,\pairing{s}{t},
 	\word{for any \emph{real}} X \in \gX(M). 
 	\end{align*}
 \end{defn}

 If $\nb, \nb'$ are connections on $\sE$, then $\nb' - \nb$ is an
 $\sA$-module map: $(\nb' - \nb)(fs) = f(\nb' - \nb)s$, so that
 \textit{locally}, over $U \subset M$ for which $E\bigr|_U \to U$ is
 trivial, we can write
 \[
 \nb = d + \al, \word{where} \al \in \sA^1(U, \End E).
 \]

 \begin{fact}\cite{hajac:toknotes}
 	On $\gX(M) = \Ga(M, TM)$ there is, for each Riemannian metric $g$,
 	a \emph{unique torsion-free connection that is compatible with $g$}:
 	\[
 	g(\nb X, Y) + g(X, \nb Y) = d(g(X,Y)) \word{for} X, Y \in \gX(M),
 	\word{or}
 	\]
 	\[
 	g(\nb_Z X, Y) + g(X, \nb_Z Y) = Z(g(X,Y)) \word{for} X, Y, Z \in \gX(M).
 	\]
 \end{fact}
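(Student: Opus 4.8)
The plan is to prove this classical result---the fundamental theorem of Riemannian geometry---by first extracting a forced algebraic identity that gives uniqueness, and then using that same identity as a construction to establish existence. First I would suppose $\nb$ is a torsion-free connection compatible with $g$ and write out the compatibility relation $Z\,g(X,Y) = g(\nb_Z X, Y) + g(X, \nb_Z Y)$ for the three cyclic permutations of the triple $(X,Y,Z)$. Adding the relations coming from $X\,g(Y,Z)$ and $Y\,g(Z,X)$ and subtracting the one from $Z\,g(X,Y)$, then converting the resulting symmetric combinations $\nb_X Y - \nb_Y X$ into Lie brackets via the torsion-free condition $\nb_X Y - \nb_Y X = [X,Y]$, I expect to arrive at the Koszul formula
\begin{multline*}
2\,g(\nb_X Y, Z) = X\,g(Y,Z) + Y\,g(Z,X) - Z\,g(X,Y) \\
+ g([X,Y],Z) - g([Y,Z],X) + g([Z,X],Y).
\end{multline*}
Since the right-hand side is built only from $g$ and Lie brackets---data intrinsic to $(M,g)$---and since $g$ is nondegenerate (indeed positive definite by Definition \ref{df:Riem-metric}), the vector field $\nb_X Y$ is pinned down uniquely, which settles uniqueness immediately.

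For existence I would reverse the logic: \emph{define} $\nb_X Y$ to be the unique vector field for which $2\,g(\nb_X Y, Z)$ equals the right-hand side above, for every $Z \in \gX(M)$. Producing such a vector field requires that the right-hand side be $\Coo(M)$-linear in the slot $Z$; granting this, nondegeneracy of $g$ furnishes $\nb_X Y$ as the metric dual of the resulting $1$-form. I would then verify the defining axioms in turn: $\Coo(M)$-linearity in $X$, the Leibniz rule $\nb_X(fY) = (Xf)\,Y + f\,\nb_X Y$ in the $Y$-slot, the torsion-free identity $\nb_X Y - \nb_Y X = [X,Y]$, and finally metric compatibility. Each follows by substitution into the Koszul formula and simplification using the bracket Leibniz identity $[X, fY] = (Xf)\,Y + f[X,Y]$, together with the skew-symmetry and the Jacobi identity of the Lie bracket.

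The step I expect to be the main obstacle is precisely the tensoriality in $Z$, and its close cousin the Leibniz rule in $Y$: these are the two places where the derivation-like terms $X\,g(Y,Z)$ must conspire with the bracket terms so that all anomalous derivative contributions cancel. Concretely, replacing $Z$ by $fZ$ produces a factor $Xf$ from $X\,g(Y,Z)$ and from $g([Z,X],Y)$ (through $[fZ,X]$), and a factor $Yf$ from $Y\,g(Z,X)$ and from $-g([Y,Z],X)$ (through $[Y,fZ]$), while $-Z\,g(X,Y)$ and $g([X,Y],Z)$ remain tensorial. The two $Yf$-contributions cancel outright, and the two $Xf$-contributions cancel by the symmetry $g(Y,Z) = g(Z,Y)$; this is the decisive computation. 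Once this cancellation and the entirely analogous one underlying the Leibniz rule are confirmed, the checks of torsion-freeness and metric compatibility are a direct bookkeeping verification, and combining existence with the uniqueness argument above completes the proof.
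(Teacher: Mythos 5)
Your proposal is correct and takes essentially the same route as the paper: the paper's displayed formula \eqref{eq:Levi-Civita} is exactly your Koszul formula (its terms $g(Y,[Z,X])$, $g(Z,[X,Y])$, $-g(X,[Y,Z])$ are your bracket terms rewritten via the symmetry of $g$), with uniqueness from nondegeneracy of $g$ and existence by defining $\nb_X Y$ through the formula and checking the module-theoretic properties of the right-hand side, which is precisely what the paper's parenthetical remark sketches. If anything, your verification is the corrected form of that sketch---the right-hand side is $\Coo(M)$-linear in $X$ and $Z$ and obeys the Leibniz rule in $Y$, and your cancellation computation for the $Z$-slot is accurate---whereas the paper's parenthetical inadvertently swaps the roles of $X$ and $Y$ in stating which slots are $\sA$-linear and which is Leibniz.
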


 The explicit formula for this connection is
 \begin{align}
 2g(\nb_X Y, Z) &= X(g(Y, Z)) + Y(g(X, Z)) - Z(g(X, Y)) 
 \nonumber \\
 &\qquad + g(Y, [Z,X]) + g(Z, [X,Y]) - g(X, [Y,Z]).
 \label{eq:Levi-Civita}
 \end{align}
 It is called \emph{Levi-Civita connection} associated to~$g$. (The 
 proof of existence consists in showing that the right hand side of 
 this expression is $\sA$-linear in $Y$ and~$Z$, and obeys a Leibniz 
 rule with respect to~$X$, so it gives a connection; and uniqueness 
 is obtained by checking that metric compatibility and torsion freedom 
 make the right hand side automatic.)
 
 The \emph{dual connection} on $\sA^1(M)$ will also be called the
 ``Levi-Civita connection''. At the risk of some confusion, we shall 
 use the same symbol $\nb$ for both of these Levi-Civita connections.

 \begin{defn}
 	\label{df:spin-conn}\cite{hajac:toknotes}
 	On a spinor module $\sS = \Ga(M, S)$, a
 	\emph{spin$^{\mathbf{c}}$-connection} is any \emph{Hermitian}
 	connection $\nb^S \: \sS \to \sA^1(M) \ox_{\sA} \sS$ which is
 	compatible with the action of~$B$ in the following way:
 	\begin{align}
 	\nb^S(c(\al)\psi) &= c(\nb\al) \psi + c(\al) \nb^S\psi
 	\word{for} \al \in \sA^1(M), \ \psi \in \sS; \word{or}
 	\nonumber \\
 	\nb_X^S(c(\al)\psi) &= c(\nb_X\al) \psi + c(\al) \nb_X^S\psi
 	\word{for} \al \in \sA^1(M), \ \psi \in \sS, \ X \in \gX(M),
 	\label{eq:spin-conn}
 	\end{align}
 	where $\nb \al$ and $\nb_X \al$ refer to the \emph{Levi-Civita
 		connection} on $\sA^1(M)$.
 	
 	If $(\sS, C)$ are data for a spin structure, we say $\nb^S$ is a
 	\emph{spin connection} if, moreover, each $\nb_X \: \sS \to \sS$
 	commutes with $C$ whenever $X$ is real.
 \end{defn}

 \[
 \nb^S(c(\al)\psi) =  c(\nb \al) \psi + c(\al) \nb^S\psi.
 \]

 \begin{prop}\cite{hajac:toknotes}
 	If $(\sS, C)$ are data for a spin structure on $M$, then there is a unique
 	Hermitian spin connection $\nb^S \: \sS \to \sA^1(M) \ox_{\sA} \sS$,
 	such that
 	\[
 	\nb^S(c(\al)\psi) = c(\nb\al) \psi + c(\al) \nb^S\psi,
 	\word{for} \al \in \sA^1(M),\ \psi \in \sS,
 	\]
 	and such that $\nb^S_X C = C \nb^S_X$ for $X \in \gX(M)$ real. 
 \end{prop}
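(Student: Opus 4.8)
The plan is to establish existence and uniqueness separately. For existence I would first build a local \emph{spin$^{\mathbf c}$}-lift of the Levi--Civita connection, then glue the local pieces with a smooth partition of unity, and finally symmetrise the result over the charge conjugation $C$; for uniqueness I would use that the difference of two admissible connections is central and then kill it with the Hermitian and $C$-compatibility conditions.

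\emph{Local existence.} Over an open set $U\subset M$ carrying an oriented orthonormal coframe $e^1,\dots,e^n$ of $\sA^1(M)|_U$, the Levi--Civita connection reads $\nb e^a = -\sum_b \om^a{}_b\ox e^b$ with real, skew connection one-forms $\om_{ab}=-\om_{ba}$. On the (locally trivialised) spinor module I would set
\begin{equation*}
\nb^S_U := d + \tfrac14\sum_{a,b}\om_{ab}\,c(e^a)c(e^b).
\end{equation*}
The Leibniz rule is immediate. Compatibility with the Clifford action follows from the bracket identity $[\tfrac14 c(e^a)c(e^b),c(e^c)] = \tfrac12(\delta^{bc}c(e^a)-\delta^{ac}c(e^b))$, which is the infinitesimal form of the vector representation and reproduces $c(\nb e^c)$; hence $\nb^S_U(c(\al)\psi)=c(\nb\al)\psi+c(\al)\nb^S_U\psi$. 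Finally, since $e^a$ is a real one-form we have $c(e^a)^*=c(e^a)$ by the defining adjointness $\pairing{s}{c(\ka)t}=\pairing{c(\ka^*)s}{t}$, so $(c(e^a)c(e^b))^*=-c(e^a)c(e^b)$ for $a\neq b$ and, $\om_{ab}$ being real, the zeroth-order term is skew-adjoint; thus $\nb^S_U$ is Hermitian.

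\emph{Global patching and symmetrisation.} The set of Hermitian spin$^{\mathbf c}$-connections is an affine space: if $\{f_j\}$ is a smooth partition of unity subordinate to a trivialising cover $\{U_j\}$ (available by Proposition \ref{smooth_part_unity_prop}) and $\nb^S_j$ are the local connections above, then $\nb^S:=\sum_j f_j\nb^S_j$ is again a connection (because $\sum_j f_j=1$), is again Clifford-compatible and Hermitian (both conditions are affine in $\nb^S$ with real coefficients summing to $1$), and is globally defined since $\supp f_j\subset U_j$. To arrange $\nb^S_XC=C\nb^S_X$ I would replace $\nb^S_X$ by $\tfrac12(\nb^S_X+C^{-1}\nb^S_XC)$. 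Using the properties of $C$ from Proposition \ref{pr:charge-conj}---in particular $C(b\psi)=\chi(\bar b)C\psi$, which for a real one-form $\al$ yields $C^{-1}c(\al)=-c(\al)C^{-1}$---a short computation shows $C^{-1}\nb^S_XC$ is still a Hermitian spin$^{\mathbf c}$-connection, so the average is one too; and since $C^2=\pm1$ one checks directly that the average commutes with $C$.

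\emph{Uniqueness and main obstacle.} Let $\nb^S,\nb'^S$ both satisfy the conditions and put $D:=\nb'^S-\nb^S$. The Leibniz rules cancel, so $D$ is $\sA$-linear, i.e. $D\in\sA^1(M)\ox_\sA\End_\sA(\sS)$. Subtracting the two Clifford-compatibility identities gives $D(c(\al)\psi)=c(\al)D\psi$, so $D$ commutes with the left $B$-action; by \eqref{spinor_morita} we have $\End_\sA(\sS)\isom B$, and by Proposition \ref{pr:Cliff-centre} (together with the central simplicity of $\bCl^0$ used in the odd case) the centre of $B$ is $\bC$, whence $D_X=\om(X)\,\id$ for some complex one-form $\om$. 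The common Hermitian identity forces $\pairing{D_Xs}{t}+\pairing{s}{D_Xt}=0$, i.e. $\om(X)+\overline{\om(X)}=0$; the common $C$-compatibility forces $D_XC=CD_X$, i.e. $\om(X)=\overline{\om(X)}$ because $C$ is antilinear. Hence $\om=0$ and $\nb^S=\nb'^S$. I expect the genuine difficulty to lie in the existence half---specifically in verifying that the local spin lift really is Clifford-compatible (the $\gso\to\gspin$ bracket computation) and that the $C$-symmetrisation preserves all three properties; the uniqueness half is essentially forced once the centrality of $B$ is in hand.
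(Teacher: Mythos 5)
Your proof is correct and follows essentially the same route as the proof in the cited source \cite{hajac:toknotes} (the paper itself only states this proposition without proof): the local lift $d+\tfrac14\sum_{a,b}\om_{ab}\,c(e^a)c(e^b)$ of the Levi--Civita connection glued by a smooth partition of unity for existence, and for uniqueness the centrality argument --- the difference is $\sA$-linear, commutes with the $B$-action, hence by $\End_{\sA}(\sS)\isom B$ and the triviality of the centre of the fibres is a scalar one-form, which Hermiticity forces to be purely imaginary and $C$-commutation forces to be real. The only cosmetic deviation is your extra $C$-symmetrisation step, which is harmless but not needed: in an orthonormal gauge $C$ acts as a constant antiunitary matrix composed with conjugation, the coefficients $\om_{ab}$ are real, and $C\,c(e^a)c(e^b)=c(e^a)c(e^b)\,C$, so the patched connection already commutes with $C$.
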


 \subsection{Dirac operators}\label{comm_dir_op}
 \label{ch:DOSG_3}
 
\paragraph*{} Suppose we are given a compact oriented (boundaryless) Riemannian
 manifold $(M,\eps)$ and a spinor module with charge conjugation
 $(\sS,C)$, together with a Riemannian metric $g$, so that the Clifford
 action $c \: \sB \to \End_{\sA}(\sS)$ has been specified. We can also
 write it as $\hat c \in \Hom_{\sA}(\sB \ox_{\sA} \sS, \sS)$ by setting
 $\hat c(\ka \ox \psi) := c(\ka)\,\psi$.

 \begin{defn}
 	\label{df:Dirac-defn}\cite{hajac:toknotes}
 	Using the inclusion $\sA^1(M) \hookto \sB$ ---where in the odd
 	dimensional case this is given by $c(\al) := c(\al\ga)$, as before---
 	we can form the composition
 	\begin{equation}
 	\Dslash := -i\,\hat c \circ \nb^S
 	\label{eq:Dirac-defn}
 	\end{equation}
 	where
 	\[
 	\sS \xrightarrow{\nb^S} \sA^1(M)\ox_{\sA}\sS \xrightarrow{\hat c} \sS,
 	\]
 	so that $\Dslash \: \sS \to \sS$ is $\bC$-linear. This is the
 	\emph{Dirac operator} associated to $(\sS, C)$ and~$g$.
 \end{defn}

 The $(-i)$ is included in the definition to make $\Dslash$ symmetric
 (instead of skewsymmetric) as an operator on a Hilbert space, because
 we have chosen $g$ to be positive definite, that is,
 $\ga^\al \ga^\bt + \ga^\bt \ga^\al = +2\,\dl^{\al\bt}$. Historically,
 $\Dslash$ was introduced as $-i\ga^\mu \dl_\mu = \ga^\mu p_\mu$ where 
 the $p_\mu$ are components of a $4$-momentum, but in the Minkowskian
 signature.
 
 Using local (coordinate or orthonormal) bases for $\gX(M)$ and
 $\sA^1(M)$, we get nicer formulas:
 \begin{equation}
 \Dslash\psi = -i\,\hat c(\nb^S\psi) = -i\,c(dx^j) \nb^S_{\del_j} \psi
 = -i\,\ga^\al \nb^S_{E_\al} \psi.
 \label{eq:Dirac-local}
 \end{equation}
 The essential algebraic property of $\Dslash$ is the
 \textit{commutation relation}:
 \begin{equation}
 [\Dslash, a] = -i\,c(da), \words{for all} a \in \sA = \Coo(M).
 \label{eq:comm-Dirac}
 \end{equation}
 Indeed,
 \begin{align*}
 [\Dslash, a]\,\psi
 &= -i\,\hat c(\nb^S(a\psi)) + ia\,\hat c(\nb^S\psi)
 \\
 &= -i\,\hat c(\nb^S(a\psi) - a\,\nb^S\psi)
 \\
 &= -i\,\hat c(da \ox\psi) = -i\,c(da)\,\psi, \word{for} \psi \in \sS.
 \end{align*}
 As an operator, we can make sense of $[\Dslash, a]$ by conferring on
 $\sS$ the structure of a Hilbert space: if we write
 $\det g := \det[g_{ij}]$ for short, then
 \[
 \nu_g := \sqrt{\det g} \,dx^1 \w dx^2 \wyw dx^n \in \sA^n(M)
 \]
 is the Riemannian volume form (for the given orientation $\eps$ and
 metric~$g$). In the notation, we assume that all local charts are
 consistent with the given orientation, which just means that
 $\det[g_{ij}] > 0$ in any local chart. The scalar product on $\sS$ is
 then given by
 \begin{equation}\label{comm_hilb_space}
 (\phi,\psi) \stackrel{\mathrm{def}}{=} \int_M \pairing{\phi}{\psi} \,\nu_g
 \word{for} \phi,\psi \in \sS.
 \end{equation}
 On completion in the norm $\|\psi\| \stackrel{\mathrm{def}}{=} \sqrt{\left(\psi,\psi\right)}$, we
 get the Hilbert space $\H := L^2(M,S)$ of $L^2$-spinors on~$M$. In the even case, $B = \Ga(M,\bCl(T^*M))$ contains the operator $\Ga = c(\ga)$ which
 extends to a selfadjoint unitary operator on $H$. It is known that $\left(C^{\infty}(M), L^2(M, S),\slashed D, C, \Ga\right)$ (resp. $\left(C^{\infty}(M), L^2(M, S),\slashed D, C \right)$) is a spectral triple in case of even (resp. odd) dimension \cite{hajac:toknotes}.

 \subsection{Definition of spectral triples}
  \begin{defn}
  \label{df:spec-triple}\cite{hajac:toknotes}
  A (unital) {\bf \index{spectral triple} spectral triple} $(\mathscr{A}, H, D)$ consists of:
  \begin{itemize}
  \item
  an \emph{algebra} $\mathscr{A}$ with an involution $a \mapsto a^*$, equipped
  with a faithful representation on:
  \item
  a \emph{Hilbert space} $\H$; and also
  \item
  a \emph{selfadjoint operator} $D$ on $\mathcal{H}$, with dense domain
  $\Dom D \subset \H$, such that $a(\Dom D) \subseteq \Dom D$ for all 
  $a \in \mathcal{A}$,
  \end{itemize}
  satisfying the following two conditions:
  \begin{itemize}
  \item
  the operator $[D,a]$, defined initially on $\Dom D$, extends to a
  \emph{bounded operator} on $\H$, for each $a \in \mathcal{A}$;
  \item
  $D$ has \emph{compact resolvent}: $(D - \lambda)^{-1}$ is compact, when
  $\lambda \notin \spec(D)$.
  \end{itemize}
  \end{defn}

  For now, and until further notice, all \index{spectral triple} spectral triples will be
  defined over unital algebras. The compact-resolvent condition must be 
  modified if $\mathcal{A}$ is nonunital: as well as enlarging $\mathcal{A}$ to a unital 
  algebra, we require only that the products $a(D - \lambda)^{-1}$, for 
  $a \in \mathcal{A}$ and $\lambda \notin \spec(D)$, be compact operators.
  
  \subsection{The Dixmier trace}
  \paragraph{} The Dixmier trace is the noncommutative analogue of integral over a manifold.
 \begin{empt}\cite{varilly:noncom} The algebra $\mathcal{K}$ of compact operators on a separable, infnite-dimensional Hilbert space   contains the ideal $\mathcal{L}^1$ of traceclass operators, on which  $\|T\|_1 = \mathrm{Tr}|T|$ is a norm not to  be confused with the operator norm $\|T\|$. Let $\sigma_n(T)$ be such that 
   \begin{equation*}
  \sigma_n(T)= \sup\left\{\|TP_n\|_1 \ | \ P_n \ \text{is a projector of rank} \ n \right\}
  \end{equation*}
  
  There is a formula \cite{connes_moscovici:local_index}, coming from real  interpolation theory of Banach spaces:
  \begin{equation*}
  \sigma_n(T)= \left\{\inf \{\|R\|_1+n\|S\| \ | \ R,S \in \mathcal{K}, \ R + S = T \right\}.
  \end{equation*}
  If $T \in \mathcal{K}$ is a compact operator then $\sigma_n$ can be defined as
  \begin{equation*}
  \sigma_n = \sum_{i =1}^n \lambda_i
  \end{equation*}
  where $\{\lambda_i\}_{i \in N}$ is a decreasing ordered set of the operator $\left(T^*T\right)^{1/2}$ eigenvalues, i.e. $\lambda_1 \ge \lambda_2 \ge ... \ge \lambda_n \ge ...$.   We can think of $\sigma_n(T)$ as the trace of $|T|$ with a {\it cutoff} at the scale $n$.  This scale does not have to be an integer; for any scale $ \lambda > 0$, we can {\it define}
  \begin{equation*}
  \sigma_{\lambda}(T) = \inf\left\{\|R\|_1+\lambda \|S\| \ | \ R,S \in \mathcal{K}, \ R + S = T \right\}.
 \end{equation*}
 If $0 < \lambda \le 1$, then $\sigma_{\lambda}(T) = \lambda\|T\|$. If $\lambda = n + t$ with $0 \le t < 1$, one checks that
  \begin{equation}\label{sltn}
 \sigma_{\lambda}(T)=(1-t)\sigma(T)+t \sigma_{n+1}(T),
 \end{equation}
  so $\lambda \mapsto \sigma_{\lambda}(T)$ is a piecewise linear, increasing, concave function on $(0,1)$.
  
  \paragraph*{} Each $\sigma_{\lambda}$ is a norm by \eqref{sltn}, and so satisfies the triangle inequality. It is proved in \cite{varilly:noncom} that for positive compact operators, there is a triangle inequality in the opposite direction:
 \begin{equation}\label{sigma_uneq}
  \sigma_{\lambda}(A) + \sigma_{\mu}(B) \le \sigma_{\lambda + \mu}(A + B); \ \text{if} \ A,B > 0.
 \end{equation}
  It suffices to check this for integral values $\lambda=m$, $\mu =n$. If $P_m$, $P_n$ are projectors of respective ranks $m$, $n$, and if $P = P_m \vee P_n$ is the projector with range $P_mH + P_nH$, then
  \begin{equation*}
  \|AP_m\|_1 + \|BP_n\|_1 = Tr(P_mAP_m) + Tr(P_nBP_n) \le Tr(P(A + B)P) \le \|(A + B)P\|_1,
 \end{equation*}
  and \eqref{sigma_uneq} follows by taking supremum over $P_m$, $P_n$. Thus we have a sandwich of norms:
  \begin{equation}\label{norm_sandwich}
  \sigma_{\lambda}(A + B) \le \sigma_{\lambda}(A) + \sigma_{\lambda}(B) \le \sigma_{2\lambda}(A + B) \ \text{if} \ A,B \ge 0.
  \end{equation}
  \end{empt}
  \begin{empt}\cite{varilly:noncom} {\it The Dixmier ideal}. The {\itfirst-order infinitesimals} can now be defined precisely as the following normed ideal of compact operators:
  \begin{equation*}
  \mathcal{L}^{1+}=\left\{T \in \mathcal{K} \ | \ \|T\|_{1+}=\sup_{\lambda > e} \frac{\sigma_{\lambda}(T)}{\log\lambda}< \infty\right\},
  \end{equation*}
  
  that obviously includes the traceclass operators $\mathcal{L}^1$. (On the other hand, if $p > 1$ we have $\mathcal{L}^{1+}\subset\mathcal{L}^p$, where the latter is the ideal of those $T$ such that $\mathrm{Tr}|T|^p < 1$, for which $\sigma_{\lambda}(T) = O(\lambda^{1-1/p})$.) If $T \in \mathcal{L}^{1+}$, the function $\lambda \mapsto \sigma_{\lambda}(T)/\log\lambda$ is continuous and bounded on the interval  $[e,\infty)$, i.e., it lies in the $C^*$-algebra $C_b[e,\infty)$. We can then form the following Ces\`{a}ro mean of this function:
 \begin{equation*}
 \tau_{\lambda}(T)=\frac{1}{\log \lambda}\int_{e}^{\lambda}\frac{\sigma_u(T)}{\log u}\frac{du}{u}.
  \end{equation*}
  
  Then $\lambda \mapsto \tau_{\lambda}(T)$ lies in $C_b[e, \infty)$ also, with upper bound $\|T\|_{1+}$. From \eqref{norm_sandwich} we can derive   that
  \begin{equation*}
  \tau_{\lambda}(A)+\tau_{\lambda}(B)-\tau_{\lambda}(A+B)\le \left(\|A\|_{1+}+\|B\|_{1+}\right)\log 2 \frac{\log \log \lambda}{\log \lambda},
  \end{equation*}
  so that $\tau_{\lambda}$ is "asymptotically additive" on positive elements of $\mathcal{L}^{1+}$.
  \paragraph{} We get a true additive functional in two more steps. Firstly, let $\dot \tau(A)$ be the class of $\lambda \mapsto \tau_{\lambda}(A)$ in the quotient $C^*$-algebra $\mathcal{B} = C_b[e, \infty)/C_0[e, \infty)$. Then $\dot \tau$ is an additive,   positive-homogeneous map from the positive cone of $\mathcal{L}^{1+}$ into $\mathcal{B}$, and $\dot \tau(UAU^{-1}) = \dot \tau(A)$ for  any unitary $U$; therefore it extends to a linear map $\dot \tau: \mathcal{L}^{1+} \to \mathcal{B}$ such that $\dot \tau (ST) =\dot \tau (TS)$ for $T \in \mathcal{L}^{1+}$ and any $S$.
  \paragraph{} Secondly, we follow $\dot \tau$ with any state (i.e., normalized positive linear form) $\omega:\mathcal{B} \to \mathbb{C}$.  The composition is a {\it Dixmier trace}:
  \begin{equation*}
  \mathrm{Tr}_{\omega}(T) = \omega(\dot \tau (T)).
  \end{equation*}
 \end{empt}
 \begin{empt}{\it The noncommutative integral.} Unfortunately, the $C^*$-algebra $\mathcal{B}$ is not separable and there is no way to {\it exhibit} any particular state. This problem can be finessed by noticing that a function $f\in C_b[e, \infty)$ has a limit $\lim_{\lambda \to \infty} f(\lambda) = c$ if and only if $\omega(f) = c$ does not  depend on $\omega$. Let us say that an operator $T\in \mathcal{L}^{1+}$ is {\it measurable} if the function $\lambda \mapsto \tau_{\lambda}(T)$ converges as $\lambda \to \infty$, in which case any $\mathrm{Tr}_{\omega}(T)$ equals its limit. We denote by $\ncint T$ the common value of the Dixmier traces:
  \begin{equation*}
  \ncint T = \lim_{\lambda \to \infty} \tau_{\lambda}(T) \ \text{if this limit exists}.
  \end{equation*}
  
  We call this value the {\it noncommutative integral} of $T$.
  \paragraph{}Note that if $T\in \mathcal{K}$ and $\sigma_n(T)/ \log n$ converges as $n \to \infty$, then $T$ lies in $\mathcal{L}^{1+}$ and is   measurable. 
  \end{empt}

  \begin{exm}\label{comm_integ_exm}{\it Commutative case}.
  Let $M$ be a compact spin-manifold, and let $g$ be the Riemannian metric \cite{varilly:noncom}. There is the Riemannian volume form $\Omega$ given by
  \begin{equation*}
  \Omega = \sqrt{\mathrm{det}g(x)}dx^1 \wedge...\wedge dx^n.
  \end{equation*}
  It is proven in \cite{varilly:noncom} that for any $a \in C(M)$ following equation hold
  
  \[
      \int_{M} a \Omega=
  \begin{cases}
      m!(2\pi)^m \ncint a \slashed D^{-2m} ,& \text{if} \ \mathrm{dim}M = 2m \ \text{is even},\\
      (2m+1)!!\pi^{m+1} \ncint a |\slashed D|^{-2m-1}             &  \text{if} \ \mathrm{dim}M = 2m + 1 \ \text{is odd}.
  \end{cases}
  \]

  \end{exm}

 \subsection{Regularity of  spectral triples}
   
   \paragraph{} The arguments of the previous section are not applicable to determine
   whether $[|D|, a]$ is bounded, in the case $r = 1$. This must be
   formulated as an assumption. In fact, we shall ask for much more:
   we want each element $a \in \sA$, and each bounded operator $[D,a]$ 
   too, to lie in the {\it smooth domain} of the following derivation.

   \begin{notn}
   We denote by $\dl$ the derivation on $B(\sH)$ given by taking the 
   commutator with $|D|$. It is an unbounded derivation, whose domain is
   \[
   \Dom \dl := \set{T \in B(\sH) :
   T(\Dom|D|) \subseteq \Dom|D|,\ [|D|, T] \text{ is bounded }}.
   \]
   We write $\dl(T) := [|D|, T]$ for $T \in \Dom\dl$.
   \end{notn}

   \begin{defn}\cite{hajac:toknotes}
   \label{df:spt-regular}
   A \index{spectral triple} spectral triple $(\sA, \sH, D)$ is called {\it regular}, if for
   each $a \in \sA$, the operators $a$ and $[D,a]$ lie in
   $\bigcap_{k\in\bN} \Dom \dl^k$.
   \end{defn}

    \begin{cor}\cite{hajac:toknotes}
   \label{cr:comm-spt}
   The standard commutative example $(\Coo(M), L^2(M, S), \Dslash)$
   is a regular \index{spectral triple} spectral triple.
   \end{cor}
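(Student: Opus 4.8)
The plan is to reduce the statement to the pseudodifferential calculus on the compact manifold $M$ and to exploit the fact that the principal symbol of $|D|$ is scalar. Recall that $D=\Dslash$ is a first-order elliptic selfadjoint differential operator on $\sH=L^2(M,S)$, and that the Clifford relation $c(\al)c(\bt)+c(\bt)c(\al)=2g(\al,\bt)$ forces the principal symbol of $D^2$ to be $g_x(\xi,\xi)\,\Id$; thus $D^2$ is a Laplace-type operator of order $2$. By Seeley's theory of complex powers of positive elliptic operators, the square root $|D|=(D^2)^{1/2}$ is a classical $\PsiDO$ of order $1$ whose principal symbol is the scalar $|\xi|_g\,\Id$. (If $\ker D\neq 0$ one first replaces $D^2$ by the invertible operator $D^2+P_0$, where $P_0$ is the finite-rank orthogonal projection onto $\ker D$; since $P_0$ is smoothing, the resulting $|D|$ differs from a genuine order-$1$ $\PsiDO$ only by an operator in $\Psi^{-\infty}$, which does not affect any of the estimates below.)

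First I would record the key structural fact about the derivation $\dl=[\,|D|,\,\cdot\,]$. For classical pseudodifferential operators $A$ of order $a$ and $B$ of order $b$, the composite $AB$ has symbol $\sum_{\a}\frac{1}{\a!}\,\partial_\xi^\a\sigma_A\,D_x^\a\sigma_B$, so the leading term of the symbol of $[A,B]$ is $\sigma_A\sigma_B-\sigma_B\sigma_A$. When $\sigma_A$ is a scalar multiple of the identity this term vanishes identically, and the commutator drops to order $a+b-1$. Applying this with $A=|D|$ (order $1$, scalar principal symbol) shows that for every $P\in\Psi^m(M,S)$ one has $\dl(P)=[\,|D|,P\,]\in\Psi^m(M,S)$; that is, $\dl$ preserves the order of a classical $\PsiDO$. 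In particular $\dl$ maps the algebra $\Psi^0(M,S)$ of order-zero operators into itself.

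Next I would apply this to the two generators. The element $a\in\sA=\Coo(M)$ acts as a multiplication operator, hence lies in $\Psi^0(M,S)$, and by \eqref{eq:comm-Dirac} the commutator $[\Dslash,a]=-i\,c(da)$ is Clifford multiplication by the smooth $1$-form $da$, which is again an operator in $\Psi^0(M,S)$. Since $\dl$ preserves $\Psi^0(M,S)$, a trivial induction gives $\dl^k(a)\in\Psi^0(M,S)$ and $\dl^k([\Dslash,a])\in\Psi^0(M,S)$ for every $k\in\bN$. Every order-zero classical $\PsiDO$ on the compact manifold $M$ is bounded on $L^2(M,S)$ and maps the first Sobolev space $H^1(M,S)=\Dom|D|$ to itself, so each iterated commutator is a bounded operator preserving $\Dom|D|$. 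This is precisely the condition $a,\,[\Dslash,a]\in\bigcap_{k\in\bN}\Dom\dl^k$ required by Definition~\ref{df:spt-regular}, which establishes regularity.

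The only genuinely nontrivial input is the identification of $|D|$ as a classical $\PsiDO$ of order $1$ with scalar principal symbol; this is where I expect the main work to lie, and it rests on the ellipticity of $D^2$ together with Seeley's construction of complex powers. Once that is in hand, the scalar nature of $\sigma_1(|D|)$ is exactly what makes the order-preservation of $\dl$ automatic, and the remainder is bookkeeping in the symbol calculus. The finite-rank correction coming from $\ker D$ is harmless, since it contributes only smoothing operators, all of whose commutators with $|D|$ lie in $\Psi^{-\infty}\subset\Psi^0$.
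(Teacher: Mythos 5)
Your proposal is correct and follows essentially the same route as the source: the paper states Corollary~\ref{cr:comm-spt} without proof, citing \cite{hajac:toknotes}, where regularity of the standard commutative example is obtained exactly as you argue --- $|D|$ is a classical $\PsiDO$ of order one with scalar principal symbol $|\xi|_g\,\Id$ (via complex powers of the Laplace-type operator $D^2$, with the smoothing correction on $\ker D$ handled just as you do), so the derivation $\dl=[\,|D|,\,\cdot\,]$ preserves $\Psi^0(M,S)$, and since $a$ and $[\Dslash,a]=-i\,c(da)$ lie in $\Psi^0(M,S)$, all iterates $\dl^k(a)$, $\dl^k([\Dslash,a])$ are bounded and preserve $\Dom|D|=H^1(M,S)$. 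Your explicit verification of the domain condition $T(\Dom|D|)\subseteq\Dom|D|$ for order-zero operators is a welcome detail that the cited sketch leaves implicit.
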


   \subsection{Pre-C*-algebras}
   
  \paragraph*{} If any spectral triple $(\sA, \sH, D)$, the algebra $\sA$ is a
   (unital) $*$-algebra of bounded operators acting on a Hilbert space
   $\sH$ [or, if one wishes to regard $\sA$ abstractly, a faithful
   representation $\pi \: \sA \to B(\sH)$ is given]. Let $A$ be the
   norm closure of $\sA$ [or of $\pi(\sA)$] in $B(\sH)$: it is a
   C*-algebra in which $\sA$ is a dense $*$-subalgebra.
   
   {\it A priori}, the only functional calculus available for $\sA$
   is the holomorphic one:
   \begin{equation}
   f(a) := \inv{2\pi i} \oint_\Ga f(\la) (\la 1 - a)^{-1} \,d\la,
   \label{eq:Dunford-int}
   \end{equation} 
   where $\Ga$ is a contour in $\bC$ winding (once positively) around
   $\spec(a)$, and $\spec(a)$ means the spectrum of $a$ in the
   C*-algebra $A$. To ensure that $a \in \sA$ implies $f(a) \in \sA$,
   we need the following property:
   
   If $a \in \sA$ has an inverse $a^{-1} \in A$, then in fact $a^{-1}$
   lies in $\sA$ (briefly: $\sA \cap A^\x = \sA^\x$, where $\sA^\x$ is
   the group of invertible elements of~$A$). If this condition holds,
   then $\inv{2\pi i} \oint_\Ga f(\la) (\la 1 - a)^{-1} \,d\la$ is a
   limit of Riemannian sums lying in $\sA$. To ensure convergence in~$\sA$
   (they do converge in~$A$), we need only ask that $\sA$ be complete in
   some topology that is finer than the $C^*$-norm topology.

   \begin{defn}
   \label{df:pre-Cstar}\cite{hajac:toknotes}
   A {\boldmath{\it pre-$C^*$-algebra}} is a subalgebra of $\sA$ of a
   C*-algebra $A$, which is stable under the holomorphic functional
   calculus of~$A$.
   \end{defn}

   If $\sA$ is a nonunital algebra, we can always adjoin a unit in the
   usual way, and work with $\Tilde\sA := \bC \oplus \sA$ whose unit is
   $(1,0)$, and with its $C^*$-completion $\Tilde A := \bC \oplus A$.
   Since the multiplication rule in $\Tilde\sA$ is
   $(\la,a)(\mu,b) := (\la\mu, \mu a + \la b + ab)$, we see that
   $1 + a := (1,a)$ is invertible in $\Tilde\sA$, with inverse $(1,b)$,
   if and only if $a + b + ab = 0$.

   \begin{lem}\cite{hajac:toknotes}
   \label{lm:preC-matrix}
   If $\sA$ is a unital, Fr\'echet pre-$C^*$-algebra, then so also is
   $M_n(\sA) = M_n(\bC) \ox \sA$.
   \end{lem}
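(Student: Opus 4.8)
The plan is to give $M_n(\sA)=M_n(\bC)\ox\sA$ its natural Fréchet topology, reduce the assertion to a \emph{spectral invariance} property, and then establish that property by induction on $n$, the delicate point being the production of an invertible pivot. First I would fix an increasing sequence of seminorms $\{p_k\}$ defining the topology of $\sA$ and set $P_k\big((a_{ij})\big):=\max_{i,j}p_k(a_{ij})$. Since a finite product of complete metrizable spaces is complete and the entries of a matrix product are finite sums of products of entries, $\big(M_n(\sA),\{P_k\}\big)$ is a unital Fréchet algebra. As the inclusion $\sA\hookto A$ is continuous there are $C>0$ and an index $k_0$ with $\|a\|_A\le C\,p_{k_0}(a)$, and together with $\|(a_{ij})\|_{M_n(A)}\le n\max_{i,j}\|a_{ij}\|_A$ this shows that the Fréchet topology of $M_n(\sA)$ is finer than the $C^*$-norm topology it carries as a dense subalgebra of the $C^*$-algebra $M_n(A)$.

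By the discussion preceding Definition~\ref{df:pre-Cstar}, such a dense Fréchet subalgebra is stable under the holomorphic functional calculus of $M_n(A)$ as soon as it is spectrally invariant: for $T\in M_n(\sA)$ and $f$ holomorphic near $\spec(T)$ the integral~\eqref{eq:Dunford-int} is a limit of Riemann sums $\sum_i f(\lambda_i)(\lambda_i-T)^{-1}\,\Delta\lambda_i$, which lie in $M_n(\sA)$ once each resolvent does and converge there by completeness. Hence it suffices to prove that $T\in M_n(\sA)$ invertible in $M_n(A)$ forces $T^{-1}\in M_n(\sA)$; the base case $n=1$ is exactly the spectral invariance of $\sA$, which follows from applying the holomorphic calculus to $z\mapsto z^{-1}$.

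The hard part is that, a priori, no entry of $T$ need be invertible in $A$, so there is no evident pivot for a block inversion. I would bypass this with the involution. Put $P:=T^*T\in M_n(\sA)$; it is positive and invertible in $M_n(A)$, hence $P\ge\delta\,I$ for some $\delta>0$. Testing this operator inequality against the column $\xi=(1,0,\dots,0)^t$ gives $p_{11}=\xi^*P\xi\ge\delta\,\xi^*\xi=\delta\,1_A$, so the corner $p_{11}$ is invertible in $A$ and therefore in $\sA$ by the case $n=1$. Writing $P$ in block form with $(1,1)$-block $p_{11}$, the Schur complement $P/p_{11}=Q-u^*p_{11}^{-1}u\in M_{n-1}(\sA)$ is again positive and invertible in $M_{n-1}(A)$, so by the inductive hypothesis $(P/p_{11})^{-1}\in M_{n-1}(\sA)$. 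The block-inversion formula then expresses $P^{-1}$ through finite algebraic combinations of $p_{11}^{-1}$, $u$, $u^*$ and $(P/p_{11})^{-1}$, all lying in $M_n(\sA)$, whence $T^{-1}=P^{-1}T^*\in M_n(\sA)$.

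Two points will need care. The convergence of the Riemann sums above is convergence in the Fréchet topology of $M_n(\sA)$, which I would justify by the continuity of $\lambda\mapsto(\lambda-T)^{-1}$ into $M_n(\sA)$ --- itself a consequence of the openness of the invertible group and continuity of inversion in a Fréchet algebra, once spectral invariance is in hand. Secondly, the reduction $P=T^*T$ uses that $\sA$ is a $*$-subalgebra, so that $M_n(\sA)$ is stable under the $C^*$-involution of $M_n(A)$; this is the feature of the spectral-triple setting that makes the positivity trick, rather than a direct pivot search, available.
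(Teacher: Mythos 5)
Your proof is essentially correct, but note first that the paper itself offers no proof of Lemma~\ref{lm:preC-matrix}: it is quoted from \cite{hajac:toknotes}, so the comparison is with the standard argument there rather than with anything in this text. Your reduction (Fr\'echet structure on $M_n(\sA)$ via $P_k = \max_{i,j} p_k(a_{ij})$, then spectral invariance plus completeness $\Rightarrow$ stability under the Dunford integral \eqref{eq:Dunford-int}, with Riemann-sum convergence justified by openness of the unit group --- the preimage of $GL(M_n(A))$ under the continuous inclusion once invariance holds --- and Waelbroeck's theorem that a Fr\'echet $Q$-algebra has continuous inversion) matches the scheme the paper sketches before Definition~\ref{df:pre-Cstar}, and your induction is sound: $P = T^*T \geq \delta\,I$ gives $p_{11} = e_1^*(P-\delta I)e_1 + \delta\,1_A \geq \delta\,1_A$, so the pivot is invertible in $A$ and hence in $\sA$ by the $n=1$ case; congruence by the unitriangular $L = \bigl(\begin{smallmatrix} 1 & 0 \\ -u^*p_{11}^{-1} & I \end{smallmatrix}\bigr) \in M_n(\sA)$ shows the Schur complement is positive and invertible in $M_{n-1}(A)$, the inductive hypothesis applies, and $P^{-1} = L^*\,\mathrm{diag}\bigl(p_{11}^{-1}, (P/p_{11})^{-1}\bigr)\,L$, $T^{-1} = P^{-1}T^*$ land in $M_n(\sA)$. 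The genuine divergence from the standard route is the one you flag yourself: your pivot is manufactured by the involution, so you need $\sA$ to be a $*$-subalgebra of $A$, which Definition~\ref{df:pre-Cstar} does not require --- it demands only a subalgebra stable under holomorphic functional calculus. The proof behind the cited lemma (Schweitzer's argument, also Bost's) is involution-free: one first shows the unit group of $M_n(\sA)$ is open by Gaussian elimination on matrices near the identity, where the $(1,1)$ entry is automatically invertible because units are open in $\sA$, and then deduces spectral invariance; that buys full generality at the cost of a subtler openness argument. Your positivity trick buys a pivot that always exists, with a clean two-line verification, and is perfectly adequate for every use in this paper, since spectral-triple algebras are $*$-algebras; it would be worth stating the $*$-closedness hypothesis explicitly in the lemma if your proof is the one intended.
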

   
   \begin{lem}
   \label{lm:sS-preC}\cite{hajac:toknotes}
   The Schwartz algebra $\sS(\bR^n)$ is a nonunital pre-$C^*$-algebra.
   \end{lem}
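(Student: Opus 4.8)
The plan is to realise $\sS(\bR^n)$ as a dense $*$-subalgebra of the commutative $C^*$-algebra $A = C_0(\bR^n)$ (with complex conjugation as involution and the supremum norm), and then to verify the two properties that, according to the discussion preceding Definition~\ref{df:pre-Cstar}, characterise a (nonunital) pre-$C^*$-algebra: completeness in a topology finer than the $C^*$-norm, together with spectral invariance in the unitization. Closure of $\sS(\bR^n)$ under pointwise multiplication follows from the Leibniz rule, and the algebra is nonunital because the constant function $1$ does not vanish at infinity. The ambient unital algebra is the unitization $\Tilde A = \bC \oplus C_0(\bR^n)$, which is the algebra of continuous functions on the one-point compactification of $\bR^n$, and the candidate smooth subalgebra is $\bC \oplus \sS(\bR^n)$.

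First I would equip $\sS(\bR^n)$ with its usual Fr\'echet topology defined by the seminorms $p_{\alpha,\beta}(f) = \sup_{x}\,|x^\alpha \partial^\beta f(x)|$. It is a standard fact that $\sS(\bR^n)$ is complete in this topology, and since $p_{0,0}$ is exactly the supremum norm, the Fr\'echet topology is finer than the $C^*$-norm topology inherited from $A$. This supplies the completeness half of the requirement, which guarantees that the Riemann sums approximating the Dunford integral \eqref{eq:Dunford-int} converge inside $\bC \oplus \sS(\bR^n)$ once the resolvents are known to lie there.

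The heart of the argument is spectral invariance. A general element of $\bC \oplus \sS(\bR^n)$ has the form $\lambda 1 + f$; if $\lambda = 0$ it is non-invertible in $\Tilde A$ because $f$ vanishes at infinity, and if $\lambda \neq 0$ we may rescale, so it suffices to treat $1 + f$ with $f \in \sS(\bR^n)$. Invertibility of $1+f$ in $\Tilde A$ forces $1 + f(x) \neq 0$ for all $x$, and since $1 + f(x) \to 1$ as $|x| \to \infty$ this yields a uniform lower bound $|1 + f(x)| \geq \delta > 0$. I would then set $g := (1+f)^{-1} - 1 = -f/(1+f)$ and show $g \in \sS(\bR^n)$. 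Writing $u = 1 + f$, an induction on the order of differentiation via the Leibniz and chain rules shows that for $|\alpha| \geq 1$ the derivative $\partial^\alpha(u^{-1})$ is a finite linear combination of terms $u^{-m}\,(\partial^{\gamma_1} f)\cdots(\partial^{\gamma_r} f)$ with $r \geq 1$, $m = r+1$, and $\sum_i \gamma_i = \alpha$. Each factor $u^{-m}$ is bounded by $\delta^{-m}$, while the product of derivatives of $f$ lies in $\sS(\bR^n)$; hence every such term, and therefore $\partial^\alpha g$, is rapidly decreasing, so $g \in \sS(\bR^n)$ and $(1+f)^{-1} \in \bC \oplus \sS(\bR^n)$.

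Combining these observations, spectral invariance holds, so for $a \in \bC \oplus \sS(\bR^n)$ and $\lambda \notin \spec(a)$ the resolvent $(\lambda 1 - a)^{-1}$ lies in $\bC \oplus \sS(\bR^n)$; by completeness in the finer Fr\'echet topology, the holomorphic functional calculus integral \eqref{eq:Dunford-int} stays in $\bC \oplus \sS(\bR^n)$, which establishes that $\sS(\bR^n)$ is a nonunital pre-$C^*$-algebra. I expect the main obstacle to be the derivative estimate, namely verifying rigorously that all Schwartz seminorms of $g$ are finite, i.e. that differentiating $(1+f)^{-1}$ preserves rapid decay; this is precisely where the uniform lower bound $|1+f| \geq \delta$ and the multiplicative stability of $\sS(\bR^n)$ must be used together.
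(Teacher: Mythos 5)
Your proposal is correct and takes essentially the same route as the standard argument in the cited source (the paper itself states this lemma without proof, referring to \cite{hajac:toknotes}): pass to the unitization $\bC \oplus \sS(\bR^n)$ inside $\bC \oplus C_0(\bR^n)$, extract the uniform lower bound $|1+f| \geq \delta$ from invertibility on the one-point compactification, and verify $-f/(1+f) \in \sS(\bR^n)$ by the Leibniz/quotient-rule induction, with Fr\'echet completeness in the seminorms $p_{\alpha,\beta}$ ensuring the Dunford integral \eqref{eq:Dunford-int} stays in the subalgebra. The key estimate you single out --- that $\partial^\alpha\bigl((1+f)^{-1}\bigr)$ is a combination of terms $(1+f)^{-m}(\partial^{\gamma_1}f)\cdots(\partial^{\gamma_r}f)$, each bounded by $\delta^{-m}$ times a Schwartz function --- is exactly the crux of the cited proof, so there is nothing to add.
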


  We state, without proof, two important facts about Fr\'echet
   pre-$C^*$-algebras.

   \begin{fact}\cite{hajac:toknotes}
   If $\sA$ is a Fr\'echet pre-$C^*$-algebra and $A$ is
   its $C^*$-completion, then $\rK_j(\sA) = \rK_j(A)$ for $j = 0,1$. More 
   precisely, if $i\: \sA \to A$ is the (continuous, dense) inclusion,
   then $i_* \: \rK_j(\sA) \to \rK_j(A)$ is an surjective isomorphism,
   for $j = 0$ or~$1$.
   \end{fact}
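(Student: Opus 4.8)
The plan is to establish the isomorphism separately for $j=1$ (via invertibles) and $j=0$ (via idempotents), in each case reducing $i_*$ to a surjectivity and an injectivity statement, and to power both by spectral permanence together with the holomorphic functional calculus. The single structural input I would use repeatedly is that, by Lemma \ref{lm:preC-matrix}, every matrix algebra $M_n(\sA)$ is again a Fr\'echet pre-$C^*$-algebra sitting densely inside $M_n(A)$ and stable under its holomorphic calculus; in particular $M_n(\sA) \cap M_n(A)^\x = M_n(\sA)^\x$, so an element of $M_n(\sA)$ invertible in $M_n(A)$ already has its inverse in $M_n(\sA)$, and its spectrum is the same computed in either algebra. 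Since $\rK_j$ is built from the groups $GL_n$ and the semigroups of idempotents over $M_n$, it suffices to prove the corresponding statements at each finite matrix level and then pass to the inductive limit in $n$.

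For $\rK_1$ I would show that $GL_n(\sA) \hookrightarrow GL_n(A)$ induces a bijection on connected components. Surjectivity is immediate: the invertibles form an open set and $\sA$ is norm-dense in $A$, so $GL_n(\sA)$ is dense in $GL_n(A)$ and meets every component. For injectivity, given $u,v \in GL_n(\sA)$ joined by a path in $GL_n(A)$, I would subdivide the path into short arcs, write each transition $\gamma(t)\gamma(s)^{-1}$ (norm-close to $1$) as $\exp b$ with $b = \log(\gamma(t)\gamma(s)^{-1})$ obtained from the holomorphic calculus \eqref{eq:Dunford-int}, approximate each $b$ by some $b' \in M_n(\sA)$, and replace the arc by $t \mapsto \exp(tb')$, which stays in $GL_n(\sA)$ because $\sA$ is complete in its finer topology and closed under the holomorphic calculus. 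Stringing these corrected arcs together yields a path inside $GL_n(\sA)$ with the same endpoints, so $[u]=[v]$ in $\rK_1(\sA)$.

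For $\rK_0$ I would work with idempotents. For surjectivity, given an idempotent $e \in M_n(A)$, choose $a \in M_n(\sA)$ with $\|a-e\|$ small; then $\spec(a)$ is confined to two small discs about $0$ and $1$ and avoids the line $\Re z = \tfrac12$, so a holomorphic $f$ equal to $0$ near the first cluster and $1$ near the second produces a genuine idempotent $p=f(a)$. By spectral permanence $p \in M_n(\sA)$, and continuity of the functional calculus makes $p$ close to $f(e)=e$, whence $p \sim e$ in $M_n(A)$ and $i_*[p]_0 = [e]_0$. For injectivity, if $p,q \in M_n(\sA)$ satisfy $[p]_0=[q]_0$ in $\rK_0(A)$, then after stabilizing they become conjugate by some invertible in $GL_N(A)$; that invertible lies in $M_N(\sA)^\x$ by spectral invariance, so $p$ and $q$ are already equivalent over $M_N(\sA)$ and $[p]_0=[q]_0$ in $\rK_0(\sA)$.

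The density and openness arguments are routine; the delicate point — and the place where the pre-$C^*$ hypothesis does all the work — is guaranteeing that the correcting elements (the logarithms and exponentials in the $\rK_1$ step, and the idempotent $f(a)$ in the $\rK_0$ step) actually land back in $\sA$, and in every $M_n(\sA)$. This is precisely stability under the holomorphic functional calculus, promoted to all matrix levels by Lemma \ref{lm:preC-matrix} and made effective by completeness of $\sA$ in a topology finer than the $C^*$-norm, so that the Riemann sums approximating the Dunford integral \eqref{eq:Dunford-int} converge in $\sA$. Once that is in hand, every equivalence visible in $A$ pulls back to $\sA$, and passing to the limit over $n$ gives $i_* : \rK_j(\sA) \xrightarrow{\isom} \rK_j(A)$ for $j=0,1$.
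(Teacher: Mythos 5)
Note first that the paper does not actually prove this Fact: it is stated explicitly without proof and attributed to Bost~\cite{bost:oka}, with only the remarks that spectral invariance drives $\rK_0$ and that $\rK_1$ requires a topological $\rK_1$ for ``good'' Fr\'echet algebras. So your proposal is an attempt to supply the missing argument, and your general toolbox --- density, Lemma~\ref{lm:preC-matrix} to promote spectral invariance to all $M_n(\sA)$, and stability under the holomorphic calculus \eqref{eq:Dunford-int} to pull corrections back into $\sA$ --- is exactly the right machinery. But your $\rK_0$ injectivity step contains a genuine error: you assert that the invertible $w \in GL_N(A)$ conjugating $p$ to $q$ ``lies in $M_N(\sA)^\x$ by spectral invariance.'' Spectral invariance says $M_N(\sA) \cap M_N(A)^\x = M_N(\sA)^\x$; it returns the \emph{inverse} of an element that is already in $M_N(\sA)$, but it cannot place an arbitrary invertible of the completion into the dense subalgebra, and in general $w \notin M_N(\sA)$. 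The repair is standard but must be said: approximate $w$ in norm by $w' \in M_N(\sA)$ close enough that $w'$ is invertible in $M_N(A)$ (hence $w'^{-1} \in M_N(\sA)$ by spectral invariance); then $p' := w' p w'^{-1} \in M_N(\sA)$ is an idempotent norm-close to $q$, and $z := p'q + (1-p')(1-q)$ is a polynomial in $p', q \in M_N(\sA)$, is norm-close to $1$ hence invertible with $z^{-1} \in M_N(\sA)$, and satisfies $zqz^{-1} = p'$. This exhibits the equivalence of $p$ and $q$ entirely inside $M_N(\sA)$.

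A second, smaller gap sits in your $\rK_1$ injectivity argument: after replacing each arc $\exp(b)$ by $\exp(b')$ with $b' \in M_n(\sA)$, the concatenated path does \emph{not} have the same endpoints --- it runs from $u$ to $v' := \exp(b'_{k-1})\cdots\exp(b'_0)\,u$, which only approximates $v$. You need the final patch: $v(v')^{-1} \in GL_n(\sA)$ is norm-close to $1$, so $\log\bigl(v(v')^{-1}\bigr) \in M_n(\sA)$ by the holomorphic calculus and supplies a connecting path from $v'$ to $v$ inside $GL_n(\sA)$. You should also acknowledge the point the paper itself flags: for $\rK_1$ one must first fix the definition of topological $\rK_1$ for Fr\'echet algebras (via $\pi_0$ of $GL_n$ in the Fr\'echet topology, legitimate because pre-$C^*$-algebras are ``good'': open invertibles, continuous inversion), and check that paths such as $t \mapsto \exp(tb')$ are continuous in the Fr\'echet topology, not merely in norm. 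With these repairs your argument goes through and is essentially the Bost-type proof the paper cites.
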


   This invariance of $\rK$-theory was proved by Bost~\cite{bost:oka}.
   For~$\rK_0$, the spectral invariance plays the main role. For~$\rK_1$,
   one must first formulate a topological $\rK_1$-theory is a category of
   ``good'' locally convex algebras (thus whose invertible elements form 
   an open subset and for which inversion is continuous), and it is 
   known that Fr\'echet pre-$C^*$-algebras are ``good'' in this sense.

   \begin{fact}\cite{hajac:toknotes}
   If $(\sA, \sH, D)$ is a regular  spectral triple, we can confer on
   $\sA$ the topology given by the seminorms
   \begin{equation}
   q_k(a) := \|\dl^k(a)\|, \quad  q_k'(a) := \|\dl^k([D,a])\|,
   \word{for each} k \in \bN.
   \label{eq:delta-top}
   \end{equation}
   The completion $\sA_{\dl}$ of $\sA$ is then a Fr\'echet
   pre-$C^*$-algebra, and $(\sA_{\dl}, \sH, D)$ is again a regular spectral
   triple.
   \end{fact}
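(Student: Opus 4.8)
The plan is to establish three things in turn: that the seminorms \eqref{eq:delta-top} make $\sA_\dl$ a Fréchet $*$-algebra, that this algebra is spectrally invariant inside its $C^*$-completion (hence a pre-$C^*$-algebra), and finally that the enlarged triple $(\sA_\dl, \sH, D)$ still satisfies all the spectral-triple axioms together with regularity. First I would observe that $\dl^0 = \id$, so $q_0(a) = \|a\|$ is the operator norm; the countable family $\{q_k, q_k'\}_{k \in \bN}$ therefore defines a Hausdorff topology strictly finer than the $C^*$-norm topology, and its completion $\sA_\dl$ is by construction a Fréchet space whose operator-norm closure is the original $C^*$-algebra $A$. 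To see that $\sA_\dl$ is a Fréchet algebra I would invoke that $\dl$ is a derivation, so the iterated Leibniz rule
\[
\dl^k(ab) = \sum_{j=0}^k \binom{k}{j}\,\dl^j(a)\,\dl^{k-j}(b)
\]
gives $q_k(ab) \le \sum_{j=0}^k \binom{k}{j} q_j(a)\,q_{k-j}(b)$; combining this with $[D,ab] = [D,a]\,b + a\,[D,b]$ and applying $\dl^k$ again yields
\[
q_k'(ab) \le \sum_{j=0}^k \binom{k}{j}\bigl(q_j'(a)\,q_{k-j}(b) + q_j(a)\,q_{k-j}'(b)\bigr),
\]
which uses exactly the regularity hypothesis $[D,a] \in \bigcap_k \Dom\dl^k$. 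These estimates make multiplication jointly continuous, so it extends to $\sA_\dl$. The involution is controlled by $\dl(T^*) = -\dl(T)^*$ and $[D,a^*] = -[D,a]^*$ (both from self-adjointness of $|D|$ and $D$), giving $q_k(a^*) = q_k(a)$ and $q_k'(a^*) = q_k'(a)$, so $a \mapsto a^*$ is isometric for every seminorm and extends continuously.

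The essential point is spectral invariance, i.e. $\sA_\dl \cap A^\x = \sA_\dl^\x$. Given $x \in \sA_\dl$ with $x^{-1} \in A$, the derivation identity $\dl(x^{-1}) = -x^{-1}\dl(x)x^{-1}$ and its iterates express each $\dl^k(x^{-1})$ as a finite sum of products of the bounded operators $x^{-1}$ and $\dl^j(x)$ with $j \le k$, so every $q_k(x^{-1})$ is finite; the companion identity $[D,x^{-1}] = -x^{-1}[D,x]x^{-1}$, differentiated by $\dl^k$ and expanded by Leibniz, bounds every $q_k'(x^{-1})$ in terms of quantities already shown finite. Using that each $\dl^k$ is a closed operator, a sequence Cauchy in all seminorms converges in norm to an operator lying in $\bigcap_k \Dom\dl^k$ with bounded, likewise smooth, commutator with $D$, which is how I would place $x^{-1}$ back inside the completion. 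With spectral invariance established, the Dunford integral $f(a) = \frac{1}{2\pi i}\oint_\Ga f(\la)(\la - a)^{-1}\,d\la$ has a continuous $\sA_\dl$-valued integrand on the compact contour $\Ga$ and converges in the Fréchet topology by completeness, so $\sA_\dl$ is stable under holomorphic functional calculus, i.e. a pre-$C^*$-algebra. I expect this step to be the main obstacle: the inverse formulas are a routine induction, but the delicate part is justifying that finiteness of all seminorms genuinely places $x^{-1}$ in the \emph{completion} of $\sA$ rather than merely in the ambient algebra of smooth operators, and this rests entirely on the closedness of the derivations $\dl^k$ and must be argued with care.

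Finally I would verify that $(\sA_\dl, \sH, D)$ is again a regular spectral triple. The operator $D$ is unchanged, so it stays self-adjoint with compact resolvent, and the representation of $\sA_\dl \subset B(\sH)$ is faithful. For each $a \in \sA_\dl$ one has $q_0'(a) = \|[D,a]\| < \infty$, so $[D,a]$ is bounded; writing $a$ as a norm-limit of $a_n \in \sA$ with $[D,a_n] \to [D,a]$ and using that $D$ is closed shows $a(\Dom D) \subseteq \Dom D$. Regularity is then immediate from the construction, since finiteness of every $q_k(a)$ and $q_k'(a)$ says precisely that $a$ and $[D,a]$ lie in $\bigcap_{k \in \bN}\Dom\dl^k$.
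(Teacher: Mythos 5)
You should first note that the paper itself offers no proof of this Fact: it is quoted verbatim from the lecture notes \cite{hajac:toknotes}, with the surrounding text explicitly saying the facts are stated without proof and that the properties of the completed triple are due to Rennie~\cite{rennie:smooth_nonunital}. So your proposal can only be measured against the standard literature argument, not against anything in the paper. Your first and third stages are sound: the iterated Leibniz estimates for $q_k(ab)$ and $q_k'(ab)$ do make $\sA_\dl$ a Fr\'echet $*$-algebra, the re-verification of the spectral-triple axioms for $(\sA_\dl, \sH, D)$ via closedness of $D$ is correct, and once spectral invariance and completeness are in hand, convergence of the Dunford integral is exactly the mechanism the paper's pre-$C^*$-algebra discussion describes. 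The genuine gaps sit in your middle step, in two places. First, the identity $\dl(x^{-1}) = -x^{-1}\dl(x)x^{-1}$ is not available until you already know $x^{-1} \in \Dom\dl$, i.e.\ that $x^{-1}$ preserves $\Dom|D|$ and has bounded commutator with $|D|$ --- and that is part of what must be proved: invertibility of $x$ in $A$ together with $x(\Dom|D|) \subseteq \Dom|D|$ does not by itself yield $x^{-1}(\Dom|D|) \subseteq \Dom|D|$. Your inductive formulas therefore establish finiteness of the seminorms of $x^{-1}$ only conditionally.

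Second, and more seriously, even granting finiteness of all $q_k(x^{-1})$ and $q_k'(x^{-1})$, this only places $x^{-1}$ in the maximal smooth subalgebra $\left\{a \in A \ : \ a,\,[D,a] \in \bigcap_{k}\Dom\dl^k\right\}$, which may be strictly larger than the completion $\sA_\dl$ of $\sA$, whereas the Fact asserts membership in the latter. You correctly locate this difficulty but reach for the wrong tool: closedness of the $\dl^k$ shows that Fr\'echet--Cauchy sequences \emph{from} $\sA$ converge to elements of the smooth domain --- the implication runs in the wrong direction and cannot manufacture approximants of $x^{-1}$ out of $\sA$. The standard repair closes both gaps simultaneously: since the triple is unital, reduce to the positive invertible element $a = x^*x \in \sA_\dl$, choose $M'$ so that $y = 1 - a/M'$ satisfies $\|y\| \le r < 1$, and run the Neumann series inside $\sA_\dl$ itself. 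By the Leibniz rule, in $\dl^k(y^n)$ at most $k$ of the $n$ factors carry a derivative, so $q_k(y^n) \le C_k\, n^k\, r^{n-k}$, and an analogous estimate holds for $q_k'(y^n)$; hence $\sum_n y^n$ converges in the Fr\'echet topology of the \emph{complete} algebra $\sA_\dl$, which yields $a^{-1} \in \sA_\dl$ and then $x^{-1} = a^{-1}x^* \in \sA_\dl$. The domain statements for $x^{-1}$ fall out of the same norm-convergent differentiated series via closedness of $\dl$ --- which is where that tool is legitimately used, alongside its other proper role of identifying the abstract completion with a concrete subalgebra of $B(\sH)$.
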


   These properties of the completed  spectral triple are due to 
   Rennie~\cite{rennie:smooth_nonunital}. We now discuss another result of Rennie, namely 
   that such completed algebras of regular spectral triples are endowed 
   with a {\it $\Coo$ functional calculus}.

   \begin{prop}\cite{hajac:toknotes}
   \label{pr:Coo-funccalc}
   If $(\sA, \sH, D)$ is a regular spectral triple, for which $\sA$ is
   complete in the Fr\'echet topology determined by the
   seminorms~\eqref{eq:delta-top}, then $\sA$ admits a $\Coo$-functional
   calculus. Namely, if $a = a^* \in \sA$, and if $f\: \bR \to \bC$ is a
   compactly supported smooth function whose support includes a
   neighbourhood of $\spec(a)$, then the following element $f(a)$ lies
   in~$\sA$:
   \begin{equation}
   f(a) := \inv{2\pi} \int_{\bR} \hat f(s) \exp(isa) \,ds.
   \label{eq:Coo-funccalc}
   \end{equation}
   \end{prop}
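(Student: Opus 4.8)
The plan is to show that the operator-valued integral \eqref{eq:Coo-funccalc} converges not merely in the enveloping $C^*$-algebra $A$ but already in the finer Fr\'echet topology on $\sA$ determined by the seminorms \eqref{eq:delta-top}, and then to invoke the assumed completeness of $\sA$. First I would identify the right-hand side of \eqref{eq:Coo-funccalc} with the ordinary continuous functional calculus. Since $a = a^* \in \sA \subset A$ is a bounded self-adjoint operator, the spectral theorem together with the Fourier inversion formula gives $\frac{1}{2\pi}\int_{\bR}\hat f(s)\,\exp(isa)\,ds = f(a)$ as an element of $A$, and this element depends only on the restriction $f|_{\spec(a)}$; thus the compactly supported smooth $f$ does produce a well-defined element of $A$. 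The whole content of the proposition is that this element in fact lies in the smooth subalgebra $\sA$.

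Since the preceding facts show that $\sA$ is a Fr\'echet pre-$C^*$-algebra, it is stable under the holomorphic functional calculus of $A$; as $\exp$ is entire and $isa$ is skew-adjoint, each unitary $\exp(isa)$ already lies in $\sA$, so every seminorm $q_k(\exp(isa)) = \nor{\dl^k(\exp(isa))}$ and $q_k'(\exp(isa)) = \nor{\dl^k([D,\exp(isa)])}$ is finite. The technical core is to control their growth in the parameter $s$. The tool is Duhamel's formula
\[
\dl\bigl(\exp(isa)\bigr) = is\int_0^1 \exp(iusa)\,\dl(a)\,\exp\bigl(i(1-u)sa\bigr)\,du,
\]
which, because every factor $\exp(ivsa)$ is unitary, yields $q_1(\exp(isa)) \le |s|\,q_1(a)$. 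As $\dl$ is a derivation, applying it $k$ times to the integrand and expanding by the Leibniz rule exhibits $\dl^k(\exp(isa))$ as a finite sum of products of unitaries $\exp(ivsa)$ and lower-order terms $\dl^j(a)$ with $j \le k$, each carrying a scalar monomial in $s$ of degree at most $k$. Hence there is a polynomial $P_k$ of degree $\le k$, with coefficients depending only on $q_1(a),\dots,q_k(a)$, such that $q_k(\exp(isa)) \le P_k(|s|)$. The same computation applied to the companion Duhamel formula
\[
[D,\exp(isa)] = is\int_0^1 \exp(iusa)\,[D,a]\,\exp\bigl(i(1-u)sa\bigr)\,du,
\]
together with regularity of the triple (so that $[D,a] \in \bigcap_{k}\Dom\dl^k$), produces a polynomial $P_k'(|s|)$, whose coefficients involve the $q_j(a)$ and $q_j'(a)$ for $j\le k$, bounding $q_k'(\exp(isa))$.

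Finally I would assemble the convergence argument. Because $f$ is smooth with compact support, its Fourier transform $\hat f$ belongs to the Schwartz class $\SS(\bR)$ and decays faster than any power of $|s|$. Consequently, for every $k$ the integrals $\int_{\bR} |\hat f(s)|\,P_k(|s|)\,ds$ and $\int_{\bR}|\hat f(s)|\,P_k'(|s|)\,ds$ are finite, so the $\sA$-valued map $s \mapsto \hat f(s)\exp(isa)$ is Bochner-integrable with respect to each seminorm $q_k$ and $q_k'$. Since $\sA$ is complete in the associated Fr\'echet topology, the Bochner integral $\frac{1}{2\pi}\int_{\bR}\hat f(s)\exp(isa)\,ds$ exists in $\sA$; its image under the continuous inclusion $\sA \hookrightarrow A$ is the element $f(a)$ of the first step, whence $f(a) \in \sA$.

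The step I expect to be the main obstacle is the second one: justifying Duhamel's formula at the level of unbounded commutators with $|D|$ --- that is, verifying that $\exp(isa)$ preserves $\Dom|D|$ and that the commutator $[|D|,\exp(isa)]$ equals the norm-convergent integral displayed above --- and then carrying out the inductive Leibniz bookkeeping carefully enough to read off that the $k$-th seminorm bound is a polynomial in $s$ of degree at most $k$ (uniformly in the Duhamel integration variable $u$). Once the polynomial bounds $P_k$ and $P_k'$ are secured, the rapid decay of $\hat f$ makes the remaining convergence and the passage to the limit entirely routine.
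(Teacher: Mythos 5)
Your argument is correct and takes essentially the same route as the proof in the cited source (the paper states Proposition \ref{pr:Coo-funccalc} without proof, referring to \cite{hajac:toknotes}): the Fourier-inversion identification of $f(a)$, the Duhamel formulas for $\dl(\exp(isa))$ and $[D,\exp(isa)]$ with iterated Leibniz expansion yielding seminorm bounds polynomial in $|s|$ of degree at most $k$, and the rapid decay of $\hat f$ combined with Fr\'echet completeness of $\sA$ are precisely the steps of the standard argument. The domain issue you flag as the main obstacle is handled exactly as you anticipate, since $a \in \Dom\dl$ already guarantees that the unitaries $\exp(isa)$ preserve $\Dom|D|$ and that the commutator integral converges in norm.
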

   
   
 Before showing how this smooth functional calculus can yield useful
   results, we pause for a couple of technical lemmas on approximation of
   idempotents and projectors, in Fr\'echet pre-$C^*$-algebras. The first
   is an adaptation of a proposition of~\cite{bost:oka}

   \begin{lem}
   \label{lm:almost-idemp}\cite{hajac:toknotes}
   Let $\sA$ be an unital Fr\'echet pre-$C^*$-algebra, with $C^*$-norm
   $\|\cdot\|$. Then for each $\eps$ with $0 < \eps < \frac{1}{8}$, we
   can find $\dl \leq \eps$ such that, for each $v \in \sA$ with
   $\|v - v^2\| < \dl$ and $\|1 - 2v\| < 1 + \dl$, there is an idempotent
   $e = e^2 \in \sA$ such that $\|e - v\| < \eps$.
   \end{lem}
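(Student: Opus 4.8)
The plan is to linearise the idempotent condition by passing to the variable $u := 1 - 2v$, for which $v = \tfrac12(1-u)$ and idempotency of $v$ is equivalent to $u$ being an involution. A direct computation gives $u^2 = 1 - 4(v - v^2)$, so the hypothesis $\|v - v^2\| < \dl$ becomes $\|1 - u^2\| < 4\dl$, while $\|1 - 2v\| < 1 + \dl$ is simply $\|u\| < 1 + \dl$. The two hypotheses thus play distinct roles: the first forces $u^2$ to be close to $1$ (hence invertible), and the second keeps $\|u\|$ bounded, which is essential for the final estimate (note that $\|u\|$ is \emph{not} controlled by $\|u^2\|$ for non-normal $u$).

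Next I would manufacture a genuine involution from $u$. Since the spectral radius satisfies $r(1 - u^2) \le \|1 - u^2\| < 4\dl < 1$, the spectrum $\spec(u^2)$ lies in the disk $\{z : |z-1| < 4\dl\}$, on which the principal branch of $z \mapsto z^{-1/2}$ is holomorphic. Because $\sA$ is a pre-$C^*$-algebra, it is stable under the holomorphic functional calculus of its completion $A$, so $b := (u^2)^{-1/2}$ is a bona fide element of $\sA$. As $b$ is a holomorphic function of $u^2$ it commutes with $u$, whence $w := u b$ satisfies
\[
w^2 = u^2 b^2 = u^2 (u^2)^{-1} = 1 .
\]
Setting $e := \tfrac12(1 - w)$ then gives $e^2 = \tfrac14(1 - 2w + w^2) = \tfrac14(2 - 2w) = e$, so $e \in \sA$ is the required idempotent.

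It remains to bound $\|e - v\|$, which pins down the admissible $\dl$. Since $e - v = \tfrac12(u - w)$ we have
\[
\|e - v\| = \tfrac12\,\|u - w\| = \tfrac12\,\|u\|\,\bigl\|1 - (u^2)^{-1/2}\bigr\| \le \tfrac12 (1+\dl)\,\bigl\|1 - (u^2)^{-1/2}\bigr\| .
\]
Writing $x := 1 - u^2$ with $\|x\| < 4\dl$ and expanding $(1-x)^{-1/2} - 1 = \sum_{k\ge 1}\binom{2k}{k}4^{-k}\,x^k$ with positive coefficients, the $C^*$-norm of the remainder is majorised by the scalar series, so $\bigl\|1 - (u^2)^{-1/2}\bigr\| \le (1-4\dl)^{-1/2} - 1$. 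Hence
\[
\|e - v\| \le \tfrac12 (1+\dl)\bigl((1-4\dl)^{-1/2} - 1\bigr),
\]
whose right-hand side tends to $0$ as $\dl \to 0$. Choosing $\dl \le \eps$ small enough that this bound is $< \eps$ (which in particular forces $4\dl < 1$) finishes the argument.

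The only genuinely delicate point I expect is the verification that $(u^2)^{-1/2}$ really lands in $\sA$ and not merely in the $C^*$-completion $A$: this is exactly what the pre-$C^*$-algebra hypothesis (stability under holomorphic functional calculus) supplies, and it is also what allows every estimate above to be performed with the single $C^*$-norm $\|\cdot\|$, since then $\spec_{\sA}(u^2) = \spec_A(u^2)$. Once that is granted, the remainder is routine scalar estimation, and the involution trick $w = u(u^2)^{-1/2}$ is what makes the construction explicit rather than merely existential.
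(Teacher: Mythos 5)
Your proof is correct and follows essentially the same route as the source the paper cites for this lemma (the paper states Lemma \ref{lm:almost-idemp} without reproducing a proof): the substitution $u = 1-2v$, the involution $w = u\,(u^2)^{-1/2}$ built via the holomorphic functional calculus that the pre-$C^*$-algebra hypothesis keeps inside $\sA$, the idempotent $e = \tfrac12(1-w)$, and the binomial-series estimate $\bigl\|1-(u^2)^{-1/2}\bigr\| \le (1-4\dl)^{-1/2}-1$ constitute exactly the standard argument, and you correctly identify why the hypothesis $\|1-2v\| < 1+\dl$ is indispensable for non-normal $v$. One cosmetic slip: in your display bounding $\|e-v\|$ the middle step should be the inequality $\tfrac12\|u-w\| = \tfrac12\bigl\|u\bigl(1-(u^2)^{-1/2}\bigr)\bigr\| \le \tfrac12\|u\|\,\bigl\|1-(u^2)^{-1/2}\bigr\|$ rather than an equality, which does not affect the conclusion.
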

   
  Lemma~\ref{lm:almost-idemp} says that in a unital Fr\'echet
   pre-$C^*$-algebra $\sA$, an ``almost idempotent'' $v \in \sA$ that is
   not far from being a projector (since $\|1 - 2v\|$ is close to~$1$)
   can be retracted to a genuine idempotent in~$\sA$. The next Lemma says
   that projectors in the $C^*$-completion of~$\sA$ can be approximated
   by projectors lying in~$\sA$.

   \begin{lem}
   \label{lm:proj-approx}\cite{hajac:toknotes}
   Let $\sA$ be an unital Fr\'echet pre-$C^*$-algebra, whose
   $C^*$-completion is~$A$. If $\tilde{q} = \tilde{q}^2 = \tilde{q}^*$ is
   a projector in $A$, then for any $\eps > 0$, we can find a projector
   $q = q^2 = q^* \in \sA$ such that $\|q - \tilde{q}\| < \eps$.
   \end{lem}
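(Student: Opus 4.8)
The plan is to produce the projector in three moves: approximate $\tilde q$ by a self-adjoint element of $\sA$ which is \emph{almost} idempotent, straighten that element to a genuine idempotent of $\sA$ via Lemma~\ref{lm:almost-idemp}, and finally repair the idempotent into a self-adjoint projector that still lies in $\sA$. Throughout, the one structural fact doing the work is that $\sA$ is a pre-$C^*$-algebra, i.e.\ stable under the holomorphic functional calculus of $A$; equivalently $\sA \cap A^\x = \sA^\x$, so that any element of $\sA$ invertible in $A$ has its inverse already in $\sA$.

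First I would use that $\sA$ is dense in $A$ for the $C^*$-norm to pick $b \in \sA$ with $\nor{b - \tilde q}$ as small as I like, and pass to its self-adjoint part $v := \tfrac12(b + b^*) \in \sA$. Since $\tilde q = \tilde q^*$ one has $\nor{v - \tilde q} \le \nor{b - \tilde q}$, so $v = v^*$ can be made arbitrarily close to $\tilde q$. Because $\tilde q = \tilde q^2$ and $1 - 2\tilde q$ is a self-adjoint unitary (so $\nor{1 - 2\tilde q} = 1$), continuity of the algebraic operations yields $\nor{v - v^2} < \dl$ and $\nor{1 - 2v} < 1 + \dl$ for $v$ close enough to $\tilde q$, where $\dl \le \eps$ is the number supplied by Lemma~\ref{lm:almost-idemp}. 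That lemma then delivers an idempotent $e = e^2 \in \sA$ with $\nor{e - v}$ small, hence $\nor{e - \tilde q}$ small.

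The idempotent $e$ need not be self-adjoint, so the final move is the standard straightening. Set $h := 1 + (e - e^*)(e^* - e)$. As $(e - e^*)(e^* - e) = (e - e^*)(e - e^*)^* \ge 0$, we have $h \ge 1$, so $h$ is invertible in $A$, and by the pre-$C^*$ property $\sA \cap A^\x = \sA^\x$ its inverse $h^{-1}$ already lies in $\sA$. Using $e^2 = e$ one checks $h e = e\,e^* e$, whence $h$ commutes with $e\,e^*$; consequently
\[
q := e\,e^*\,h^{-1} \in \sA
\]
is a self-adjoint idempotent, $q = q^* = q^2$, with the same range as $e$. Since $e$ is close to the self-adjoint $\tilde q$, the skew part $e - e^*$ is small, so $h$ is close to $1$ and $q$ is close to $e\,e^*$, which is in turn close to $\tilde q^2 = \tilde q$.

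I expect the only real labor to be the norm bookkeeping that turns ``choose $b$ close to $\tilde q$'' into the final bound $\nor{q - \tilde q} < \eps$: fix $\eps$, read off $\dl$ from Lemma~\ref{lm:almost-idemp}, then make $\nor{b - \tilde q}$ small enough that simultaneously $\nor{v - v^2} < \dl$, $\nor{1 - 2v} < 1 + \dl$, and the straightening error stays within the remaining budget. The conceptual crux, on which everything rests, is the pre-$C^*$-algebra hypothesis: it is what lets Lemma~\ref{lm:almost-idemp} operate inside $\sA$ and what keeps $h^{-1}$ in $\sA$, so that $q$ is genuinely an element of $\sA$ rather than merely of $A$. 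Alternatively, the construction can be done in one stroke: the self-adjoint $v$ has $\spec(v)$ confined to small neighbourhoods of $0$ and $1$ and avoiding $\tfrac12$, so the Riesz projection $\tfrac{1}{2\pi i}\oint_\Ga (\la - v)^{-1}\,d\la$ around the part of $\spec(v)$ near $1$ lies in $\sA$ by holomorphic functional calculus and is self-adjoint for a contour $\Ga$ symmetric about $\R$.
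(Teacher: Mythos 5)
The paper itself offers no proof of this lemma: it is stated with a citation to the lecture notes, so there is nothing internal to compare against. Your argument is precisely the standard proof from that literature, and it is sound: approximate $\tilde q$ in $C^*$-norm by $b \in \sA$, symmetrize to $v = \tfrac12(b+b^*)$ (using that $\sA$ is a $*$-subalgebra, which you should state, since the paper's definition of pre-$C^*$-algebra mentions only stability under holomorphic functional calculus), check the hypotheses $\nor{v - v^2} < \dl$ and $\nor{1-2v} < 1+\dl$ of Lemma~\ref{lm:almost-idemp} exactly as you do, and then straighten the resulting idempotent $e$ by Kaplansky's formula $q = e e^* h^{-1}$ with $h = 1 + (e-e^*)(e^*-e)$. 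One small point to tighten: from $he = ee^*e$ alone, commutation of $h$ with $ee^*$ does not follow; you also need $eh = ee^*e$ (both hold by direct expansion of $h = 1 - e - e^* + ee^* + e^*e$ using $e^2 = e$), which gives $[h,e] = 0$, then $[h,e^*] = 0$ by taking adjoints since $h = h^*$, and only then $q^* = h^{-1}ee^* = q$ and $q^2 = q$. The norm bookkeeping you defer is indeed routine: $\nor{e - e^*} \le 2\nor{e - \tilde q}$, $\nor{h - 1} \le \nor{e-e^*}^2$, $h \ge 1$ so $\nor{h^{-1}} \le 1$, and $\nor{ee^* - \tilde q}$ is controlled by $\nor{e - \tilde q}$. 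Your alternative via the Riesz projection is also correct and arguably more economical here, since the paper's definition of pre-$C^*$-algebra is literally stability under the holomorphic functional calculus: for self-adjoint $v$ close to $\tilde q$, $\spec(v)$ lies in small neighbourhoods of $\{0,1\}$ (Hausdorff continuity of spectra of self-adjoint elements), the contour integral around the component near $1$ lands in $\sA$, agrees with the continuous functional calculus $\chi(v)$ for a real-valued $\chi$, hence is a self-adjoint projector, and $\nor{q - \tilde q} \le \nor{q - v} + \nor{v - \tilde q}$ is small since $|\chi(t) - t|$ is small on $\spec(v)$; this route bypasses Lemma~\ref{lm:almost-idemp} entirely.
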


 \subsection{Real spectral triples}
   
   Recall that a spin structure on an oriented compact manifold
   $(M,\eps)$ is represented by a pair $(\sS,C)$, where $\sS$ is a
   $\sB$-$\sA$-bimodule and $C \: \sS\to \sS$ is an antilinear
   map such that $C(\psi\,a) = C(\psi)\,\bar a$ for $a \in \sA$;
   $C(b\,\psi) = \chi(\bar b)\,C(\psi)$ for $b \in \sB$; and, by choosing
   a metric $g$ on $M$, which determines a Hermitian pairing on~$\sS$, we
   can also require that
   $(C\phi|C\psi) = (\phi|\psi) \in \sA$ for
   $\phi,\psi \in \sS$. $\sS$ may be completed to a Hilbert space
   $\sH = L^2(M,S)$, with scalar product
   $\langle\phi|\psi\rangle = \int_M(\phi|\psi),\nu_g$. It is clear
   that $C$ extends to a bounded antilinear operator on $\sH$ such that
   $\braket{C\phi}{C\psi} = \braket{\psi}{\phi}$ by integration with
   respect to $\nu_g$, so that (the extended version of) $C$ is
   \textit{antiunitary} on $\sH$. There are two tables of signs
   
     \[
     \begin{array}[t]{|c|cccc|}
     \hline
     n \bmod 8               & 0 & 2 & 4 & 6 \rule[-5pt]{0pt}{17pt} \\
     \hline
     C^2 = \pm 1             & + & - & - & + \rule[-5pt]{0pt}{17pt} \\
     C\Dslash = \pm\Dslash C & + & + & + & + \rule[-5pt]{0pt}{17pt} \\
     C\Ga = \pm\Ga C         & + & - & + & - \rule[-5pt]{0pt}{17pt} \\
     \hline
     \end{array}
     \qquad\qquad
     \begin{array}[t]{|c|cccc|}
     \hline
     n \bmod 8               & 1 & 3 & 5 & 7 \rule[-5pt]{0pt}{17pt} \\
     \hline
     C^2 = \pm 1             & + & - & - & + \rule[-5pt]{0pt}{17pt} \\
     C\Dslash = \pm\Dslash C & - & + & - & + \rule[-5pt]{0pt}{17pt} \\
     \hline
     \end{array}
     \]
     
     \vspace{6pt}
   
   where $n$ is a dimension of spin manifold (See \cite{hajac:toknotes} for details).
   
   \begin{defn}
   \label{df:spt-real}\cite{hajac:toknotes}
   A {\it real  spectral triple} is a  spectral triple $(\sA, \sH, D)$,
   together with an antiunitary operator $J\:\sH\to\sH$ such that
   $J(\Dom D)\subset \Dom D$, and $[a, Jb^*J^{-1}] = 0$ for all
   $a, b \in \sA$.
   \end{defn}

   \begin{defn}
   \label{df:spt-even}\cite{hajac:toknotes}
   A  spectral triple $(\sA, \sH, D)$ is {\it even} if there is
   a selfadjoint unitary operator $\Ga$ on $\sH$ such that $a\Ga = \Ga a$
   for all $a \in \sA$, $\Ga(\Dom D) = \Dom D$, and $D\Ga = -\Ga D$. If
   no such $\bZ_2$-grading operator $\Ga$ is given, we say that
   the spectral triple is {\it odd}.
   \end{defn}

   We have seen that in the standard commutative example, the {\it even}
   case arises when the auxiliary algebra $\sB$ contains a natural
   $\bZ_2$-grading operator, and this happens exactly when {\it the
   manifold dimension is even}. Now, the manifold dimension is determined
   by the spectral growth of the Dirac operator, and this spectral
   version of dimension may be used for noncommutative  spectral triples,
   too. To make this more precise, we must look more closely at spectral
   growth.
   
 \subsection{Geometric conditions on \index{spectral triple} spectral triples}
 \label{sec:geo-cond}
 
 We begin by listing a set of requirements on a \index{spectral triple} spectral triple
 $(\sA, \sH, D)$, whose algebra $\sA$ is unital but not necessarily
 commutative, such that $(\sA, \sH, D)$ provides a ``spin geometry''
 generalization of our ``standard commutative example''
 $(\Coo(M), L^2(M,S), \Dslash)$. Again we shall assume, for 
 convenience, that $D$ is invertible.

 \begin{cond}[Spectral dimension]
 There is an \emph{integer} $n \in \{1,2,\dots\}$, called the spectral
 dimension of $(\sA, \sH, D)$, such that $|D|^{-1} \in \sL^{n+}(\sH)$,
 and $0 < \mathrm{Tr}_{\om}(|D|^{-n}) < \infty$ for any Dixmier trace~$\mathrm{Tr}_{\om}$.
 
 When $n$ is \emph{even}, the \index{spectral triple} spectral triple $(\sA, \sH, D)$ is also 
 even: that is, there exists a selfadjoint unitary operator
 $\Ga \in B(\sH)$ such that $\Ga(\Dom D) = \Dom D$, satisfying
 $a\Ga = \Ga a$ for all $a \in \sA$, and $D\Ga = -\Ga D$.
 \end{cond}

\begin{cond}[Regularity]
 For each $a \in \sA$, the bounded operators $a$ and $[D,a]$ lie in the
 smooth domain $\bigcap_{k\geq 1} \Dom \dl^k$ of the derivation
 $\dl \: T \mapsto [|D|, T]$.
 
 Moreover, $\sA$ is complete in the topology given by the seminorms
 $q_k \: a \mapsto \|\dl^k(a)\|$ and
 $q'_k \: a \mapsto \|\dl^k([D,a])\|$. This ensures that $\sA$ is a
 Fr\'echet pre-$C^*$-algebra.
 \end{cond}

 \begin{cond}[Finiteness]\label{fin_cond}
 The subspace of smooth vectors
 $\sH^\infty := \bigcap_{k\in\bN} \Dom D^k$ is a \emph{finitely
 generated projective} left $\sA$-module.
 
 This is equivalent to saying that, for some $N \in \bN$, there is a 
 projector $p = p^2 = p^*$ in~$M_N(\sA)$ such that 
 $\sH^\infty \isom \sA^N p$ as left $\sA$-modules. 
 \end{cond}

 \begin{cond}[Real structure]
 There is an antiunitary operator $J : \sH \to \sH$ satisfying
 $J^2 = \pm 1$, $JDJ^{-1} = \pm D$, and $J\Ga = \pm \Ga J$ in the even
 case, where the signs depend only on $n \bmod 8$ (and thus are given
 by the table of signs for the standard commutative examples).
 \[
    \begin{array}[t]{|c|cccc|}
    \hline
    n \bmod 8               & 0 & 2 & 4 & 6 \rule[-5pt]{0pt}{17pt} \\
    \hline
    J^2 = \pm 1             & + & - & - & + \rule[-5pt]{0pt}{17pt} \\
    JD = \pm D J & + & + & + & + \rule[-5pt]{0pt}{17pt} \\
    J\Ga = \pm\Ga J         & + & - & + & - \rule[-5pt]{0pt}{17pt} \\
    \hline
    \end{array}
    \qquad\qquad
    \begin{array}[t]{|c|cccc|}
    \hline
    n \bmod 8               & 1 & 3 & 5 & 7 \rule[-5pt]{0pt}{17pt} \\
    \hline
    J^2 = \pm 1             & + & - & - & + \rule[-5pt]{0pt}{17pt} \\
    JD = \pm D J & - & + & - & + \rule[-5pt]{0pt}{17pt} \\
    \hline
    \end{array}
    \]
 Moreover, $b \mapsto J b^* J^{-1}$ is an antirepresentation of $\sA$
 on~$\sH$ (that is, a representation of the opposite algebra
 $\sA^\opp$), which commutes with the given representation of~$\sA$:
 $$
 [a, J b^* J^{-1}] = 0,  \word{for all} a,b \in \sA.
 $$
 \end{cond}

 \begin{cond}[First order]
 For each $a,b \in \sA$, the following relation holds:
\begin{equation}\label{fist_order}
 [[D, a], J b^* J^{-1}] = 0,  \word{for all} a,b \in \sA.
\end{equation}
 This generalizes, to the noncommutative context, the condition that
 $D$ be a first-order differential operator.
 
 Since 
 \[
 [[D, a], Jb^*J^{-1}]
 = [[D, Jb^*J^{-1}], a] + [D, \underbrace{[a, Jb^*J^{-1}]}_{=0}],
 \]
 this is equivalent to the condition that $[a, [D, Jb^*J^{-1}]] = 0$.
 \end{cond}

 \begin{cond}[Orientation]
 There is a Hochschild $n$-cycle
 $$
 \cc = \tsum_j (a_j^0 \ox b_j^0) \ox a_j^1 \oxyox a_j^n 
 \in Z_n(\sA, \sA \ox \sA^\opp),
 $$
 such that
 \begin{equation}
 \pi_D(\cc)
 \equiv \tsum_j a_j^0 (J b_j^{0*} J^{-1}) \,[D,a_j^1] \dots [D,a_j^n]
 = \begin{cases} \Ga, &\text{if $n$ is even}, \\
                   1, &\text{if $n$ is odd}. \end{cases}
 \label{eq:vol-cond}
 \end{equation}
 \end{cond}

 In many examples, including the noncommutative examples we shall meet
 in the next two sections, one can often take $b_j^0 = 1$, so that
 $\cc$ may be replaced, for convenience, by the cycle
 $\sum_j a_j^0 \ox a_j^1 \oxyox a_j^n \in Z_n(\sA, \sA)$. In the
 commutative case, where $\sA^\opp = \sA$, this identification may be
 justified: the product map $m\: \sA \ox \sA \to \sA$ is a
 homomorphism.
 
 The data set $(\sA, \sH, D; \Ga \text{ or } 1, J, \cc)$ satisfying
 these six conditions constitute a ``noncommutative spin geometry''. In
 the fundamental paper where these conditions were first laid out
 \cite{connes:grav}, Connes added one more nondegeneracy condition
 (Poincar\'e duality in $K$-theory) as a requirement. We shall not go
 into this matter here.
 
   \section{Topological constructions}\label{inf_to}
  
  \paragraph{} This section is concerned with a topological construction of an infinitely listed covering projection from finitely listed ones. 
  \begin{empt}\label{top_constr}
  	Let $\mathcal{X}$ be a  second-countable \cite{munkres:topology} locally compact connected Hausdorff space, and let
  	
  	\begin{equation}\label{top_inv_limit}
  	\mathcal{X} = \mathcal{X}_0 \xleftarrow{}... \xleftarrow{} \mathcal{X}_n \xleftarrow{} ... 
  	\end{equation}
  	be a sequence of finitely listed covering projections. Let $\left\{G_n = G\left(\mathcal{X}_n | \mathcal{X}\right)\right\}_{n \in \mathbb{N}}$ be groups of covering transformations. Let $\widehat{\mathcal{X}} = \varprojlim \mathcal{X}_n$, $\widehat{G} =\varprojlim G_n$ be inverse limits. The group $\widehat{G}$ has a topology defined by subgroups of finite index \cite{milne:etale}. Let $\overline{G}$ be a discrete group algebraically isomorphic to  $\widehat{G}$.  For any  $x \in \widehat{\mathcal{X}}$ let us define a map $\phi_x: \overline{G} \to \widehat{\mathcal{X}}$ be given by $g \mapsto gx$. We define  a topological space $\overline{\mathcal{X}}$ as the set $\widehat{\mathcal{X}}$ with the final topology \cite{bourbaki_sp:gt} such that the identical map $\mathrm{Id} : \widehat{\mathcal{X}} \to \overline{\mathcal{X}}$, and maps $\varphi_x$ for any $x \in \widehat{\mathcal{X}}$ are continuous. Action of $\overline{G}$ on $\overline{\mathcal{X}}$ is free and properly discontinuous, so there is a natural regular covering projection $\overline{\pi}:\overline{\mathcal{X}} \to \mathcal{X}$. Let $\widetilde{\mathcal{X}}$ be a connected component of $\overline{\mathcal{X}}$, and let $G \subset \overline{G}$ be a maximal subgroup such that
  	\begin{equation}\label{comm_normal}
  	G\widetilde{\mathcal{X}}=\widetilde{\mathcal{X}}.
  	\end{equation}

  	For any $g \in \overline{G}$ a subgroup $gGg^{-1}$ satisfies to \eqref{comm_normal}, i.e. $gGg^{-1}=G$, whence $G$ is a normal subgroup. Any connected component of a covering space is also a covering space, therefore  $\widetilde{\pi}=\overline{\pi}|_{\widetilde{\mathcal{X}}}:\widetilde{\mathcal{X}} \to \mathcal{X}$ is a covering projection and $\mathcal{X}\approx\widetilde{\mathcal{X}}/G$, i.e. $\widetilde{\mathcal{X}} \to \mathcal{X}$ is a regular covering projection.  
  	\begin{defn}
  		The space $\overline{\mathcal{X}}$ is said to be a {\it disconnected covering space} of the sequence \eqref{top_inv_limit}, and the space $\widetilde{\mathcal{X}}$ is said to be  a {\it connected covering space} of the sequence \eqref{top_inv_limit}.
  	\end{defn}
  \end{empt}

  \begin{defn}\label{fund_dom}\cite{chavel:riemann} Let $p: \widetilde{\mathcal X} \to \mathcal{X}$ be a (topological) covering projection.
  	A {\it fundamental domain} of $G$ is an open connected set $\mathcal{D} \subset \widetilde{\mathcal{X}}$ such that
  	\begin{enumerate}
  		\item[(a)] 
  		
  		\begin{equation}
  		p\left(\mathfrak{cl}\left(\mathcal{D}\right)\right)=\mathcal{X}
  		\end{equation}
  		\item[(b)] 
  		\begin{equation}
  		g \mathcal{D} \bigcap \mathcal{D} = \emptyset \ \text{ for any nontrivial } g\in G.
  		\end{equation}
  		
  	\end{enumerate}

  \end{defn}
  \begin{rem}
  The condition (a) of the Definition \ref{fund_dom} is equivalent to 
  \begin{equation*}
  \widetilde{\mathcal{X}} = \bigcup_{g \in G}g ~\mathfrak{cl}\left(\mathcal{D}\right).
  \end{equation*}
  \end{rem}
  \begin{rem}\label{cut_loci_rem}\cite{chavel:riemann}
  	When considering a Riemannian covering $\widetilde{M}\to M$ one may construct a fundamental by following way. For any point $x \in M$ there is the \textit{cut loci}, which is an open set $\Om_x \subset M$ such that $M \backslash \Om_x$ is a set of measure 0 \cite{chavel:riemann}. It means that 
  $1_{\Om_x}\in L^\infty\left(M\right)$ and
  \begin{equation}
 1_{\Om_x} = 1_M.
  \end{equation}
From \cite{chavel:riemann} it follows that for any $\widetilde{x}\in p^{-1}\left(x\right)$ there is the natural connected open subset $\widetilde{\Om}_{\widetilde{x}}$ which is mapped homeomorphically on $\Om_x$ and 
 \begin{equation}
 \sum_{g \in G\left(\widetilde{M}|M\right)}g ~1_{\widetilde{\Om}_{\widetilde{x}}} = 1_{\widetilde{M}}.
 \end{equation}
  The definition of the cut loci is contained in \cite{chavel:riemann}.


  \end{rem}
 
  \begin{defn}\label{fund_domu}
  	Let $\widetilde{\mathcal{X}} \to \mathcal{X}$ be a topological covering projection. A finite or countable family $\left\{\mathcal{U}_{\iota}\subset\mathcal{X}\right\}_{\iota \in I}$ of connected relatively compact open sets, such that

  	\begin{enumerate}
  		\item
  		\begin{equation}\label{inf_union}
  		\mathcal{U}_i \text{ is evenly covered by } \pi^{-1}\left(\mathcal{U}_{\iota}\right),
  		\end{equation}
  		\item
  		\begin{equation}\label{inf_com}
  		\bigcup_{\iota \in I}  \mathcal{U}_{\iota} = \mathcal{X}  \text{ is a locally finite covering},
  		\end{equation}
  		\item
  		\begin{equation}\label{inf_comq}
  		\bigcup_{\iota \in I \backslash \{\iota_0\} } \mathcal{U}_{\iota} \neq \mathcal{X}; \ \forall \iota_0 \in I.
  		\end{equation}
  	\end{enumerate}
  	is said to be a {\it fundamental covering } of $\widetilde{\mathcal{X}} \to \mathcal{X}$.  Let us select a single connected subset $\widetilde{\mathcal{U}}_{\iota}\subset \widetilde{\mathcal{X}}$ which is mapped homeomorphically onto $\mathcal{U}_{\iota}$ , and we require that the union $\bigcup_{\iota \in I}\widetilde{\mathcal{U}}_{\iota}$ is a connected set. The family $\left\{\widetilde{\mathcal{U}}_{\iota}\subset\widetilde{\mathcal{X}}\right\}_{\iota \in I}$ is said to be a {\it basis of the fundamental covering}.
  \end{defn}
  \begin{defn}\label{lift_desc_defn}
  	Let $\widetilde{\pi}: \widetilde{\mathcal X} \to \mathcal X$ be a covering projection and let $\widetilde{\mathcal U}\subset \widetilde{\mathcal X}$ be a connected open set which is mapped homeomorphicaly onto $\mathcal U= \widetilde{\pi}\left(\widetilde{\mathcal U}\right)$. If $\varphi \in C_0\left(\mathcal{X}\right)$ is such that $\varphi\left(\mathcal X \backslash \mathcal U\right)= \{0\}$ then a function $\widetilde{\varphi} \in C_0\left(\widetilde{\mathcal X}\right)$ given by
  	\begin{equation*}
  	\widetilde{\varphi}\left(\widetilde{x}\right)=\left\{
  	\begin{array}{c l}
  	\varphi\left(\widetilde{\pi}\left(\widetilde{x}\right)\right) & \widetilde{x} \in \widetilde{\mathcal{U}}  \\
  	0 & \widetilde{x} \notin \widetilde{\mathcal{U}}
  	\end{array}\right.
  	\end{equation*}
  	is said to be the $\widetilde{\mathcal{U}}$-{\it lift } of $\varphi$. Otherwise if $\widetilde{\varphi} \in C_0\left(\widetilde{\mathcal{X}}\right)$  is such that $\widetilde{\varphi} \left(\widetilde{\mathcal X} \backslash \widetilde{\mathcal U}\right)=  \{0\}$  then a function $\varphi \in C_0\left(\mathcal X\right)$ given by
  	\begin{equation*}
  	\varphi\left(x\right)=\left\{
  	\begin{array}{c l}
  	\widetilde{\varphi}\left(\widetilde{x}\right) & x \in \mathcal{U},  \  \widetilde{x} \in \widetilde{\mathcal U}, \  \widetilde{\pi}\left(\widetilde{x}\right)=x   \\
  	0 & x \notin \mathcal{U}      \end{array}\right.
  	\end{equation*}
  	is said to be the {\it descent} of $\widetilde{\varphi}$ on $\mathcal X$.
  \end{defn}
  \begin{defn}
  	Let $\left\{\widetilde{\mathcal{U}}_{\iota}\subset\widetilde{\mathcal{X}}\right\}_{\iota \in I}$ be a basis of the fundamental covering of $\widetilde{\pi}:\widetilde{\mathcal{X}}\to \mathcal{X}$. Let $\sum_{\iota \in I}a_{\iota} = 1_{C_b\left(\mathcal X\right)}$ be a partition of unity  dominated by $\left\{\widetilde{\pi}\left(\widetilde{\mathcal U}_\iota\right)\right\}_{\iota \in I}$ , and let $\widetilde{a}_{\iota}\in C_0\left(\widetilde{\mathcal X}\right)$ be the $\widetilde{\mathcal{U}}_{\iota}$-lift of $a_\iota$ for any $\iota \in I$.  
  	A partition of unity given by
  	\begin{equation}\label{part_unity_g}
  	1_{C_b\left(\widetilde{\mathcal X}\right)} = \sum_{g \in G} \sum_{\iota \in I}  g\widetilde{a}_\iota = \sum_{\left(g,\iota\right)\in G \times I}\widetilde{a}_{\left(g,\iota\right)}
  	\end{equation}
  	where $G = G\left(\widetilde{\mathcal X} \ | \ \mathcal X \right)$ and $\widetilde{a}_{\left(g,\iota\right)} = ga_\iota$ is said to be {\it dominated} by  $\sum_{\iota \in I}a_{\iota} = 1_{C_b\left(\mathcal X\right)}$. We also say that \eqref{part_unity_g} is the partition of unity is {\it dominated} by $\left\{\widetilde{\mathcal{U}}_{\iota}\right\}_{\iota \in I}$.
  \end{defn}
  \begin{empt}
  	Let $\phi_{[1,0]}: [0,1]\to [0,1]$ be a continuous function such that
  	\begin{equation*}
  	\phi_{[1,0]}\left(x\right) = \left\{
  	\begin{array}{c l}
  	1 & x \le \frac{1}{2} \\
  	0 & x = 1.
  	\end{array}\right.
  	\end{equation*}
  	Let $\widetilde{\pi}: \widetilde{\mathcal X} \to \mathcal X$ be a covering projection such that $\widetilde{\mathcal X}$ is a locally compact metric space. For any $\widetilde{x} \in \widetilde{\mathcal X}$ there is $r > 0$ such that the set $\widetilde{\mathcal U}=\left\{\widetilde{y} \in \widetilde{\mathcal X} \ | \ \mathfrak{dist}\left(\widetilde{y}, \widetilde{x}\right) \le r  \right\}$ is mapped homeomorphicaly on $\widetilde{\pi}\left(\widetilde{\mathcal U}\right)$. There is a function $\phi^{\widetilde{x}}_{[1,0]}\in C_0\left(\widetilde{\mathcal{X}}\right)$ given by
  	\begin{equation}
  	\phi^{\widetilde{x}}_{[1,0]}\left(\widetilde{y}\right) = \left\{
  	\begin{array}{c l}
  	\phi_{[1,0]}\left(\frac{\mathfrak{dist}\left(\widetilde{y}, \widetilde{x}\right)}{r}\right) & \widetilde{y} \in \widetilde{\mathcal U} \\
  	0 & \widetilde{y} \notin \widetilde{\mathcal U} .
  	\end{array}\right.
  	\end{equation}
  	There is an open neighborhood $\widetilde{\mathcal V}=\left\{\widetilde{y} \in \widetilde{\mathcal X} \ | \ \mathfrak{dist}\left(\widetilde{y}, \widetilde{x}\right) < \frac{r}{2}  \right\}$ of $\widetilde{x}$ such that  $\phi^{\widetilde{x}}_{[1,0]}\left(\widetilde{\mathcal V}\right)= \{1\}$.
  \end{empt}
  \begin{defn}\label{test_func_defn}
  	A quadruple $\left(\widetilde{x}, \phi^{\widetilde{x}}_{[1,0]}, \widetilde{\mathcal U}, \widetilde{\mathcal V} \right)$ is said to be a {\it  test function subordinated to} $\widetilde{\pi}$.
  \end{defn}
  
  \begin{empt}\label{smooth_test}
  	If $M$ is a $C^\infty$-manifold then we can define smooth test functions. Let us select a $C^\infty$ function $\phi_{[1,0]}: [0,1]\to [0,1]$ such that following conditions hold
  	Let $\phi_{[1,0]}: [0,1]\to [0,1]$ be a continuous function such that
  	\begin{equation*}
  	\phi_{[1,0]}\left(x\right) = \left\{
  	\begin{array}{c l}
  	1 & x \le \frac{1}{2} \\
  	\\
  	0 & x \ge \frac{3}{4}.
  	\end{array}\right.
  	\end{equation*}	
  	If $\widetilde{\pi}:\widetilde{M}\to M$ is a covering projection then any point $\widetilde{x} \in  \widetilde{M}$ has an open neighborhood $\widetilde{x} \in \widetilde{U}\subset\widetilde{M}$ such that
  	\begin{enumerate}
  		\item There is an infinitely differentiable diffeomorphism $\psi : \widetilde{U} \to \mathbb{R}^n$.
  		\item The restriction $\widetilde{\pi}|_{\widetilde{U}}: \widetilde{U} \to \widetilde{\pi}\left(\widetilde{U}\right)$ is a homeomorphism.
  	\end{enumerate}
  	Suppose that $\psi$ is such that $\psi\left(\widetilde{x}\right) = 0 \in \mathbb{R}^n$, and let $\|\cdot\|_{\mathbb{R}^n}$ be a norm on $\mathbb{R}^n$ given by
  	\begin{equation*}
  	\left\|	\left(\begin{array}{c l}
  	x_1 \\
  	...	\\
  	x_n
  	\end{array}\right)\right\|_{\mathbb{R}^n} = \sqrt{x_1^2 + ... + x_n^2}.
  	\end{equation*}
  	Let us introduce a $C^\infty$ function $\phi^{\widetilde{x}}_{[1,0]}: \widetilde{M} \to \mathbb{R}$ given by
  	\begin{equation}
  	\phi^{\widetilde{x}}_{[1,0]}\left(\widetilde{y}\right) = \left\{
  	\begin{array}{c l}
  	\phi_{[1,0]}\left(\left\|\psi\left(\widetilde{y}\right)\right\|_{\mathbb{R}^n}\right) & \widetilde{y} \in \widetilde{ U} \\
  	0 & \widetilde{y} \notin \widetilde{ U} .
  	\end{array}\right.
  	\end{equation}
  	If $\widetilde{V} = \left\{\widetilde{y}\in \widetilde{U} \ | \ \left\|\psi\left(\widetilde{y}\right)\right\|_{\mathbb{R}^n} < \frac{1}{2}\right\}$ then $\phi^{\widetilde{x}}_{[1,0]}\left(\widetilde{V}\right)	= \{1\}$.
  	
  \end{empt}
  \begin{defn} In the situation \ref{smooth_test} 
  	a quadruple $\left(\widetilde{x}, \phi^{\widetilde{x}}_{[1,0]}, \widetilde{\mathcal U}, \widetilde{\mathcal V} \right)$ is said to be a {\it $C^\infty$ test function subordinated} to $\widetilde{\pi}$.
  \end{defn}
  \begin{defn}\label{gluin}
  	Let $\mathcal{X}$, $\mathcal{Y}$ be Hausdorff spaces, and let $\left\{\mathcal{U}_\iota\subset \mathcal{X}\right\}_{\iota \in I}$ by a family of open sets such that $\mathcal{X}  = \bigcup_{\iota \in I} \mathcal{U}_{\iota}$. A family of continuous maps  $\varphi^{\mathcal{U}_{\iota}}: \mathcal{U}_{\iota} \to \mathcal{Y}$ is said to be {\it coherent} if $\varphi^{\mathcal{U}_{\iota}}|_{\mathcal{U}_{\iota} \bigcap \mathcal{U}_{\iota'}}=\varphi^{\mathcal{U}_{\iota'}}|_{\mathcal{U}_{\iota} \bigcap \mathcal{U}_{\iota'}}$ For any coherent sequence there is the unique continuous map $\varphi: \mathcal{X} \to \mathcal{Y}$ such that $\varphi|_{{\mathcal{U}_{\iota}}} = \varphi^{\mathcal{U}_{\iota}}$;  $\forall \iota \in I$. The map $\varphi$ is said to be the {\it gluing} of $\left\{\varphi^{\mathcal{U}_{\iota}}\right\}_{\iota \in I}$. Let denote by $\varphi = \mathfrak{Gluing}\left(\left\{\varphi^{\mathcal{U}_{\iota}}\right\}_{\iota \in I}\right)$ the gluing of $\left\{\varphi^{\mathcal{U}_{\iota}}\right\}_{\iota \in I}$.
  \end{defn}

 \section{Noncommutative covering projections}
 \paragraph{} In this section I follow to the \cite{ivankov:inv_lim}.
 \subsection{Finite case}
 
 \begin{defn}\cite{ivankov:inv_lim}
 	If $A$ is a $C^*$-algebra then an action of a group $G$ is said to be {\it involutive } if $ga^* = \left(ga\right)^*$ for any $a \in A$ and $g\in G$.
 \end{defn}
 \begin{defn}\label{hilb_product_defn}\cite{ivankov:inv_lim}
 	If  $B \subset A$ is an inclusion of $C^*$-algebras, $G$ is a finite group with an involutive action on $A$ such that $B = A^G$, then there is a $B$-valued sesquilinear product on $A$ given by 
 	\begin{equation*}
 	\langle a, b \rangle_{A} = \frac{1}{|G|} \sum_{g \in G} g(a^*b)
 	\end{equation*}
 	The structure of Hilbert $B$-module $A$ is said to be the {\it induced by} $G$-action. 
 \end{defn}
 \begin{rem}
The Definition \ref{hilb_product_defn} complies with the Theorem \ref{pavlov_troisky_thm}.
 \end{rem}
 
 \begin{defn}\label{fin_def}\cite{ivankov:inv_lim}
 	Let $\pi:A \to \widetilde{A}$ be an injective *-homomorphism of $C^*$-algebras, and let $G$ be a finite group such that following conditions hold:
 	\begin{enumerate}
 		\item There is an involutive continuous action of $G$ on $\widetilde{A}$ such that $\widetilde{A}^G=A$; 
 		\item $\widetilde{A}\subset \mathcal{K}\left(\widetilde{A}_A\right)$ where structure of  Hilbert $A$-module  $\widetilde{A}_A$ is induced by $G$-action;
 		\item There is a finite or countable set $I$ and indexed by $I$ subsets $\{a_{\iota}\}_{\iota \in I}$, $\{b_{\iota}\}_{\iota \in I} \subset \widetilde{A}$ such that
 		\begin{equation}\label{can_nc}
 		\sum_{\iota \in I} a_{\iota}(gb_\iota)=\left\{
 		\begin{array}{c l}
 		1_{M(\widetilde{A})} & g \in  G \text{ is trivial}\\
 		0 & g \in  G \text{ is not trivial}
 		\end{array}\right..
 		\end{equation}
 		where the sum of the series means the strict convergence \cite{blackadar:ko}.
 		
 	\end{enumerate}
 	Then $\pi$ is said to be a {\it finite noncommutative covering projection}, $G$ is said to be the {\it covering transformation group}. Denote by $G(\widetilde{A}|A)=G$. The algebra $\widetilde{A}$ is said to be the {
 		\it covering algebra}, and $A$ is called the {\it base algebra} of the covering projection. A triple $\left(A, \widetilde{A}, G\right)$ is also  said to be a {\it noncommutative finite covering projection}.
 \end{defn}
 \begin{rem}
  The article \cite{ivankov:inv_lim} contains the comprehensive foundation of the Definition  \ref{fin_def}.
 \end{rem}
 
 \begin{defn}
 	Let $\left(A, \widetilde{A}, G\right)$ be a noncommutative finite covering projection.  Algebra  $\widetilde{A}$  is a  finitely generated projective Hilbert $A$-modules with induced by $G$-action sesquilinear product given by
 	\begin{equation}\label{fin_form_a}
 	\langle a, b \rangle_{\widetilde{A}} = \frac{1}{|G|} \sum_{g \in G} g(a^*b)
 	\end{equation}
 	We say that the structure of Hilbert $A$-module is {\it induced by the covering projection} $\left(A, \widetilde{A}, G\right)$. Henceforth we shall consider $\widetilde{A}$ as a right $A$-module. 
 \end{defn}
 \begin{empt}\label{dir_sum_constr}
 	Let $\left(A, \widetilde{A}, G\right)$ be a noncommutative finite covering projection. If $\widetilde{a} \in \widetilde{A}$ then $\widetilde{a} = a + p$ where $a = \frac{1}{|G|}\sum_{g \in G}g\widetilde{a}$ and $p = \widetilde{a}-a$. It is clear that $\sum_{g \in G}gp = 0$ and for any $G$-invariant $b \in \widetilde{A}$ we have 
 	\begin{equation*}
 	\langle b, p \rangle_{\widetilde{A}} = \frac{1}{|G|}\widetilde{b}\sum_{g \in G}gp = 0.
 	\end{equation*}
 	Otherwise the set of $G$-invariant elements is just a sublagebra $A \subset \widetilde{A}$.
 	So  $\widetilde{A}_A$ can be decomposed into the direct orthogonal sum, i.e.
 	\begin{equation}\label{hilb_mod_direct_sum}
 	\begin{split}
 	\widetilde{A}_A= A \oplus P; \  
 	A \perp P \ , \text{i.e. } \langle \widetilde{b}, p \rangle_{\widetilde{A}}= 0; \text{ for any } a \in A; \ p \in P. 
 	\end{split}
 	\end{equation}
 \end{empt}

 \begin{exm}\label{circle_fin}{\it Finite covering projections of the circle $S^1$.} 
 	There is the universal covering projection $\widetilde{\pi}: \mathbb{R}\to S^1$. 
 	Let $\widetilde{\mathcal{U}}_1,\ \widetilde{\mathcal{U}}_2 \subset \mathbb{R}$ be such that
 	\begin{equation}\label{ur}
 	\widetilde{\mathcal{U}}_1 = (-\pi -1/2,  1/2), \ \widetilde{\mathcal{U}}_2 = (-1/2, \pi +  1/2). 
 	\end{equation} For any $i \in \{1,2\}$ the set $\mathcal{U}_i = \widetilde{\pi}(\widetilde{\mathcal{U}}_i) \subset S^1$ is open, connected and evenly covered. Since $S^1=\mathcal{U}_1\bigcup\mathcal{U}_2$ there is  a partition of unity $a_1, a_2$ dominated by $\{\mathcal{U}_i\}_{i\in \{1,2\}}$ \cite{munkres:topology}, i.e.  $a_i: S^1 \to [0,1]$ are such that
 	\begin{equation*}
 	\left\{
 	\begin{array}{c l}
 	a_i(x)>0 & x \in \mathcal{U}_i \\
 	a_i(x)=0 & x \notin \mathcal{U}_i
 	\end{array}\right.; \ i \in \{1,2\}.
 	\end{equation*}
 	and $a_1+a_2 = 1_{C(S^1)}$. From the Proposition \ref{smooth_part_unity_prop} is follows that we can select smooth partition of unity, i.e. $a_1, a_2 \in \Coo(S^1)$. If $e_1, e_2\in \Coo(S^1)$ are given by
 	\begin{equation}\label{e_1_e_2}
 	e_i = \sqrt{a_i}; \ i=1,2;
 	\end{equation}
 	then
 	\begin{equation*}
 	\left(e_1\right)^2+\left(e_2\right)^2= 1_{C_0(S^1)}.
 	\end{equation*}
 	If $\widetilde{e}_i \in C_0(\mathbb{R})$ are given by
 	\begin{equation}\label{e_tilde}
 	\widetilde{e}_i(\widetilde{x})=\left\{
 	\begin{array}{c l}
 	e_i(\widetilde{\pi}(\widetilde{x}))>0 & \widetilde{x} \in \widetilde{\mathcal{U}}_i \\
 	0 & \widetilde{x} \notin \widetilde{\mathcal{U}}_i
 	\end{array}\right.; \ i \in \{1,2\}
 	\end{equation}
 	then there is a pointwise (=weak) convergence of the following series
 	\begin{equation*}
 	\sum_{i=1,2; \ n \in \mathbb{Z}} n \cdot \widetilde{e}^{2}_i = 1_{C_b\left(\mathbb{R}\right)=M\left(C_0\left(\mathbb{R}\right)\right)}, 
 	\end{equation*}
 	where $n \cdot -$ means the natural action of $G(\mathbb{R}|S^1) \approx \mathbb{Z}$ on $C_0(\mathbb{R})$.
 	Let $\overline{\pi}^n: \mathcal{X}_n\to S^1$ be an $n$-listed covering projection then $G(\mathcal{X}_n|S^1)\approx \mathbb{Z}_n$. It is well known that $\mathcal{X}_n\approx S^1$ but we use the  $\mathcal{X}_n$ notion for clarity. There is a sequence of covering projections $\mathbb{R}\xrightarrow{\pi^n}\mathcal{X}_n \to S^1$. If $\mathcal{U}^n_i = \pi^n\left(\widetilde{\mathcal{U}}_i\right)$ then $\mathcal{U}^n_i \bigcap g\mathcal{U}^n_i = \emptyset$ for any nontrivial $g \in G(\mathcal{X}_n, S^1)$. If $e^n_i \in C(\mathcal{X}_n)$ is given by
 	\begin{equation}\label{e_n_i}
 	e^n_i(\pi^n(\widetilde{x}))=\left\{
 	\begin{array}{c l}
 	\widetilde{e}_i(\widetilde{x}) & \pi^n(\widetilde{x}) \in \mathcal{U}^n_i  \\
 	0 & \pi^n(\widetilde{x}) \notin \mathcal{U}^n_i
 	\end{array}\right.; \ i \in \{1,2\}
 	\end{equation}
 	then
 	\begin{equation*}
 	\begin{split}
 	\sum_{i \in \{1,2\}; \ g \in G(\mathcal{X}_n, S^1)} g\left(e^n_i\right)^2 = 1_{C_0(\mathcal{X}_n)}; \\ e^n_i(ge^n_i) = 0; \ \text{ for any notrivial } g \in G(\mathcal{X}_n|S^1).
 	\end{split}
 	\end{equation*}
 	If $I_n = G(\mathcal{X}_n|S^1) \times \{1,2\}$
 	and
 	\begin{equation}\label{iota_def}
 	e^n_{\iota} = ge_i^n; \text{ where } \iota = (g,i) \in I_n
 	\end{equation}
 	then
 	\begin{equation}\label{circ_sum}
 	\sum_{\iota \in I_n} e^n_{\iota}(ge^n_\iota)=\left\{
 	\begin{array}{c l}
 	1_{C_0(\mathcal{X}_n)} & g \in  G(\mathcal{X}_n|S^1) \text{ is trivial}\\
 	0 & g \in  G(\mathcal{X}_n|S^1) \text{ is not trivial}
 	\end{array}\right..
 	\end{equation}
 	
 	So a natural *-homomorphism $\pi:C(S^1)\to C(\mathcal{X}_n)$ satisfies the condition 3 of the Definition \ref{fin_def}. Otherwise $C(\mathcal{X}_n)\approx C(S^1)^n$ as $C(S^1)$-module, i.e. $C(\mathcal{X}_n)$ is a finitely generated projective left and right $C(S^1)$-module. So a triple $\left(C_0(S^1),C_0(\mathcal{X}_n), \mathbb{Z}_n\right)$ is a finite noncommutative covering projection.
 \end{exm}
 \begin{exm}\label{fin_lem}{\it Finite covering projections of locally compact spaces.} The Example \ref{circle_fin} can be generalized.
 	Let $\pi:\widetilde{\mathcal{X}}\to \mathcal{X}$ be a topological finitely listed covering projection such that $\mathcal{X}$ is a second-countable locally compact Hausdorff space. Suppose that both $\widetilde{\mathcal{X}}$ and $\mathcal{X}$ are connected spaces. There is an involutive continuous action of the covering  transformation group  $G = G\left(\widetilde{\mathcal X}| \mathcal X\right)$ on $C_0\left(\widetilde{\mathcal X}\right)$ arising from the action of $G$ on $\widetilde{\mathcal X}$,  and $C_0\left(\mathcal X\right)=C_0\left(\widetilde{\mathcal X}\right)^G$, i.e. condition 1 of the Definition \ref{fin_def} hold. Let $\left\{\widetilde{\mathcal{U}}_{\iota}\subset\widetilde{\mathcal{X}}\right\}_{\iota \in I}$  be a {\it basis of the fundamental covering}, and let $1_{C_b\left(\widetilde{\mathcal X}\right)} = \sum_{g \in G} \sum_{\iota \in I}  g\widetilde{a}_\iota = \sum_{\left(g,\iota\right)\in G \times I}\widetilde{a}_{\left(g,\iota\right)}$ be a partition of unity dominated by $\left\{\widetilde{\mathcal{U}}_{\iota}\right\}_{\iota \in I}$. If  $\widetilde{e}_{\left(g,\iota\right)}\in C_0\left(\widetilde{\mathcal X}\right)$ is given by
 	\begin{equation}\label{a_iota}
 	\widetilde{e}_{\left(g,\iota\right)} = \sqrt{g \widetilde{a}_\iota}.
 	\end{equation}
 	then 
 	\begin{equation*}
 	\sum_{\left(g',\iota\right) \in G \times I} \widetilde{e}_{\left(g',\iota\right)}\left(g\widetilde{e}_{\left(g',\iota\right)}\right)=\left\{
 	\begin{array}{c l}
 	1_{M\left(C_0\left(\widetilde{\mathcal{X}}\right)\right)} & g \in  G \text{ is trivial}\\
 	0 & g \in  G \text{ is not trivial}
 	\end{array}\right..
 	\end{equation*}
 	So condition 3 of the Definition \ref{fin_def} hold. If $\varphi \in C_0\left(\widetilde{\mathcal{X}}\right)$ is any function then
 	\begin{equation*}
 	\varphi = \sum_{\left(g,\iota\right) \in G \times I} \varphi \widetilde{e}_{\left(g,\iota\right)}\widetilde{e}_{\left(g,\iota\right)}=\varphi \sqrt{\widetilde{e}_{\left(g,\iota\right)}}\sqrt{\widetilde{e}_{\left(g,\iota\right)}}\widetilde{e}_{\left(g,\iota\right)}
 	\end{equation*}
 	and from above equation and definition of $\widetilde{e}_{\left(g,\iota\right)}$ it follows that
 	\begin{equation*}
 	\varphi = \sum_{\iota \in I} \left(\left\langle \varphi, \sqrt{\widetilde{e}_{\left(g,\iota\right)}} \right\rangle_{C_0\left(\widetilde{\mathcal X}\right)} \sqrt{\widetilde{e}_{\left(g,\iota\right)}} \right) \left\rangle \right\langle \widetilde{e}_{\left(g,\iota\right)},
 	\end{equation*}
 	i.e. $\varphi$ is an infinite sum of rank one operators. The sum is norm convergent, whence $\varphi\in \mathcal{K}\left(C_0\left(\widetilde{\mathcal{X}}\right)_{C_0\left(\mathcal X\right)}\right)$, and  condition 2 of the Definition \ref{fin_def} hold. So all conditions of the definition \ref{fin_def} hold, whence the triple
 	$\left(C_0\left(\mathcal{X}\right),C_0\left(\widetilde{\mathcal{X}}\right), G\left(\widetilde{\mathcal X}| \mathcal X\right)\right)$ is a finite noncommutative covering projection.
 	
 \end{exm}

 \begin{exm}\label{nt_fin_cov}{\it A finite covering projection of a noncommutative torus}.
 	A noncommutative torus \cite{varilly:noncom} $A_{\theta}$ is an universal unital $C^*$-algebra generated by two unitary elements ($u, v \in U(A_{\theta})$) with one relation given by
 	\begin{equation}\label{nt_commutation}
 	uv = e^{2\pi i\theta}vu, \ (\theta \in \mathbb{R} \backslash \mathbb{Q}).
 	\end{equation}
 	Let $\overline{\pi} : A_{\theta} \to A_{\theta'}$ be a *-homomorphism such that:
 	\begin{itemize}
 		\item There are $m, n, k \in \mathbb{N}$ such that $\theta'=\frac{\theta + 2 \pi k}{mn}$;
 		\item $A_{\theta'}$ is generated by $u_m,v_n \in U(A_{\theta'})$ and $\overline{\pi}$ is given by
 		\begin{equation}\label{fin_nt}
 		u \mapsto u_m^m; \ v \mapsto v_n^n.
 		\end{equation}
 	\end{itemize}
 	
 	There is a continuous involutive action of $G=\mathbb{Z}_m\times\mathbb{Z}_n$ on $A_{\theta'}$ given by
 	\begin{equation*}
 	\left(\overline{p}, \overline{q}\right) u_m = u_m e^{\frac{2\pi i p}{m}}, \ \left(\overline{p}, \overline{q}\right) v_n = v_n e^{\frac{2\pi i q}{n}}; \ \forall \left(\overline{p}, \overline{q}\right) \in G =\mathbb{Z}_m\times\mathbb{Z}_n,
 	\end{equation*}
 	and $A_{\theta}=A_{\theta'}^{G}$, i.e. the condition 1 of the Definition \ref{fin_def} hold.
 	If $\{e_{\iota}^m\}_{\iota \in I_m}$ and $\{e_{\iota}^n\}_{\iota \in I_n}$ are given by \eqref{iota_def} then from \eqref{circ_sum} it follows that
 	\begin{equation}\label{circ_sum_nt}
 	\begin{split}
 	\sum_{\iota \in I_m \ }  e^m_{\iota}(u_m) \left(\overline{k} e^{m}_{\iota}(u_m)\right) \left( \text{resp. }\sum_{\iota \in I_n }  e^n_{\iota}(v_n) \left(\overline{k} e^{n}_{\iota}(v_n)\right) \right)= \\ =\left\{
 	\begin{array}{c l}
 	1_{A_{\theta'}} & \overline{k}=\overline{0} \\
 	0 & \overline{k}\neq\overline{0}
 	\end{array}\right. \text{ where } \overline{k} \in \mathbb{Z}_m \text{ (resp. } \overline{k} \in \mathbb{Z}_n\text{)} .
 	\end{split}
 	\end{equation}
 	If $I = I_m \times I_n$ and $\{e_{\iota}\in A_{\theta'}\}_{\iota \in I}$, $\{e'_{\iota}\in A_{\theta'}\}_{\iota \in I}$ are given by
 	\begin{equation*}\label{nt_e_iota}
 	e_{\iota}= e^m_{\iota_1}(u_m) e^n_{\iota_2}(v_n); \ e'_{\iota}=  e^n_{\iota_2}(v_n)e^m_{\iota_1}(u_m);  \ \iota_1 \in I_m,\ \iota_2 \in I_n,\ \iota = (\iota_1, \iota_2) \in I 
 	\end{equation*}
 	then from \eqref{circ_sum_nt} it follows that
 	\begin{equation*}
 	\sum_{\iota \in I}e'_{\iota}e_{\iota} = \sum_{\iota_2 \in I_n} \left(e^n_{\iota_2}(v_n) \left(\sum_{\iota_1 \in I_m}e^{m}_k(u_m)e^{m}_k(u_m)\right)e^n_{\iota_2}(v_n)\right) = \sum_{\iota_2 \in I_n} \left(e^n_{\iota_2}(v_n) 1e^n_{\iota_2}(v_n)\right)=1.
 	\end{equation*} 
 	If $g = (\overline{p}, \overline{q}) \in \mathbb{Z}_m\times\mathbb{Z}_n$ is such that $\overline{p}\neq \overline{0}$ then from \eqref{circ_sum_nt} it follows that
 	\begin{equation*}
 	\sum_{\iota \in I}e'_{\iota}(ge_{\iota}) = \sum_{\iota_2 \in I_n} \left(e^n_{\iota_2}(v_n) \left(\sum_{\iota_1 \in I_m}e^{m}_{\iota_1}(u_m)\left(\overline{p}e^m_{\iota_1}(u_n)\right)\right)\left(\overline{q}e^n_{\iota_2}(v_n)\right)\right) =
 	\end{equation*}
 	\begin{equation*}
 	= \sum_{\iota_2 \in I_n} e^n_{\iota_2}(v_n) 0\left(\overline{q}e^n_{\iota_2}(v_n)\right)=0.
 	\end{equation*} 
 	If $g = (\overline{p}, \overline{q}) \in \mathbb{Z}_m\times\mathbb{Z}_n$ is such that $\overline{p}= \overline{0}$ and $\overline{q}\neq \overline{0}$ then from \eqref{circ_sum_nt} it follows that
 	\begin{equation*}
 	\sum_{\iota \in I}e'_{\iota}(ge_{\iota}) = \sum_{\iota_2 \in I_n} \left(e^n_{\iota_2}(v_n) \left(\sum_{\iota_1 \in I_m}e^{m}_{\iota_1}(u_m)e^{m}_{\iota_1}k(u_m)\right)\left(\overline{q}e^n_{\iota_2}(v_n)\right)\right) =
 	\end{equation*}
 	\begin{equation*}
 	= \sum_{\iota_2 \in I_n} e^n_{\iota_2}(v_n) 1\left(\overline{q}e^n_{\iota_2}(v_n)\right)= \sum_{\iota_2 \in I_n} e^n_{\iota_2}(v_n) \left(\overline{q}e^n_{\iota_2}(v_n)\right)=0.
 	\end{equation*}
 	From above equations it follows that 
 	\begin{equation}\label{torus_comp}
 	\sum_{\iota\in I, \ g \in  \mathbb{Z}_m\times\mathbb{Z}_n} e'_{\iota}(ge_{\iota})=\left\{
 	\begin{array}{c l}
 	1 & g \in  \mathbb{Z}_m\times\mathbb{Z}_n \text{ is trivial}\\
 	0 & g \in  \mathbb{Z}_m\times\mathbb{Z}_n \text{ is not trivial}
 	\end{array}\right.,
 	\end{equation}
 	i.e. condition 3 of the Definition \ref{fin_def} hold.
 	Otherwise $A_{\theta'} \approx A^{mn}_{\theta}$ as right and left $A_{\theta}$ module, i.e. condition 2 of the Definition \ref{fin_def} hold. So the triple $\left(A_{\theta}, A_{\theta'}, \mathbb{Z}_m\times\mathbb{Z}_n\right)$ is a  noncommutative finite covering projection.

 \end{exm}

 \begin{exm}\label{boring_example}{\it Boring example}.
 	Let $A$ (resp. $G$) be {\it any} unital $C^*$-algebra (resp. finite group.). Let $\widetilde{A}=\oplus_{g\in G}A_g$ where $A_g\approx A$ for any $g \in G$. Let $1_{A_g} \in G$ be the unity of $A_g$. Then $\widetilde{A}$ is a finitely generated left and right $A$-module. Action of $G$ is given by $g_1A_{g_2}=A_{g_1g_2}$. We have
 	\begin{equation*}
 	\sum_{g\in G} 1_{A_g}1_{A_g} = 1_{\widetilde{A}}  ,
 	\end{equation*}
 	\begin{equation}
 	\sum_{i =1, ..., n} 1_{A_g}(g1_{A_g}) = 0 \ \forall g\in G \ (g \ \mathrm{is \ nontrivial}).
 	\end{equation}
 	So a triple  $\left(A, \oplus_{g\in G}A_g, G\right)$ is a finite noncommutative covering projection. This example is boring since it does not reflect properties of $A$ and this projection can be constructed for any finite group.

 \end{exm}
 
 \begin{defn}
 	A ring is said to be {\it irreducible } if it is not a direct sum of more than one nontrivial ring. A finite covering projection $(A, \widetilde{A}, G)$ is said to be {\it irreducible} if both $A$ and $\widetilde{A}$ are irreducible. Otherwise  $(A, \widetilde{A}, G)$ is said to be {\it reducible}.
 \end{defn}
 \begin{rem}
 	Any reducible finite covering projection is boring. Covering projections from examples \ref{circle_fin} - \ref{nt_fin_cov} are irreducible. 
 \end{rem}
 
 \begin{defn}
 	Let
 	\begin{equation*}
 	A=A_0 \xrightarrow{\pi^1} A_1 \xrightarrow{\pi^2} ... \xrightarrow{\pi^n} A_n \xrightarrow{\pi^{n+1}} ...
 	\end{equation*}
 	be a finite or countable sequence of $C^*$-algebras and *-homomorphisms such that for any $i>0$ there is a finite noncommutative covering projection $\left(A_{i-1}, A_i, G_i\right)$. The sequence is said to be {\it composable} if following conditions hold:
 	\begin{enumerate}
 		\item Any composition $\pi_{i_1}\circ ...\circ\pi_{i_0+1}\circ\pi_{i_0}:A_{i_0}\to A_{i_1}$ corresponds to the noncommutative covering projection $\left(A_{i_0}, A_{i_1}, G\left(A_{i_1}|A_{i_0}\right)\right)$;
 		\item There is the natural exact sequence of covering transformation groups
 		\begin{equation*}
 		\{e\}\to G\left(A_{i+2}|A_{i+1}\right) \xrightarrow{\iota} G\left(A_{i+2}|A_{i}\right)\xrightarrow{\pi}G\left(A_{i+1}|A_{i}\right)\to\{e\}
 		\end{equation*}
 		for any $i>0$.
 	\end{enumerate}

 \end{defn}
 \begin{exm}
 	Let
 	\begin{equation*}
 	\mathcal{X} = \mathcal{X}_0 \xleftarrow{}... \xleftarrow{} \mathcal{X}_n \xleftarrow{} ... 
 	\end{equation*}
 	be a finite or countable sequence of second-countable locally compact Hausdorff spaces and regular finitely listed topological covering projections. From the Example \ref{fin_lem} it follows that for any $i$ there is a finite noncommutative covering projection $\left(C\left(\mathcal X_{i-1}\right), C\left(\mathcal X_i\right), G_i\right)$. Since a composition of two regular finitely listed topological covering projections is also regular finitely listed topological covering projection the sequence
 	\begin{equation*}
 	C\left(\mathcal X\right)=C\left(\mathcal X_0\right) \xrightarrow{} C\left(\mathcal X_1\right) \xrightarrow{} ... \xrightarrow{} C\left(\mathcal X_n\right) \xrightarrow{} ...
 	\end{equation*}
 	is composable.
 \end{exm}
 \subsection{Infinite case}\label{bas_constr}
 \paragraph{}
 This section is concerned with a noncommutative generalization of the described in the Section \ref{inf_to} construction. Let
 \begin{equation}\label{seq}
 A=A_0 \xrightarrow{\pi^1} A_1 \xrightarrow{\pi^2} ... \xrightarrow{\pi^n} A_n \xrightarrow{\pi^{n+1}} ...
 \end{equation}
 
 be a sequence of *-homomorphisms which correspond to irreducible noncommutative finite covering projections, and suppose that \eqref{seq} is composable. Let $G_n = G(A_{n}|A)$  be  covering transformations groups, where $n \in \mathbb{N}$. For any $n \in \mathbb{N}$ there is the  natural group epimorphism $h_n:\overline{G} = \varprojlim G_m \to G_n$. 
 \begin{defn}
 	A sequence \eqref{seq} is said to be {\it irreducible} if $A_n$ is irreducible for any $n \in \mathbb{N}^0$.
 \end{defn}
 \begin{empt}
 	
 	Algebras  $\{A_n\}_{n \in \mathbb{N}^0}$  are  finitely generated projective Hilbert $A$-modules with sesquilinear product given by \eqref{fin_form_a}, i.e.
 	
 	\begin{equation}\label{fin_form}
 	\langle a, b \rangle_{A_n} = \frac{1}{|G_n|} \sum_{g \in G_n} g(a^*b)
 	\end{equation}
 	From \eqref{hilb_mod_direct_sum} it follows that there $A_n$ can be decomposed to a direct sum of orthogonal Hilbert $A$-modules
 	\begin{equation}\label{a_p}
 	A_n = A_{n-1} \oplus P_n,
 	\end{equation}
 	i.e. $\langle a, p \rangle_{A_n}=0$ for any $a\in A_{n-1}$, $p \in P_{n}$.
 \end{empt}
 \begin{defn}\label{coh_defn}
 	A sequence $\{a_n \in A_n\}_{n \in \mathbb{N}^0}$  such that following conditions hold:
 	
 	\begin{equation}\label{seq_cond}
 	a_{n+1} = \frac{a_n}{\left|G\left(A_{n+1}|A_n\right)\right|} + p_{n + 1}, \ p_{n + 1} \in P_{n+1};
 	\end{equation}

 	\begin{equation}\label{norm_conv_cond}
 	\text{A sequence } \left\{\left\langle a_n, a_n \right\rangle_{A_n}\in A\right\}_{n \in \mathbb{N}} \text{ is norm  convergent as } n \to \infty
 	\end{equation}
 	
 	is said to be {\it coherent}.
 \end{defn}
 \begin{rem}
 	The condition \eqref{seq_cond} is equivalent to
 	\begin{equation}\label{seq_cond_sum}
 	a_{n} = \sum_{g \in G\left(A_{n+1}|A_n\right)}ga_{n+1}.
 	\end{equation}
 \end{rem}

 \begin{rem}\label{inf_small_rem1}
 	Informally $\frac{a_n}{|G(A_{n+1}|A_n)|}$ means an infinitesimally small coefficient, because $\frac{a_n}{|G(A_{n+N}|A_n)|} \to 0$ as $N \to \infty$. If we denote $P_0 = A$ then $A_n = \bigoplus_{i \in \{0,...,n\}}P_i$, where $P_i$ is finitely generated projective $A$ module. Otherwise for any $n \in\mathbb{N}^0$ there is an inclusion $P_n \subset\sH_{A}$ into the Hilbert space over $A$ (Definition \ref{hilb_product_defn}). From the Definition \ref{coh_defn} it follows that
 	\begin{equation*}
 	a_n = \sum_{k = 0}^{n} \frac{p_{k}}{\left|G \left(A_n|A_k\right)\right|}
 	\end{equation*}
 	where $p_i \in \sH_A$. For any $k \in \mathbb{N}^0$ there is an infinitesimally small element $p^k \in \sH_A$ given by the sequence $\left\{p^k_n\in A\right\}_{n \in \mathbb{N}}$ where 
 	 		\begin{equation}\label{p_nk_eqn}
 	 		p^k_n=\left\{
 	 		\begin{array}{c l}
 	 		0 & n<k\\
 	 		\frac{p_{k}}{\left|G \left(A_n|A_k\right)\right|} & n \ge k
 	 		\end{array}\right..
 	 		\end{equation}
 Otherwise from \eqref{p_nk_eqn} it follows that 
 \begin{equation*}
\left\langle a_n, a_n \right\rangle_{A_n} = \sum_{k = 0}^{\infty}\left(p^k_n\right)^*p^k_n,
 \end{equation*}	 		
\begin{equation*}
\lim_{n \to \infty}\left\langle a_n, a_n \right\rangle_{A_n} = \lim_{n \to \infty}\sum_{k = 0}^{\infty}\left(p^k_n\right)^*p^k_n
\end{equation*}
or
\begin{equation*}
\lim_{n \to \infty}\left\langle a_n, a_n \right\rangle_{A_n} = \sum_{k = 0}^{\infty}\left(p^k\right)^*p^k.
\end{equation*}
where $p^k$ is infinitesimally small for any $k \in \mathbb{N}$. As well as in the Section \ref{nsa_sec} there is a representation by the sum of infinitesimally small elements.	 		
 \end{rem}
 \begin{empt}\label{product}
 	There is an action of $\overline{G}=\varprojlim G_n$ on the linear space $X'$ of coherent sequences arising from actions of $G_n$ on $A_n$. There is the unique sesquilinear $A$-valued product $\langle \cdot , \cdot \rangle_{X'}$ on $X'$ such that $\langle \{a_n\} , \{b_n\} \rangle_{X'} = \mathrm{lim}_{n\to \infty} \langle a_n, b_n \rangle_{A_n}$. If $\mathcal{I} =\left\{x \in X' \ | \ \langle x, x\rangle_{X'} = 0 \right\}$ and  $X''=X'/\mathcal{I} $ then there is a norm on $X''$ given by $\|x\|=\sqrt{\langle x, x\rangle_{X'}}$. Let $\overline{X}_A$ be the norm completion of $X''$.  There is the unique sesquilinear $A$-valued product $\langle \cdot , \cdot \rangle_{\overline{X}_A}$ on $\overline{X}_A$ arising from  $\langle \cdot , \cdot \rangle_{X'}$, so $\overline{X}_A$  is a Hilbert $A$-module. There is an  action  $\overline{G}$ on  $\overline{X}_A$ arising from the action of $\overline{G}$ on $X'$. 
 \end{empt}
 \begin{rem}
 	As well as in the Remark \ref{inf_small_rem1} the inner product can be represented by the sum
 	\begin{equation*}
 	\lim_{n \to \infty}\left\langle a_n, b_n \right\rangle_{A_n} = \sum_{k = 0}^{\infty}\left(p^k\right)^*q^k.
 	\end{equation*}
 	where $p^k, q^k \in \sH_A$ are infinitesimally small for any $k \in \mathbb{N}^0$.
 	
 \end{rem}
 \begin{defn}
 	Any coherent sequence $\{a_n \in A_n\}_{n \in \mathbb{N}^0}$ naturally gives the unique element $\xi \in \overline{X}_A$. We say that $\xi$ is {\it represented by }  $\{a_n \in A_n\}_{n \in \mathbb{N}^0}$, and we will write $\xi = \mathfrak{Rep}\left(\left\{a_n \in A_n\right\}_{n \in \mathbb{N}^0}\right)$. We say that the sequence $\{a_n \in A_n\}_{n \in \mathbb{N}^0}$ is a {\it representative} of $\xi$.
 \end{defn}
 \begin{empt}\label{irr_text}
 	Let $\Lambda= \{a_n \in A_n\}_{n \in \mathbb{N}^0}$ be a coherent sequence. For any $N > 0$ and $b_N \in A_N$ we will define a coherent sequence $b_N\Lambda= \{c_n \in A_n\}_{n \in \mathbb{N}^0}$ given by
 	\begin{equation*}
 	c_n =\left\{
 	\begin{array}{c l}
 	\sum_{g \in G\left(A_N |A_n\right)} \frac{1}{\left|G\left(A_N |A_n\right)\right|}g\left(b_Na_n\right) & n < N\\
 	b_Na_n & n \ge N
 	\end{array}\right. .
 	\end{equation*}
 	whence there is the left action of $A_N$ on  $\overline{X}_A$ arising from the action of $A_N$ on coherent sequences. Similarly there is the right action of $A_N$ on $\overline{X}_A$. 
 	Let $\mathcal{K}\left(\overline{X}_A\right)$ be a $C^*$-algebra of compact operators with left action on $X_A$. For any $N \in \mathbb{N}$ the left (resp. right) action of $A_N$ on $\overline{X}_A$ induces the left (resp. right) action of   $A_N$ on $\mathcal{K}\left(\overline{X}_A\right)$.  If $A \to B\left(H\right)$ is a faithful representation then $\overline{X}_A \otimes_AH$ is a pre-Hilbert space with the scalar product given by
 	\begin{equation*}
 	\left(\xi \otimes x, \eta \otimes y\right)= \left(x, \left\langle \xi, \eta\right\rangle_{\overline{X}_A} y\right).
 	\end{equation*}
 	If $\overline{H}$ is the Hilbert completion of $\overline{X}_A \otimes H$ then $\overline{H}$ is a Hilbert space. For any $n \in \mathbb{N}$ there is the action of $A_n$ on $\overline{H}$ arising from the action of $A_n$ on $\overline{X}_A$. Similarly $\mathcal{K}\left(\overline{X}_A\right)$ acts on $\overline{H}$ and there are natural inclusions $\bigcup_{n\in \mathbb{N}}A_n \subset B\left(\overline{H}\right)$ and $\mathcal{K}\left(\overline{X}_A\right) \subset B\left(\overline{H}\right)$. Moreover there is the natural inclusion of enveloping $W^*$-algebra $\left(\bigcup_{n\in \mathbb{N}}A_n\right)'' \subset B\left(\overline{H}\right)$. 
 	There is a $C^*$-algebra $\overline{A}$ given by
 	\begin{equation*}
 	\overline{A} = \mathcal{K}\left(\overline{X}_A\right)\bigcap \left(\bigcup_{n\in \mathbb{N}}A_n\right)'' \subset  B\left(\overline{H}\right)
 	\end{equation*} There is the natural left (resp. right) action of $A_n$ on both $\mathcal{K}\left(\overline{X}_A\right)$ and $\left(\bigcup_{n\in \mathbb{N}}A_n\right)''$, so there is the left (resp. right) action of $A_n$ of $\overline{A}$.
 	For any $n \in \mathbb{N}$ there is the natural action of $\overline{G}$ on $A_n$ given by $\overline{g}a = h_n(g)a_n$, which induces natural actions of $\overline{G}$ on both $\overline{X}_A$ and $\overline{A}$, such that $g\left(a\xi\right) = \left(ga\right)\left(g\xi\right)$ for any $a \in \overline{A}$ and $\xi \in \overline{X}_A$. According to the Zorn's lemma \cite{spanier:at} there is a maximal irreducible subalgebra $\widetilde{A} \subset \overline{A}$. Let $G\subset \overline{G}$ is a maximal subgroup such that $G\widetilde{A}=\widetilde{A}$. If $g \notin G$ then $\widetilde{A} \bigcap g \widetilde{A} = \{0\}$. From $g^{-1}Gg = G$ it follows that $G$ is a normal subgroup and for any $n \in \mathbb{N}$ there is a homomorphism $h_n|_G : G \to G_n$.
 \end{empt}

  \begin{defn}
  The sequence \eqref{seq} is said to be \textit{faithful} if for any  $n \in \mathbb{N}$ following conditions hold:
  \begin{enumerate}
  \item[(a)] For any  $n \in \mathbb{N}$ the restriction  $h_n|_G$ is a group epimorphism, i.e. $h_n\left(G\right) = G_n$.
  
   \item[(b)] The natural left and right actions of $A_n$ on $\overline{A}$ are faithful.
   
   \end{enumerate}
  \end{defn}
 
 \begin{lem}\label{dir_sum_lem}
 	If the sequence \eqref{sec:Cliff-alg} is faithful  and $J= \overline{\overline{G}/G}$ is a set of representatives of $\overline{G}/G$ then
 	\begin{enumerate}
 		\item[(a)] $\overline{A} = \bigoplus_{\overline{g} \in J} \overline{g}\widetilde{A}$,
 		\item[(b)] Left and right actions of $A_n$ on $\widetilde{A}$ are faithful for any $n \in \mathbb{N}$.
 	\end{enumerate}
 \end{lem}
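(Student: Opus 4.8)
The plan is to prove part (a) first and then derive part (b) from it together with the two faithfulness hypotheses. Throughout I regard the left and right actions of each $A_n$ on $\overline{A}$ as $*$-homomorphisms $A_n \to M(\overline{A})$ into the multiplier algebra, and I use that $\overline{A} \subset (\bigcup_{n} A_n)''$, so that $\overline{A}$ inherits an abundant supply of central projections from this enveloping von Neumann algebra. The first step is to realise the maximal irreducible subalgebra $\widetilde{A}$ as a direct summand: since $\widetilde{A}$ is irreducible (indecomposable) and maximal among such subalgebras, it is cut out by a central projection $z \in M(\overline{A})$, i.e. $\widetilde{A} = z\overline{A}$, the noncommutative analogue of a connected component being clopen. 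Applying the automorphism $\overline{g}$ of $\overline{A}$ and using $\overline{G}$-invariance of $\overline{A}$ gives $\overline{g}\widetilde{A} = (\overline{g}z)\,\overline{A} = z_{\overline{g}}\overline{A}$ with $z_{\overline{g}} := \overline{g}z$ again central, and $z_{\overline{g}}$ depends only on the coset $\overline{g}G$ because $G$ stabilises $\widetilde{A}$.

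Next I would establish orthogonality of distinct translates. For $\overline{g}_1 G \neq \overline{g}_2 G$ we have $\overline{g}_2^{-1}\overline{g}_1 \notin G$, and the property $\widetilde{A} \cap g\widetilde{A} = \{0\}$ for $g \notin G$ recorded in \ref{irr_text}, after applying $\overline{g}_2$, yields $\overline{g}_1\widetilde{A} \cap \overline{g}_2\widetilde{A} = \{0\}$. For central projections this means $z_{\overline{g}_1} z_{\overline{g}_2} = 0$, so the $\{z_{\overline{g}}\}_{\overline{g}\in J}$ are mutually orthogonal and $\sum_{\overline{g}\in J} \overline{g}\widetilde{A}$ is an internal $C^*$-direct sum of the ideals $z_{\overline{g}}\overline{A}$. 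It remains to prove exhaustion, $\sum_{\overline{g}\in J} z_{\overline{g}} = 1$ in $M(\overline{A})$. The projection $z_{\mathrm{tot}} := \sum_{\overline{g}\in J} z_{\overline{g}}$ is central and $\overline{G}$-invariant, since $\overline{G}$ merely permutes the cosets in $J$. If $z_{\mathrm{tot}} \neq 1$ then $(1 - z_{\mathrm{tot}})\overline{A}$ is a nonzero $\overline{G}$-invariant ideal disjoint from the whole orbit of $\widetilde{A}$; I would rule this out by the connectedness/transitivity argument that mirrors the topological construction of Section \ref{inf_to}: because the base algebra $A = A_0$ is irreducible and the $\overline{G}$-fixed part of $\overline{A}$ corresponds to $A$, the group $\overline{G}$ must act transitively on the indecomposable summands of $\overline{A}$, exactly as $\overline{G}$ permutes the connected components of $\overline{\mathcal{X}}$ transitively when $\overline{\mathcal{X}}/\overline{G} = \mathcal{X}$ is connected. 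Hence the orbit of $\widetilde{A}$ accounts for every component and $z_{\mathrm{tot}} = 1$, giving $\overline{A} = \bigoplus_{\overline{g}\in J}\overline{g}\widetilde{A}$.

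For part (b) I would argue as follows, using part (a). Since each $z_{\overline{g}}$ is central in $\overline{A}$, it is central in $M(\overline{A})$, so the image of $A_n$ in $M(\overline{A})$ commutes with every $z_{\overline{g}}$; thus the left action of $A_n$ preserves each summand $\overline{g}\widetilde{A}$, and in particular $A_n \cdot \widetilde{A} \subseteq \widetilde{A}$, so the left action of $A_n$ on $\widetilde{A}$ is well defined. The compatibility $\overline{g}(a\,b) = (\overline{g}a)(\overline{g}b)$ with $\overline{g}a = h_n(\overline{g})a$ gives, for $a \in A_n$ and $\tilde{b}\in\widetilde{A}$, the conjugation identity $a\cdot(\overline{g}\tilde{b}) = \overline{g}\big((h_n(\overline{g}^{-1})a)\cdot\tilde{b}\big)$. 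Now suppose $a \in A_n$ annihilates $\widetilde{A}$. Given any $s \in G_n$, faithfulness (a) supplies $g_0 \in G$ with $h_n(g_0) = s$; since $g_0 \in G$ stabilises $\widetilde{A}$, the identity gives $(s\,a)\cdot\tilde{b} = \overline{g_0}\big(a\cdot(\overline{g_0}^{-1}\tilde{b})\big) = 0$ for all $\tilde{b}\in\widetilde{A}$, because $\overline{g_0}^{-1}\tilde{b}\in\widetilde{A}$. Thus $s\,a$ annihilates $\widetilde{A}$ for every $s \in G_n$. Applying the conjugation identity once more with $s = h_n(\overline{g}^{-1})$ shows $a$ annihilates every summand $\overline{g}\widetilde{A}$, hence annihilates $\overline{A} = \bigoplus_{\overline{g}}\overline{g}\widetilde{A}$; by faithfulness (b) we conclude $a = 0$. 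This proves the left action of $A_n$ on $\widetilde{A}$ is faithful, and the right action is handled by the symmetric argument.

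The main obstacle is the exhaustion step in part (a) — proving $z_{\mathrm{tot}} = 1$, i.e. that the $\overline{G}$-orbit of $\widetilde{A}$ sweeps out all of $\overline{A}$. This is the algebraic incarnation of ``the base is connected, so the covering group permutes the components of the cover transitively'', and making it rigorous requires genuine control of the central-projection (component) structure of $\overline{A}$ inside $(\bigcup_{n} A_n)''$ together with an honest identification of the $\overline{G}$-invariant part of $\overline{A}$ with the irreducible base $A$; the realisation of $\widetilde{A}$ as a clopen summand $z\overline{A}$ used at the outset rests on the same structural input. Once these facts about the decomposition of $\overline{A}$ into indecomposable summands are secured, orthogonality and the deduction of (b) are essentially formal.
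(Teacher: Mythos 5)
Your proposal is correct in substance and, at its core, follows the same route as the paper: part (a) rests on the observation that $G$ stabilises $\widetilde{A}$ while every $\overline{g}\in\overline{G}\setminus G$ moves it to a disjoint irreducible summand (the paper's one-line proof of (a) says exactly this, citing the disjointness $\widetilde{A}\cap \overline{g}\widetilde{A}=\{0\}$ from \ref{irr_text}), and part (b) transfers faithfulness on $\overline{A}$ down to $\widetilde{A}$ using the epimorphism condition $h_n(G)=G_n$. The differences are in implementation, and they mostly favour you. For (b) the paper works with the annihilator ideal $\mathcal{I}\subset A_n$ of $\widetilde{A}$, notes that the annihilator of $\overline{g}\widetilde{A}$ is $h_n(\overline{g})\mathcal{I}$, and then chooses the representatives $\overline{g}\in J$ so that $h_n(\overline{g})$ is trivial, concluding $\mathcal{I}=\bigcap_{\overline{g}\in J}\mathcal{I}^{\overline{g}}=\{0\}$; your elementwise conjugation identity $a\cdot(\overline{g}\tilde b)=\overline{g}\bigl((h_n(\overline{g}^{-1})a)\cdot\tilde b\bigr)$ proves the same thing, but avoids a small imprecision in the paper: a single fixed set $J$ of coset representatives cannot satisfy $h_n(J)=\{e\}$ for all $n$ simultaneously (since $\bigcap_n\ker h_n$ is trivial in $\overline{G}$), so the paper implicitly re-chooses $J$ for each $n$ --- harmless because the decomposition in (a) is independent of the choice, but your argument, which instead uses $h_n(G)=G_n$ to show the annihilator is $G_n$-invariant and then sweeps all summands, sidesteps the issue entirely. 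For (a), your central-projection formalisation ($\widetilde{A}=z\overline{A}$, orthogonality $z_{\overline{g}_1}z_{\overline{g}_2}=0$) makes precise what the paper leaves implicit, and you are right that the genuine mathematical content is the exhaustion step $\sum_{\overline{g}\in J}z_{\overline{g}}=1$, i.e.\ transitivity of the $\overline{G}$-action on the irreducible summands: the paper asserts this in the single clause ``$\overline{g}$ transposes irreducible subalgebras, so $\overline{A}=\bigoplus_{\overline{g}\in J}\overline{g}\widetilde{A}$'' without proof, so your honestly flagged sketch (mirroring transitivity of the deck group on connected components of $\overline{\mathcal{X}}$ from Section \ref{inf_to}) is no less complete than the paper's own treatment; neither supplies the full transitivity argument, and closing it would improve on the published proof rather than merely match it.
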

 \begin{proof}
 a) The algebra $\overline{A}$ is invariant with respect to the $\overline{G}$-action, i.e. $\overline{g} \ \overline{a} \in \overline{A}$ for any $\overline{g} \in \overline{G}$ and $\overline{a} \in \overline{A}$, or $\overline{G} \ \overline{A}=\overline{A}$. If $g \in G$ then $g\widetilde{A}=\widetilde{A}$ for any maximal irreducible subalgebra $\widetilde{A} \subset \overline{A}$. Otherwise if $\overline{g} \in \overline{G} \backslash G$ then $\overline{g}$ transposes irreducible subalgebras, so  $\overline{A} = \bigoplus_{\overline{g} \in J} \overline{g}\widetilde{A}$.
 	\newline
 	b) Consider the right action of $A_n$ on $\overline{A}$. 
 	Let $\mathcal{I} \subset A_n$ be the annulator of $X_A$, i.e.  $\mathcal{I}$ is the maximal ideal such that $\widetilde{A}\mathcal{I} = \{0\}$. Since action of $A_n$ on $\overline{A} = \bigoplus_{\overline{g} \in J}\overline{g}\widetilde{A}$ is faithful we have $\bigcap_{\overline{g} \in J} \mathcal{I}^{\overline{g}}=\{0\}$, where $\mathcal{I}^{\overline{g}}=h_n\left(\overline{g}\right)\mathcal{I}$ is the annulator of $\overline{g} \widetilde{A}$. However since $h_n\left(G\right)=G_n$ an element $h_n\left(\overline{g}\right)$ is trivial for any $\overline{g} \in J$  and $\mathcal{I}^{\overline{g}}= h_n\left(\overline{g}\right) \mathcal{I} = \mathcal{I}$, whence $\mathcal{I}= \bigcap_{\overline{g} \in J} \mathcal{I}^{\overline{g}}=\{0\}$ and the action of $A_n$ on $\widetilde{A}$ is faithful. Similarly we can proof that the left action of $A_n$ on $\widetilde{A}$ is faithful.
 \end{proof}

 \begin{defn}\label{main_defn}
 	Let \eqref{seq} be a composable faithful sequence of irreducible $C^*$-algebras. A Hilbert $A$-module $\overline{X}_A$ is said to be the {\it disconnected module} of the sequence \eqref{seq}.  The $\overline{G}$ is said to be a {\it disconnected group} of the sequence \eqref{seq}. The algebra $\overline{A}= \mathcal{K}\left(\overline{X}_A\right)\bigcap \left(\bigcup_{n\in \mathbb{N}}A_n\right)''$ is said to be the {\it disconnected covering algebra} of the sequence \eqref{seq}.  If  $\widetilde{A} \subset \overline{A}$ is a maximal irreducible subalgebra and $X_A = \widetilde{A}\otimes_{\overline{A}}\overline{X}_A$, then $\widetilde{A}$ is said to be a {\it connected covering algebra} of the sequence \eqref{seq} and $X_A\subset\overline{X}_A$ is said to be a {\it connected module} of sequence \eqref{seq}. A maximal subgroup $G \subset \overline{G}$ such that $G\widetilde{A}=\widetilde{A}$ (or $G  X_A = X_A$) is said a {\it  covering transformation group} of the sequence \eqref{seq}. The group $G$ is a normal subgroup of $\overline{G}$. $X_A$ is a $\widetilde{A}$-$A$ correspondence, i.e. $X_A = _{\widetilde{A}}X_A$. The quadruple $\left(A, \widetilde{A}, _{\widetilde{A}}X_A, G\right)$ is said to be a {\it noncommutative infinite covering projection} of the sequence \eqref{seq}. $A$  is said to be the {\it base algebra of the sequence} \eqref{seq}.
 \end{defn}
 \begin{rem}
 	From the Lemma \ref{dir_sum_lem} all irreducible subalgebras of $\overline{A}$ are isomorphic. Similarly we can say about $G$ and $_{\widetilde{A}}X_A$. So  $\widetilde{A}$, $G$ and $_{\widetilde{A}}X_A$ from the Definition \ref{main_defn} are unique up to isomorphisms.
 \end{rem}
 \begin{lem}\label{sum_lem}
 	Let $\Lambda = \left\{e_{n} \in A_n\right\}_{n \in \mathbb{N}^0}$, $\Lambda' = \left\{e'_{n} \in A_n\right\}_{n \in \mathbb{N}^0}$ be  coherent  sequences such that 
 	\begin{equation*}
 	e_{n} = \sum_{g \in G(A_{n+1}|A_n)}g e_{n+1}; \ e'_{n} = \sum_{g \in G(A_{n+1}|A_n)}g e'_{n+1}; \ e'_ne^*_{n} = \sum_{g \in G(A_{n+1}|A_n)}g e'_{n+1}e^*_{n+1}.
 	\end{equation*}

 	If $\xi = \mathfrak{Rep}(\Lambda)$, $\xi' = \mathfrak{Rep}(\Lambda')$ then following series 
 	
 	\begin{equation*}\label{infsum}
 	\widetilde{a} = \sum_{g \in G} g \xi'\rangle \langle g \xi
 	\end{equation*}
 	is strictly convergent and $\langle \eta e'_0e_0^*, \zeta \rangle_{\overline{X}_A} =\langle \eta, \widetilde{a}\zeta \rangle_{\overline{X}_A}$  for any $\eta,\zeta \in \overline{X}_A$.
 \end{lem}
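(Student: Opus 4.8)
The plan is to prove the assertion in its \emph{weak} (inner-product) form and to extract the strict convergence from the same estimates. Writing $\xi=\mathfrak{Rep}(\Lambda)$, $\xi'=\mathfrak{Rep}(\Lambda')$ and recalling that $g\xi'\rangle\langle g\xi$ denotes the rank-one operator $\zeta\mapsto(g\xi')\langle g\xi,\zeta\rangle_{\overline{X}_A}$, the identity $\langle\eta\,e'_0e_0^*,\zeta\rangle_{\overline{X}_A}=\langle\eta,\widetilde a\zeta\rangle_{\overline{X}_A}$ is equivalent to
\[
\sum_{g\in G}\langle\eta,g\xi'\rangle_{\overline{X}_A}\,\langle g\xi,\zeta\rangle_{\overline{X}_A}=\langle\eta\,e'_0e_0^*,\zeta\rangle_{\overline{X}_A}\qquad(\eta,\zeta\in\overline{X}_A).
\]
First I would record the decisive finite-level ingredient. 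Iterating the coherence relation \eqref{seq_cond_sum} for the sequence $\{e'_ne_n^*\}$, which is coherent by the hypothesis $e'_ne_n^*=\sum_{g\in G(A_{n+1}|A_n)}g(e'_{n+1}e_{n+1}^*)$, gives for every $n$ the telescoped formula
\[
e'_0e_0^*=\sum_{g\in G_n}g\!\left(e'_ne_n^*\right)=\sum_{g\in G_n}(ge'_n)(ge_n)^*,
\]
where the last step uses that the $G$-action is involutive. Together with the level-$n$ inner product $\langle a,b\rangle_{A_n}=\frac{1}{|G_n|}\sum_{g\in G_n}g(a^*b)$ of \eqref{fin_form}, this is what converts the collapsed group-sum on the left into the base element on the right.

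Next I would treat convergence. Using that $h_n|_G\colon G\to G_n$ is onto (faithfulness~(a)) with kernel $G^n$, I would organise $\sum_{g\in G}$ over the cosets $G/G^n\cong G_n$ and truncate to finite subsets $F\subset G$. For the diagonal case $\xi'=\xi$ the partial sums $\sum_{g\in F}g\xi\rangle\langle g\xi$ are positive and increasing, so strict convergence follows once they are uniformly bounded; the bound I would get by comparing them, level by level, with the finite partition-of-unity relation \eqref{can_nc} of the covering $(A,A_n,G_n)$, using the orthogonal splitting $A_n=A_{n-1}\oplus P_n$ of \eqref{a_p} and the norm-convergence hypothesis \eqref{norm_conv_cond} to dominate the tails. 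The general case then reduces to the diagonal one by polarization, since $g\xi'\rangle\langle g\xi=\tfrac14\sum_{k=0}^{3}i^{k}\,g(\xi'+i^{k}\xi)\rangle\langle g(\xi'+i^{k}\xi)$ by $g$-linearity, and $(g\xi'\rangle\langle g\xi)^{*}=g\xi\rangle\langle g\xi'$; controlling the partial sums in both the $T\mapsto\|T\zeta\|$ and $T\mapsto\|T^{*}\zeta\|$ seminorms yields strict convergence of $\widetilde a$ in the multiplier algebra of $\mathcal K(\overline{X}_A)\subset B(\overline{H})$.

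Finally I would identify the limit. Replacing each inner product by its level-$n$ expression and summing $\langle\eta,g\xi'\rangle\langle g\xi,\zeta\rangle$ one coset-family at a time, the finite Parseval relation \eqref{can_nc} collapses the double $G_n$-average against the conditional expectation, and the telescoped coherence identity above rewrites the surviving $G_n$-sum as the base element $e'_0e_0^*$; the contributions of $P_{n+1},P_{n+2},\dots$ are exactly the ``infinitesimally small'' remainders of Remark~\ref{inf_small_rem1}, which vanish as $n\to\infty$ by \eqref{norm_conv_cond}. Passing to the limit then gives the displayed weak identity, and with it the lemma. I expect the main obstacle to be precisely this last interchange of $\sum_{g\in G}$ with $\lim_{n}$: the inner product carries the normalising factor $\tfrac1{|G_n|}$ while the operator sums over the growing group $G$, so the estimates must balance the two against each other exactly. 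This is the analytic content concealed behind the formal telescoping, and it is where the coherence conditions \eqref{seq_cond} and \eqref{norm_conv_cond} do the real work.
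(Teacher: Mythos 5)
Your core engine is the same as the paper's: the telescoped identity $e'_0e_0^*=\sum_{g\in G(A_n|A)}g\left(e'_ne_n^*\right)$ obtained by iterating the third hypothesis, followed by a weak-pairing computation in which the group sum collapses against this identity. The paper implements it by forming partial sums $a_m=\sum_{g\in G^m}g\xi'\rangle\langle g\xi$ over the finite sets $G^m$ of a $\overline{G}$-covering, computing $\langle\eta,a_m\zeta\rangle_{\overline{X}_A}$ as a double limit in $m$ and $n$, and using the bijection $h_m|_{G^m}\colon G^m\to G_m$ to rewrite $\sum_{g\in G^m}h_m(g)\left(e'_me_m^*\right)$ as $\sum_{g\in G_m}g\left(e'_me_m^*\right)=e'_0e_0^*$. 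Two discrepancies in your setup: the series is indexed by the inverse-limit group $\overline{G}$ realised through these transversals (consistently with Corollary \ref{cor_xi1}), and in Definition \ref{g_cov_defn} the sets $G^n$ are \emph{transversals} ($|G^n|=|G_n|$, $h_n(G^n)=G_n$), not kernels as you use them; this transversal device is precisely what balances the $\tfrac{1}{|G_n|}$ normalisation against the growing sum, which you correctly flag as the analytic crux. Also, the uniform bound should come from this collapse itself, not from \eqref{can_nc}: the families $\{a_\iota\},\{b_\iota\}$ there belong to the definition of a finite covering projection and are unrelated to the arbitrary coherent sequences $e_n,e'_n$ of the lemma.

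The genuine gap is the polarization step. Writing $g\xi'\rangle\langle g\xi=\tfrac14\sum_{k=0}^{3}i^{k}\,g(\xi'+i^{k}\xi)\rangle\langle g(\xi'+i^{k}\xi)$ reduces the lemma to four diagonal series, but your positivity argument for the diagonal series built from $f_n=e'_n+i^{k}e_n$ requires the multiplicative coherence $f_nf_n^*=\sum_{g\in G(A_{n+1}|A_n)}g\left(f_{n+1}f_{n+1}^*\right)$, and expanding $f_nf_n^*=e'_ne'^*_n+e_ne_n^*+i^{k}e_ne'^*_n+i^{-k}e'_ne_n^*$ shows this needs the telescoping identities for $e_ne_n^*$, $e'_ne'^*_n$ and $e_ne'^*_n$ in addition to the hypothesised one for $e'_ne_n^*$. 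None of these follows from coherence of $\Lambda,\Lambda'$ alone: $\sum_{g}g\left(e_{n+1}e_{n+1}^*\right)$ differs from $\left(\sum_{g}ge_{n+1}\right)\left(\sum_{g}ge_{n+1}\right)^*=e_ne_n^*$ by cross terms, which is exactly why the mixed identity had to be imposed as a separate hypothesis. So under the lemma as stated your diagonal reduction is unavailable, whereas the paper's direct computation of $\langle\eta,a_m\zeta\rangle$ for the mixed rank-one sum uses precisely and only the given identity. If you strengthen the hypotheses to all four product identities (true in the applications, e.g.\ Example \ref{fin_lem}), your route does go through, and it then treats the passage from weak to strict convergence more carefully than the paper — which asserts strictness directly from the weak limit — via $\left\|\left(\widetilde{a}-a_F\right)\mu\right\|\le\left\|\left(\widetilde{a}-a_F\right)^{1/2}\right\|\,\left\|\left(\widetilde{a}-a_F\right)^{1/2}\mu\right\|$ and the factorisation through rank-one operators; but as written the proposal proves a lemma with stronger hypotheses, not Lemma \ref{sum_lem}. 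The fix is to abandon polarization and run your truncation-and-collapse estimate directly on the mixed partial sums, as the paper does.
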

 \begin{proof}
 	Let $\left\{G^k \subset \overline{G}\right\}_{k\in\mathbb{N}}$ be a $\overline{G}$-covering of the sequence
 	\begin{equation*}\
 	G\left(A_1, A\right) \leftarrow G\left(A_2, A\right) \leftarrow ...
 	\end{equation*}
 	where $\overline{G}=\varprojlim  G\left(A_n, A\right)$.
 	If $\eta, \zeta \in \overline{X}_A$ are given by $\eta = \mathfrak{Rep}\left(\left\{b_n \in A_n\right\}_{n \in \mathbb{N}^0}\right), \ \zeta = \mathfrak{Rep}\left(\left\{c_n \in A_n\right\}_{n \in \mathbb{N}^0}\right)$ then from \ref{product}  it  follows that
 	\begin{equation*}
 	\langle \eta, \xi' \rangle_{\overline{X}_A} = \lim_{n\to \infty} c^*_n e'_n; \ \langle \xi, \zeta \rangle_{\overline{X}_A} = \lim_{n\to \infty} e^*_nb_n; \ 
 	\end{equation*}
 	\begin{equation*}
 	\langle \eta, \xi' \rangle_{\overline{X}_A} \langle\xi, \zeta \rangle_{\overline{X}_A}=  \langle \eta, \left( \xi' \rangle \langle\xi\right) \zeta \rangle_{\overline{X}_A}= \lim_{n\to \infty} c^*_ne'_ne^*_nb_n;
 	\end{equation*}
 	If $a_m \in \mathcal{K}\left(\overline{X}_A\right)$ is given by
 	\begin{equation*}
 	a_m = \sum_{g \in G^m}g \xi'\rangle \langle g \xi.
 	\end{equation*}
 	then 
 	\begin{equation*}
 	\langle \eta, a_m \zeta \rangle_{\overline{X}_A} = \lim_{n \to \infty} c^*_n \left(\sum_{g \in G^m}(h_m(g)(e'_me^*_m))\right)b_n=\lim_{n \to \infty} c^*_n \left(\sum_{g \in G\left(A_m, A\right)}g\left(e'_me^*_m\right)\right)b_n,
 	\end{equation*}
 	whence
 	\begin{equation*}
 	\lim_{m \to \infty}\langle \eta, a_m \zeta \rangle_{\overline{X}_A} =\lim_{m \to \infty}\lim_{n \to \infty} c^*_n \left(\sum_{g \in G\left(A_m, A\right)}g\left(e'_me^*_m\right)\right)b_n = 
 	\end{equation*}
 	\begin{equation*}
 	= \lim_{n \to \infty} c^*_n e'_0e^*_0b_n=\langle \eta e'_0e^*_0, \zeta \rangle_{\overline{X}_A}. 
 	\end{equation*}
 	Form the above equation it follows that the sequence $\left\{a_m\right\}_{m \in \mathbb{N}^0}$ is strictly convergent as $m\to \infty$ and $\langle \eta,\left( \lim_{m \to \infty} a_m\right) \zeta \rangle_{\overline{X}_A}=\langle \eta e'_0e^*_0,\zeta \rangle_{\overline{X}_A}$. Otherwise $\lim_{m \to \infty} a_m = \widetilde{a}$ in the sense of the strict convergence.
 \end{proof}
 \begin{cor}\label{cor_xi1}
 	Let $I$ be a finite or countable set and $\xi_{\iota}=\mathfrak{Rep}\left(\left\{e_{\iota n} \in A_n\right\}_{n \in \mathbb{N}^0}\right)$, $ \xi'_{\iota}=\mathfrak{Rep}\left(\left\{e'_{\iota n} \in A_n\right\}_{n \in \mathbb{N}^0}\right)\in \overline{X}_A$ satisfy conditions of the Lemma \ref{sum_lem}. If $\xi$ and $\xi'$ satisfy following condition
 	\begin{equation}\label{alg}
 	\sum_{\iota} e'_{0,\iota}e^*_{0,\iota} = 1_{M(A)}, \ \forall n \in \mathbb{N}^0
 	\end{equation}
 	in sense of strict topology and then
 	\begin{equation*}
 	\sum_{\iota \in I, g \in \overline{G}} g \xi'_{\iota}\rangle \langle g \xi_{\iota} = 1_{M\left(\mathcal{K}\left(\overline{X}_A\right)\right)}
 	\end{equation*}
 	in sense of strict topology.
 \end{cor}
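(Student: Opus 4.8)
The plan is to reduce the statement to Lemma \ref{sum_lem} applied one index at a time and then to add up the resulting multipliers using the hypothesis \eqref{alg}. First I would observe that for each fixed $\iota \in I$ the pair $(\xi_\iota, \xi'_\iota)$ satisfies the hypotheses of Lemma \ref{sum_lem}, so the inner series $\widetilde a_\iota := \sum_{g \in \overline G} g \xi'_\iota\rangle \langle g \xi_\iota$ converges strictly in $M\left(\mathcal K\left(\overline X_A\right)\right)$ and obeys the pairing identity $\langle \eta, \widetilde a_\iota \zeta\rangle_{\overline X_A} = \langle \eta\, e'_{0,\iota}e^*_{0,\iota}, \zeta\rangle_{\overline X_A}$ for all $\eta,\zeta \in \overline X_A$. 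Thus the asserted double sum is the iterated sum $\sum_{\iota \in I}\widetilde a_\iota$, and it remains to identify this operator with $1_{M\left(\mathcal K\left(\overline X_A\right)\right)}$.

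For the algebraic identification I would test $\sum_{\iota}\widetilde a_\iota$ against arbitrary $\eta,\zeta \in \overline X_A$. Because $b \mapsto \eta\, b$ is the right $A$-action on $\overline X_A$ and $\langle\,\cdot\,,\zeta\rangle_{\overline X_A}$ is additive in its first slot, summing the pairing identity over any finite $J \subset I$ gives, exactly, $\langle \eta, (\sum_{\iota \in J}\widetilde a_\iota)\zeta\rangle_{\overline X_A} = \langle \eta\, (\sum_{\iota \in J} e'_{0,\iota}e^*_{0,\iota}), \zeta\rangle_{\overline X_A}$. As $J$ exhausts $I$, hypothesis \eqref{alg} forces $\sum_{\iota \in J} e'_{0,\iota}e^*_{0,\iota} \to 1_{M(A)}$ strictly, so the right-hand side converges in norm to $\langle \eta, \zeta\rangle_{\overline X_A}$. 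Hence $\langle \eta, (\sum_\iota\widetilde a_\iota)\zeta\rangle_{\overline X_A} = \langle\eta,\zeta\rangle_{\overline X_A}$ for all $\eta,\zeta$; by nondegeneracy of the $A$-valued product this means the limit operator acts as the identity on $\overline X_A$, i.e.\ it is $1_{M\left(\mathcal K\left(\overline X_A\right)\right)}$.

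Finally I would promote this pairing-level equality to genuine strict convergence of the single series indexed by $I \times \overline G$. The decisive ingredient is uniform boundedness of the finite partial sums $S_F = \sum_{(\iota,g)\in F} g \xi'_\iota\rangle \langle g \xi_\iota$: this I would extract by combining the uniform control on the inner partial sums already implicit in the proof of Lemma \ref{sum_lem} with the fact that \eqref{alg} converges strictly to the unit, so that its finite partial sums are bounded by $1$. A uniformly bounded net in $M\left(\mathcal K\left(\overline X_A\right)\right)$ that converges in every pairing $\langle\eta,\,\cdot\,\zeta\rangle_{\overline X_A}$ converges strictly to the same limit; applying this with limit $1$ completes the argument.

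The step I expect to be the main obstacle is precisely this last one: the convergence over $g$ (from Lemma \ref{sum_lem}) and the convergence over $\iota$ (from \eqref{alg}) are each only strict, so some care is needed to see that the combined double-indexed net $S_F$ is strictly Cauchy and not merely convergent on the canonical total family of vectors. Establishing the uniform boundedness of $S_F$ honestly, rather than reading it off formally from the two one-directional limits, is where the real work lies; I would model that estimate on the telescoping identities $e'_n e^*_n = \sum_{g} g\,e'_{n+1}e^*_{n+1}$ that already drive the proof of Lemma \ref{sum_lem}.
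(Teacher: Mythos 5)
Your first two steps coincide exactly with the paper's own proof: the paper applies Lemma \ref{sum_lem} once for each $\iota$ to obtain the pairing identity \eqref{comp}, sums over $\iota$, and uses \eqref{alg} to conclude $\left\langle \eta, \left(\sum_{\iota,g} g\xi'_{\iota}\rangle\langle g\xi_{\iota}\right)\zeta\right\rangle_{\overline{X}_A} = \langle\eta,\zeta\rangle_{\overline{X}_A}$ --- and then it simply pronounces the series strictly convergent, with no further argument. The genuine gap in your proposal lies precisely in the extra third step you add to bridge that silence. The principle you invoke --- that a uniformly bounded net in $M\left(\mathcal{K}\left(\overline{X}_A\right)\right)$ converging in every pairing $\langle\eta,\,\cdot\,\zeta\rangle_{\overline{X}_A}$ converges strictly --- is false. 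Take $\overline{X}_A = H$ a Hilbert space, so that $M\left(\mathcal{K}\left(H\right)\right) = B\left(H\right)$ and the pairings are the ordinary inner products, and let $S$ be the unilateral shift: the powers $S^m$ are bounded by $1$ and converge to $0$ in every pairing, yet $\left\|S^m \left(\xi\right)\left(\eta\right)\right\| = \left\|S^m\xi\right\|\left\|\eta\right\| = \left\|\xi\right\|\left\|\eta\right\|$ for every rank-one compact operator, so $S^m$ does not converge strictly. Your auxiliary claim that the finite partial sums of \eqref{alg} are bounded by $1$ is also unjustified in general: when $\xi'_{\iota}\neq\xi_{\iota}$ the summands $e'_{0,\iota}e^*_{0,\iota}$ need not be positive (nor even self-adjoint), and strict convergence of a series to $1_{M(A)}$ does not bound its partial sums.

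What would actually close the gap is structure you do not use. In the case $\xi'_{\iota}=\xi_{\iota}$ the partial sums $S_F$ are positive and increasing, weak convergence to $1$ gives $0\le S_F\le 1$, and the standard approximate-unit estimate $\left\|\left(1-S_F\right)\eta\right\| \le \left\|\left\langle\eta,\left(1-S_F\right)\eta\right\rangle_{\overline{X}_A}\right\|^{1/2} \to 0$ yields strong convergence on $\overline{X}_A$, hence strict convergence on rank-one compacts and, by uniform boundedness (which now is genuine), on all of $\mathcal{K}\left(\overline{X}_A\right)$; self-adjointness of $S_F$ handles the other side of the strict seminorms. In the general case $\xi'_{\iota}\neq\xi_{\iota}$ one needs honest tail estimates, e.g.\ a Cauchy--Schwarz bound of the mixed partial sums against the two positive series $\sum g\xi_{\iota}\rangle\langle g\xi_{\iota}$ and $\sum g\xi'_{\iota}\rangle\langle g\xi'_{\iota}$, derived from the telescoping hypotheses of Lemma \ref{sum_lem} --- exactly the work you gesture at in your last sentence but do not carry out. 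To be fair, the paper's own proof is equally silent on this point, so your diagnosis of where the real difficulty sits is accurate; but the tool you propose to resolve it does not do the job.
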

 \begin{proof}
 	From lemma \ref{sum_lem} it follows that for any $\eta, \zeta \in \overline{X}_A$ following condition hold.
 	\begin{equation}\label{comp}
 	\left\langle \eta, \left(\sum_{g \in G} g \xi_{\iota}\rangle \langle g \xi'_{\iota}\right) \zeta\rangle_{\overline{X}_A}=\langle \eta e^*_{0, \iota}e'_{0, \iota} \ , \ \zeta\right\rangle_{\overline{X}_A}.
 	\end{equation}
 	From follows \eqref{alg}, \eqref{comp} it follows that for any $\eta, \zeta \in \overline{X}_A$ following condition  hold
 	\begin{equation*}
 	\left\langle \eta , \left( \sum_{\iota \in I, \ g \in \overline{G}}g \xi_{\iota}\rangle \langle g \xi_{\iota}\right) \zeta \right\rangle_{\overline{X}_A} =\left\langle \eta \sum_{\iota \in I} e^*_{0\iota}e'_{0,\iota} \ , \ \zeta \rangle_{\overline{X}_A} = \langle \eta , \zeta \right\rangle_{\overline{X}_A} \ ,
 	\end{equation*}
 	i.e.
 	\begin{equation*}
 	\sum_{\iota \in I, g \in \overline{G}} g \xi'_{\iota}\rangle \langle g \xi_{\iota} = 1_{M\left(\mathcal{K}\left(\overline{X}_A\right)\right)}.
 	\end{equation*}
 \end{proof}
 \begin{cor}\label{cor_xi2}
 	In the situation of the corollary \ref{cor_xi1} a linear span of $\left\{g \xi'_{\iota}a\right\}_{g \in \overline{G}, \ \iota \in I, \ a \in A}$ is a dense subspace of $\overline{X}_A$.
 \end{cor}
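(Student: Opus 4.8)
The plan is to read the resolution of the identity established in Corollary \ref{cor_xi1} as an expansion of an arbitrary vector. Fix $\eta \in \overline{X}_A$. Since
$\sum_{\iota \in I,\, g \in \overline{G}} g\xi'_\iota\rangle\langle g\xi_\iota = 1_{M(\mathcal{K}(\overline{X}_A))}$
and the unit of $M(\mathcal{K}(\overline{X}_A))$ acts on $\overline{X}_A$ as the identity operator, I would apply this multiplier to $\eta$ and use the defining formula $\theta_{\xi,\zeta}(\eta)=\xi\langle\zeta,\eta\rangle_{\overline{X}_A}$ for the rank-one operators to obtain
\begin{equation*}
\eta = \left(\sum_{\iota \in I,\, g \in \overline{G}} g\xi'_\iota\rangle\langle g\xi_\iota\right)\eta = \sum_{\iota \in I,\, g \in \overline{G}} g\xi'_\iota\,\langle g\xi_\iota, \eta\rangle_{\overline{X}_A}.
\end{equation*}
Every summand has the form $g\xi'_\iota a$ with $a = \langle g\xi_\iota, \eta\rangle_{\overline{X}_A} \in A$, so each finite partial sum lies in the linear span of $\left\{g\xi'_\iota a\right\}_{g\in\overline{G},\,\iota\in I,\,a\in A}$. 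Once the displayed series is known to converge to $\eta$ in the norm of $\overline{X}_A$, it follows that $\eta$ belongs to the closure of that span; as $\eta$ is arbitrary, the span is dense, which is exactly the assertion.

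The one point requiring care --- and the step I expect to be the main obstacle --- is the passage from the \emph{strict} convergence of the operator series in $M(\mathcal{K}(\overline{X}_A))$ to the \emph{norm} convergence of the vector series $\sum g\xi'_\iota\langle g\xi_\iota,\eta\rangle_{\overline{X}_A}$ inside $\overline{X}_A$. I would handle this using the nondegeneracy of the left action of $\mathcal{K}(\overline{X}_A)$ on $\overline{X}_A$: by the Cohen--Hewitt factorization theorem the vector $\eta$ may be written $\eta = k\zeta$ with $k \in \mathcal{K}(\overline{X}_A)$ and $\zeta \in \overline{X}_A$. Writing $T_F := \sum_{(\iota,g)\in F} g\xi'_\iota\rangle\langle g\xi_\iota$ for the net of partial sums over finite subsets $F \subset I \times \overline{G}$, strict convergence $T_F \to 1_{M(\mathcal{K}(\overline{X}_A))}$ gives $\|T_F k - k\| \to 0$, whence
\begin{equation*}
\|T_F\eta - \eta\| = \|(T_F k - k)\zeta\| \le \|T_F k - k\|\,\|\zeta\| \longrightarrow 0.
\end{equation*}
Since $T_F\eta$ is precisely the $F$-th partial sum of the vector series above, this establishes the required module-norm convergence.

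It then remains only to record the two routine facts used along the way: the identification $(g\xi'_\iota\rangle\langle g\xi_\iota)\eta = g\xi'_\iota\,\langle g\xi_\iota,\eta\rangle_{\overline{X}_A}$ is immediate from the definition of $\theta_{\xi,\zeta}$, and $\langle g\xi_\iota,\eta\rangle_{\overline{X}_A}$ indeed lies in $A$ because $\overline{X}_A$ is a Hilbert $A$-module. No new estimates beyond those already supplied by Corollary \ref{cor_xi1} are needed, so the argument reduces entirely to the convergence step above together with the bookkeeping that each term is of the prescribed form $g\xi'_\iota a$.
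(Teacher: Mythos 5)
Your proposal is correct and is essentially the argument the paper intends: its entire proof of this corollary is the single line ``Follows from the Corollary \ref{cor_xi1}'', i.e.\ apply the strict resolution of the identity to an arbitrary $\eta\in\overline{X}_A$ and note that the partial sums $T_F\eta=\sum_{(\iota,g)\in F} g\xi'_\iota\langle g\xi_\iota,\eta\rangle_{\overline{X}_A}$ lie in the span in question. Your extra care in passing from strict convergence to norm convergence of the vector series, via Cohen--Hewitt factorization $\eta=k\zeta$ with $k\in\mathcal{K}\left(\overline{X}_A\right)$ and the nondegeneracy of the $\mathcal{K}\left(\overline{X}_A\right)$-action, correctly fills the one step the paper leaves implicit.
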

 \begin{proof}
 	Follows from the Corollary \ref{cor_xi1}.
 \end{proof}
 
\section{Covering projections of spectral triples}
\begin{empt}\label{s_repr}
Let  $\left(\A, H, D\right)$ be a spectral triple.  Similarly to \cite{bram:atricle} we  define a representation of $\pi^1:\A \to B(H^2)$ given by
\begin{equation}
\nonumber \pi^1(a) =  \begin{pmatrix} a & 0\\
[D,a] & a\end{pmatrix}.
\end{equation}
 We can inductively construct  representations $\pi^s: \A \to B\left(H^{2^s}\right)$ for any $s \in \mathbb{N}$. If $\pi^s$ is already constructed then  $\pi^{s+1}: \A \to B\left(H^{2^{s+1}}\right)$ is given by
  \begin{equation}\label{s_diff_repr_equ}
  \pi^{s+1}(a) =  \begin{pmatrix}  \pi^{s}(a) & 0 \\ \left[D,\pi^s(a)\right] &  \pi^s(a)\end{pmatrix}
  \end{equation}
  where we assume diagonal action of $D$ on $H^{2^s}$, i.e.
 \begin{equation*}
 D \begin{pmatrix} x_1\\ ... \\ x_{2^s}
 \end{pmatrix}= \begin{pmatrix} D x_1\\ ... \\ D x_{2^s}
 \end{pmatrix}; \ x_1,..., x_{2^s}\in H.
 \end{equation*}
 \end{empt}

\begin{defn}\label{coh_spec_triple_defn}
Let  
\begin{equation}\label{sp_tr_sec_eqn}
\left\{\left(\A_n, H_n, D_n\right)\right\}_{n\in \mathbb{N}^0}
\end{equation}
 be a sequence of spectral triples and $\left(\A, H, D\right) = \left(\A_0, H_0, D_0\right)$. The sequence is said to be {\it  coherent} if following conditions hold:
\begin{enumerate}
\item There is a sequence of injective *-homomorphisms
\begin{equation}\label{triple_seq}
\A = \A_0 \to \A_1 \to ... \to \A_n \to ... \ ,
\end{equation} 
\item For any $n \in \mathbb{N}$ there is a finite noncommutuative covering projection $\left(A_{n-1}, A_n, G\left(A_n, A_{n-1}\right)\right)$ where $A_n$ is the $C^*$-completion of $\A_n$ and the *-homomorphism $A_{n-1}\to A_n$ is induced by the inclusion $\A_{n-1}\to \A_n$.
\item The sequence of finite noncommutative covering projections 
\begin{equation}\label{inf_seq_trp}
A = A_0 \to A_1 \to ... \to A_n \to ... \ ,
\end{equation} 
is composable.
\item If $g \in G\left(A_n,A\right)$ then $g\A_n = \A_n$ , and $\A_n^{G\left(A_n, A_{m}\right)}= \A_m$ for any $m, n \in \mathbb{N}^0$ such that $n > m$.
\item The $\A_n$-module $\sH^\infty_n= \bigcap_{k\in\bN} \Dom D_n^k$ is given by $\sH^\infty_n = \A_n \otimes_{\A} \sH^\infty$ where $\sH^\infty=\bigcap_{k\in\bN} \Dom D^k\subset H$. So $H_n = A_n \otimes H$ and the scalar  product on  $H_n$ is given by 
\begin{equation}\label{hilb_scalar_correct}
\left(a \otimes \xi, b \otimes \eta\right)= \left(\xi, \langle a,b\rangle_{A_n}\eta\right); \ \forall a, b \in A_n, \ \forall \xi, \eta \in H_0.
\end{equation}
From the above expressions it follows that
\begin{itemize}
\item The space $\sH^\infty_n$ is given by $\sH^\infty_n = \A_n \otimes_{\A_m} \sH^\infty_m$ for any $n > m$,
\item Since $\sH^\infty_m = \A_m \otimes_{\A_m}\sH^\infty_m$ and $\A_m \subset \A_n$ there is the natural inclusion $\sH^\infty_m \subset \sH^\infty_n$ and the action of $G\left(A_n, A_m\right)$ on $\sH^\infty_n$ for any $n > m$.
\item If $g \in G\left(A_n,A\right)$ then $g\sH^\infty_n = \sH^\infty_n$ , and $\left(\sH^\infty_n\right)^{G\left(A_n, A_{m}\right)}= \sH^\infty_m$ for any $m, n \in \mathbb{N}^0$ such that $n > m$
\end{itemize}
\item  For any $g \in G\left(A_n,A\right)$ and $\xi \in \sH^\infty_n$  following conditions hold:
\begin{equation*}
g \left(D_n\xi\right)= D_n\left(g\xi\right); \ \forall \xi \in \sH^\infty_n,
\end{equation*}
\begin{equation*}
D_n|_{\sH^\infty_m}= D_m;~\forall n > m.
\end{equation*}

\end{enumerate}

\end{defn}
\begin{rem}
	From the condition 6 of the Definition \ref{coh_spec_triple_defn} it follows that if $n > m$ then
	\begin{equation*}
	D_n \left(1_{A_n} \otimes \xi\right) = 1_{A_n} \otimes D_m \xi; \ \ \forall \xi \in \mathrm{Dom}\left(D_m\right);~\forall n > m.
	\end{equation*}
where tensor product means that $H_n = A_n \otimes_{A_m} H_m$. From this property it follows that $\mathrm{Dom}\left(D_m\right)\subset \mathrm{Dom}\left(D_n\right)$ and $D_n|_{\mathrm{Dom}\left(D_m\right)} = D_m$	
\end{rem}

\begin{empt}

	Let denote $\left(\A, D, H\right) = \left(\A_0, D_0, H_0\right)$ and $\sH^\infty = \sH^\infty_0$. If $\sH = A\otimes_{\A}\sH^\infty$ then from \cite{bass} it follows that $\sH$ is a projective finitely generated $A$-module. From \cite{frank:frames} it follows that $\sH$ is a finitely generated Hilbert $A$-module. So  $\sH_n = A_n \otimes_A \sH$ is a finitely generated Hilbert $A$-module with $A$ valued product given by
	\begin{equation*}
	\left\langle\xi \otimes a, \eta \otimes b\right\rangle_{\sH_n} = \left\langle\xi, \left\langle a, b \right\rangle_{A_n}\eta \right\rangle_{\sH}
	\end{equation*}
\end{empt}
\begin{empt}\label{sp_tr_cov_constr}	
 On the algebraic tensor product $\overline{X}_A \otimes_A H$ there is a $\mathbb{C}$-valued product $\left(\cdot, \cdot\right)$  given by
	
	\begin{equation}\label{hilbert_cov_product_cov}
\left(\mu \otimes \xi, \nu \otimes\eta\right)= \left( \xi, \left\langle \mu, \nu \right\rangle_{\overline{X}_A} \eta\right).
	\end{equation} 
Denote by $\overline{H}$ the Hilbert completion of the $\overline{X}_A \otimes_A H$ and denote by  $\widetilde{H}$ the Hilbert completion of $_{\widetilde{A}}X_A \otimes_A H$. From $_{\widetilde{A}}X_A \subset \overline{X}_A$ it follows the inclusion $\widetilde{H}\subset\overline{H}$.
\end{empt}

\begin{defn}\label{hilbert_coh}
	A sequence $\left\{\xi_n \in H_n\right\}_{n \in \mathbb{N}^0}$ is said to be {\it coherent} in $\overline{H}$ (or $\overline{H}$-\textit{coherent}) if following conditions hold:
	\begin{enumerate}
		\item $\xi_n = 
		\sum_{g \in G\left(A_{n+1}\ | \ A_n\right)}g\xi_{n+1}$
		\item The sequence $\left\{\left(\xi_n, \xi_n\right)\in \mathbb{R}\right\}_{n \in \mathbb{N}}$ is  convergent. 	\end{enumerate}
\end{defn}

\begin{empt}\label{h_sec_constr}
	If $\left\{a_n \in A_n \right\}_{n \in \mathbb{N}^0}$ is a coherent sequence then for any $\xi \in H$ the sequence $\left\{a_n \otimes \xi \in H_n \right\}_{n \in \mathbb{N}^0}$ is coherent in $\overline{H}$. So any $\overline{H}$-coherent sequence $\left\{\xi_n \in H_n \right\}_{n \in \mathbb{N}^0}$ corresponds  to a functional on $\overline{X}_A \otimes_A H$ given by
	\begin{equation*}
	\left\{a_n \otimes \xi  \right\} \mapsto \lim_{n \to \infty}\left(a_n \otimes \xi, \xi_n\right).
	\end{equation*}
	The functional can be uniquely extended to $\overline{H}$ and from the Riesz representation theorem it follows the existence  of the unique $\overline{\xi} \in \widetilde{H}$ which corresponds to the functional.

\end{empt}
\begin{defn}\label{h_repr_defn}
	In the situation \ref{h_sec_constr} we say that $\overline{\xi}$ is \textit{$\overline{H}$-represented} by the sequence $\left\{\xi_n  \right\}$ and we will write $\overline{\xi} = \mathfrak{Rep}_{H}\left(\left\{\xi_n  \right\}\right)$.
\end{defn}
\begin{empt}
 	Now we would like to define the unbounded Dirac operator $\widetilde{D}$ on $\widetilde{H}$. It is naturally to define $\widetilde{D}$ on coherent sequences  $\left\{\xi_n \in H_n\right\}_{n \in \mathbb{N}^0}$ such that
 	\begin{equation*}
 	\widetilde{D}~\mathfrak{Rep}_{H}\left(\left\{\xi_n  \right\}\right) = \mathfrak{Rep}_{H}\left(\left\{D_n\xi_n  \right\}\right).
 	\end{equation*} 
 	and then obtain closure of this operator.
 	But the sequence  $\left\{D_n\xi_n  \right\}$ should not be always coherent, and the general definition of $	\widetilde{D}$ can be very difficult. However the situation can be simplified in  the special case of local covering projections which are described below.
 \end{empt}
\begin{empt}\label{local_constr}
 Let $\left(\A, H, D\right)$ (resp. $\left(\widetilde{\A}, \widetilde{H}, \widetilde{D}\right)$) be a spectral triple, let $A$ (resp. $\widetilde{A}$) be the $C^*$-completion of $\A$ (resp. $\widetilde{\A}$). Suppose that there is a finite noncommutative covering projection $\left(A, \widetilde{A}, G\right)$ such that $G\widetilde{\A} = \widetilde{\A}$. Suppose that there are a subspace $\widehat{H}\subset\widetilde{H}$ such that $\widetilde{H}=\bigoplus_{g \in G}g\widehat{H}$, and there is an isomorphism of Hilbert spaces $\varphi: \widehat{H} \to H$ given by  $\xi \mapsto \sum_{g \in G} g \xi$.  
\end{empt}
\begin{defn}\label{local_defn}
	Let us consider the situation \ref{local_constr}, and let $\mathfrak{A}= \left(\A, H, D\right)$, $\widetilde{\mathfrak{A}}=\left(\widetilde{\A}, \widetilde{H}, \widetilde{D}\right)$, $\mathfrak{B}=\left(A, \widetilde{A}, G\right)$. The triple $\left(\mathfrak{A}, \widetilde{\mathfrak{A}}, \mathfrak{B}\right)$ is said to be a \textit{local covering projection of spectral triples} if following conditions hold:
	 	\begin{enumerate}
	 		\item[(a)] $\mathrm{Dom}~\widetilde{D} \bigcap \widehat{H} = \varphi^{-1}\left(\mathrm{Dom}~D \right)$.
	 		\item[(b)] $\widetilde{D}\left(\mathrm{Dom}~\widetilde{D} \bigcap \widehat{H}\right) \subset  \widehat{H}$.
	 		\item[(c)] $D\left(\varphi\left(\xi\right)\right)=\varphi\left(\widetilde{D}\xi\right)$ for any $\xi \in \widehat{H} \bigcap \mathrm{Dom}~D$.
	 	\end{enumerate}
\end{defn}
\begin{rem}
	The meaning of the "local" term is explained in the Remark \ref{locality_explanation_rem}. 
\end{rem}

\begin{lem}
	If $\left(\mathfrak{A}, \widetilde{\mathfrak{A}}, \mathfrak{B}\right)$ is  local covering projection of spectral triples then
	\begin{equation*}
	\ncint \widetilde{D} = \left|G\right|\ncint D.
	\end{equation*}
\end{lem}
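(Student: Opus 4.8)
The plan is to show that, as a self-adjoint operator, $\widetilde{D}$ is unitarily equivalent to the orthogonal direct sum of $|G|$ copies of $D$; the equality of noncommutative integrals then reduces to the behaviour of the Dixmier trace under such direct sums. Throughout, $\ncint D$ is understood as the noncommutative integral $\ncint |D|^{-n}$ of the compact operator attached to the spectral dimension $n$ (and likewise $\ncint\widetilde{D}=\ncint|\widetilde{D}|^{-n}$), so that both operands lie in $\mathcal{L}^{1+}$ and the expression $\ncint$ is defined.

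First I would record that $\widetilde{D}$ is $G$-equivariant: each $g\in G$ acts by a unitary on $\widetilde{H}$ that commutes with $\widetilde{D}$ on its domain. This is built into the coherent data (cf.\ condition 6 of Definition~\ref{coh_spec_triple_defn}, $g(D_n\xi)=D_n(g\xi)$) and passes to the limit operator $\widetilde{D}$. Combined with condition (b), which gives $\widetilde{D}\bigl(\mathrm{Dom}\,\widetilde{D}\cap\widehat{H}\bigr)\subset\widehat{H}$, equivariance yields $\widetilde{D}\bigl(\mathrm{Dom}\,\widetilde{D}\cap g\widehat{H}\bigr)\subset g\widehat{H}$ for every $g\in G$. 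Hence the orthogonal decomposition $\widetilde{H}=\bigoplus_{g\in G}g\widehat{H}$ reduces $\widetilde{D}$, so that $\widetilde{D}=\bigoplus_{g\in G}\widetilde{D}\big|_{g\widehat{H}}$. To identify each summand with $(H,D)$ I would use conditions (a) and (c): $\varphi\colon\widehat{H}\to H$ is a Hilbert-space isomorphism restricting to a bijection $\mathrm{Dom}\,\widetilde{D}\cap\widehat{H}\xrightarrow{\ \approx\ }\mathrm{Dom}\,D$ with $D\varphi=\varphi\,\widetilde{D}$ there, whence $\widetilde{D}|_{\widehat{H}}=\varphi^{-1}D\varphi$ is unitarily equivalent to $D$. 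For an arbitrary summand, the unitary $g\colon\widehat{H}\to g\widehat{H}$ conjugates $\widetilde{D}|_{\widehat{H}}$ onto $\widetilde{D}|_{g\widehat{H}}$ by equivariance. Thus every summand is unitarily equivalent to $D$, and $\widetilde{D}\cong D^{\oplus|G|}$; in particular $|\widetilde{D}|^{-n}$ is unitarily equivalent to $\bigl(|D|^{-n}\bigr)^{\oplus|G|}$, so its singular-value list is that of $|D|^{-n}$ with every multiplicity multiplied by $|G|$.

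Finally I would compute the Dixmier trace from this multiplicity statement. Writing $\lambda_1\ge\lambda_2\ge\cdots$ for the singular values of $|D|^{-n}$, the ordered singular values of $|\widetilde{D}|^{-n}$ satisfy $\sum_{j=1}^{|G|k}\lambda_j(|\widetilde{D}|^{-n})=|G|\sum_{i=1}^{k}\lambda_i$, i.e.\ $\sigma_{|G|k}(|\widetilde{D}|^{-n})=|G|\,\sigma_k(|D|^{-n})$, with the intermediate indices sandwiched by monotonicity through the interpolation formula~\eqref{sltn}. Dividing by $\log(|G|k)$ and using $\log(|G|k)/\log k\to1$ gives $\sigma_m(|\widetilde{D}|^{-n})/\log m\to|G|\ncint|D|^{-n}$, so $|\widetilde{D}|^{-n}$ is measurable and
\[
\ncint\widetilde{D}=\lim_{\lambda\to\infty}\tau_\lambda(|\widetilde{D}|^{-n})=|G|\lim_{\lambda\to\infty}\tau_\lambda(|D|^{-n})=|G|\ncint D.
\]
Equivalently, since each $\Trw$ is a trace it is additive over orthogonal direct sums, so $\Trw\bigl((|D|^{-n})^{\oplus|G|}\bigr)=|G|\,\Trw(|D|^{-n})$ for every state $\omega$, which is the asserted identity.

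The hard part will be the genuine unitary equivalence at the level of \emph{unbounded} self-adjoint operators: one must check that the reduction of $\widetilde{D}$ by the closed subspaces $g\widehat{H}$ is compatible with self-adjointness and domains, i.e.\ that $\mathrm{Dom}\,\widetilde{D}=\bigoplus_{g\in G}\bigl(\mathrm{Dom}\,\widetilde{D}\cap g\widehat{H}\bigr)$, and that $\varphi$ intertwines the self-adjoint \emph{closures} rather than merely the operators on a core. Once this orthogonal reduction and the intertwining on domains are secured, the remaining Dixmier-trace bookkeeping with the $\log$-asymptotics is routine.
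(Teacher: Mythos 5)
Your proof is correct and shares its skeleton with the paper's: the paper likewise sets $\widehat D=\widetilde D|_{\widehat H}$, uses the commuting square with $\varphi$ to conclude that $\widehat D$ is measurable with $\ncint\widehat D=\ncint D$, conjugates by $g\in G$ to obtain the blocks on $g\widehat H$, and sums over $G$. The genuine difference is the final step, and it is to your credit. The paper simply writes $\ncint\sum_{g\in G}g\widehat D=\sum_{g\in G}\ncint g\widehat D$, invoking additivity of the noncommutative integral; but $\ncint$ is defined only on measurable operators and measurability is not preserved under sums in general, so that line tacitly assumes precisely what your explicit bookkeeping establishes, namely that a direct sum of $\left|G\right|$ mutually orthogonal unitary copies of a measurable operator is again measurable with the expected value (your reading of $\ncint D$ as $\ncint|D|^{-n}$ is also the right repair of the paper's abuse of notation). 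One caveat on your first closing route: measurability in the paper's sense is Ces\`aro convergence of $\tau_\lambda(|D|^{-n})$, which does not imply that $\sigma_k(|D|^{-n})/\log k$ converges, so the step where you divide $\sigma_{|G|k}(|\widetilde D|^{-n})=|G|\,\sigma_k(|D|^{-n})$ by $\log(|G|k)$ presumes more than the hypotheses give. Your alternative route is the sound one: linearity of each $\Trw$ gives $\Trw(|\widetilde D|^{-n})=|G|\,\Trw(|D|^{-n})$ for every state $\omega$, and substituting the scaling identity $\sigma_\lambda(|\widetilde D|^{-n})=|G|\,\sigma_{\lambda/|G|}(|D|^{-n})$, interpolated via \eqref{sltn}, into the Ces\`aro mean shows $\tau_\lambda(|\widetilde D|^{-n})\to|G|\ncint|D|^{-n}$, so measurability itself transfers. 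Finally, the domain issues you flag as ``the hard part'' are shared by the paper's proof, which writes $\widetilde D=\sum_{g\in G}g\widehat D$ without comment; they are harmless in the intended application because in \ref{local_dirac_constr} the operator $\widetilde D$ is defined outright as the direct sum over $g\in G$ of copies of $D$ transported by $\widehat\varphi$, so the reduction $\mathrm{Dom}\,\widetilde D=\bigoplus_{g\in G}\bigl(\mathrm{Dom}\,\widetilde D\cap g\widehat H\bigr)$ and the $G$-equivariance hold by construction, even though they are not literally among conditions (a)--(c) of Definition~\ref{local_defn}.
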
\label{ncint_lem}
	\begin{proof}
	Let $\widehat{D} = \widetilde{D}|_{\widehat{H}}$. Operator $\widehat{D}$ can be regarded as operator $\mathrm{Dom}~\widetilde{D} \to \widetilde{H}$ and as  $\widehat{H}\bigcap\mathrm{Dom}~\widetilde{D} \to \widehat{H}$. From the diagram
	\newline
 \begin{tikzpicture}
 \matrix (m) [matrix of math nodes,row sep=3em,column sep=4em,minimum width=2em]
 {
 	\widehat{H} \bigcap \mathrm{Dom}~\widetilde{D}    & \widehat{H}   \\
 	\mathrm{Dom}~D   & H    \\};
 \path[-stealth]
 (m-1-1) edge node [above] {$\widehat{D}$} (m-1-2)
 (m-2-1) edge node [above] {$D$} (m-2-2)
 (m-1-1) edge node [left]  {$\varphi$} (m-2-1)
 (m-1-2) edge node [right] {$\varphi$} (m-2-2);
 \end{tikzpicture}
 \newline
 it follows that operator $\widehat{D}$ is measurable and $\ncint \widehat{D} = \ncint D$. If $g\widehat{D}$ is given by $\xi \mapsto g \widehat{D} g^{-1}\xi$ then for any $g \in G$ we have $\ncint \widehat{D}= \ncint g\widehat{D}$. From $\widetilde{D}= \sum_{g \in G} g \widehat{D}$ it follows that
 \begin{equation*}
 \ncint \widetilde{D}= \ncint \sum_{g \in G} g \widehat{D} = \sum_{g \in G} \ncint g \widehat{D} = \left|G\right| \ncint D.
 \end{equation*}
	\end{proof}
\begin{rem}
	It is well known that if $\widetilde{M}\to M$ is an $m$-fold covering of Riemannian manifold  then
	\begin{equation}\label{comm_cov_int_eqn}
\int_{\widetilde{M}}	\sqrt{\mathrm{det}g(x)}dx^1 \wedge...\wedge dx^n=m\int_{M}	\sqrt{\mathrm{det}g(x)}dx^1 \wedge...\wedge dx^n
	\end{equation}
 Otherwise from the example \ref{comm_integ_exm} that in the noncommutative case the noncommutative integral of the Dirac operator $\slashed D$ is proportional to $\int_{M}	\sqrt{\mathrm{det}g(x)}dx^1 \wedge...\wedge dx^n$. Thus the Lemma \ref{ncint_lem} is the noncommutative generalization of the Equation \eqref{comm_cov_int_eqn}.
\end{rem}
\begin{empt}\label{local_sp_tr_constr}
Suppose that the coherent  sequence of spectral triples \eqref{sp_tr_sec_eqn} is such that if $\mathfrak{A}=\left(\A, H, D\right)$, $\mathfrak{A}_n=\left(\A_n, H_n, D_n\right)$ and $\mathfrak{B}_n = \left(A, A_n, G\left(A_n| A\right)\right)$ then the triple $\left(\mathfrak{A}, \mathfrak{A}_n, \mathfrak{B}_n\right)$ is a local covering projection of spectral triples for any $n \in \mathbb{N}$. Let $\widehat{H}^n$ be such that $H^n = \bigoplus_{g \in G\left(A_n,A\right)}g \widehat{H}^n$ and $\varphi^n : \widehat{H}^n \to H$ is the isomorphism given by $\xi \mapsto \sum_{g \in G\left(A_n,A\right)}g\xi$.
\end{empt}
  \begin{defn}\label{local_sec_thiples_defn}
  Let us consider the situation \ref{local_sp_tr_constr}. The  coherent  sequence of spectral triples \eqref{sp_tr_sec_eqn} is said to be \textit{local} if for any $n \in \mathbb{N}$ and $\xi \in \widehat{H}^{n}$ following conditions hold:
  \begin{enumerate}
 \item[(a)] 
  \begin{equation*}
\sum_{g \in G\left(A_{n}|A_{n-1}\right)} ~g\xi \in \widehat{H}^{n -1}.
  \end{equation*}
 \item[(b)] If $\left\{\xi_n \in H_n\right\}_{n \in \mathbb{N}}$ is such that $\xi_n \in \widehat{H}^n$ then $\mathfrak{Rep}_{\overline{H}}\left(\left\{\xi_n \in H_n\right\}\right)\in \widetilde{H}$.
 
  \end{enumerate}
    \end{defn}
    \begin{empt}\label{h_tilde_constr}
   If  coherent  sequence of spectral triples \eqref{sp_tr_sec_eqn} is local then for any $n \in \mathbb{N}$ there is the natural isomorphism $\psi_n:\widehat{H}^{n} \to \widehat{H}^{n-1}$ given by
  \begin{equation*}
\psi_n(\xi)=\sum_{g \in G\left(A_{n}|A_{n-1}\right)} ~g\xi
  \end{equation*}
  and a following condition holds
  \begin{equation*}
 \varphi_n = \psi_1 \circ ... \circ \psi_n.
  \end{equation*} 
 An $\overline{H}$-coherent sequence $\left\{\xi_n = H_n\right\}$ is said to be {\it special} if $\xi_n \in \widehat{H}^n$ for any $n \in \mathbb{N}$. If the sequence is special then there is $\xi \in H$ such that $\xi_n= \varphi^{-1}_n\left(\xi\right)$. If $\Xi$ is the set of special sequences then denote by $\widehat{\overline{H}}$ the Hilbert completion of the $\mathbb{C}$-linear span of $\mathfrak{Rep}_{\overline{H}}\left(\Xi\right)$. There is the unique isomorphism $\widehat{\varphi}:\widehat{\overline{H}} \to H$ which is the extension of the given by $\mathfrak{Rep}_{\overline{H}}\left(\left\{\xi_n\right\}\right) \mapsto \xi_0$ map.
    \end{empt}
\begin{defn}
   	Suppose that the coherent  sequence \eqref{sp_tr_sec_eqn} is local.	A $\overline{H}$-coherent sequence $\left\{\xi_n \in H_n\right\}_{n \in \mathbb{N}^0}$ is said to be {\it special} if $\xi_n \in \widehat{H}^n$ for any $n \in \mathbb{N}$. 
   \end{defn}
    \begin{lem}\label{local_direct_sum_lem}
 	Suppose that the coherent  sequence \eqref{sp_tr_sec_eqn} is local. Let $G_n = G\left(A_n|A\right)$ and $\overline{G}=\varprojlim G_n$. If $H=H_0$ is a separable Hilbert space then following condition hold
  \begin{equation*}
\overline{H} = \bigoplus_{g \in \overline{G}} g \widehat{\overline{H}}.
  \end{equation*}
    \end{lem}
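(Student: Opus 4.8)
The plan is to establish the two facts that together constitute a Hilbert‑space direct sum decomposition: first that the subspaces $g\widehat{\overline H}$, $g\in\overline G$, are pairwise orthogonal, and second that their closed linear span exhausts $\overline H$. The standing tool throughout is the finite‑level orthogonal decomposition
\[
H_n=\bigoplus_{g\in G_n}g\widehat H^n
\]
furnished by the locality hypothesis (\ref{local_sp_tr_constr}), together with the facts that $\varphi^n=\psi_1\circ\cdots\circ\psi_n$ carries $\widehat H^n$ onto $H$ and that a \emph{special} sequence is precisely one with $\xi_n\in\widehat H^n$ for all $n$, i.e.\ $\xi_n=(\varphi^n)^{-1}(\xi_0)$ (\ref{h_tilde_constr}). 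Since $\overline G$ acts by unitaries on $\overline H$, in each comparison it suffices to treat the identity coset, and since $H\cong\widehat{\overline H}$ is separable, only countably many summands can meet a given vector, so the $\overline G$‑indexed direct sum is a legitimate one.

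First I would prove orthogonality. Because $\bigcap_n\ker h_n$ is trivial, any $g\in\overline G$ with $g\neq e$ satisfies $h_n(g)\neq e$ for all $n\geq N$, for some $N$ (the projections $G_{n+1}\to G_n$ fix the unit, so once $h_N(g)$ is nontrivial every later component is too). Let $\overline\eta=\mathfrak{Rep}_{\overline H}(\{\eta_n\})$ and $\overline\zeta=\mathfrak{Rep}_{\overline H}(\{\zeta_n\})$ lie in $\widehat{\overline H}$, coming from special sequences. The inner product $(\overline\eta,g\overline\zeta)_{\overline H}$ is assembled, through the pairing of \ref{h_sec_constr} and the product $\langle\cdot,\cdot\rangle_{\overline X_A}$ of \ref{product}, from the level‑$n$ inner products pairing $\eta_n$ against $h_n(g)\zeta_n$. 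For $n\geq N$ one has $\eta_n\in\widehat H^n$ while $h_n(g)\zeta_n\in h_n(g)\widehat H^n$, and these sit in distinct summands of $H_n$; hence every contribution past level $N$ drops out and so does the limit. By unitarity this gives $g\widehat{\overline H}\perp g'\widehat{\overline H}$ whenever $g\neq g'$.

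Next I would show that $V:=\overline{\operatorname{span}}\bigcup_{g\in\overline G}g\widehat{\overline H}$ equals $\overline H$, by proving $V^{\perp}=\{0\}$. The representatives $\mathfrak{Rep}_{\overline H}(\{\chi_n\})$ of coherent sequences are dense in $\overline H$, so it is enough to show that such an element orthogonal to every $g\widehat{\overline H}$ vanishes. Given $\{\chi_n\}$, decompose each term along the finite‑level decomposition, $\chi_N=\sum_{g\in G_N}g\,\widehat\chi^{\,g}_N$ with $\widehat\chi^{\,g}_N\in\widehat H^N$, so that $\|\chi_N\|^2=\sum_{g\in G_N}\|\widehat\chi^{\,g}_N\|^2$. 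For each $g$ I would choose a lift $\overline g\in\overline G$ (the projection $h_N$ is onto) and the special sequence determined through $\varphi^N$ by $\widehat\chi^{\,g}_N$; pairing $\{\chi_n\}$ against the $\overline g$‑translate of this special sequence and invoking the assumed orthogonality should force each $\widehat\chi^{\,g}_N$ to be zero, whence $\chi_N=0$ for every $N$ and the representative vanishes. Combining the two parts then yields $\overline H=\bigoplus_{g\in\overline G}g\widehat{\overline H}$.

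The hard part will be the density step, and precisely the compatibility of the finite‑level decompositions with the inner product of $\overline H$. Because that inner product is defined through the Riesz representation of functionals on $\overline X_A\otimes_A H$ (\ref{h_sec_constr}) carrying the $|G_n|^{-1}$ normalization built into $\langle\cdot,\cdot\rangle_{A_n}$, one cannot simply read $(\chi_n,h_n(\overline g)\eta_n)$ off as the $n$‑th term of a numerical sequence that has already stabilised at level $N$: the coherence relation $\chi_{n-1}=\sum_{g}g\chi_n$ sums over an entire fibre, whereas a special sequence isolates a single fundamental‑domain copy, and reconciling the two requires showing that the fibre sum does not contaminate the component selected by the special sequence. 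Verifying this — equivalently, that the projections onto the $g\widehat H^n$ are compatible with the maps $\psi_n$ and survive the passage to the limit — is the point at which the measure‑theoretic disjointness of fundamental domains in the commutative model (Remark \ref{cut_loci_rem}) must be replaced by an operator‑algebraic argument, and it is the technical core of the proof.
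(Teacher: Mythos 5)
Your orthogonality step is sound: for nontrivial $g\in\overline G$ the triviality of $\bigcap_n\ker h_n$ gives $h_n(g)\neq e$ for all large $n$, the level-$n$ summands $h_n(g)\widehat H^n$ and $\widehat H^n$ are orthogonal in $H_n$, and the inner product on $\overline H$ is the limit of the level-$n$ inner products, so the translates $g\widehat{\overline H}$ are pairwise orthogonal. But the totality step is a genuine gap, and you have in effect conceded it yourself: the claim that pairing $\{\chi_n\}$ against lifted special sequences ``should force each $\widehat\chi^{\,g}_N$ to be zero'' is exactly what needs proof, and your closing paragraph correctly identifies why the naive version fails --- the coherence relation $\chi_{n-1}=\sum_g g\chi_n$ spreads a vector over a whole fibre, while a special sequence isolates one fundamental-domain copy, and nothing in your argument controls how the components $\widehat\chi^{\,g'}_n$ for $n>N$ recombine in the limit. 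In addition, your conclusion ``whence $\chi_N=0$ for every $N$'' overreaches even granting the pairing computation: orthogonality to the span can only force the \emph{representative} in $\overline H$ to vanish, not each term of the coherent sequence. Finally, you use separability only to justify countably many summands, which misses its actual role in the statement.

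The paper closes precisely this gap by a different, constructive route that also explains the separability hypothesis. It fixes a countable orthonormal basis $\left\{e_\iota\right\}_{\iota\in I}$ of $H$, decomposes every $\overline H$-coherent sequence into $\iota$-special pieces with $\xi_n\in H^\iota_n=\bigoplus_{g\in G_n}\mathbb{C}\,e^n_\iota$, and so writes $\overline H=\bigoplus_{\iota\in I}\overline H_\iota$. Inside each one-dimensional component the difficulty you flagged disappears: $\left\{g e^n_\iota\right\}_{g\in G_n}$ is an orthonormal basis of $H^\iota_n$ at every finite level, so the partial sums
\begin{equation*}
a_m=\sum_{g\in G^m} g\overline e_\iota \left)\right( g\overline e_\iota
\end{equation*}
(over a $\overline G$-covering $\left\{G^m\right\}$ in the sense of Definition \ref{g_cov_defn}) satisfy a double-limit computation, patterned on Lemma \ref{sum_lem}, showing $a_m\to 1_{B\left(\overline H_\iota\right)}$ weakly. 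Hence $\left\{g\overline e_\iota\right\}_{g\in\overline G}$ is an orthonormal basis of $\overline H_\iota$, and summing over $\iota$ gives $\overline H=\bigoplus_{g\in\overline G}g\widehat{\overline H}$. If you want to salvage your annihilator argument, this basis reduction is the missing ingredient: prove totality component-by-component for scalar coefficients along $e^n_\iota$, where the fibre-sum bookkeeping is explicit, rather than for arbitrary vectors $\widehat\chi^{\,g}_N\in\widehat H^N$.
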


  	 \begin{proof}
  	 Let $\left\{e_\iota\right\}_{\iota \in I}\subset H$ be a countable orthonormal basis of $H$, and let $\widehat{H}_\iota = \mathbb{C} e_\iota  \subset H$ be a generated by $e_\iota$ one dimensional subspace. We say that a $\widetilde{H}$-coherent sequence $\left\{\xi_n\right\}$ is $\iota$-\textit{special} if $\xi_n \in H^\iota_n = \bigoplus_{g \in G_n}\mathbb{C}e^n_\iota \subset H_n$ where $e^n_\iota = \varphi^{-1}_n\left(e_\iota\right)$. Any $\overline{H}$ coherent $\Lambda$ sequence can be decomposed $\Lambda = \sum_{\iota \in I}\Lambda_\iota$ into $\iota$-special sequences, so $\overline{H}$ can be decomposed $\overline{H}= \bigoplus_{\iota \in I} \overline{H}_\iota$ where $\overline{H}_\iota$ is generated by $\iota$-special $\overline{H}$-coherent sequences.	 If $\xi, \eta, \zeta \in \overline{H}_\iota$ are given by $\xi = \mathfrak{Rep}_{\overline{H}}\left(\left\{\xi_n \in H_n\right\}_{n \in \mathbb{N}^0}\right),~\eta = \mathfrak{Rep}_{\overline{H}}\left(\left\{\eta_n \in H_n\right\}_{n \in \mathbb{N}^0}\right), \ \zeta = \mathfrak{Rep}_{\overline{H}}\left(\left\{\zeta_n \in H_n\right\}_{n \in \mathbb{N}^0}\right)$ then 
  	 	\begin{equation*}
  	 		\left(\eta, \xi \right) = \lim_{n\to \infty}\left(\eta_n, \xi_n \right); 
  	 	\end{equation*}
  	 	\begin{equation*}
  	 			\left( \eta, \xi \right) 	\left(\xi, \zeta \right)=  \langle \eta, \left( \xi \right) 	\left(\xi\right) \zeta \rangle_{\overline{X}_A}= \lim_{n\to \infty} \left(\eta_n,\xi_n\right)\left(\xi_n,\zeta_n\right);
  	 	\end{equation*}
  	 	 	If $\left\{G^k \subset \overline{G}\right\}_{k\in\mathbb{N}}$ is a $\overline{G}$-covering of the sequence $\{G_1 \leftarrow G_2 \leftarrow ...\}$, $\overline{e}_\iota = \mathfrak{Rep}_{\overline{H}}\left(\left\{e^n_\iota \right\}\right)$ $a_m \in B\left(\overline{H}_\iota\right)$ be given by
  	 	  \begin{equation*}
  	 	  a_m = \sum_{g \in G^m} g\overline{e}_\iota \left)\right(g\overline{e}_\iota.
  	 	  \end{equation*}
  	 	then 
  	 	\begin{equation*}
  	 		\left( \eta, a_m \zeta \right) = \lim_{n \to \infty} \left(\eta_n, \left(\sum_{g \in G^m} h_n(g)\overline{e}_\iota\left)\right(h_n(g)\overline{e}_\iota\right)\zeta_n\right)
  	 	\end{equation*}
  	
    	 	whence
  	 	\begin{equation*}
  	 		\lim_{m \to \infty}\langle \eta, a_m \zeta \rangle_{\overline{X}_A} =\lim_{m \to \infty} \lim_{n \to \infty} \left(\eta_n, \left(\sum_{g \in G^m} h_n(g)\overline{e}_\iota\left)\right(h_n(g)\overline{e}_\iota\right)\zeta_n\right) = 
  	 	\end{equation*}
  	 	\begin{equation*}
  	 		= \lim_{n \to \infty} \left(\eta_n, \left(\sum_{g \in G^n} h_n(g)\overline{e}_\iota\left)\right(h_n(g)\overline{e}_\iota\right)\zeta_n\right)= \lim_{n \to \infty} \left(\eta_n, \left(\sum_{g \in G_n} ge^n_\iota\left)\right(ge^n_\iota\right)\zeta_n\right)= 
  	 	\end{equation*}
  	 	\begin{equation*}
  	 	\lim_{n \to \infty} \left(\eta_n, \zeta_n\right) = \left(\eta,\zeta\right)
  	 	\end{equation*}
  	 	Form the above equation it follows that the sequence $\left\{a_m\right\}_{m \in \mathbb{N}^0}$ is weakly convergent as $m\to \infty$ and $\lim_{m \to \infty} a_m = 1_{B\left(\overline{H}_\iota\right)}$, i.e.
  	 	\begin{equation*}
  	 	\sum_{g \in \overline{G}} g \overline{e}_\iota \left)\right( g \overline{e}_\iota = 1_{B\left(\overline{H}_\iota\right)}.
  	 	\end{equation*}
  	 \end{proof}
  	 So $\left\{g\overline{e}_\iota\right\}_{g \in \overline{G}}\subset \overline{H}_\iota$ is an orthonormal basis of  $\overline{H}_\iota$. From $\overline{H}= \bigoplus_{\iota \in I} \overline{H}_\iota$ it follows that $\left\{g\overline{e}_\iota\right\}_{g \in \overline{G},~\iota \in I}\subset \overline{H}$ is an orthonormal basis of  $\overline{H}_\iota$, therefore
  	 \begin{equation*}
  	 \overline{H} = \bigoplus_{g \in \overline{G},~\iota \in I}g\overline{H}_\iota= \bigoplus_{g \in \overline{G}}g \widehat{\overline{H}}.
  	 \end{equation*}
 \begin{cor}
 If $G$ is a covering transformation group of the sequence \eqref{inf_seq_trp} then $\widetilde{H} = \bigoplus_{g \in G}g\widehat{\overline{H}}$.
 \end{cor}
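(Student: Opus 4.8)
The plan is to deduce the decomposition of $\widetilde{H}$ from the already established decomposition $\overline{H} = \bigoplus_{g \in \overline{G}} g\widehat{\overline{H}}$ of Lemma \ref{local_direct_sum_lem} by splitting the index set $\overline{G}$ along the cosets of the normal subgroup $G \subset \overline{G}$. Writing $J = \overline{\overline{G}/G}$ for a set of coset representatives, as in Lemma \ref{dir_sum_lem}, one has the disjoint decomposition $\overline{G} = \bigsqcup_{\overline{g} \in J}\overline{g}G$, so that Lemma \ref{local_direct_sum_lem} rewrites as
\begin{equation*}
\overline{H} = \bigoplus_{\overline{g} \in J}\overline{g}\Bigl(\bigoplus_{g \in G}g\widehat{\overline{H}}\Bigr) = \bigoplus_{\overline{g} \in J}\overline{g}K, \qquad K := \bigoplus_{g \in G}g\widehat{\overline{H}}.
\end{equation*}
The claim is then exactly the assertion $K = \widetilde{H}$.

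First I would check the inclusion $K \subseteq \widetilde{H}$. By the construction in \ref{h_tilde_constr}, $\widehat{\overline{H}}$ is the Hilbert completion of the linear span of $\mathfrak{Rep}_{\overline{H}}\left(\Xi\right)$ for $\Xi$ the set of special sequences, and condition (b) of Definition \ref{local_sec_thiples_defn} guarantees $\mathfrak{Rep}_{\overline{H}}\left(\left\{\xi_n\right\}\right) \in \widetilde{H}$ for every special sequence; hence $\widehat{\overline{H}} \subseteq \widetilde{H}$. Since $G\widetilde{A} = \widetilde{A}$ gives $G X_A = X_A$ (Definition \ref{main_defn}), the subspace $\widetilde{H}$, being the completion of ${}_{\widetilde{A}}X_A \otimes_A H$, satisfies $g\widetilde{H} = \widetilde{H}$ for all $g \in G$. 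This $G$-invariance yields $g\widehat{\overline{H}} \subseteq \widetilde{H}$ for every $g \in G$, and thus $K \subseteq \widetilde{H}$.

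Next I would produce a second orthogonal decomposition of $\overline{H}$, now indexed by the \emph{same} set $J$ but with summands $\overline{g}\widetilde{H}$. Lemma \ref{dir_sum_lem}(a) gives $\overline{A} = \bigoplus_{\overline{g} \in J}\overline{g}\widetilde{A}$ as a direct sum of its irreducible components; the associated central projections act on the disconnected module $\overline{X}_A$ and split it as $\overline{X}_A = \bigoplus_{\overline{g} \in J}\overline{g}X_A$, where $X_A = {}_{\widetilde{A}}X_A$ is the connected module. Tensoring with $H$ over $A$ and passing to the Hilbert completion with respect to the product \eqref{hilbert_cov_product_cov} then gives
\begin{equation*}
\overline{H} = \bigoplus_{\overline{g} \in J}\overline{g}\widetilde{H},
\end{equation*}
an orthogonal direct sum, since distinct irreducible components of $\overline{A}$ are mutually orthogonal for the $A$-valued product.

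Finally I would compare the two decompositions. Both express $\overline{H}$ as an orthogonal direct sum over the same index set $J$, and Step one gives $\overline{g}K \subseteq \overline{g}\widetilde{H}$ for each $\overline{g} \in J$. Given $v \in \widetilde{H}$, which is the $\overline{g} = e$ summand of $\overline{H} = \bigoplus_{\overline{g} \in J}\overline{g}\widetilde{H}$, I expand $v$ in $\overline{H} = \bigoplus_{\overline{g} \in J}\overline{g}K$ as $v = \sum_{\overline{g} \in J}w_{\overline{g}}$ with $w_{\overline{g}} \in \overline{g}K \subseteq \overline{g}\widetilde{H}$; uniqueness of the orthogonal expansion relative to $\bigoplus_{\overline{g}}\overline{g}\widetilde{H}$ forces $w_{\overline{g}} = 0$ for $\overline{g} \neq e$ and $w_e = v$, whence $v = w_e \in K$. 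This proves $\widetilde{H} \subseteq K$, and together with $K \subseteq \widetilde{H}$ it gives $\widetilde{H} = K = \bigoplus_{g \in G}g\widehat{\overline{H}}$. I expect the main obstacle to be Step three, namely transferring the algebraic direct-sum decomposition of $\overline{A}$ through the module $\overline{X}_A$ to a genuinely orthogonal decomposition of the Hilbert space $\overline{H}$, while keeping careful track of the $A$-valued and $\mathbb{C}$-valued inner products so that the summands $\overline{g}\widetilde{H}$ are mutually orthogonal.
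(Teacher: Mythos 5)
Your proposal is correct and rests on exactly the two ingredients the paper's own (one-line) proof cites: condition (b) of Definition \ref{local_sec_thiples_defn}, which gives $\widehat{\overline{H}} \subseteq \widetilde{H}$, and Lemma \ref{local_direct_sum_lem}, which gives $\overline{H} = \bigoplus_{g \in \overline{G}} g\widehat{\overline{H}}$. Where you genuinely go beyond the paper is in the reverse inclusion $\widetilde{H} \subseteq \bigoplus_{g \in G} g\widehat{\overline{H}}$: the paper treats this as immediate and never argues it, whereas you supply it by building a second orthogonal decomposition $\overline{H} = \bigoplus_{\overline{g} \in J} \overline{g}\widetilde{H}$ out of Lemma \ref{dir_sum_lem}(a) and comparing the two $J$-indexed decompositions summand by summand. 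That is a worthwhile completion of an argument the paper elides, and your uniqueness-of-expansion step is sound once both decompositions are in hand. The one place you should be careful --- and you correctly flag it yourself --- is the transfer $\overline{A} = \bigoplus_{\overline{g} \in J} \overline{g}\widetilde{A} \Rightarrow \overline{X}_A = \bigoplus_{\overline{g} \in J} \overline{g}X_A$: the paper nowhere proves this, $\overline{A}$ need not be unital, and the splitting requires that $\overline{A}$ (or its multiplier projections onto the irreducible summands) act nondegenerately on $\overline{X}_A$, together with mutual orthogonality of $\overline{g}'X_A$ and $\overline{g}''X_A$ for $\overline{g}' \neq \overline{g}''$ with respect to the $A$-valued product. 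Both facts are consistent with the paper's setup (the definition $X_A = \widetilde{A} \otimes_{\overline{A}} \overline{X}_A$ in Definition \ref{main_defn}, and the density statement of Corollary \ref{cor_xi2} make them plausible), but as written they are assumptions, not consequences; a fully rigorous version of your Step three would verify them explicitly, e.g.\ by checking orthogonality on the dense span $\overline{G}\,\Xi\,A$ of Corollary \ref{cor_xi2}.
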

 \begin{proof}
 Follows from the condition (b) of the Definition \ref{local_sec_thiples_defn} and the Lemma \ref{local_direct_sum_lem}.
 \end{proof}
\begin{empt}\label{local_dirac_constr}
If a coherent  sequence of spectral triples \eqref{sp_tr_sec_eqn} is local  then there is a densely defined unbounded operator $\widetilde{D}$ on $\widetilde{H}$ given by
\begin{equation*}
\mathrm{Dom}~\widetilde{D}= \bigoplus_{g \in G}g \widehat{\varphi}^{-1}\left(\mathrm{Dom}~D\right)
\end{equation*}
\begin{equation*}
\widetilde{D}\left( \sum_{g \in G}g \xi_g\right) = \sum_{g \in G} g \widehat{\varphi}^{-1}\left(D\widehat{\varphi}\left(\xi_g\right)\right)
\end{equation*}
where $\xi_g \in \widehat{\overline{H}}$ for any $g \in G$.
\end{empt}	
\begin{defn}
If  coherent  sequence of spectral triples \eqref{sp_tr_sec_eqn} is local then defined in \ref{local_dirac_constr} operator $\widetilde{D}$ is said to be the \textit{Dirac operator} of the sequence \eqref{sp_tr_sec_eqn}. 
\end{defn}
\begin{thm}\label{sa_thm}\cite{reed_simon:mp_1}
Let $T$ be an unbounded symmetric operator on a Hilbert space $H$. Then the following are equivalent:
\begin{enumerate}
\item[(a)] $T$ is  self-adjoint,
\item[(b)] $T$ is closed and $\mathrm{ker}\left(T^*\pm i\right)= \{0\}$,
\item[(c)] $\mathrm{ran}\left(T\pm i\right)=H$.
\end{enumerate}
\end{thm}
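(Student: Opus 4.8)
The statement is the classical basic criterion for self-adjointness, so the plan is to reproduce the standard functional-analytic argument and prove the cycle $(a)\Rightarrow(b)\Rightarrow(c)\Rightarrow(a)$. The whole proof hinges on one elementary identity: for a symmetric operator $T$ and any $x \in \Dom T$,
\begin{equation*}
\|(T \pm i)x\|^2 = \|Tx\|^2 + \|x\|^2.
\end{equation*}
First I would establish this by expanding the left-hand side and observing that the two cross terms involve $(Tx,x)$ and $(x,Tx)$, which are equal and real by symmetry, hence cancel against each other after the factor $\pm i$ is accounted for. Two consequences will be used repeatedly: $T \pm i$ is injective, and the a priori bound $\|(T\pm i)x\| \geq \|x\|$ holds on $\Dom T$.

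For $(a)\Rightarrow(b)$: if $T = T^*$ then $T$ is closed, since every adjoint operator is closed. For the kernels, any $\psi \in \mathrm{ker}(T^* \mp i)$ satisfies $T^*\psi = \pm i\,\psi$, and because $T = T^*$ this means $\psi$ is an eigenvector of $T$ with eigenvalue $\pm i$; but a symmetric operator has only real eigenvalues, as $(T\psi,\psi)$ is real, so $\psi = 0$ and $\mathrm{ker}(T^*\pm i) = \{0\}$.

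For $(b)\Rightarrow(c)$: I would first show that $\mathrm{ran}(T \pm i)$ is closed. Given a convergent sequence $(T \pm i)x_n \to y$, the a priori estimate makes both $\{x_n\}$ and $\{Tx_n\}$ Cauchy; since $T$ is closed, the limit of $x_n$ lies in $\Dom T$ and the limit of the images is $(T\pm i)x$, so $y \in \mathrm{ran}(T\pm i)$. Next, using $(T + i)^* = T^* - i$ together with the general duality $\mathrm{ran}(S)^\perp = \mathrm{ker}(S^*)$, hypothesis $(b)$ gives $\mathrm{ran}(T+i)^\perp = \mathrm{ker}(T^* - i) = \{0\}$; a closed subspace with trivial orthogonal complement is all of $H$, so $\mathrm{ran}(T+i) = H$, and the identical argument with $T-i$ and $\mathrm{ker}(T^*+i)$ gives $\mathrm{ran}(T-i) = H$.

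For $(c)\Rightarrow(a)$: symmetry already gives $T \subseteq T^*$, so it remains to prove $\Dom T^* \subseteq \Dom T$. Given $\psi \in \Dom T^*$, surjectivity of $T - i$ furnishes $\eta \in \Dom T$ with $(T - i)\eta = (T^* - i)\psi$; since $T \subseteq T^*$ this yields $(T^* - i)(\psi - \eta) = 0$, i.e.\ $\psi - \eta \in \mathrm{ker}(T^* - i) = \mathrm{ran}(T + i)^\perp = \{0\}$ by $(c)$. Hence $\psi = \eta \in \Dom T$, so $T = T^*$. The only step demanding real care is the closed-range argument in $(b)\Rightarrow(c)$, where both the a priori estimate \emph{and} the closedness hypothesis are genuinely required; the remaining steps are bookkeeping with adjoints and the range--kernel duality.
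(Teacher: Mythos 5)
Your proof is correct: the paper does not prove this theorem itself but simply cites it from Reed--Simon, and your argument is precisely the standard one given there (the identity $\|(T\pm i)x\|^2=\|Tx\|^2+\|x\|^2$, the closed-range step via closedness of $T$, the duality $\mathrm{ran}(S)^\perp=\ker(S^*)$, and the surjectivity trick for $(c)\Rightarrow(a)$). All steps, including the sign conventions in $\ker(T^*\pm i)$ and the use of $(T+i)^*=T^*-i$ for the densely defined symmetric $T$, check out.
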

\begin{cor}
If  coherent  sequence of spectral triples \eqref{sp_tr_sec_eqn} is local the Dirac operator is self-adjoint.
\end{cor}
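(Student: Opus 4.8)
The plan is to exhibit $\widetilde{D}$ as a unitary direct sum of copies of the self-adjoint operator $D$, and then to apply the range criterion of Theorem~\ref{sa_thm}. By the preceding Corollary we have the orthogonal decomposition $\widetilde{H} = \bigoplus_{g\in G} g\widehat{\overline{H}}$, while $\widehat{\varphi}\colon \widehat{\overline{H}} \to H$ is a unitary isomorphism. First I would, for each $g\in G$, introduce the operator $U_g := g\circ\widehat{\varphi}^{-1}\colon H \to g\widehat{\overline{H}}$; since the $G$-action on $\widetilde{H}$ is by unitaries and $\widehat{\varphi}$ is unitary, each $U_g$ is unitary and $U := \bigoplus_{g\in G} U_g$ is a unitary from $\bigoplus_{g\in G} H$ onto $\widetilde{H}$.

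Next I would read off from the definition in \ref{local_dirac_constr} that $\widetilde{D}$ leaves each summand $g\widehat{\overline{H}}$ invariant and that its restriction there is $\widetilde{D}|_{g\widehat{\overline{H}}} = U_g\, D\, U_g^{-1}$ with domain $U_g(\mathrm{Dom}~D)$. Indeed, for $\zeta = g\xi_g \in g\widehat{\overline{H}}$ with $\xi_g\in\widehat{\overline{H}}$ one has $U_g^{-1}\zeta = \widehat{\varphi}(\xi_g)$, so $U_g D U_g^{-1}\zeta = g\,\widehat{\varphi}^{-1}\!\left(D\widehat{\varphi}(\xi_g)\right)$, which is exactly the $g$-component of $\widetilde{D}\zeta$ prescribed in \ref{local_dirac_constr}. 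Hence $U^{-1}\widetilde{D}\,U = \bigoplus_{g\in G} D$ on the transported domain.

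Since $D$ is the Dirac operator of the spectral triple $(\A, H, D)$ it is self-adjoint, and so is each unitarily equivalent operator $U_g D U_g^{-1}$. In particular $\widetilde{D}$ is symmetric, being an orthogonal sum of symmetric operators. Because $D$ is self-adjoint, the implication (a)$\Rightarrow$(c) of Theorem~\ref{sa_thm} gives $\mathrm{ran}(D\pm i) = H$, whence $\mathrm{ran}(\widetilde{D}\pm i) = \bigoplus_{g\in G} U_g\,\mathrm{ran}(D\pm i) = \bigoplus_{g\in G} g\widehat{\overline{H}} = \widetilde{H}$. The implication (c)$\Rightarrow$(a) of Theorem~\ref{sa_thm} then yields the self-adjointness of $\widetilde{D}$.

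The main obstacle is the careful treatment of the domain when $G$ is infinite: the symbol $\mathrm{Dom}~\widetilde{D} = \bigoplus_{g\in G} g\,\widehat{\varphi}^{-1}(\mathrm{Dom}~D)$ must be read as the set of $\sum_{g} g\xi_g$ with $\xi_g\in\widehat{\varphi}^{-1}(\mathrm{Dom}~D)$ and $\sum_{g}\|D\widehat{\varphi}(\xi_g)\|^2 < \infty$, which is precisely the domain making an orthogonal direct sum of self-adjoint operators self-adjoint (the standard direct-sum lemma, \cite{reed_simon:mp_1}); one must check that the range equalities above hold componentwise with this convergence condition, so that $(\widetilde{D}\pm i)^{-1}$ acts as $\bigoplus_{g} U_g (D\pm i)^{-1} U_g^{-1}$ and is everywhere defined and bounded. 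A secondary point to confirm is that $\widehat{\varphi}$ is genuinely unitary, and not merely a bounded isomorphism, since only unitary equivalence, not similarity, transports self-adjointness; this is guaranteed by its construction in \ref{h_tilde_constr} from the Hilbert-space isomorphisms $\psi_n$.
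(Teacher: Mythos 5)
Your proposal is correct and follows essentially the same route as the paper: the paper likewise observes that by the construction in \ref{local_dirac_constr} the operator $\widetilde{D}$ is a direct sum of copies of the self-adjoint $D$ across the decomposition $\widetilde{H}=\bigoplus_{g\in G}g\widehat{\overline{H}}$ and then invokes Theorem~\ref{sa_thm}. Your elaboration of the direct-sum domain convention for infinite $G$ and the verification that $\widehat{\varphi}$ is unitary are welcome details that the paper's terse proof leaves implicit.
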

\begin{proof}
Since Dirac operator $D$ of the spectral triple $\left(\A, H, D\right)$ is self-adjoint it satisfies to the Theorem \ref{sa_thm}. Form \ref{local_dirac_constr} it follows that $\widetilde{D}$ is a direct sum of infinite copies of $D$. So $\widetilde{D}$ satisfies to the Theorem \ref{sa_thm}. 
\end{proof}
\begin{empt}
Let $\left\{a_n \in A_n\right\}_{n \in \mathbb{N}^0}$ be a coherent sequence such that $a_n \in \A_n$. For any $s, n \in \mathbb{N}$ there is the $s$-times differentiable representation $\pi^s_n: \A_n \to B\left(H^{2^s}_n\right)$.
\end{empt}
\begin{defn}
	A sequence $\left\{a_n\right\}_{n \in \mathbb{N}}$ such that $a_n \in \A_n$
	is said to be {\it $s$-times differentiable} if the sequence
		\begin{equation*}
		\left\{\frac{1}{\left|G\left(A_{n}\ | \ A_0\right)\right|}\sum_{g \in G\left(A_{n} \ | \ A_0\right)}g\left(\left(\pi^s\left(a_{n}\right)\right)^*\pi^s\left(a_{n}\right)\right)\in B\left(H^{2^s}\right)\right\}_{n \in \mathbb{N}}
		\end{equation*}
		is norm convergent as $n \to \infty$.
	For any $s$-times differentiable  coherent sequence we will define a norm $\left\|\cdot\right\|_s$ given by
	\begin{equation*}
	\begin{split}
	\left\|\left\{a_n \in \A_n\right\}_{n \in \mathbb{N}}\right\|_s= \sqrt{\lim_{n \to \infty}\frac{1}{\left|G\left(A_{n}\ | \ A_0\right)\right|}\sum_{g \in G\left(A_{n} \ | \ A_0\right)}g\left(\left(\pi^s\left(a_{n}\right)\right)^*\pi^s\left(a_{n}\right)\right)}.
	\end{split}
	\end{equation*}
\end{defn} 
\begin{defn}\label{smooth_coh_defn}
	A coherent sequence $\left\{a_n \in \A_n\right\}_{n \in \mathbb{N}}$ is said to be {\it smooth} if it is $s$-times differentiable for any $s \in \mathbb{N}$.
\end{defn}
\begin{empt}\label{smooth_smooth}
	If both $\left\{a_n\right\}_{n \in \mathbb{N}}$, $\left\{b_n\right\}_{n \in \mathbb{N}}$ are smooth coherent sequences then $\left\langle\mathfrak{Rep}\left(\left\{a_n\right\}\right), \mathfrak{Rep}\left(\left\{b_n\right\}\right)\right\rangle_{X_A}\in \A$. If $\left\{a_n\right\}_{n \in \mathbb{N}}$ is a smooth coherent sequence and $b \in \A$ then the coherent sequence $\left\{a_n\right\}b$ given by 
	\begin{equation*}
\left\{a_n\right\}b= \left\{a_nb\right\}
	\end{equation*}
	is smooth.
\end{empt}\
\begin{defn}\label{smooth_mod_defn}
	Let $\Xi$ be a set of all smooth coherent sequences and $X' \subset \overline{X}_A$ be a $\mathbb{C}$-linear span of $\mathfrak{Rep}\left(\Xi\right)$. The completion of $X'$ with respect to seminorms $\left\|\cdot\right\|_s$  is said to be the {\it disconnected smooth} module of the coherent sequence $\left\{\left(\A_n, D_n, H_n\right)\right\}_{n\in \mathbb{N}^0}$. The disconnected smooth module will be denoted by $\overline{X}^\infty_{\A}$. There is the Fr\'echet topology on $\overline{X}_{\A}^\infty$ induced by seminorms $\left\|\cdot\right\|_s$. 	There is the natural inclusion $\overline{X}_{\A}^\infty \subset  \overline{X}_A$. The intersection $\overline{X}^\infty_{\A} \bigcap ~ _{\widetilde{A}}X_A$ of the disconnected smooth module and connected module (Definition \ref{main_defn}) is said to be the \textit{connected smooth module} which will be denoted by $X^\infty_{\A}$.
\end{defn}

\begin{defn}\label{comp_smooth_defn}
	An element $\kappa \in \mathcal{K}\left(_{\widetilde{A}}X_A\right)$ is said to be {\it smooth} if following conditions hold:
	\begin{enumerate}
		\item $\kappa X_{\A}^\infty \subset X_{\A}^\infty$.
		\item For any $s \in \mathbb{N}$
		\begin{equation*}
	\left\|\kappa\right\|_s= \sup_{\xi \in  X_{\A}^\infty~\&~ \left\|\xi\right\|_s=1}\left\|\kappa\xi\right\|_s < \infty.
		\end{equation*}
Denote by $\mathcal{K}'$ the algebra of smooth operators.		
	\end{enumerate}
\end{defn}
\begin{empt}
	The  $\mathcal{K}'$ is a Fr\'echet algebra  induced by seminorms $\left\|\cdot\right\|_s$. From \ref{smooth_smooth} it follows that if $\xi, \eta \in X_{\A}^\infty$ then $\xi \rangle\langle \eta \in \mathcal{K}'$  
\end{empt}
\begin{defn}\label{smoothly_comp_sub_defn}
	If $\xi, \eta \in X_{\A}^\infty$ then the operator $\xi \rangle\langle \eta$ is said to be a {\it rank-one smooth}. The completion in the Fr\'echet topology of the linear span of rank-one smooth operators  is said to be the {\it smoothly compact subalgebra}. Denote by $\mathcal{K}^\infty\left(X_{\A}^\infty\right)\subset\mathcal{K}\left(_{\widetilde{A}}X_A\right)$ the smoothly compact subalgebra.
\end{defn}
\begin{defn}
	If $\left(A, \widetilde{A}, _{\widetilde{A}}X_A, G\right)$ is  a noncommutative infinite covering projection of the sequence \eqref{inf_seq_trp} then from the definition \ref{main_defn} it follows that $\widetilde{A} \subset \mathcal{K}\left(_{\widetilde{A}}X_A\right)$. The algebra $\widetilde{\A} = \mathcal{K}^\infty\left(X_{\A}^\infty\right) \bigcap \widetilde{A}$ is said to be the {\it smooth covering algebra}. The algebra $\widetilde{\A}$ is a Fr\'echet algebra with a topology  induced by seminorms $\left\|\cdot\right\|_s$.
\end{defn}
\begin{defn}
	A local coherent sequence $\left\{\left(\A_n, D_n, H_n\right)\right\}_{n\in \mathbb{N}^0}$ of spectral triples is said to be {\it regular} if $\widetilde{\A}$ is a dense subalgebra of $\widetilde{A}$ with respect to $C^*$-norm of $\widetilde{A}$.
\end{defn}

\begin{defn}\label{inv_lim_defn}
If	$\left\{\left(\A_n, D_n, H_n\right)\right\}_{n\in \mathbb{N}^0}$ be a regular local coherent sequence spectral triples then the triple $\left(\widetilde{\A}, \widetilde{H}, \widetilde{D} \right)$ is said to be the {\it inverse limit} of $\left\{\left(\A_n, D_n, H_n\right)\right\}_{n\in \mathbb{N}^0}$.
\end{defn}
\begin{rem}
The inverse limit from the Definition \ref{inv_lim_defn} is not an inverse limit in the strict sense of the category theory, it rather looks like an inverse limit.
\end{rem}

\section{Covering projections of commutative spectral triples}\label{comm_case_sec}
\subsection{Covering projections of topological spaces}
\paragraph{} This section supplies a purely algebraic  analog of the topological construction given by the Section \ref{inf_to}. Let 
\begin{equation}\label{top_inv_limit1}
\mathcal{X} = \mathcal{X}_0 \xleftarrow{}... \xleftarrow{} \mathcal{X}_n \xleftarrow{} ... 
\end{equation}

be a sequence of finitely listed regular covering projections such that $\mathcal{X}$ is a locally compact  second-countable  Hausdorff topological space, and $\mathcal{X}_n$ is connected for any $n \in \mathbb{N}$. Let $\left\{G_n =G\left(\mathcal{X}_n|\mathcal{X}\right)\right\}_{n \in \mathbb{N}}$ be the set of groups of covering transformations. Let  $\overline{\mathcal{X}}$ (resp. $\overline{G}$) be a space (resp. a group) described in the Section \ref{inf_to}. There is a natural (disconnected) covering projection $\overline{\pi}: \overline{\mathcal{X}} \to \mathcal{X} $. 
Let $\widetilde{\mathcal{X}} \subset \overline{\mathcal{X}}$ be a connected component. According to the Section \ref{inf_to} there is a normal subgroup $G \subset \overline{G}$ such that $G\widetilde{\mathcal{X}}=\widetilde{\mathcal{X}}$ and a subset $J \subset \overline{G}$ of $\overline{G}/G$ representatives such that $\overline{\mathcal{X}}= \bigsqcup_{g\in J}g\widetilde{\mathcal{X}}$. The restriction $\widetilde{\pi}=\overline{\pi}|_{\widetilde{\mathcal{X}}}$ is a regular covering projection $\widetilde{\pi}: \widetilde{\mathcal{X}}\to\mathcal{X}$ and $\mathcal{X} \approx \widetilde{\mathcal{X}}/G$.
\begin{defn}
Let $\pi:\widetilde{\mathcal{X}} \to \mathcal{X}$ be a regular topological covering projection such that the group $G = G\left(\widetilde{\mathcal{X}} | \mathcal{X}\right)$ of covering transformations is finite or countable. Then there is a Hilbert $C_0(\mathcal{X})$-module
\begin{equation}\label{hilb_cond_comm}
 \mathscr{L}^2\left(\widetilde{\mathcal{X}}_{\mathcal{X}}\right) = \left\{ \varphi \in C_b(\widetilde{\mathcal{X}}) \ | \text{ if } \phi\left(x\right) =\sum_{\widetilde{x}\in \widetilde{\pi}^{-1}(x)} \varphi^*(\widetilde{x})\varphi(\widetilde{x}) \text{ then } \phi \in C_0\left(\mathcal{X}\right)\right\}.
 \end{equation}

 with a $C_0\left(\mathcal X\right)$-valued sesquilinear product given by
\begin{equation}\label{comm_pr_l2}
\langle \xi, \eta \rangle_{\mathscr{L}^2\left(\widetilde{\mathcal{X}}_{\mathcal{X}}\right)}(x) = \sum_{\widetilde{x}\in \widetilde{\pi}^{-1}(x)} \xi^*(\widetilde{x})\eta(\widetilde{x}).
\end{equation}
We say that $\mathscr{L}^2\left(\widetilde{\mathcal{X}}_{\mathcal{X}}\right)$ is an {\it associated with $\pi:\widetilde{\mathcal{X}} \to \mathcal{X}$}  Hilbert $C_0(\mathcal{X})$-module. 
\end{defn}
\begin{rem}
If $G$ is a finite group then $\mathscr{L}^2\left(\widetilde{\mathcal{X}}_{\mathcal{X}}\right)\approx C_0\left(\widetilde{\mathcal{X}}\right)_{C_0\left(\mathcal{X}\right)}$ as  Hilbert $C_0\left(\mathcal{X}\right)$-modules, so this definition compiles with the Theorem  \ref{pavlov_troisky_thm} and with the Equation \eqref{fin_form_a}.
\end{rem}


\begin{defn}\label{c_c_def_2}
A $C^*$-algebra $C_0\left(\mathcal{X}\right)$ is given by following equation
\begin{equation*}
C_0\left(\mathcal{X}\right) = \left\{\varphi \in C_b\left(\mathcal{X}\right) \ | \ \forall \varepsilon > 0 \ \ \exists K \subset \mathcal{X} \ ( K \text{ is compact}) \ \& \ \forall x \in \mathcal X \backslash K \ \left|\varphi\left(x\right)\right| < \varepsilon  \right\}.
\end{equation*}
\end{defn}
\begin{lem}\label{l2_comp}
Let $\mathcal X$ be a second-countable compact Hausdorff space. If  $\pi:  \widetilde{\mathcal X}\to \mathcal X$ is a regular covering projection such that $G = G\left( \widetilde{\mathcal X} \ | \ \mathcal X\right)$ is countable then $\mathscr{L}^2\left(\widetilde{\mathcal{X}}_{\mathcal{X}}\right) \subset C_0\left(\widetilde{\mathcal{X}}\right)$. 
\end{lem}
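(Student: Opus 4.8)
The plan is to verify the defining condition for membership in $C_0(\widetilde{\mathcal X})$ directly: I will show that for every $\eps > 0$ the set $\set{\widetilde x \in \widetilde{\mathcal X} : |\varphi(\widetilde x)| \ge \eps}$ is compact. Since any $\varphi \in \mathscr{L}^2\left(\widetilde{\mathcal{X}}_{\mathcal{X}}\right)$ is by definition already an element of $C_b(\widetilde{\mathcal X})$, hence continuous and bounded, this compactness of super-level sets is exactly the condition that $\varphi$ vanish at infinity, i.e. that $\varphi \in C_0(\widetilde{\mathcal X})$ in the sense of Definition \ref{c_c_def_2}.

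First I would exploit the compactness of $\mathcal X$ to set up the right cover. Every point of $\mathcal X$ has an evenly covered open neighbourhood; by normality of the compact Hausdorff space $\mathcal X$ (Theorem \ref{comp_normal}) I can shrink these to a finite family $\set{U_1,\dots,U_k}$ covering $\mathcal X$ with each closure $\mathfrak{cl}(U_i)$ contained in an evenly covered open set, so that $\mathfrak{cl}(U_i)$ is itself evenly covered. Over each $U_i$ I write $\widetilde\pi^{-1}(\mathfrak{cl}(U_i)) = \bigsqcup_{\lambda \in \Lambda_i}\widehat U_i^\lambda$ as a disjoint union of sheets, each mapped homeomorphically onto $\mathfrak{cl}(U_i)$ by $\widetilde\pi$, and I denote by $s_i^\lambda : \mathfrak{cl}(U_i) \to \widehat U_i^\lambda$ the corresponding continuous sections. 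Because $\widetilde\pi$ is a regular covering whose covering group $G$ is countable, each fibre $\widetilde\pi^{-1}(x)$ is a countable $G$-orbit, so each index set $\Lambda_i$ is countable.

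The key step is to upgrade the pointwise convergence of the fibre sums to uniform convergence. On the compact set $\mathfrak{cl}(U_i)$ the hypothesis $\phi \in C_0(\mathcal X) = C(\mathcal X)$ says that $\phi(x) = \sum_{\lambda \in \Lambda_i} |\varphi(s_i^\lambda(x))|^2$ is a pointwise-convergent series of nonnegative continuous functions with continuous sum. Enumerating $\Lambda_i$ by an increasing chain of finite subsets $F_i^{(1)} \subset F_i^{(2)} \subset \cdots$ with union $\Lambda_i$, the partial sums form a monotonically increasing sequence of continuous functions converging pointwise to the continuous function $\phi$, so by Dini's theorem the convergence is uniform on $\mathfrak{cl}(U_i)$. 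Hence there is a finite set $F_i \subset \Lambda_i$ with $\sum_{\lambda \notin F_i} |\varphi(s_i^\lambda(x))|^2 < \eps^2$ for every $x \in \mathfrak{cl}(U_i)$; in particular $|\varphi(s_i^\lambda(x))| < \eps$ for all $\lambda \notin F_i$ and all $x \in \mathfrak{cl}(U_i)$, i.e. $|\varphi| < \eps$ on every sheet indexed outside the finite set $F_i$.

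Finally I would assemble the pieces. Any $\widetilde x$ with $|\varphi(\widetilde x)| \ge \eps$ satisfies $\widetilde\pi(\widetilde x) \in U_i$ for some $i$, so $\widetilde x = s_i^\lambda(\widetilde\pi(\widetilde x))$ for a unique $\lambda \in \Lambda_i$; the previous estimate forces $\lambda \in F_i$. Therefore
\[
\set{\widetilde x \in \widetilde{\mathcal X} : |\varphi(\widetilde x)| \ge \eps}
\subseteq \bigcup_{i=1}^{k}\bigcup_{\lambda \in F_i} s_i^\lambda\!\left(\mathfrak{cl}(U_i)\right),
\]
a finite union of continuous images of compact sets, hence compact. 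As this super-level set is also closed (continuity of $\varphi$), it is compact, and since $\eps > 0$ was arbitrary we conclude $\varphi \in C_0(\widetilde{\mathcal X})$, giving $\mathscr{L}^2\left(\widetilde{\mathcal{X}}_{\mathcal{X}}\right) \subset C_0\left(\widetilde{\mathcal{X}}\right)$. The main obstacle is precisely the passage from pointwise to uniform control of the fibrewise tails; everything rests on arranging the cover so that the sheets, and hence the sections $s_i^\lambda$, are defined on the compact closures $\mathfrak{cl}(U_i)$, which is exactly what makes Dini's theorem available.
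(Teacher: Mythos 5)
Your proof is correct, but it takes a genuinely different route from the paper's. The paper argues by contradiction: it fixes a fundamental covering, builds an exhaustion $K_k = G^k K$ of $\widetilde{\mathcal X}$ from a $G$-covering of the group sequence (Definition \ref{g_cov_defn}), extracts a sequence $\widetilde{x}_i \in \widetilde{\mathcal X}\setminus K_i$ with $|\varphi(\widetilde x_i)| > \varepsilon$, pushes it down to $\mathcal X$, uses compactness to get a convergent subsequence $x_{i_j}\to x$, and then reaches a contradiction via the tail function $\psi(y)=\sum_{g\in G\setminus G^r}|\varphi(g\widetilde y)|^2$ on a small evenly covered neighbourhood of $x$ --- continuous because it is the continuous fibre sum $\phi$ minus a finite partial sum. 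Your Dini argument packages exactly this continuity of the tail into a uniform statement at the outset: on each compact $\mathfrak{cl}(U_i)$ the monotone partial sums converge pointwise to the continuous $\phi$, so the convergence is uniform, giving finitely many sheets outside of which $|\varphi|<\varepsilon$ everywhere at once, and hence an explicit compact set $\bigcup_i\bigcup_{\lambda\in F_i} s_i^\lambda(\mathfrak{cl}(U_i))$ containing the super-level set as required by Definition \ref{c_c_def_2}. What your approach buys: it is direct rather than by contradiction, it dispenses entirely with the $G$-covering machinery and the sequential limit-point argument (needing only normality of $\mathcal X$, Theorem \ref{comp_normal}, to arrange evenly covered compact closures), and it yields a stronger, quantitative conclusion (a uniform tail bound and an explicit vanishing set). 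What the paper's approach buys is reusability of its scaffolding: essentially the same contradiction argument is repeated with $\rho^s$-seminorms in Lemma \ref{l2_in_smooth}, so the author amortizes the setup cost. One small point to tidy: your claim that each $\Lambda_i$ is countable because fibres are $G$-orbits uses transitivity of $G$ on fibres, which for a regular covering requires $\widetilde{\mathcal X}$ connected; since the lemma is also applied to the disconnected space $\overline{\mathcal X}$, it is safer to note that fibres are countable simply because they are discrete subspaces of the second-countable space $\widetilde{\mathcal X}$ (or to run Dini over the net of finite subsets of $\Lambda_i$, for which countability is not needed).
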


\begin{proof}
 Let $\left\{\widetilde{\mathcal{U}}_{\iota}\subset\widetilde{\mathcal{X}}\right\}_{\iota \in I}$ be a basis of the fundamental covering of $\widetilde{\pi}:\widetilde{\mathcal{X}}\to \mathcal{X}$. Since $\mathcal X$ is compact we can select finite family  $\left\{\widetilde{\mathcal{U}}_{\iota}\subset\widetilde{\mathcal{X}}\right\}_{\iota \in I}$ , i.e.  $\left\{\widetilde{\mathcal{U}}_{\iota}\right\}_{\iota \in I} = \left\{\widetilde{\mathcal{U}}_1, ..., \widetilde{\mathcal{U}}_n\right\}$. Let 
\begin{equation*}
G_1 \leftarrow G_2 \leftarrow ...
\end{equation*}
be a coherent sequence of finite groups with epimorphisms $h_i: G \to G_i$, and let $\left\{G^k\subset G\right\}_{k \in \mathbb{N}}$  be a $G$-covering (See the Definition \ref{g_cov_defn}). 
If $\widetilde{\mathcal V}=\bigcup_{i = 1,...,n}\widetilde{\mathcal{U}}_i$ and $K=\mathfrak{cl}\left(\widetilde{\mathcal V}\right)$ is the closure of $\widetilde{\mathcal V}$ then $K$ is compact. For any $k \in \mathbb{N}$ the set $K_k = G^kK$ is a finite union of compact sets, whence $K_k$ is compact for any $k \in \mathbb{N}$. If $\widetilde{\mathcal V}_k = \bigcup_{k \in \mathbb{N}}G^k\widetilde{\mathcal V}$ then from definitions it follows that
 $\widetilde{\mathcal X}= \bigcup_{k \in \mathbb{N}}  \widetilde{\mathcal V}_k$. Let $\varphi \in \mathscr{L}^2\left(\widetilde{\mathcal{X}}_{\mathcal{X}}\right)$ be such that $\varphi \notin  C_0\left(\mathcal{X}\right)$. From the Definition \ref{c_c_def_2} it follows that there is $\varepsilon > 0$ such that for any compact set $K \subset \widetilde{\mathcal X}$ there is $\widetilde{x} \in \widetilde{\mathcal X} \backslash K$ such that $\left|\varphi\left(\widetilde{x}\right)\right| > \varepsilon$. Let us define a sequence $\left\{\widetilde{x}_i\in \widetilde{\mathcal{X}} \right\}_{i \in \mathbb{N}}$ such that $\left|\varphi\left(\widetilde{x}_i\right)\right|> \varepsilon$ and $\widetilde{x}_i\in \ \widetilde{\mathcal X} \backslash K_i$.
There is a sequence $\left\{x_i \in \mathcal X\right\}_{i \in \mathbb{N}}$ given by $x_i = \pi \left(\widetilde{x}_{i_j}\right)$. Since $\mathcal{X}$ is  compact the sequence $\left\{x_i \right\}_{i \in \mathbb{N}}$ contains a convergent subsequence $\left\{x_{i_j} \right\}_{j \in \mathbb{N}}$. Let $x = \lim_{j \to \infty} x_{i_j}$. Let $\widetilde{x} \in \widetilde{\mathcal X}$ be such that $\widetilde{\pi}\left(\widetilde{x}\right)=x$ and $\widetilde{x} \in \widetilde{\mathcal{V}}$. From \eqref{hilb_cond_comm} it follows that the series
\begin{equation*}
\sum_{g \in G}\left|\varphi\left(g\widetilde{x}\right)\right|^2
\end{equation*}
is convergent, whence there is $r \in \mathbb{N}$ such that
\begin{equation}\label{min_phi_2}
\sum_{g \in G \backslash G^r}\left|\varphi\left(g\widetilde{x}\right)\right|^2 < \frac{\varepsilon^2}{2}.
\end{equation}
If $\widetilde{\mathcal W}$ is an open connected neighborhood of $\widetilde{x}$ which is mapped homeomorphicaly onto $\mathcal{W}=\widetilde{\pi}\left(\widetilde{\mathcal W}\right)$ and $\widetilde{\mathcal W} \subset \widetilde{\mathcal V}$ then there is  a real continuous function $\psi:\widetilde{\mathcal W} \to \mathbb{R}$ given by
\begin{equation*}
\psi\left(y\right)= \sum_{g \in G \backslash G^r} \left|\phi\left(g \widetilde{y}\right)\right|^2; \text{ where } \widetilde{y} \in \widetilde{\mathcal W} \text{ and } \widetilde{\pi}\left(\widetilde{y}\right)=y.
\end{equation*}
There is $s \in \mathbb{N}$ such that $i_s > r$ and $x_{i_j} \in \mathcal W$  for any $j \ge s$. If $j > s$  then from  $\left|\varphi\left(\widetilde{x}_{i_j}\right)\right| > \varepsilon$ and $\widetilde{x}_{i_j} \notin K_r$ it follows that 
\begin{equation*}
\psi\left(x_{i_j}\right) = \sum_{g \in G \backslash G^r} \left|\varphi\left(g\widetilde{x}'_{i_j}\right)\right|^2 \ge \left|\varphi\left(\widetilde{x}_{i_j}\right)\right|^2 > \varepsilon^2; \text{ where } \widetilde{x}'_{i_j} \in \widetilde{\mathcal W} \text{ and } \widetilde{\pi}\left(\widetilde{x}'_{i_j}\right)=x_{i_j}.
\end{equation*}
Since $\psi$ is continuous and $x = \lim_{j \to \infty} x_{i_j}$ we have $\psi\left(x\right) > \varepsilon^2$. This fact contradicts to the equation \eqref{min_phi_2}, and the contradiction proves the lemma.

\end{proof}
\begin{lem}\label{l2_l_comp} 
Let $\mathcal X$ be a second-countable locally compact Hausdorff space. If  $\widetilde{\pi}:  \widetilde{\mathcal X}\to \mathcal X$ is a regular covering projection such that $G = G\left( \widetilde{\mathcal X} \ | \ \mathcal X\right)$ is countable then $\mathscr{L}^2\left(\widetilde{\mathcal{X}}_{\mathcal{X}}\right) \subset C_0\left(\widetilde{\mathcal{X}}\right)$
\end{lem}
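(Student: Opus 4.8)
The plan is to reduce to the compact case (Lemma~\ref{l2_comp}) by exploiting $\sigma$-compactness, with one genuinely new ingredient needed to absorb the non-compactness of $\mathcal X$. First I would record that a second-countable locally compact Hausdorff space is $\sigma$-compact, so it carries an exhaustion $\mathcal X = \bigcup_{k \in \mathbb N}\mathcal K_k$ by compact sets with $\mathcal K_k \subset \mathrm{int}\,\mathcal K_{k+1}$; by Corollary~\ref{loc_comp_haus_metr_col} the space is also metrizable, which lets me phrase ``escaping to infinity'' as leaving every $\mathcal K_k$. Fix $\varphi \in \mathscr{L}^2(\widetilde{\mathcal X}_{\mathcal X})$ and set $\phi = \langle\varphi,\varphi\rangle_{\mathscr{L}^2(\widetilde{\mathcal X}_{\mathcal X})} \in C_0(\mathcal X)$ as in \eqref{hilb_cond_comm}. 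The goal is $\varphi \in C_0(\widetilde{\mathcal X})$, which I would attack by contradiction exactly as in Lemma~\ref{l2_comp}: assuming $\varphi \notin C_0(\widetilde{\mathcal X})$ produces $\varepsilon > 0$ and a sequence $\{\widetilde x_i\}$ in $\widetilde{\mathcal X}$ leaving every compact set with $|\varphi(\widetilde x_i)| > \varepsilon$.

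The new step, absent in the compact case, is a dichotomy on the projected sequence $x_i = \widetilde\pi(\widetilde x_i) \in \mathcal X$. If $\{x_i\}$ itself leaves every compact $\mathcal K_k$, then $\phi(x_i) \to 0$ because $\phi \in C_0(\mathcal X)$ (Definition~\ref{c_c_def_2}); but $\phi(x_i) = \sum_{\widetilde y \in \widetilde\pi^{-1}(x_i)}|\varphi(\widetilde y)|^2 \ge |\varphi(\widetilde x_i)|^2 > \varepsilon^2$, a contradiction. Hence, after passing to a subsequence, I may assume $\{x_i\}$ stays in a fixed compact $\mathcal K_{k_0}$ and, by metrizability, converges to some $x \in \mathcal X$.

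From here the argument mirrors Lemma~\ref{l2_comp}, but is carried out \emph{locally} near $x$ rather than over a global fundamental domain, which is what makes it survive when the fundamental covering is infinite. I would choose a relatively compact evenly-covered neighbourhood $\mathcal W$ of $x$ so that $\widetilde\pi^{-1}(\mathcal W) = \bigsqcup_{g \in G} g\widetilde{\mathcal W}$ with $\mathfrak{cl}(\widetilde{\mathcal W})$ compact, and fix the point $\widetilde x \in \widetilde{\mathcal W}$ over $x$. Choosing a $G$-covering $\{G^r\}$ as in Definition~\ref{g_cov_defn}, convergence of the series $\phi(x) = \sum_{g \in G}|\varphi(g\widetilde x)|^2$ yields an $r$ with $\sum_{g \in G \setminus G^r}|\varphi(g\widetilde x)|^2 < \varepsilon^2/2$. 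I would then define, for $y \in \mathcal W$ with lift $\widetilde y \in \widetilde{\mathcal W}$, the function $\psi(y) = \sum_{g \in G\setminus G^r}|\varphi(g\widetilde y)|^2$; its continuity is immediate from writing it as the difference $\phi|_{\mathcal W} - \sum_{g \in G^r}|\varphi(g\widetilde y)|^2$ of a continuous function and a \emph{finite} continuous sum. The set $\bigcup_{g \in G^r} g\,\mathfrak{cl}(\widetilde{\mathcal W})$ is compact, so for large $i$ the escaping point $\widetilde x_i$ avoids it while $x_i \in \mathcal W$; thus $\widetilde x_i = g\widetilde x_i'$ with $g \notin G^r$ and $\widetilde x_i' \in \widetilde{\mathcal W}$, whence $\psi(x_i) \ge |\varphi(\widetilde x_i)|^2 > \varepsilon^2$. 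Continuity together with $x_i \to x$ forces $\psi(x) \ge \varepsilon^2$, contradicting $\psi(x) < \varepsilon^2/2$.

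I expect the main obstacle to be the bookkeeping in this local argument: guaranteeing that the escaping points $\widetilde x_i$ genuinely lie in sheets indexed by $g \notin G^r$ (so that $|\varphi(\widetilde x_i)|^2$ is actually one of the terms of $\psi(x_i)$), which is precisely where compactness of $\mathfrak{cl}(\widetilde{\mathcal W})$ and the exhaustion of $G$ by the finite sets $G^r$ are both used. The dichotomy via $\phi \in C_0(\mathcal X)$ is the conceptually new move, but it is short; the delicate part is reproducing the continuity of $\psi$ and the sheet-selection estimate without the global compactness that Lemma~\ref{l2_comp} could rely on.
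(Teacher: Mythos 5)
Your proposal is correct, but it takes a genuinely different route from the paper's. The paper argues directly rather than by contradiction: given $\varepsilon>0$, since $\psi(x)=\sum_{\widetilde{x}\in \widetilde{\pi}^{-1}(x)}\left|\varphi(\widetilde{x})\right|^2$ lies in $C_0\left(\mathcal X\right)$ there is a compact $K\subset\mathcal X$ with $\psi<\varepsilon^2$ off $K$, hence $\left|\varphi\right|<\varepsilon$ off $\widetilde{\pi}^{-1}\left(K\right)$; it then restricts $\varphi$ to $\widetilde{K}=\widetilde{\pi}^{-1}\left(K\right)$, notes $\varphi|_{\widetilde{K}}\in\mathscr{L}^2\left(\widetilde{K}_{K}\right)$, and invokes Lemma \ref{l2_comp} as a black box to get a compact $\widetilde{K}_0\subset\widetilde{K}$ outside which $\left|\varphi\right|<\varepsilon$, so that $\left|\varphi\right|<\varepsilon$ on all of $\widetilde{\mathcal X}\setminus\widetilde{K}_0$. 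Your escape-to-infinity branch of the dichotomy is exactly the paper's first reduction in contrapositive form, but where the paper then cites Lemma \ref{l2_comp} over $K$, you instead re-run its contradiction machinery locally near the accumulation point $x$. The paper's route buys brevity and modularity; yours buys self-containedness and quietly repairs two frictions: the paper never checks that $\widetilde{\pi}^{-1}\left(K\right)\to K$ is a covering of the kind Lemma \ref{l2_comp} was proved for ($K$ and its preimage need not be connected, while that proof leans on a basis of a fundamental covering), and your continuity argument for the tail function $\psi$ --- writing it as $\phi|_{\mathcal W}$ minus a finite sum of continuous terms --- is explicit where the paper merely asserts continuity. Two small repairs to your write-up: Definition \ref{g_cov_defn} presupposes a coherent sequence of finite quotients of $G$, which an abstract countable deck group need not admit; all you actually use is an increasing exhaustion of $G$ by finite subsets $G^r$ with $\bigcup_r G^r=G$, which countability provides directly, so cite that instead. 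And you should take $\widetilde{\mathcal W}$ to be a single sheet over the evenly covered $\mathcal W$, so that regularity (simple transitivity of $G$ on fibers of the connected covering) genuinely yields $\widetilde{\pi}^{-1}\left(\mathcal W\right)=\bigsqcup_{g\in G}g\widetilde{\mathcal W}$ --- the same convention the paper itself uses when it rewrites fiber sums as sums over $G$.
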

\begin{proof}
 If $\varphi \in \mathscr{L}^2\left(\widetilde{\mathcal{X}}_{\mathcal{X}}\right)$then  $\psi\left(x\right)=\sum_{\widetilde{x}\in \widetilde{\pi}^{-1}(x)} \left|\varphi(\widetilde{x})\right|^2 \in C_0\left(\mathcal X\right)$. Let $\varepsilon > 0$ be any number. Form the Definition \ref{c_c_def_2} it follows that there is a compact set $K \subset \mathcal X$ such $\psi\left(\mathcal X \backslash K \right) \subset [0,\varepsilon^2]$. From this fact it follows that $\left|\varphi(\widetilde{x})\right| < \varepsilon$ for any $\widetilde{x} \in \widetilde{\mathcal{X}} \backslash \widetilde{\pi}^{-1}\left(K\right)$. If $\widetilde{K}= \widetilde{\pi}^{-1}\left(K\right)$ then the restriction $\varphi|_{\widetilde{K}}$ belongs to $\mathscr{L}^2\left(\widetilde{K}_{K}\right)$. From the Lemma \ref{l2_comp} it follows that $\varphi|_{\widetilde{K}} \in C_0\left(\widetilde{K}\right)$, whence there is a compact set $\widetilde{K}_0 \subset \widetilde{K}$ such that  $\left|\varphi\left(\widetilde{x}\right) \right| < \varepsilon$ for any $\widetilde{x} \in \widetilde{K} \backslash \widetilde{K}_0$. In result we have $\left|\varphi\left(\widetilde{x}\right) \right| < \varepsilon$ for any $\widetilde{x} \in \widetilde{\mathcal X} \backslash \widetilde{K}_0$. From the Definition \ref{c_c_def_2} it follows that $\varphi \in C_0\left(\widetilde{\mathcal X}\right)$.
\end{proof}
\begin{empt}\label{desc_descr}
Let us consider the sequence \eqref{top_inv_limit1}, and let $\overline{\mathcal U} \subset \overline{\mathcal X}$ be a connected open set which is mapped homeomorphicaly on $\overline{\pi}\left(\overline{\mathcal U}\right)$. Let $\overline{\phi} \in C_0\left(\overline{\mathcal X}\right)$ be such that $\overline{\pi}\left(\overline{\mathcal X}\backslash\overline{\mathcal U}\right) = \{0\}$. For any $n \in \mathbb{N}^0$ there are covering projections $\overline{\pi}^n: \overline{\mathcal X} \to \mathcal X_n$, $\pi^n: \mathcal X_n \to \mathcal X$  and there is the descent $\phi_n \in C_0\left(\mathcal X_n\right)$ of $\overline{\phi}$ (See Definition \ref{lift_desc_defn}). From the Example \ref{fin_lem} it follows that
\begin{equation}\label{comm_seq}
C_0(\mathcal{X})=C_0(\mathcal{X}_0)\to ... \to C_0(\mathcal{X}_n) \to
\end{equation}
is a sequence of noncommutative covering projections. The sequence $\Lambda = \left\{\phi_n\right\}_{n \in \mathbb{N}^0}$ satisfies to \eqref{seq_cond}.
From 
\begin{equation*}
\left\langle\phi_n, \phi_n\right\rangle_{C_0\left(\mathcal X_n\right)}\left(x\right)= \sum_{x_n \in \left(\pi^n\right)^{-1}\left(x\right)}\left(\phi_n\right)^*\left(x_n\right)\phi_n\left(x_n\right) =  \phi^*_0\left(x\right)\phi_0\left(x\right)\in C_0\left(\mathcal X\right),
\end{equation*}
it follows that the sequence $\Lambda$  satisfies to \eqref{norm_conv_cond}, so $\Lambda$ is a coherent sequence.
\end{empt}
\begin{defn}\label{desc_defn}
Let us consider the situation \ref{desc_descr}. The coherent sequence $\Lambda = \left\{\phi_n\right\}_{n \in \mathbb{N}^0}$ is said to be the {\it descent} of $\overline{\phi}$, and we will write $\left\{\phi_n\right\}_{n \in \mathbb{N}^0} = \mathfrak{Desc}\left(\overline{\phi}\right)$.  The element  $\xi = \mathfrak{Rep}\left(\mathfrak{Desc}\left(\overline{\phi}\right)\right)\in \overline{X}_{C_0\left(\mathcal X\right)}$ is said to be a {\it represented} by $\overline{\phi}$, or $\overline{\phi}$ is a {\it representative} of $\xi$. We will write $\xi = \mathfrak{Rep}\left(\overline{\phi}\right)$.
\end{defn}
\begin{empt}\label{dense_comm}
 The sequence \eqref{comm_seq} is composable. There is the noncommutative covering projection   $\left(C_0(\mathcal{X}), \widetilde{A}, \ _{\widetilde{A}}X_{C_0(\mathcal{X})} \ , G\right)$  of the sequence \eqref{comm_seq}.   Let $\left\{\widetilde{\mathcal{U}}_{\iota}\subset\widetilde{\mathcal{X}}\right\}_{\iota \in I}$ be a basis of the fundamental covering, and let
   \begin{equation*}
  1_{C_b\left(\overline{\mathcal X}\right)} = \sum_{g \in G\left(\overline{\mathcal X}  | \mathcal X\right)} \  \sum_{\iota \in I}  g\overline{a}_\iota = \sum_{\left(g,\iota\right)\in G\left(\overline{\mathcal X}  | \mathcal X\right) \times I}\overline{a}_{\left(g,\iota\right)}
   \end{equation*}
 be a partition of unity is dominated by $\left\{\widetilde{\mathcal{U}}_{\iota}\right\}_{\iota \in I}$ . If $\overline{e}_\iota = \sqrt{\overline{a}_\iota}$ for any $\iota \in I$ then from the Corollary  \ref{cor_xi1} it follows that
 \begin{equation}\label{comm_unity_equ}
  \sum_{\iota \in I, g \in G\left(\overline{\mathcal X} | \mathcal X\right)} g \mathfrak{Rep}\left(\overline{e}_\iota \right) \rangle \langle g \mathfrak{Rep}\left(\overline{e}_\iota \right)  = 1_{M\left(\mathcal{K}\left(\overline{X}_{C_0\left(\mathcal X\right)}\right)\right)}.
  \end{equation}

If $\Xi = \left\{\mathfrak{Rep}\left(\overline{e}_\iota \right)\right\}_{\iota \in I}$ then from the Corollary \ref{cor_xi2} it follows that the set $\overline{G}\Xi C_0(\mathcal{X})$ is dense in $\overline{X}_{C_0(\mathcal{X})}$. 

\end{empt}
\begin{lem}\label{l2_eqiv_lem}
Let $\mathcal{X}$ be a locally compact  second-countable  Hausdorff topological space,  and let
\begin{equation}\label{comm_thm_equ}
C_0(\mathcal{X})=C_0(\mathcal{X}_0)\to ... \to C_0(\mathcal{X}_n) \to ...
\end{equation}
be an infinite sequence of finite noncommutative covering projections arising from the sequence \eqref{top_inv_limit1} of connected topological covering projections. If $\overline{X}_{C_0(\mathcal{X})}$  is a disconnected module of the sequence \eqref{comm_thm_equ} then there is a natural isomorphism $\overline{X}_{C_0(\mathcal{X})}\xrightarrow{\approx}\mathscr{L}^2\left(\overline{\mathcal{X}}_{\mathcal{X}}\right)$ of $C_0\left(\mathcal X\right)$-Hilbert modules.
\end{lem}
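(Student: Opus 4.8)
\emph{Proof plan.} The plan is to construct the isomorphism explicitly as the map that sends a function on $\overline{\mathcal{X}}$ to the element of $\overline{X}_{C_0(\mathcal{X})}$ represented by its system of descents, exactly as in the construction of \ref{desc_descr}--\ref{desc_defn}, and then to verify that this map is an isometry with dense range. Throughout I identify $\overline{\mathcal{X}}$ with $\varprojlim \mathcal{X}_n$ and write $\overline{\pi}^n : \overline{\mathcal{X}} \to \mathcal{X}_n$ and $\pi^n : \mathcal{X}_n \to \mathcal{X}$ for the projections, so that $\overline{\pi} = \pi^n \circ \overline{\pi}^n$. By Lemma \ref{l2_l_comp} every element of $\mathscr{L}^2(\overline{\mathcal{X}}_{\mathcal{X}})$ already lies in $C_0(\overline{\mathcal{X}})$, which is what makes the descent maps and the partition-of-unity arguments below legitimate.

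First I would define the candidate map $\Phi$ on a convenient dense subspace. Let $\{\widetilde{\mathcal{U}}_\iota\}_{\iota\in I}$ be a basis of a fundamental covering, so that its $\overline{G}$-translates cover $\overline{\mathcal{X}}$ by one-to-one sets, and let $\mathscr{D}\subset \mathscr{L}^2(\overline{\mathcal{X}}_{\mathcal{X}})$ be the linear span of those $\overline{\phi}$ whose support lies in a single translate $g\widetilde{\mathcal{U}}_\iota$. Using a partition of unity dominated by $\{\widetilde{\mathcal{U}}_\iota\}$ (as in \ref{dense_comm}) one writes any $\overline{\phi}\in \mathscr{L}^2(\overline{\mathcal{X}}_{\mathcal{X}})$ as a locally finite sum of single-sheet pieces, so $\mathscr{D}$ is dense in the module norm. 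On a single-sheet function $\overline{\phi}$ I set $\Phi(\overline{\phi}) := \mathfrak{Rep}(\overline{\phi})$, the element represented by the descent $\mathfrak{Desc}(\overline{\phi}) = \{\phi_n\}$; this is well defined by \ref{desc_descr}. The map is manifestly right $C_0(\mathcal{X})$-linear, since multiplying $\overline{\phi}$ by $f\circ\overline{\pi}$ multiplies each descent $\phi_n$ by $f\circ \pi^n$, which is precisely the right action on $\overline{X}_{C_0(\mathcal{X})}$; and it is $\overline{G}$-equivariant by construction of the $\overline{G}$-actions in \ref{product}, which lets me reduce every inner-product computation to a fixed reference sheet.

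Next I would prove that $\Phi$ is isometric for the $C_0(\mathcal{X})$-valued product \eqref{comm_pr_l2} and the one on $\overline{X}_{C_0(\mathcal{X})}$ from \ref{product}. For two single-sheet functions $\overline{\phi},\overline{\psi}$ the module product is the limit of the finite-level fibre sums \eqref{comm_pr_l2}, so that
\[
\langle \Phi(\overline{\phi}), \Phi(\overline{\psi})\rangle_{\overline{X}_{C_0(\mathcal{X})}}(x) = \lim_{n\to\infty}\sum_{x_n\in(\pi^n)^{-1}(x)}\phi_n^*(x_n)\psi_n(x_n),
\]
and the point is that a nonzero term survives only at an $x_n$ lying in the intersection of the two descended sheets. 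Since distinct points of $\overline{\mathcal{X}}$ are separated at some finite level, those finite-level coincidences not arising from a genuine coincidence in $\overline{\mathcal{X}}$ disappear as $n\to\infty$, so the limit equals $\sum_{\overline{x}\in\overline{\pi}^{-1}(x)}\overline{\phi}^*(\overline{x})\overline{\psi}(\overline{x}) = \langle\overline{\phi},\overline{\psi}\rangle_{\mathscr{L}^2(\overline{\mathcal{X}}_{\mathcal{X}})}(x)$; the diagonal case $\overline{\phi}=\overline{\psi}$ is exactly the computation of \ref{desc_descr}, and sheets that do not overlap in $\overline{\mathcal{X}}$ give orthogonal images. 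Polarizing and using $C_0(\mathcal{X})$-linearity and $\overline{G}$-equivariance extends the identity from single-sheet functions to all of $\mathscr{D}$, so $\Phi$ is an isometry and extends to an isometry $\overline{\Phi}$ of the completion $\mathscr{L}^2(\overline{\mathcal{X}}_{\mathcal{X}})$.

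Finally I would check surjectivity. Taking $\overline{e}_\iota = \sqrt{\overline{a}_\iota}$ as in \ref{dense_comm}, the range of $\Phi$ contains every $\mathfrak{Rep}(\overline{e}_\iota)$ together with its $\overline{G}$-translates and $C_0(\mathcal{X})$-multiples; by Corollary \ref{cor_xi2}, and in view of the completeness relation \eqref{comm_unity_equ}, this family has dense linear span in $\overline{X}_{C_0(\mathcal{X})}$. Hence $\overline{\Phi}$ is an isometry with dense range, therefore a unitary isomorphism of Hilbert $C_0(\mathcal{X})$-modules, and it is natural because every step is canonically attached to the covering data. The main obstacle I anticipate is the interchange of limits in the isometry step: one must show rigorously that the finite-level fibre sums $\sum_{x_n}\phi_n^*\psi_n$ converge to the genuine coincidence sum over $\overline{\pi}^{-1}(x)$, i.e.\ that spurious collisions of descended sheets at finite levels contribute nothing in the limit. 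This is where second countability, the separation property $\bigcap_n \ker h_n = \{e\}$, and the inclusion $\mathscr{L}^2(\overline{\mathcal{X}}_{\mathcal{X}})\subset C_0(\overline{\mathcal{X}})$ of Lemma \ref{l2_l_comp} must be combined carefully.
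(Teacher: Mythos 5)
Your proposal is correct, but it runs in the opposite direction from the paper's proof, and the comparison is worth recording. The paper first constructs the map $\alpha:\overline{X}_{C_0(\mathcal{X})}\to C_b\left(\overline{\mathcal{X}}\right)$ out of the abstract module: it localizes an element $\zeta$ against test functions $\phi^{\overline{x}}_{[1,0]}$, lifts the resulting $C_0(\mathcal{X})$-valued products to the one-to-one neighbourhoods, and glues (Definition \ref{gluin}); injectivity is then read off from the dense span $\overline{G}\,\Xi\,C_0(\mathcal{X})$ of \ref{dense_comm}, and the inner-product identity is derived by expanding $\left\langle \xi,\eta\right\rangle$ through the completeness relation \eqref{comm_unity_equ} and summing $\sum_\iota e_\iota^2 = 1$, which silently absorbs the collision analysis you worry about. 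Your $\Phi$ is essentially the paper's inverse map $\beta:\varphi\mapsto\sum_{g,\iota}\mathfrak{Rep}\left(\varphi\left(g\overline{e}_\iota\right)^2\right)$, defined first on single-sheet functions and extended by density; what you buy is that the pointwise gluing construction of $\alpha$ is avoided entirely, and the same dense-span fact (Corollary \ref{cor_xi2}) that the paper uses for injectivity of $\alpha$ you use dually for surjectivity of $\Phi$, after which isometry-plus-dense-range closes the argument. What you pay is that the isometry step now carries the analytic burden: you must show that for two single-sheet functions the finite-level fibre sums $\sum_{x_n}\phi_n^*\psi_n$ converge in the sup-norm to the genuine coincidence sum over $\overline{\pi}^{-1}(x)$. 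This step is completable, and your sketch points in the right direction: reduce to compactly supported $\overline{\phi},\overline{\psi}$ with disjoint supports $K_1,K_2$, note that since each $\overline{\mathcal{U}}$ is one-to-one there is at most one collision point per fibre, observe that the set of pairs in $\left(K_1\times K_2\right)\cap\left\{\overline{\pi}(\overline{u})=\overline{\pi}(\overline{v})\right\}$ separated at level $m$ is open and these sets exhaust the compact set as $m$ grows, so a single finite level separates all spurious collisions uniformly. With that compactness argument, and a Dini-type uniformity argument for the density of your single-sheet span $\mathscr{D}$ (the partial fibre sums are continuous and increase to a continuous limit, so converge uniformly on compacta), your route is fully rigorous and arguably cleaner than the paper's two-map verification $\alpha\circ\beta=\mathrm{Id}$, $\beta\circ\alpha=\mathrm{Id}$.
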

\begin{proof}
 Let $\overline{\mathcal{X}}$ be the disconnected covering space of the sequence \eqref{top_inv_limit1} with the natural covering projection $\overline{\pi}:\overline{\mathcal{X}}\to \mathcal{X}$ and let $\overline{G} = \varprojlim G\left(\mathcal X_n|\mathcal X\right)$. For any $\overline{x} \in \overline{X}_{C_0(\mathcal{X})}$ we will define a test function $\left(\overline{x}, \phi^{\overline{x}}_{[1,0]}, \overline{\mathcal U}_{\overline{x}}, \overline{\mathcal V}_{\overline{x}} \right)$ subordinated to  $\overline{\pi}: \overline{\mathcal{X}}\to \mathcal{X}$. For any $\zeta \in \overline{\mathcal{X}}$ there is the $C_0\left(\mathcal X\right)$-valued product
\begin{equation*}
\varphi^{\overline{x}}_\xi = \left\langle \xi , \mathfrak{Rep}\left(\phi^{\overline{x}}_{[1,0]} \right)\right\rangle_{\overline{X}_{C_0(\mathcal{X})}} \in C_0\left(\mathcal X\right).
\end{equation*} 
If $\overline{\varphi}^{\overline{x}}_\xi \in C_0\left(\overline{\mathcal{X}}\right)$ is the $\overline{\mathcal U}_{\overline{x}}$ -lift of $\varphi^{\overline{x}}_\xi$ then the  family  $\left\{\overline{\varphi}^{\overline{x}}_\xi|_{\overline{\mathcal V}_{\overline{x}}}:\overline{\mathcal V}_{\overline{x}}\to \mathbb{C} \right\}_{\overline{x}\in \overline{\mathcal X}}$ is coherent, whence  there is the gluing $\varphi_\zeta = \mathfrak{Gluing}\left(\left\{\overline{\varphi}^{\overline{x}}_\xi|_{\overline{\mathcal V}_{\overline{x}}}\right\}\right): \overline{\mathcal{X}} \to \mathbb{C}$. Note that $\varphi_\zeta$ is bounded because $\left\|\varphi_\zeta\right\|=\left\|\zeta\right\|$, i.e. $\varphi_{\zeta} \in C_b\left(\overline{\mathcal{X}}\right)$. So there is 
a natural $C_0(\mathcal{X})$-linear map $\alpha : \overline{X}_{C_0(\mathcal{X})} \to {C_b(\overline{\mathcal{X}})}$, given by $\zeta \mapsto \varphi_{\zeta}$.  If  $\Xi = \left\{\mathfrak{Rep}\left(\overline{e}_\iota \right)\right\}_{\iota \in I}$ then from \ref{dense_comm} it follows that the set  $\overline{G} \ \Xi \ C_0(\mathcal{X})$ is dense in $\overline{X}_{C_0(\mathcal{X})}$. 
So for any nonzero $\zeta \in \overline{X}_{C_0(\mathcal{X})}$  there is a pair $(\iota, g)  \in I \times \overline{G}$ such that  $\langle \zeta, g \mathfrak{Rep}\left(\overline{e}_\iota \right) \rangle_{\overline{X}_{C_0(\mathcal{X})}} \neq 0$. If $x \in \mathcal{X}$ is such that 
$\left\langle \zeta, g \mathfrak{Rep}\left(\overline{e}_\iota \right) \right\rangle_{\overline{X}_{C_0(\mathcal{X})}}(x)\neq 0$ then there is the unique $\overline{x}\in g\overline{\mathcal{U}}_{\iota}$ such that $\overline{\pi}(\overline{x}) = x$ and  $\alpha(\zeta)(\overline{x})\neq 0$.  It means that $\alpha$ is injective. If $\xi, \eta \in \overline{X}_{C_0(\mathcal{X})}$ then from \eqref{comm_unity_equ} it follows that
\begin{equation*}
\left\langle \xi , \eta \right\rangle_{\overline{X}_{C_0(\mathcal{X})}} = \sum_{\iota \in I, g \in \overline{G}} \left\langle \xi , g \mathfrak{Rep}\left(\overline{e}_\iota \right) \right\rangle_{\overline{X}_{C_0(\mathcal{X})}}\left\langle g \mathfrak{Rep}\left(\overline{e}_\iota \right) , \eta \right\rangle_{\overline{X}_{C_0(\mathcal{X})}}
\end{equation*}
From definition of $\alpha$ it follows that if $e_\iota \in C_0\left(\mathcal X\right)$ is the descent of the $\overline{e}_\iota$ for any $\iota \in I$ then
\begin{equation*}
\left( \left\langle \xi , g \mathfrak{Rep}\left(\overline{e}_\iota \right) \right\rangle_{\overline{X}_{C_0(\mathcal{X})}}\left\langle g \mathfrak{Rep}\left(\overline{e}_\iota \right) , \eta \right\rangle_{\overline{X}_{C_0(\mathcal{X})}}\right)\left(x\right)= \left(\alpha\left(\xi\right)\left(\overline{x}\right)\right)\left(e_\iota\left(x\right)\right)^2\left(\alpha\left(\eta\right)\left(\overline{x}\right)\right)^*
\end{equation*}
where $\overline{x}\in \overline{\mathcal X}$ is the unique point such that $\overline{x} \in g \overline{\mathcal U}_\iota$ and $\overline{\pi}\left(\overline{x}\right)=x$. From above equation it follows that for any $\iota \in I$ following condition hold
\begin{equation*}
\left(\sum_{g \in \overline{G}} \left\langle \xi , g \mathfrak{Rep}\left(\overline{e}_\iota \right) \right\rangle_{\overline{X}_{C_0(\mathcal{X})}}\left\langle g \mathfrak{Rep}\left(\overline{e}_\iota \right) , \eta \right\rangle_{\overline{X}_{C_0(\mathcal{X})}}\right)\left(\overline{x}\right) = \sum_{\overline{x} \in \overline{\pi}^{-1}\left(x\right)}\left(\alpha\left(\xi\right)\left(\overline{x}\right)\right)\left(e_\iota\left(x\right)\right)^2\left(\alpha\left(\eta\right)\left(\overline{x}\right)\right)^*.
\end{equation*}
From $\sum_{\iota \in I}e^2_\iota= 1_{M\left(C_0\left(\mathcal X\right)\right)}$ it follows that
\begin{equation*}
\sum_{\iota \in I, g \in \overline{G}} \left\langle \xi , g \mathfrak{Rep}\left(\overline{e}_\iota \right) \right\rangle_{\overline{X}_{C_0(\mathcal{X})}}\left\langle g \mathfrak{Rep}\left(\overline{e}_\iota \right) , \eta \right\rangle_{\overline{X}_{C_0(\mathcal{X})}}\left(\overline{x}\right)=\sum_{\overline{x} \in \overline{\pi}^{-1}\left(x\right)}\left(\alpha\left(\xi\right)\left(\overline{x}\right)\right)\left(\alpha\left(\eta\right)\left(\overline{x}\right)\right)^*,
\end{equation*}
or equivalently
\begin{equation*}
\left\langle \xi , \eta \right\rangle_{\overline{X}_{C_0(\mathcal{X})}} = \sum_{\overline{x} \in \overline{\pi}^{-1}\left(x\right)}\left(\alpha\left(\xi\right)\left(\overline{x}\right)\right)\left(\alpha\left(\eta\right)\left(\overline{x}\right)\right)^*.
\end{equation*}
In fact the above equation coincides with \eqref{comm_pr_l2} and from $\left\langle \zeta, \zeta\right\rangle_{\overline{X}_{C_0(\mathcal{X})}} \in C_0\left(\mathcal X\right)$ it follows that $\alpha\left(\zeta\right)$ satisfies to \eqref{hilb_cond_comm}, i.e. $\alpha\left(\zeta\right)\in \mathscr{L}^2\left(\widetilde{\mathcal{X}}_{\mathcal{X}}\right)$. Otherwise if $\varphi \in \mathscr{L}^2\left(\widetilde{\mathcal{X}}_{\mathcal{X}}\right)$ then the series given by
\begin{equation*}
\xi_{\varphi} = \sum_{\overline{g} \in \overline{G}, \ \iota \in I} \mathfrak{Rep}\left(\varphi \left(g\overline{e}_\iota\right)^2 \right) \in X_{C_0\left(\mathcal X\right)}, 
\end{equation*}
is norm convergent. So there is a linear map $\beta: \mathscr{L}^2\left(\widetilde{\mathcal{X}}_{\mathcal{X}}\right)\to \overline{X}_{C_0(\mathcal{X})}$ given by $\varphi \mapsto \xi_\varphi$. It is easy to check that $\alpha \circ \beta = \mathrm{Id}_{\mathscr{L}^2\left(\widetilde{\mathcal{X}}_{\mathcal{X}}\right)}$ and $\beta \circ \alpha = \mathrm{Id}_{\overline{X}_{C_0(\mathcal{X})}}$, so $\alpha: \overline{X}_{C_0\left(\mathcal{X}\right)} \xrightarrow{\approx}\mathscr{L}^2\left(\widetilde{\mathcal{X}}_{\mathcal{X}}\right)$ is an isomorphism of $C_0\left(\mathcal{X}\right)$-Hilbert modules.
\end{proof}
\begin{thm}\label{thm_comm_1}
Let $\mathcal{X}$ be a locally compact  second-countable  Hausdorff topological space,  and let us consider the infinite sequence \eqref{comm_thm_equ} of finite noncommutative covering projections arising from the sequence \eqref{top_inv_limit1} of connected topological covering projections. Let $G_n = G\left(C_0(\mathcal{X}_n)|C_0(\mathcal{X})\right)= G\left(\mathcal{X}_n|\mathcal{X}\right)$ be covering transformation groups.  Let  $\left(C_0(\mathcal{X}), \widetilde{A}, \ _{\widetilde{A}}X_{C_0(\widetilde{\mathcal{X}})} \ , G\right)$ be  a  noncommutative infinite covering projection of the sequence   \eqref{comm_thm_equ}. Then there is a regular connected topological covering projection $\widetilde{\pi}: \widetilde{\mathcal{X}} \to \mathcal{X}$ such that following conditions hold:
\begin{enumerate}
\item[(a)] $\widetilde{A} = C_0(\widetilde{\mathcal{X}})$ and $_{\widetilde{A}}X_{C_0(\mathcal{X})}=\mathscr{L}^2\left(\widetilde{\mathcal{X}}_{\mathcal{X}}\right)$;
\item[(b)] $G(\widetilde{\mathcal{X}}|\mathcal{X})=G$, $ \mathcal{X} =\widetilde{\mathcal{X}}/G$;
\item[(c)] The sequence \eqref{comm_thm_equ} is faithful.
\end{enumerate}
\end{thm}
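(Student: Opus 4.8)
The plan is to construct the space $\widetilde{\mathcal{X}}$ by the purely topological procedure of~\ref{top_constr} and then to match each algebraic datum of the noncommutative infinite covering projection against its topological counterpart, using Lemma~\ref{l2_eqiv_lem} as the bridge. First I would invoke~\ref{top_constr}: the inverse limit $\overline{\mathcal{X}} = \varprojlim \mathcal{X}_n$ carries the final topology making it the disconnected covering space, with disconnected group $\overline{G} = \varprojlim G_n$ and natural projection $\overline{\pi}: \overline{\mathcal{X}} \to \mathcal{X}$; choosing a connected component $\widetilde{\mathcal{X}}$ yields a normal subgroup $G \subseteq \overline{G}$ with $G\widetilde{\mathcal{X}} = \widetilde{\mathcal{X}}$, a set $J$ of $\overline{G}/G$-representatives with $\overline{\mathcal{X}} = \bigsqcup_{g \in J} g\widetilde{\mathcal{X}}$, and a regular covering $\widetilde{\pi} = \overline{\pi}|_{\widetilde{\mathcal{X}}}: \widetilde{\mathcal{X}} \to \mathcal{X}$ with $\mathcal{X} \approx \widetilde{\mathcal{X}}/G$. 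This already produces the space asserted in the statement together with half of~(b).

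Second, Lemma~\ref{l2_eqiv_lem} supplies a natural $C_0(\mathcal{X})$-linear isomorphism $\overline{X}_{C_0(\mathcal{X})} \xrightarrow{\approx} \mathscr{L}^2\left(\overline{\mathcal{X}}_{\mathcal{X}}\right)$ that is manifestly $\overline{G}$-equivariant, so it suffices to work inside $\mathscr{L}^2\left(\overline{\mathcal{X}}_{\mathcal{X}}\right)$. The central step is to identify the disconnected covering algebra $\overline{A} = \mathcal{K}(\overline{X}_{C_0(\mathcal{X})}) \cap (\bigcup_n A_n)''$ with $C_0(\overline{\mathcal{X}})$. For the inclusion $C_0(\overline{\mathcal{X}}) \subseteq \overline{A}$ I would repeat the argument of Example~\ref{fin_lem}: using the partition of unity $\{\overline{e}_\iota\}$ of~\ref{dense_comm} and the unit decomposition~\eqref{comm_unity_equ}, every $\varphi \in C_0(\overline{\mathcal{X}})$ is a norm-convergent sum of rank-one module operators, hence lies in $\mathcal{K}(\overline{X}_{C_0(\mathcal{X})})$, while the cylinder functions pulled back from the $\mathcal{X}_n$ are strongly dense, placing $\varphi$ in $(\bigcup_n A_n)''$. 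For the reverse inclusion I would use that $(\bigcup_n A_n)''$ is the commutative enveloping von Neumann algebra $L^\infty(\overline{\mathcal{X}})$ and that, by Lemma~\ref{l2_l_comp}, any of its elements which is simultaneously module-compact must be continuous and vanish at infinity, i.e. belong to $C_0(\overline{\mathcal{X}})$.

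Third, since $\widetilde{\mathcal{X}}$ is connected, $C_0(\widetilde{\mathcal{X}})$ is irreducible, and the decomposition $C_0(\overline{\mathcal{X}}) = \bigoplus_{g \in J} C_0(g\widetilde{\mathcal{X}})$ exhibits $C_0(\widetilde{\mathcal{X}})$ as a maximal irreducible subalgebra of $\overline{A}$ in the sense of Definition~\ref{main_defn}, matching the decomposition $\overline{A} = \bigoplus_{g \in J} g\widetilde{A}$ of Lemma~\ref{dir_sum_lem}(a); thus $\widetilde{A} = C_0(\widetilde{\mathcal{X}})$, which is the algebra part of~(a). Tensoring, $X_{C_0(\mathcal{X})} = \widetilde{A} \otimes_{\overline{A}} \overline{X}_{C_0(\mathcal{X})}$ corresponds under the isomorphism of Lemma~\ref{l2_eqiv_lem} to the restriction of sections to $\widetilde{\mathcal{X}}$, giving ${}_{\widetilde{A}}X_{C_0(\mathcal{X})} = \mathscr{L}^2\left(\widetilde{\mathcal{X}}_{\mathcal{X}}\right)$. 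The maximal subgroup $G \subseteq \overline{G}$ stabilizing $\widetilde{A} = C_0(\widetilde{\mathcal{X}})$ coincides with the topological stabilizer of $\widetilde{\mathcal{X}}$, so by regularity of $\widetilde{\pi}$ and the orbit-space description of regular coverings one gets $G = G(\widetilde{\mathcal{X}}|\mathcal{X})$ and $\mathcal{X} = \widetilde{\mathcal{X}}/G$, completing~(b). Finally, faithfulness~(c) is checked directly: $h_n(G) = G_n$ because the connected cover $\widetilde{\mathcal{X}} \to \mathcal{X}$ factors through the regular cover $\mathcal{X}_n \to \mathcal{X}$ with $\widetilde{\mathcal{X}}$ connected, and the left and right actions of $A_n = C_0(\mathcal{X}_n)$ on $\overline{A} = C_0(\overline{\mathcal{X}})$ are multiplication by pullbacks along the surjection $\overline{\mathcal{X}} \to \mathcal{X}_n$, hence faithful.

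The hard part will be the middle step, the identification $\overline{A} = C_0(\overline{\mathcal{X}})$, and in particular controlling the enveloping von Neumann algebra $(\bigcup_n A_n)''$: one must verify that its elements are genuine multiplication operators by $L^\infty(\overline{\mathcal{X}})$-functions and that intersecting with the module-compacts cuts out exactly $C_0(\overline{\mathcal{X}})$, for which Lemmas~\ref{l2_comp} and~\ref{l2_l_comp} are the essential inputs.
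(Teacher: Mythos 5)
Your proposal follows essentially the same route as the paper's own proof: the topological construction of \ref{top_constr}, the module identification of Lemma \ref{l2_eqiv_lem}, the partition-of-unity argument (via \eqref{comm_unity_equ}) for $C_0\left(\overline{\mathcal{X}}\right)\subset\overline{A}$ together with the embedding $\left(\bigcup_n A_n\right)''\subset L^\infty\left(\overline{\mathcal{X}}\right)$ and Lemmas \ref{l2_comp}, \ref{l2_l_comp} for the reverse inclusion, then the connected-component/maximal-irreducible-subalgebra identification for (a) and (b) and the pullback-faithfulness plus the covering-space epimorphism $G\left(\widetilde{\mathcal{X}}|\mathcal{X}\right)\to G\left(\mathcal{X}_n|\mathcal{X}\right)$ for (c). You correctly flag the crux as well: the paper carries out your sketched reverse inclusion exactly by localizing with test functions subordinated to $\overline{\pi}$ and using the norm inequality \eqref{norm_ineq} to force norm convergence of $\sum_i\eta_i^*\xi_i$ in $C_0\left(\overline{\mathcal{X}}\right)$, so nothing essential is missing from your plan.
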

\begin{proof}
a)  If $\overline{X}_{C_0(\mathcal{X})}$  is a disconnected module of the sequence \eqref{comm_thm_equ} then from the Lemma \ref{l2_eqiv_lem} it follows that there is a natural isomorphism $\overline{X}_{C_0(\mathcal{X})}\xrightarrow{\approx}\mathscr{L}^2\left(\overline{\mathcal{X}}_{\mathcal{X}}\right)$ of $C_0\left(\mathcal X\right)$-Hilbert modules. From definitions it follows that
 \begin{equation*}
 \sum_{g \in \overline{G}}\sum_{\iota \in I}g\left(\overline{e}_\iota \overline{e}_\iota\right)= 1_{M\left(C_0\left(\overline{\mathcal X}\right)\right)} \ ,
 \end{equation*}
so any $\varphi \in C_0\left(\overline{X}\right)$ can be represented as the following norm convergent series 
 \begin{equation*}
\varphi =   \sum_{g \in \overline{G}}\sum_{\iota \in I}   \mathfrak{Rep}\left(\left(g\overline{e}_\iota\right) \varphi\right)\rangle \langle \mathfrak{Rep}\left(g\overline{e}_\iota \right) \in \mathcal{K}\left(\overline{X}_{C_0\left(\mathcal{X}\right)}\right),
 \end{equation*}
whence $C_0\left(\overline{\mathcal X}\right) \subset \mathcal{K}\left(\overline{X}_{C_0\left(\mathcal{X}\right)}\right)$. Let  $\left\{G^n\subset \overline{G}\right\}_{n \in \mathbb{N}}$ be a $\overline{G}$-covering (See the Definition \ref{g_cov_defn}.) of the sequence $\left\{G_n =G\left(\mathcal{X}_n|\mathcal{X}\right)\right\}_{n \in \mathbb{N}}$. If $\varphi_n \in C_b\left(\mathcal X\right)$ is given by
 \begin{equation*}
\varphi_n = \sum_{g' \in \overline{\overline{G}/G_n}} g' \left(\sum_{g \in G^n}\sum_{\iota \in I}\left(g\overline{e}_\iota\right)^2 \varphi\right)
 \end{equation*}
 then $\varphi_n \in A_n$, i.e. there is $a_n \in C_0\left(\mathcal X_n\right)$ such that $\varphi_n \xi = a_n \xi$ for any $\xi \in X_{C_0\left(\mathcal X\right)}$. Otherwise $\lim_{n \to \infty}\varphi_n = \varphi$ in sense of the pointwise (=weak) convergence, whence $\varphi \in \left(\bigcup_{n\in \mathbb{N}}C_0\left(\mathcal X_n\right)\right)''$. So we have $C_0\left(\overline{\mathcal X}\right)\subset \mathcal{K}\left(\overline{X}_{C_0\left(\mathcal{X}\right)}\right) \bigcap  \left(\bigcup_{n\in \mathbb{N}}C_0\left(\mathcal X_n\right)\right)''$. Let denote $\overline{A}\stackrel{\text{def}}{=} \mathcal{K}\left(\overline{X}_{C_0\left(\mathcal{X}\right)}\right) \bigcap\left(\bigcup_{n\in \mathbb{N}}C_0\left(\mathcal X_n\right)\right)''$. From definitions it follows that $C_0\left(\mathcal X_n \right) \subset L^\infty\left(\overline{\mathcal X}\right)$, where $L^\infty\left(\overline{\mathcal X}\right)$ is  the algebra of essentially bounded complex-valued measurable functions. So $\bigcup_{n \in \mathbb{N}^0}C_0\left(\mathcal X_n \right)\subset L^\infty\left(\overline{\mathcal X}\right)$. Since $L^\infty\left(\overline{\mathcal X}\right)$ is weakly closed,  we have $\left(\bigcup_{n \in \mathbb{N}^0}C_0\left(\mathcal X_n \right)\right)'' \subset L^\infty\left(\overline{\mathcal X}\right)$ and $\overline{A} \subset L^\infty\left(\overline{\mathcal X}\right)$. Let $\mu$ be a measure  on $\overline{X}$ such that the natural representation of  $L^\infty\left(\overline{\mathcal X}\right)$ on $L^2\left(\overline{\mathcal X}, \mu\right)$ is faithful. Let us select any $\overline{x}\in \overline{\mathcal X}$. If  $\left(\overline{x}, \phi^{\overline{x}}_{[1,0]}, \overline{\mathcal U}, \overline{\mathcal V} \right)$ is a  test function subordinated to $\overline{\pi}$, then $\phi^{\overline{x}}_{[1,0]} \in  L^2\left(\overline{\mathcal X}, \mu\right)$. From the definition of test functions it follows that for any $\varphi \in L^\infty\left(\overline{\mathcal X}\right)$ following conditions hold
 \begin{equation*}
 \left(\varphi\left(\phi^{\overline{x}}_{[1,0]}\right)^2 \right)|_{\overline{\mathcal V}} = \varphi|_{\overline{\mathcal V}} \ , 
 \end{equation*}
 \begin{equation*}
 \left(\varphi\left(\phi^{\overline{x}}_{[1,0]}\right)^2\right) \left(\overline{\mathcal X} \backslash \overline{\mathcal U}\right)=\{0\}.
 \end{equation*}
 From the Lemma \ref{l2_l_comp} it follows that any element $\xi \in \overline{X}_{C_0\left(\mathcal{X}\right)} \xrightarrow{\approx}\mathscr{L}^2\left(\widetilde{\mathcal{X}}_{\mathcal{X}}\right)$ can be regarded as element of $C_0\left(\overline{\mathcal X}\right)$. If $\kappa=\sum_{i=1}^{\infty} \eta_i \rangle \langle \xi_i \in \mathcal{K}\left(\overline{X}_{C_0\left(\mathcal{X}\right)}\right)$ is a compact operator then a following series
\begin{equation*}
\begin{split}
\psi = \left\langle\mathfrak{Rep}\left( \phi^{\overline{x}}_{[1,0]}\right), \kappa \ \mathfrak{Rep}\left( \phi^{\overline{x}}_{[1,0]}\right)\right\rangle_{\overline{X}_{C_0\left(\mathcal{X}\right)}} = \\
=\sum_{i=1}^{\infty} \left\langle \mathfrak{Rep}\left( \phi^{\overline{x}}_{[1,0]}\right), \eta_i \right\rangle_{\overline{X}_{C_0\left(\mathcal{X}\right)}} \left\langle \xi_i, \mathfrak{Rep}\left( \phi^{\overline{x}}_{[1,0]}\right)\right\rangle_{\overline{X}_{C_0\left(\mathcal{X}\right)}} \in C_0\left(\mathcal X\right)
\end{split}
 \end{equation*}
 is norm convergent. If $\overline{\psi}\in C_0\left(\overline{\mathcal X}\right)$ is the $\overline{\mathcal U}$-lift of $\psi$ then
 \begin{equation}\label{xi_eta_phi_eqn}
\overline{\psi} =  \left(\sum_{i=1}^{\infty}\eta_i^* \xi_i\right)\left(\phi^{\overline{x}}_{[1,0]}\right)^2
 \end{equation} 
 where $\xi_i, \eta_i \in \mathscr{L}^2\left(\overline{\mathcal{X}}_{\mathcal{X}}\right)$ are regarded as elements of $C_0\left(\overline{\mathcal{X}}\right)$. Otherwise if $\kappa = \overline{\varphi} \in L^\infty\left(\overline{\mathcal{X}}\right)$ then 
 \begin{equation*}
\psi = \sum_{\overline{g}\in \overline{G}}\overline{g}  \left(\overline{\varphi}\left(\phi^{\overline{x}}_{[1,0]}\right)^2 \right),
 \end{equation*}
 whence $\left(\overline{\varphi}\left(\phi^{\overline{x}}_{[1,0]}\right)^2\right)$ is the $\overline{\mathcal U}$-lift of $\psi$.
 From above equations it follows that 
 \begin{equation*}
\overline{\varphi} \left(\phi^{\overline{x}}_{[1,0]}\right)^2= \overline{\psi} = \left(\sum_{i=1}^{\infty}\eta_i^* \xi_i\right)\left(\phi^{\overline{x}}_{[1,0]}\right)^2 .
 \end{equation*}
 where $\overline{\psi} \in C_0\left(\overline{\mathcal X}\right)$ and the series is norm convergent. From the Definition \ref{test_func_defn} of $\phi^{\overline{x}}_{[1,0]}$  it follows that 
 \begin{equation*}
 \overline{\varphi}|_{\overline{\mathcal V}}= \overline{\psi}|_{\overline{\mathcal V}}.
 \end{equation*}
 Since $\overline{\psi}$ is a continuous, the function $\overline{\varphi}|_{\overline{\mathcal V}}$ is also continuous. Thus any  $\overline{x} \in \overline{\mathcal X}$ has a neighborhood $\overline{\mathcal V}$ such that the restriction $\overline{\varphi}|_{\overline{\mathcal V}}$ is continuous, whence $\overline{\varphi}$ is continuous, and $\overline{\varphi}\in C_b\left(\overline{\mathcal X}\right)$ because $\left\|\overline{\varphi}\right\|=\left\|\ka\right\|$. We have selected an  arbitrary point $\overline{x}\in \overline{\mathcal X}$, therefore from \eqref{xi_eta_phi_eqn} it follows that
\begin{equation*}
 \overline{\varphi}\left(\overline{x}\right)= \left(\sum_{i=1}^{\infty}\eta_i^* \xi_i\right)\left(\overline{x}\right); \ \forall \overline{x} \in \overline{\mathcal X}.
 \end{equation*}
 Note that for any $\kappa \in \mathcal{K}\left(\overline{X}_{C_0\left(\mathcal{X}\right)}\right)$ following condition hold
 \begin{equation*}
\left\|\kappa\right\| = \sup_{\mu, \nu} \left\|\left\langle \mu, \kappa\nu\right\rangle_{\overline{X}_{C_0\left(\mathcal{X}\right)}}\right\|; \text{ where } \mu, \nu \in \overline{X}_{C_0\left(\mathcal{X}\right)} \text{ and } \|\mu\|=\|\nu\|=1.
 \end{equation*}
 
 From the definition of the norm it follows that
\begin{equation*}
\left\| \overline{\varphi} -  \sum_{i=1}^{k}\eta_i^* \xi_i\right\| = \sup_{\overline{x}\in \overline{\mathcal X}} \left| \overline{\varphi} -  \sum_{i=1}^{k}\eta_i^* \xi_i\right|\left(\overline{x}\right).
\end{equation*}
 However
\begin{equation*}
\begin{split}
\left| \overline{\varphi} -  \sum_{i=1}^{k}\eta_i^* \xi_i\right|\left(\overline{x}\right) \le \left\|\left\langle \phi^{\overline{x}}_{[1,0]}, \left(\overline{\varphi} - \sum_{i=1}^{k} \eta_i \rangle \langle \xi_i \right)\phi^{\overline{x}}_{[1,0]}\right\rangle\right\| \\ \le \sup_{\alpha, \beta} \left\|\left\langle \mu, \left(\overline{\varphi} - \sum_{i=1}^{k} \eta_i \rangle \langle \xi_i \right)\nu\right\rangle\right\|= \left\| \overline{\varphi} - \sum_{i=1}^{k} \eta_i \rangle \langle \xi_i \right\|; \\ \text{ where } \mu, \nu \in \overline{X}_{C_0\left(\mathcal{X}\right)}, \text{ and } \|\mu\|=\|\nu\|=1,
\end{split}
\end{equation*}
whence
\begin{equation}\label{norm_ineq}
\left\|  \overline{\varphi} -  \sum_{i=1}^{k}\eta_i^* \xi_i\right\| \le \left\|  \overline{\varphi} - \sum_{i=1}^{k} \eta_i \rangle \langle \xi_i \right\|.
\end{equation}
The series $\sum_{i=1}^{\infty} \eta_i \rangle \langle \xi_i$ is norm convergent, and from \eqref{norm_ineq} it follows that the series $ \sum_{i=1}^{\infty}\eta_i^* \xi_i$ is also norm convergent. From the Lemma \ref{l2_l_comp} if follows that $\eta_i, \xi_i \in C_0\left(\overline{\mathcal{X}}\right)$ whence  $\sum_{i=1}^{k}\eta_i^* \xi_i \in C_0\left(\overline{\mathcal{X}}\right)$. Since the series $\sum_{i=1}^{\infty}\eta_i^* \xi$ is norm convergent and any partial sum $\sum_{i=1}^{k}\eta_i^* \xi$ belongs to $C_0\left(\overline{\mathcal{X}}\right)$ following condition hold 
\begin{equation*}
\overline{\varphi} =\sum_{i=1}^{\infty}\eta_i^* \xi_i \in C_0\left(\overline{\mathcal{X}}\right)
\end{equation*}
 and 
\begin{equation*}
\mathcal{K}\left(\overline{X}_{C_0\left(\mathcal{X}\right)}\right) \bigcap  \left(\bigcup_{n\in \mathbb{N}}C_0\left(\mathcal X_n\right)\right)''\subset C_0\left(\overline{\mathcal X}\right).
\end{equation*} 
In result we have 
\begin{equation*}
\mathcal{K}\left(\overline{X}_{C_0\left(\mathcal{X}\right)}\right) \bigcap  \left(\bigcup_{n\in \mathbb{N}}C_0\left(\mathcal X_n\right)\right)''= C_0\left(\overline{\mathcal X}\right).
\end{equation*}
If $\widetilde{\mathcal X}\subset \overline{\mathcal X}$ is a connected component, then $C_0\left(\widetilde{\mathcal X}\right)\subset C_0\left(\overline{\mathcal X}\right)$ is a maximal irreducible subalgebra. If $C_0\left( \overline{\mathcal X}\right)= C_0\left( \widetilde{\mathcal X}\right) \oplus A'$ then $X_{C_0\left(\mathcal X\right)} = \left\{\xi \in \overline{X}_{C_0\left(\mathcal X\right)} \ | \  A'\xi = \{0\} \right\}=\mathscr{L}^2\left(\widetilde{\mathcal{X}}_{\mathcal{X}}\right)$.
\newline
b) The action of $\overline{G}$ on $C_0\left(\overline{\mathcal{X}}\right)$ arises from the action of $\overline{G}$ on $\overline{\mathcal{X}}$, and according to topological construction \ref{top_inv_limit} we have  $\mathcal{X} = \overline{\mathcal{X}}/\overline{G}$. If $G \subset \overline{G}$ is the maximal subgroup such that $GC_0\left(\widetilde{\mathcal{X}}\right)=C_0\left(\widetilde{\mathcal{X}}\right)$ then $G$ is the maximal subgroup such that $G\widetilde{\mathcal{X}}=\widetilde{\mathcal{X}}$ and vice versa. So $G(\widetilde{\mathcal{X}}|\mathcal{X})=G$, $ \mathcal{X} =\widetilde{\mathcal{X}}/G$.
\newline
c) If $a \in C_0\left(\mathcal X_n\right)$ is a nonzero function then there is an open set $\mathcal U \subset \mathcal X_n$ such that
\begin{itemize}
\item $a|_{\mathcal U}\neq 0$;
\item  $\mathcal U$ is evenly covered by $\pi_n:\overline{\mathcal X} \to \mathcal X_n$.
\end{itemize}
If $\overline{\mathcal{U}}\subset\overline{\mathcal{X}}$ be an connected open subset which is homeomorphically mapped onto $\mathcal U$ and $\overline{a}\in C_0\left(\overline{\mathcal X}\right)$ is such that $\overline{a}|_{\overline{\mathcal{U}}}\neq 0$ then $\overline{a} a \neq 0$ and $a\overline{a}\neq 0$, so the actions of $C_0\left(\mathcal X_n\right)$ on $C_0\left(\overline{\mathcal X}\right)$ are faithful. From \cite{spanier:at} it follows that the sequence of regular covering projections
\begin{equation*}
\widetilde{\mathcal X} \to \mathcal X_n  \to \mathcal X
\end{equation*}
induces the epimorphism  of groups $G\left(\widetilde{\mathcal X} | \mathcal X\right)\to G\left(\mathcal X_n | \mathcal X\right)$. From $G\left(C_0\left(\widetilde{\mathcal X}\right) | C_0\left(\mathcal X\right)\right) \approx G\left(\widetilde{\mathcal X} | \mathcal X\right)$ and $G\left(C_0\left(\mathcal X_n\right) | C_0\left(\mathcal X\right)\right) \approx G\left(\mathcal X_n | \mathcal X\right)$ follows that the natural map   $G\left(C_0\left(\widetilde{\mathcal X}\right) | C_0\left(\mathcal X\right)\right) \to G\left(C_0\left(\mathcal X_n\right) | C_0\left(\mathcal X\right)\right)$ is a group epimorphism.

  \end{proof}
\begin{lem}\label{comm_case}
Let $\pi: \widetilde{\mathcal{X}} \to \mathcal{X}$ be a regular topological covering projection by a connected space $\widetilde{\mathcal{X}}$ such that a group $G = G\left(\widetilde{\mathcal{X}} | \mathcal{X}\right)$ of covering transformations is countable, i.e. $\mathcal{X}\approx \widetilde{\mathcal{X}}/ G$.  Let $... \to G_n \to ... \to G_1$ a coherent sequence (See Definition \ref{g_cov_defn}) of finite $G$-quotients with epimorphisms $h_n: G \to G_n$. Let $\mathcal{X}_n = \widetilde{\mathcal{X}}/ \mathrm{ker} \ h_n$. For any $n \in \mathbb{N}$ there is a natural finitely listed covering projection $\pi_n:\mathcal{X}_n \to \mathcal{X}$ such that $G\left(\mathcal{X}_n | \mathcal{X}\right) = G_n$. There is a following sequence of *-homomorphisms
\begin{equation}\label{alg_inv_limit}
C_0(\mathcal{X})\to C_0(\mathcal{X}_1) \to ... \to C_0(\mathcal{X}_n) \to ... \ .
\end{equation} 
If $\left(C_0(\mathcal{X}), \widetilde{A}, \ _{C_0\left(\widetilde{\mathcal{X}}'\right)}X_{C_0\left(\mathcal{X}\right)} \ , G'\right)$ {\it noncommutative infinite covering projection} of the sequence \eqref{alg_inv_limit} then $\widetilde{\mathcal{X}}' \approx \widetilde{\mathcal{X}}$ and $G'\approx G$.

\end{lem}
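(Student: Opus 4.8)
The plan is to reduce the lemma to Theorem~\ref{thm_comm_1} and then carry out a topological identification of the algebraically constructed covering space with the original one. First I would observe that the spaces $\mathcal{X}_n = \widetilde{\mathcal{X}}/\ker h_n$ form a tower of connected, finitely listed, regular covering projections of $\mathcal{X}$: connectedness is inherited from $\widetilde{\mathcal{X}}$, finiteness follows from $|G_n| < \infty$, and regularity follows from $\ker h_n \triangleleft G$. Since $\ker h_{n+1}\subseteq\ker h_n$, the bonding maps $\mathcal{X}_{n+1}\to\mathcal{X}_n$ are well defined, so the sequence \eqref{alg_inv_limit} is exactly a sequence \eqref{comm_thm_equ} of the type treated in Theorem~\ref{thm_comm_1}. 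Applying that theorem produces a regular connected covering $\widetilde{\pi}':\widetilde{\mathcal{X}}'\to\mathcal{X}$ with $\widetilde{A}=C_0(\widetilde{\mathcal{X}}')$, ${}_{\widetilde{A}}X_{C_0(\mathcal{X})}=\mathscr{L}^2(\widetilde{\mathcal{X}}'_{\mathcal{X}})$, $G'=G(\widetilde{\mathcal{X}}'|\mathcal{X})$ and $\mathcal{X}=\widetilde{\mathcal{X}}'/G'$, where $\widetilde{\mathcal{X}}'$ is a connected component of the disconnected covering space $\overline{\mathcal{X}}$ of the tower, built as in \ref{top_constr}. The remaining content is to prove that this $\widetilde{\mathcal{X}}'$ is homeomorphic to the given $\widetilde{\mathcal{X}}$ and that $G'\approx G$.

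Next I would construct the comparison map. For every $n$ the quotient map $q_n:\widetilde{\mathcal{X}}\to\widetilde{\mathcal{X}}/\ker h_n=\mathcal{X}_n$ is a covering projection, and the relations $\ker h_{n+1}\subseteq\ker h_n$ make $\{q_n\}$ compatible with the bonding maps. The universal property of the inverse limit then yields a continuous $q:\widetilde{\mathcal{X}}\to\widehat{\mathcal{X}}=\varprojlim\mathcal{X}_n$, and composing with the continuous identity $\mathrm{Id}:\widehat{\mathcal{X}}\to\overline{\mathcal{X}}$ gives a continuous map $q:\widetilde{\mathcal{X}}\to\overline{\mathcal{X}}$ with $\overline{\pi}\circ q=\widetilde{\pi}$. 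I would then check that $q$ is injective: if $q(\widetilde{x}_1)=q(\widetilde{x}_2)$ then for each $n$ there is $g_n\in\ker h_n$ with $\widetilde{x}_2=g_n\widetilde{x}_1$; since $G$ acts freely on $\widetilde{\mathcal{X}}$, the element $g_n$ is independent of $n$ and therefore lies in $\bigcap_n\ker h_n$, which is trivial by the coherence condition of Definition~\ref{g_cov_defn}, so $\widetilde{x}_1=\widetilde{x}_2$. Writing $\iota:G\hookrightarrow\overline{G}=\varprojlim G_n$ for the injection induced by the $h_n$ (injective again because $\bigcap_n\ker h_n=\{e\}$), the map $q$ is $G$-equivariant: $q(g\widetilde{x})=\iota(g)\,q(\widetilde{x})$.

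Then I would promote $q$ to a homeomorphism onto a connected component. Since $\overline{\pi}\circ q=\widetilde{\pi}$ with $\overline{\pi},\widetilde{\pi}$ covering projections and $q$ continuous and fibrewise injective, $q$ is a local homeomorphism; its image is open, and, being the continuous image of the connected space $\widetilde{\mathcal{X}}$, it lies in a single connected component of $\overline{\mathcal{X}}$, onto which $q$ restricts to an injective covering map, hence a homeomorphism. Because \ref{top_constr} exhibits $\overline{\mathcal{X}}=\bigsqcup_{g\in J}g\widetilde{\mathcal{X}}'$ with $\overline{G}$ permuting the components transitively, all components are mutually homeomorphic over $\mathcal{X}$; in particular $\widetilde{\mathcal{X}}'\approx q(\widetilde{\mathcal{X}})\approx\widetilde{\mathcal{X}}$, which is the first assertion. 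For the groups, equivariance shows $\iota(G)$ stabilizes $q(\widetilde{\mathcal{X}})$, so $\iota(G)$ is contained in the maximal stabilizing subgroup $G'$; both $\iota(G)$ and $G'$ act freely with quotient $\mathcal{X}$, so the covering $q(\widetilde{\mathcal{X}})/\iota(G)\to q(\widetilde{\mathcal{X}})/G'$ is a self-cover of $\mathcal{X}$ of degree $[G':\iota(G)]$ covering the identity, forcing $\iota(G)=G'$ and hence $G'\approx G$.

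The main obstacle I anticipate is this third step: $\overline{\mathcal{X}}$ does not carry the inverse-limit topology but the finer final topology of \ref{top_constr}, so I must ensure that $q$ remains a local homeomorphism into $\overline{\mathcal{X}}$ and not merely into $\widehat{\mathcal{X}}$. The way around this is to argue entirely through the covering-projection structure over the common base $\mathcal{X}$: the relation $\overline{\pi}\circ q=\widetilde{\pi}$, together with unique path lifting and the local triviality of both covers, forces $q$ to be open and a morphism of covering spaces, which makes the image a connected component and $q$ a homeomorphism onto it, independently of the peculiar topology placed on $\overline{\mathcal{X}}$.
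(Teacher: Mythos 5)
Your proposal is correct, and its skeleton is the same as the paper's: identify the given connected cover $\widetilde{\mathcal{X}}$ with a connected component of the disconnected covering space $\overline{\mathcal{X}}$ and compare the two component decompositions. The execution, however, differs in an instructive way. The paper's entire proof is the pair of equalities $\overline{\mathcal{X}} = \bigsqcup_{g \in J}g\widetilde{\mathcal{X}} = \bigsqcup_{g \in J'}g\widetilde{\mathcal{X}}'$ together with the remark that $\widetilde{\mathcal{X}}$ and $\widetilde{\mathcal{X}}'$ are connected; the first equality --- that the given $\widetilde{\mathcal{X}}$ sits inside $\overline{\mathcal{X}}$ as a component whose stabilizer in $\overline{G}$ is exactly the image of $G$ --- is asserted, not proved. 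You prove precisely that: the comparison map $q$ from the universal property of $\varprojlim \mathcal{X}_n$, injectivity from freeness of the $G$-action together with the triviality of $\bigcap_n \ker h_n$ (Definition \ref{g_cov_defn}), equivariance via $\iota : G \hookrightarrow \overline{G}$, and the identification of the stabilizer. So your route buys a complete argument where the paper has a one-line appeal, at the cost of length. Two remarks. First, your closing degree-count can be shortened: since $q$ is a $G$-equivariant bijection of $\widetilde{\mathcal{X}}$ onto its leaf, the fibres of $\overline{\pi}$ restricted to that leaf are exactly the $\iota(G)$-orbits, so any $g' \in G'$ satisfies $g'p \in \iota(G)p$ for a point $p$ of the leaf, and freeness gives $g' \in \iota(G)$ immediately. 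Second, the soft spot you flag yourself is real: continuity of $q$ into $\overline{\mathcal{X}}$ with its final topology (finer than the inverse-limit topology on $\widehat{\mathcal{X}}$) does not follow from $\overline{\pi} \circ q = \widetilde{\pi}$ for a bare set map, and your proposed workaround via unique path lifting tacitly uses local path-connectedness of $\mathcal{X}$, which the lemma does not hypothesize (though it holds in every application in the paper); this is not a defect relative to the paper, whose construction \ref{top_constr} leaves the same point unaddressed, but it would be cleaner either to state that hypothesis or to verify directly that the sheets of $\overline{\pi}$ over an evenly covered $\mathcal{U} \subset \mathcal{X}$ pull back under $q$ to sheets of $\widetilde{\pi}$.
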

\begin{proof}
If  $\overline{\mathcal{X}}$ is a disconnected covering space of the sequence \eqref{top_inv_limit1}, $\overline{G}$ is a disconnected group of \eqref{top_inv_limit1} and $J=\overline{\overline{G}/G} \subset \overline{G}$, $J'=\overline{\overline{G}/G'}$ are  sets of representatives of $G$ and $G'$ in $\overline{G}$ then from 
\begin{equation*}
\overline{\mathcal{X}} = \bigsqcup_{g \in J}g\widetilde{\mathcal{X}}= \bigsqcup_{g \in J'}g\widetilde{\mathcal{X}}'.
\end{equation*} 
it follows that $\widetilde{\mathcal{X}}' \approx \widetilde{\mathcal{X}}$ and $G'\approx G$ because both $\widetilde{\mathcal{X}}'$ and $\widetilde{\mathcal{X}}$ are connected.
\end{proof}
\begin{rem}
From the Theorem \ref{thm_comm_1} and the Lemma \ref{comm_case} it follows that an infinite covering can be constructed algebraically.
\end{rem}

\subsection{Covering projections of spectral triples}\label{comm_sptr_subs}

\paragraph*{}In this section we will consider following objects:
\begin{enumerate}
 \item A compact orientable Riemannian manifold 	$(M,\eps)$ without boundary  with a 
 $\mathrm{Spin}^{\mathbf{c}}$ structure
 	$\left(\sS,C\right)$ where $\sS = \Gamma_{\mathrm{smooth}}\left(M, S\right)$ where $\Gamma_{\mathrm{smooth}}$ means the functor of $C^\infty$-sections and $S$ is the spinor bundle (See the Definition \ref{df:spin-structure}). 
 	\item A covering projection $\widetilde{\pi}: \widetilde{M} \to M$.
 	\item The pullback $\widetilde{S}$ of $S$ by $\widetilde{\pi}$ (See the Definition \ref{inv_image_top_defn}).
\end{enumerate}

\begin{empt} 	
 	Let $\pi: \widetilde{M} \to M$ be a regular covering projection.
 	From the Proposition \ref{comm_cov_mani} it follows that  $\widetilde{M}$ has the natural structure of $C^\infty$-manifold. Moreover from \cite{do_carmo:rg} it follows that $\widetilde{M}$ is a Riemannian manifold with {\it covering metric} $\widetilde{g}$. 
\end{empt}
\begin{defn}
	A $\mathbb{R}$-linear map $\varphi: \Gamma_{\mathrm{smooth}}\left(M, S\right) \to \Gamma_{\mathrm{smooth}}\left(M, S\right)$ is said to be {\it local} if for any open subset $U$ and any $s', s'' \in \Gamma_{\mathrm{smooth}}\left(M, S\right)$ such that from $s'|_{U}=s''|_{U}$ it follows that
	\begin{equation*}
	\varphi\left(s'\right)|_{U}=\varphi\left(s''\right)|_{U}.
	\end{equation*}
\end{defn}
\begin{empt}\label{local_pullback_descr}
	Let $\varphi: \Gamma_{\mathrm{smooth}}\left(M, S\right) \to \Gamma_{\mathrm{smooth}}\left(M, S\right)$ be a local operator and let $\left\{\widetilde{U}_\iota\right\}_{\iota \in I}$ be a one-to-one covering  with respect to $\widetilde{\pi}$ and $U_\iota=\widetilde{\pi}\left(\widetilde{U}_\iota\right)$. For any $\iota \in I$ there is a $\mathbb{C}$-linear isomorphism $\al_\iota: \Gamma_{\mathrm{smooth}}\left(U_\iota, S|_{U_\iota}\right) \to \Gamma_{\mathrm{smooth}}\left(\widetilde{U}_\iota, \widetilde{S}|_{\widetilde{U}_\iota}\right)$. If $\widetilde{s} \in \Gamma_{\mathrm{smooth}}\left(\widetilde{M}, \widetilde{S}\right)$ then for any $\iota \in I$ there is the unique section $\widetilde{t}_\iota \in \Gamma_{\mathrm{smooth}}\left(\widetilde{U}_\iota, \widetilde{S}|_{\widetilde{U}_\iota}\right)$ given by
	\begin{equation*}
	\widetilde{t}_\iota = \al_\iota\left(\varphi\left(\al^{-1}_\iota\left(\widetilde{s}|_{\mathcal U_\iota}\right)\right)\right).
	\end{equation*}
 A family $\left\{\widetilde{t}_\iota\right\}_{\iota \in I}$ is coherent because $\varphi$ is local. So there is the gluing $\widetilde{t} = \mathfrak{Gluing}\left(\left\{\widetilde{t}_\iota\right\}_{\iota \in I}\right)\in \Gamma_{\mathrm{smooth}}\left(\widetilde{M}, \widetilde{S}\right)$. In fact definition of $\widetilde{t}$ does not depend on the family $\left\{\widetilde{U}_\iota\right\}_{\iota \in I}$.
\end{empt}
\begin{defn}\label{pullback_spin_defn}
In the situation \ref{local_pullback_descr} there is a local operator $\widetilde{\varphi}: \Gamma_{\mathrm{smooth}}\left(\widetilde{M}, \widetilde{S}\right) \to \Gamma_{\mathrm{smooth}}\left(\widetilde{M}, \widetilde{S}\right)$ given by $\widetilde{s} \mapsto \widetilde{t}$. The operator $\widetilde{\varphi}$ is said to be the $\widetilde{\pi}$-\textit{pullback} of $\varphi$. Henceforth we write $\widetilde{\varphi}= \mathfrak{pullback}_{\widetilde{\pi}}\left(\varphi\right)$.
\end{defn}
\begin{rem}\label{pull_equiv_rem}
Any pullback is $G\left(\widetilde{M}|M\right)$-equivariant, i.e.
\begin{equation*}
	\mathfrak{pullback}_{\widetilde{\pi}}\left(\varphi\right) \left(g\widetilde{s}\right)= g\left(\mathfrak{pullback}_{\widetilde{\pi}}\left(\varphi\right) \left(\widetilde{s}\right)\right)
\end{equation*}
for any $\widetilde{s} \in \Gamma_{\mathrm{smooth}}\left(\widetilde{M}, \widetilde{S}\right)$ and $g \in G\left(\widetilde{M}|M\right)$.
\end{rem}

\begin{empt}\label{comm_ind_triple_descr}  
 Similarly to  \eqref{comm_hilb_space} we can define a scalar product on $\widetilde{H}=\widetilde{A}\otimes_A H$ given by	
\begin{equation*}
\left(\widetilde{\phi},\widetilde{\psi}\right) =\int_{\widetilde{M}} \pairing{\widetilde{\phi}}{\widetilde{\psi}} \nu_{\widetilde{g}}
\word{for} \widetilde{\phi}, \widetilde{\psi} \in \widetilde{\sS}
\end{equation*}
where $\nu_{\widetilde{g}}$ means Riemannian measure on $\widetilde{M}$.
Since $\widetilde{M} \to M$ is a finitely listed covering above integral can be presented by following way
	\begin{equation}\label{comm_scalar_product}
	\begin{split}
	 \int_{\widetilde{M}} \pairing{a\otimes\phi}{b\otimes\psi} \,\widetilde{\nu}_g = \\ \int_M \left(\sum_{y = \pi_n^{-1}\left(x\right)} a^*\left(y\right)b\left(y\right)\right)\left(\phi\left(x\right)|\psi\left(x\right)\right) \,\nu_g = \left(\phi, \left\langle a, b\right\rangle_{C\left(\widetilde{M}\right)}\psi\right),
	\end{split}
	\end{equation}	
i.e. the Hilbert scalar product on $\widetilde{H}$. complies with \eqref{hilb_scalar_correct}
It is clear that both the Dirac $\slashed D$ operator and charge conjugation operator $C$ are local, so there are pullbacks $\widetilde{\slashed D}	\stackrel{\mathrm{def}}{=}\mathfrak{pullback}_{\widetilde{\pi}}\left(\slashed D\right)$, $\widetilde{C}	\stackrel{\mathrm{def}}{=}\mathfrak{pullback}_{\widetilde{\pi}}\left(C\right)$. So $\left(\widetilde{M}, \widetilde{\eps}\right)$ is a orientable Reimannian manifold without boundary with the $\mathrm{Spin}^{\mathbf{c}}$ structure
$\left(\widetilde{\sS}, \widetilde{C}\right)$ where $\widetilde{\sS} = \Gamma_{\mathrm{smooth}}\left(\widetilde{M}, \widetilde{S}\right)$. If $\mathrm{dim}~M$ is even then from the Definition \ref{df:spt-even} it follows the existence of the operator $\Ga$ which is also local, so there is the pullback $\widetilde{\Ga}	\stackrel{\mathrm{def}}{=}\mathfrak{pullback}_{\widetilde{\pi}}\left(\Ga\right)$.
\end{empt}
\begin{defn}\label{pi_lift_st_defn} Let us consider the situation \ref{comm_ind_triple_descr}, and suppose that the covering projection $\widetilde{\pi}:\widetilde{M} \to M$ is finite fold and regular. The spectral triple $\left(\Coo\left(\widetilde{M}\right),L^2(\widetilde{M},\widetilde{S}), \widetilde{\slashed D}\right)$ is said to be the $\widetilde{\pi}$-\textit{pullback} of $\left(\Coo\left(M\right),L^2(M,S), \slashed D\right)$. We will write $\left(\Coo\left(\widetilde{M}\right),L^2(\widetilde{M},\widetilde{S}), \widetilde{\slashed D}\right) = \mathfrak{pullback}_{\widetilde{\pi}}\left(\left(\Coo\left(M\right),L^2(M,S), \slashed D\right)\right)$.
\end{defn}
\begin{rem}
	If a sequence $\widetilde{\widetilde{M}} \xrightarrow{\widetilde{\widetilde{\pi}}}\widetilde{M} \xrightarrow{\widetilde{\pi}}M$ is such that both $\widetilde{\widetilde{\pi}}$ and $\widetilde{\pi}$ are covering projections then following condition hold
	\begin{equation*}
\mathfrak{pullback}_{\widetilde{\widetilde{\pi}}\circ\widetilde{\pi}}\left(\left(\Coo\left(M\right),L^2(M,S), \slashed D\right)\right)=	\mathfrak{pullback}_{\widetilde{\widetilde{\pi}}}\left(\mathfrak{pullback}_{\widetilde{\pi}}\left(\left(\Coo\left(M\right),L^2(M,S), \slashed D\right)\right)\right).
	\end{equation*}
\end{rem}
\begin{lem}\label{comm_coherent_lemma}
	Let $(M,\eps)$ be a compact orientable Riemannian manifold without boundary  with a 
	$\mathrm{Spin}^{\mathbf{c}}$ structure
	$\left(\sS,C\right)$. Let
	\begin{equation*}
	M = M_0 \xleftarrow{}M_1 \xleftarrow{}... \xleftarrow{} M_n \xleftarrow{}...
	\end{equation*}	
	be a sequence of finite fold regular covering projections which induces the sequence of *-homomorphisms
	\begin{equation*}
	C\left(M\right)=C\left(M_0\right)\to C\left(M_1\right)\to ... \to C\left(M_n\right)\to ... .
	\end{equation*} 
	If $\pi_n: M_n \to M$ is a natural covering projection for  and $\left(\Coo\left(M_n\right),L^2(M_n,S_n), \slashed D_n\right)$ is the $\pi_n$-pullback of $\left(\Coo\left(M\right),L^2\left(M,S\right), \slashed D\right)$
then the sequence $\left\{\left(\Coo\left(M_n\right),L^2(M_n,S_n), \slashed D_n\right)\right\}_{n\in \mathbb{N}^0}$ is  a coherent sequence of spectral triples. 
\end{lem}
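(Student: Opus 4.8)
The plan is to verify, one at a time, the six conditions of Definition \ref{coh_spec_triple_defn} for the sequence $\left\{\left(\Coo\left(M_n\right),L^2(M_n,S_n), \slashed D_n\right)\right\}_{n\in \mathbb{N}^0}$. The conditions split naturally into a topological/algebraic group (1--4), which follow from the covering structure together with Example \ref{fin_lem}, and an analytic/spinorial group (5--6), which form the heart of the argument and rest on the \emph{local} character of the pullback construction of Definition \ref{pullback_spin_defn}.

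First I would treat Conditions 1--4. The $*$-homomorphism $\Coo\left(M_{n-1}\right)\to \Coo\left(M_n\right)$ is composition with the covering projection $M_n \to M_{n-1}$; since this projection is surjective the induced map on functions is injective, giving Condition 1 and the sequence \eqref{triple_seq}. For Condition 2, each $M_n \to M_{n-1}$ is a finite fold regular covering projection of compact connected Hausdorff spaces, so Example \ref{fin_lem} produces the finite noncommutative covering projection $\left(C\left(M_{n-1}\right), C\left(M_n\right), G\left(M_n|M_{n-1}\right)\right)$. Condition 3 (composability of \eqref{inf_seq_trp}) holds because a composite of regular finite fold covering projections is again a regular finite fold covering projection, as in the Example following the definition of a composable sequence. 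For Condition 4, the covering group $G\left(M_n|M\right)$ acts by deck transformations, which are diffeomorphisms of $M_n$ and hence preserve $\Coo\left(M_n\right)$; and since each cover is regular we have $M_m \approx M_n/G\left(M_n|M_m\right)$, so the $G\left(M_n|M_m\right)$-invariant smooth functions are exactly the pullbacks of $\Coo\left(M_m\right)$, i.e. $\Coo\left(M_n\right)^{G\left(M_n|M_m\right)}=\Coo\left(M_m\right)$.

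The key step is Condition 5. Since $\pi_n:M_n\to M$ is a local isometry for the covering metric and $\slashed D$ is a local operator, the pullback $\slashed D_n=\mathfrak{pullback}_{\pi_n}\left(\slashed D\right)$ coincides with the genuine Dirac operator of the Riemannian spin$^{\mathbf c}$ manifold $M_n$ carrying the pullback spin structure; hence $\left(\Coo\left(M_n\right),L^2(M_n,S_n), \slashed D_n\right)$ is the standard commutative spectral triple of $M_n$ and, by Corollary \ref{cr:comm-spt}, is regular. Elliptic regularity for this Dirac operator then identifies the smooth domain $\sH^\infty_n=\bigcap_{k\in\mathbb N}\Dom \slashed D_n^k$ with the smooth spinors $\Gamma_{\mathrm{smooth}}\left(M_n,S_n\right)=\sS_n$. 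Because $S_n$ is the pullback bundle $\pi_n^*S$, the extension-of-scalars description of pullback (Definition \ref{tensor_bundle} and Definition \ref{pullback_global_def}) gives $\Gamma_{\mathrm{smooth}}\left(M_n,S_n\right)\isom \Coo\left(M_n\right)\otimes_{\Coo\left(M\right)}\Gamma_{\mathrm{smooth}}\left(M,S\right)$, that is $\sH^\infty_n=\A_n\otimes_\A \sH^\infty$ and $H_n=A_n\otimes_A H$. Finally the scalar product formula \eqref{hilb_scalar_correct} is exactly the computation \eqref{comm_scalar_product}, which rewrites the integral over $M_n$ as the integral over $M$ of the $C\left(M_n\right)$-valued inner product; the listed consequences ($\sH^\infty_n=\A_n\otimes_{\A_m}\sH^\infty_m$, the inclusions $\sH^\infty_m\subset\sH^\infty_n$, and the invariance $\left(\sH^\infty_n\right)^{G\left(M_n|M_m\right)}=\sH^\infty_m$) then follow from transitivity of the pullback bundle construction together with Condition 4.

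It remains to check Condition 6. Equivariance $g\left(\slashed D_n\xi\right)=\slashed D_n\left(g\xi\right)$ for $g\in G\left(M_n|M\right)$ is precisely the statement of Remark \ref{pull_equiv_rem} that every pullback operator is $G\left(M_n|M\right)$-equivariant. For the compatibility $\slashed D_n|_{\sH^\infty_m}=\slashed D_m$ with $n>m$, I would write $\pi_n$ as the composite of the intermediate covering $\pi_{n,m}:M_n\to M_m$ with $\pi_m:M_m\to M$; the functoriality of the pullback under composition recorded after Definition \ref{pi_lift_st_defn} gives $\slashed D_n=\mathfrak{pullback}_{\pi_{n,m}}\left(\slashed D_m\right)$, and since this pullback is built by gluing local copies it restricts to $\slashed D_m$ on the image of $\sS_m$ inside $\sS_n$. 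I expect the main obstacle to be the clean identification in Condition 5 of the intersection $\bigcap_{k}\Dom \slashed D_n^k$ with the finitely generated projective module $\A_n\otimes_\A\sH^\infty$ carrying the correct Hilbert $A$-module structure, since this is where elliptic regularity, the extension-of-scalars picture of the pullback bundle, and the agreement of the two scalar products must all be combined.
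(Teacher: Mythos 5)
Your proposal is correct and follows essentially the same route as the paper's proof: a condition-by-condition verification of Definition \ref{coh_spec_triple_defn}, with Conditions 1--4 from the covering structure and Example \ref{fin_lem}, Condition 5 from the identification $\Gamma\left(M_n,S_n\right)=C\left(M_n\right)\otimes_{C\left(M\right)}\Gamma\left(M,S\right)$ together with the scalar-product computation \eqref{comm_scalar_product}, and Condition 6 from Remark \ref{pull_equiv_rem} and the restriction property of the pullback. Your explicit appeal to elliptic regularity and Corollary \ref{cr:comm-spt} to identify $\bigcap_k\Dom\slashed D_n^k$ with the smooth spinors is a justification the paper leaves implicit, but it does not change the argument.
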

\begin{proof}
	We need check conditions 1-6 of the Definition \ref{coh_spec_triple_defn}.
\begin{enumerate}
		\item There is a sequence of injective *-homomorphisms
		\begin{equation*}
			\Coo\left(M\right) = \Coo\left(M_1\right) \to \Coo\left(M_2\right) \to ... \to \Coo\left(M_n\right) \to ... \ .
		\end{equation*} 

		\item For any $n \in \mathbb{N}$ algebra $C\left(M_{n}\right)$ is the $C^*$-completion of $\Coo\left(M_{n}\right)$ and there is a finite noncommutuative covering projection  $\left(C\left(M_{n-1}\right), C\left(M_{n}\right), G\left(C\left(M_{n}\right) \ | \ C\left(M_{n-1}\right)\right)\right)$.
	\item The sequence of finite noncommutative covering projections 
	\begin{equation}\label{com_reimann_seq_eqn}
	C\left(M\right) = C\left(M_1\right) \to C\left(M_2\right) \to ... \to C\left(M_n\right) \to ... \ 
	\end{equation} 
	is composable.
	\item If $g \in G\left(M_n,M\right)$ then $g\Coo\left(M_n\right) = \Coo\left(M_n\right)$ , and $\Coo\left(M_n\right)^{G\left(M_n| M_{m}\right)}= \Coo\left(M_m\right)$ for any $m, n \in \mathbb{N}^0$.
	\item The $\Coo\left(M_n\right)$-module $\Ga_{\mathrm{smooth}}\left(M_n, S_n\right)= \bigcap_{k\in\bN} \Dom \slashed D_n^k$ is given by $\Ga_{\mathrm{smooth}}\left(M_n, S_n\right) = \Coo\left(M_n\right) \otimes_{\Coo\left(M\right)} \Ga_{\mathrm{smooth}}\left(M, S\right)$ where $\Ga_{\mathrm{smooth}}\left(M, S\right)= \bigcap_{k\in\bN} \Dom\slashed D^k\subset L^2\left(M, S, \nu^g\right)$. The Hilbert space $H=L^2\left(M,S\right)$ of any commutative spectral triple is a Hilbert completion of the space $\sS = \Ga_{\mathrm{smooth}}\left(M, S\right)$ of smooth sections of the spinor bungle. However $\sS$ is a dense subspace of $\overline{\sS}=\Ga\left(M, S\right)$ continuous sections of the spinor bundle. The bungle $S_n$ is a $\pi_n$-lift of $S$, and from \ref{tensor_bundle} it follows that
	\begin{equation*}
	\overline{\sS_n}=\Ga\left(M_n, S_n\right) = C\left(M_n\right) \otimes_{C\left(M\right)} \overline{\sS}
	\end{equation*} 
	whence $L^2\left(M_n,S_n\right)$ is the Hilbert completion of $C\left(M_n\right) \otimes_{C\left(M\right)}L^2\left(M,S\right)$. However the Hilbert completion of $C\left(M_n\right) \otimes_{C\left(M\right)}L^2\left(M,S\right)$ coincides with $C\left(M_n\right) \otimes_{C\left(M\right)}L^2\left(M,S\right)$ because  $C\left(M_n\right)$ is a finitely generated $C\left(M\right)$-module, i.e. $H_n = C\left(M_n\right) \otimes_{C\left(M\right)} H_0$. From \eqref{comm_scalar_product} it follows that for any $n \in \mathbb{N}$ the Hilbert scalar product on $H_n$ complies with \eqref{hilb_scalar_correct}.
	\item  According to definition $\slashed D_n = \mathfrak{pullback}_{\pi^n}\left(\slashed D\right)$ and from the Remark \ref{pull_equiv_rem} it follows that $\slashed D_n$ is $G\left(M_n|M\right)$ equivariant, i.e. for any $g \in G\left(M_n,M\right)$ and $\xi \in \xi \in \Ga_{\mathrm{smooth}}\left(M_n, S_n\right)$  following condition hold
	\begin{equation*}
	g \left(\slashed D\xi\right)= \slashed D\left(g\xi\right).
	\end{equation*}
	From the Definition \ref{pullback_spin_defn} it follows that if $m < n$ then
	\begin{equation*}
\slashed D_n|_{\Ga_{\mathrm{smooth}}\left(M_m, S_m\right)}= \slashed D_m
	\end{equation*}
\end{enumerate}
\end{proof}
\begin{lem}\label{comm_local_lemma}
The coherent sequence of spectral triples from the Lemma \ref{comm_coherent_lemma} is local.
\end{lem}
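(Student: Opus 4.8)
The plan is to verify, in the order they are needed, the two layers of structure demanded by Definition \ref{local_sec_thiples_defn}: first that the hypotheses of situation \ref{local_sp_tr_constr} are met, i.e. that each finite-level triple $\left(\mathfrak{A}, \mathfrak{A}_n, \mathfrak{B}_n\right)$ is a local covering projection of spectral triples in the sense of Definition \ref{local_defn}, and then conditions (a) and (b) of locality. Throughout I would fix geometric data by the cut-locus construction of Remark \ref{cut_loci_rem}: choose $x \in M$, a point $\widetilde{x}$ in the connected covering space $\widetilde{M}$ of \ref{top_constr}, let $\Omega \subset M$ be the cut-locus complement (open, of full measure) and $\widetilde{\Omega} = \widetilde{\Omega}_{\widetilde{x}} \subset \widetilde{M}$ its connected lift, mapped homeomorphically onto $\Omega$. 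Pushing $\widetilde{\Omega}$ forward along $\widetilde{M} \to M_n$ yields connected open sets $\widetilde{\Omega}^n \subset M_n$, each homeomorphic to $\Omega$, compatible under the maps $q_n \colon M_n \to M_{n-1}$ in the sense that $q_n(\widetilde{\Omega}^n) = \widetilde{\Omega}^{n-1}$. I would then set $\widehat{H}^n := L^2\bigl(\widetilde{\Omega}^n, S_n|_{\widetilde{\Omega}^n}\bigr) \subset H_n$. Since $M_n = \bigsqcup_{g \in G_n} g\widetilde{\Omega}^n$ up to a set of measure zero, the summands $g\widehat{H}^n$ are mutually orthogonal and $H_n = \bigoplus_{g \in G_n} g\widehat{H}^n$; moreover $\varphi^n \colon \xi \mapsto \sum_{g \in G_n} g\xi$ restricts to a linear isomorphism of $\widehat{H}^n$ onto the invariant subspace $H = (H_n)^{G_n} = H_0$, since $\xi$ is recovered from $\varphi^n(\xi)$ by restriction to $\widetilde{\Omega}^n$.

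Next I would check Definition \ref{local_defn} for $\bigl(\mathfrak{A},\mathfrak{A}_n,\mathfrak{B}_n\bigr)$. For condition (c), given $\xi \in \widehat{H}^n \cap \mathrm{Dom}\,\slashed D_n$, the $G_n$-equivariance of $\slashed D_n = \mathfrak{pullback}_{\pi_n}(\slashed D)$ (Remark \ref{pull_equiv_rem}) gives $\varphi^n(\slashed D_n \xi) = \sum_g \slashed D_n(g\xi) = \slashed D_n \varphi^n(\xi)$, and because $\varphi^n(\xi)$ is $G_n$-invariant while $\slashed D_n|_{\sH^\infty_0} = \slashed D$ by condition 6 of Definition \ref{coh_spec_triple_defn}, this equals $\slashed D\,\varphi^n(\xi)$. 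Condition (b) uses that $\slashed D_n$ is a local (first-order differential) operator, as recorded in \ref{comm_ind_triple_descr}: a local operator does not enlarge supports, so it carries sections vanishing a.e. off $\widetilde{\Omega}^n$ to sections of the same kind, hence preserves $\widehat{H}^n$. Condition (a) then follows by matching the two domains through the local isometry $\widetilde{\Omega}^n \cong \Omega$ for the covering metric, together with the block-diagonal decomposition $\mathrm{Dom}\,\slashed D_n = \bigoplus_g \bigl(\mathrm{Dom}\,\slashed D_n \cap g\widehat{H}^n\bigr)$ coming from locality; this last point is routine but is the most technical part of this layer.

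For condition (a) of Definition \ref{local_sec_thiples_defn}, take $\xi \in \widehat{H}^n$, supported on $\widetilde{\Omega}^n$. The sum $\sum_{g \in G(A_n|A_{n-1})} g\xi$ is invariant under $G(A_n|A_{n-1})$ and is supported on $q_n^{-1}(\widetilde{\Omega}^{n-1}) = \bigsqcup_g g\widetilde{\Omega}^n$; by the invariant-section/descent correspondence (condition 5 of Definition \ref{coh_spec_triple_defn}) it is the image in $H_n$ of a section on $M_{n-1}$ supported on $\widetilde{\Omega}^{n-1}$, that is, an element of $\widehat{H}^{n-1}$, as required.

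The main obstacle is condition (b): for a special coherent sequence $\{\xi_n\}$ with $\xi_n \in \widehat{H}^n$, one must show that $\mathfrak{Rep}_{\overline{H}}(\{\xi_n\})$ lies in the subspace $\widetilde{H} = L^2(\widetilde{M},\widetilde{S})$ and not merely in $\overline{H} = L^2(\overline{M},\overline{S})$. Here I would exploit that the compatible family $\{\widetilde{\Omega}^n\}$ admits a \emph{single connected} lift to $\overline{M}$, namely $\widetilde{\Omega} = \widetilde{\Omega}_{\widetilde{x}} \subset \widetilde{M} \subset \overline{M}$, homeomorphic to every $\widetilde{\Omega}^n$; this connectedness is exactly what Remark \ref{cut_loci_rem} supplies, combined with the selection of $\widetilde{M}$ as a connected component of $\overline{M}$ in \ref{top_constr}. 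Consequently $\{\xi_n\}$ is the descent (Definitions \ref{lift_desc_defn} and \ref{desc_defn}) of a section supported on $\widetilde{\Omega}$, and under the isomorphism $\overline{X}_{C_0(M)} \cong \mathscr{L}^2\bigl(\overline{\mathcal{X}}_{\mathcal{X}}\bigr)$ of Theorem \ref{thm_comm_1} its representative is supported on $\widetilde{\Omega} \subset \widetilde{M}$. An element of $L^2(\overline{M},\overline{S})$ supported in the single connected component $\widetilde{M}$ lies in $L^2(\widetilde{M},\widetilde{S}) = \widetilde{H}$, which completes the verification. The delicate point is the support bookkeeping across the tower: one must confirm that the connected lift never leaks into the other components $g\widetilde{M}$ for $g \in J \setminus \{e\}$, which is precisely where the connectedness of the cut-locus lift, rather than any purely measure-theoretic estimate, is indispensable.
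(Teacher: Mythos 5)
Your proposal is correct and follows essentially the same route as the paper's proof: both fix the cut-locus fundamental domains of Remark \ref{cut_loci_rem}, push them forward to obtain fundamental domains $\Om^n \subset M_n$, set $\widehat{H}^n$ to be the $L^2$-sections supported there, derive condition (a) of Definition \ref{local_sec_thiples_defn} from the decomposition $1_{\Om^{n-1}} = \sum_{g \in G\left(M_n|M_{n-1}\right)} g\,1_{\Om^n}$, and derive condition (b) from the inclusion of the single connected lift $\widetilde{\Om}_{\widetilde{x}} \subset \widetilde{M}$, so that the represented element is supported in $\widetilde{H}$. The only divergence is that you spell out the verification of Definition \ref{local_defn} (equivariance for (c), support-preservation by the local first-order operator $\slashed D_n$ for (b), domain matching for (a)), which the paper dispatches in a single sentence by appealing to locality of the operators $\slashed D_n$.
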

\begin{proof}
From the Theorem \ref{thm_comm_1} it follows that there are a connected topological space $\widetilde{M}$ and a covering projection $\widetilde{p}:\widetilde{M} \to M$ such that $\left(C\left(M\right), C\left(\widetilde{M}\right),~ _{C\left(\widetilde{M}\right)}X_{C\left(M\right)}, G \right)$ is the noncommutative covering projection of the sequence \eqref{com_reimann_seq_eqn}. Let $x\in M$ be any point and $\widetilde{x}\in \widetilde{M}$ is such that $\widetilde{p}\left(\widetilde{x}\right)=x$. From the Remark \ref{cut_loci_rem} there is the cut loci $\Om_x \subset M$ such that following conditions hold:
\begin{enumerate}
\item  $1_{\Om_x}\in L^\infty\left(M\right)$ and if $1_M= 1_{\Om_x}$ as element of $ L^\infty\left(M\right)$,
\item There is a connected set $\widetilde{\Om}_{\widetilde{x}}$ which is mapped homeomophically on $\Om_x$ and $\widetilde{\Om}_{\widetilde{x}}$ is a fundamental domain of the $\widetilde{p}:\widetilde{M} \to M$. For any $n \in \mathbb{N}$ there is a covering projection $\widetilde{p}_n:\widetilde{M} \to M_n$ and $\Om^n_{\widetilde{p}_n\left(\widetilde{x}\right)}=\widetilde{p}_n\left(\widetilde{\Om}_{\widetilde{x}}\right)$ is a fundamental domain of the covering projection $p_n: M_n \to M$. 
\end{enumerate}
\paragraph*{}If $\widehat{H}^n = 1_{\Om^n_{\widetilde{p}_n\left(\widetilde{x}\right)}}L^2\left(M_n,S_n\right)$ then from $1_{M_n}= \sum_{g \in G\left(M_n|M\right)} g~1_{\Om^n_{\widetilde{p}_n\left(\widetilde{x}\right)}}$ it follows that
\begin{equation*}
L^2\left(M_n,S_n\right) = \bigoplus_{g \in G\left(M_n|M\right)}g \widehat{H}^n.
\end{equation*}
From locality of the operators $\slashed D_n$ it follows that they satisfy the Definition \ref{local_defn}. The condition (a) of the Definition \ref{local_sec_thiples_defn} follows from
\begin{equation*}
1_{\Om^{n-1}_{\widetilde{p}_{n-1}\left(\widetilde{x}\right)}}= \sum_{g \in G\left(M_n|M_{n-1}\right)} g~1_{\Om^n_{\widetilde{p}_n\left(\widetilde{x}\right)}}
\end{equation*}
The condition (b) of the Definition \ref{local_sec_thiples_defn} follows from $\widetilde{\Om}_{\widetilde{x}} \subset \widetilde{M}$. Really if $\left\{\xi_n \in L^2\left(M_n,S_n\right)\right\}_{n \in \mathbb{N}}$ is such that $\xi_n \in 1_{\Om^n_{\widetilde{p}_n\left(\widetilde{x}\right)}}L^2\left(M_n,S_n\right)$ then from $_{C(M)}X_{C_0(\widetilde{M})} = C_0\left(\widetilde{M}\right) \otimes_{C_0\left(\overline{M}\right)}\overline{X}_{C\left(M\right)}$ it follows that
\begin{equation*}
\mathfrak{Rep}_{\overline{H}}\left(\left\{\xi_n\right\}\right)\in 1_{\widetilde{\Om}_{\widetilde{x}}}\overline{H} =  _{C(M)}X_{C_0(\widetilde{M})}\otimes_{C\left(\overline{M}\right)}\overline{H}=\widetilde{H}.
\end{equation*}
\end{proof}

\begin{rem}\label{locality_explanation_rem}
	The sequence is local because the operator $\widetilde{\slashed D}$ can be defined as gluing of local operators, i.e. operators defined on one-to-one subsets.
\end{rem}
\begin{empt}\label{rho_x_constr}
	Let $M$ be a manifold with a $\mathrm{Spin}^{\mathbf{c}}$ structure and $m\in\mathbb{N}$ is such that $\dim~M = 2m$ or $\dim~M = 2m +1$. If $\left(M, \pi, S\right)$ is the spinor bundle and $x \in M$ is any point then from \cite{hajac:toknotes} it follows that $S_x = \pi^{-1}\left(x\right)$ is a complex Hilbert space of dimension $2^m$ and there is a natural representation $\rho_x: \Coo\left(M\right) \to B\left(S_x\right)$ and
	\begin{equation*}
	\left\|a\right\|= \sup_{x \in M} \left\|\rho_x(a)\right\|.
	\end{equation*}
 For any $x \in M$ there is a representation  $\rho^1_x: \Coo\left(M\right)\to B\left(S^2_x\right)$ which corresponds to the representation $\pi^1: \Coo\left(M\right) \to B\left(\left(L^2\left(M,S\right)\right)^2\right)$ given by 
	\begin{equation*}
\pi^1\left(a\right)=\begin{pmatrix} a & 0\\
[\slashed D,a] & a\end{pmatrix} \in B\left(\left(L^2\left(M, S\right)\right)^2\right).
	\end{equation*}
Similarly to \ref{s_repr} for any $s \in \mathbb{N}$ and $x \in M$ we can  inductively define   $\rho^s_x: \Coo\left(M\right)\to B\left(S^{2^s}_x\right)$ which corresponds to $\pi^s: \Coo\left(M\right) \to B\left(\left(L^2\left(M,S\right)\right)^{2^s}\right)$ given by
\begin{equation}\label{pi_s_comm_eqn}
\pi^{s}\left(a\right) =  \begin{pmatrix}  \pi^{s-1}(a) & 0 \\ \left[\slashed D,\pi^{s-1}(a)\right] &  \pi^{s-1}(a)\end{pmatrix}
\end{equation}	

\end{empt}	
\begin{defn}\label{smooth_mod_el_defn}
	Let $M$ be an orientable  Riemannian manifold with a $\mathrm{Spin}^{\mathbf{c}}$ structure and let $\widetilde{\pi} : \widetilde{M} \to M$ be a covering projection. There is the unbounded Dirac operator $\slashed D$ on $L^2\left(M, S\right)$ and let $\widetilde{\slashed D}$ be the $\widetilde{\pi}$-pullback of $\slashed D$. An element $\xi \in \mathscr{L}^2\left(\widetilde{M}_{M}\right)$ of associated with $\widetilde{\pi}:\widetilde{M} \to M$  Hilbert $C(M)$-module is said to be {\it smooth} if following conditions hold
	\begin{enumerate}
		\item $\xi \in \Coo\left(\widetilde{M}\right)$.
		\item If $x \in M$ then for any $s \in \mathbb{N}$ and following condition hold
		\begin{equation*}
	\varphi_\xi\left(x\right) =	\sum_{y \in \widetilde{\pi}^{-1}\left(x\right)}\left\|\rho^s_y\left(\xi\right)\right\|^2 ;~\forall x \in M
		\end{equation*}
		then $\varphi_\xi \in C\left(M\right)$.
	\end{enumerate}
\end{defn}	
\begin{defn}\label{l2_m_m_norm_defn}
	If $\Xi \subset \mathscr{L}^2\left(\widetilde{M}_{M}\right)$ be a linear span of smooth elements then there is the Fr\'echet topology on 
	$\Xi$ induced by seminorms $\left\|\cdot\right\|_s$ given by
		\begin{equation*}
		\left\|\xi\right\|_s= \sup_x \sqrt{	\sum_{y \in \widetilde{\pi}^{-1}\left(x\right)}\left\|\rho^s_y\left(\xi\right)\right\|^2}.
		\end{equation*}
	The completion of $\Xi$ with respect to  this the Fr\'echet topology is said to be the {\it smooth module associated} with $\widetilde{\pi}:\widetilde{M} \to M$. Denote by $\mathscr{L}^2_\infty\left(\widetilde{M}_{M}\right)$ the smooth module associated with $\widetilde{\pi}:\widetilde{M} \to M$. 
\end{defn}
\begin{empt}\label{lip_slased_d_const}
 From \cite{varilly:noncom} it follows that Reimannian metric satisfies to the following condition
 \begin{equation*}
 \mathfrak{dist}\left(x, y\right) = \sup \left\{\left|a
 \left(x\right)-a
 \left(y\right)\right|~|~ a \in \Coo\left(M\right), ~ \left\|\left[\slashed D, a\right]\right\| \le 1\right\}.
 \end{equation*}
 Conversely if $\left\|\left[\slashed D, a\right]\right\| = C$ then 
 \begin{equation*}
 \left|a
 \left(x\right)-a
 \left(y\right)\right| \le C ~\mathfrak{dist}\left(x, y\right).
 \end{equation*}
 Similarly if $\pi^s(a)$ is given by \eqref{pi_s_comm_eqn} and $\left\|\left[\slashed D,\pi^{s+1}(a)\right]\right\| = C_{s+1}$ then for any $x,y \in M$
 \begin{equation}\label{lip_slased_d_equ}
 \left\|\pi^s(a)
 \left(x\right)-\pi^s(a)
 \left(y\right)\right\| \le C_{s+1} ~\mathfrak{dist}\left(x, y\right).
 \end{equation}
 \end{empt}

\begin{lem}
	Any element $\xi \in \mathscr{L}^2_\infty\left(\widetilde{M}_{M}\right)$ corresponds to the function $a \in C_0\left(\widetilde{M}\right)$ such that for any $s \in \mathbb{N}$ the function $f^s: \widetilde{M} \to \mathbb{R}$ given by
	\begin{equation*}
	f^s\left(x\right)= 		\left\|\rho^s_x\left(a\right)\right\|^2
	\end{equation*}
	
	is continuous. 
\end{lem}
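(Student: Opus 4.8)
The plan is to realise a general element of $\mathscr{L}^2_\infty\left(\widetilde{M}_{M}\right)$ as a uniform limit over $\widetilde{M}$ of the fibrewise operators attached to smooth elements, and then transport continuity through this limit. First I would fix a sequence $\left\{\xi_k\right\}$ in the linear span $\Xi$ of smooth elements that is Cauchy for every seminorm $\left\|\cdot\right\|_s$ and represents $\xi$. Observing that for $s=0$ one has, by \eqref{comm_pr_l2}, $\left\|\eta\right\|_0^2=\sup_{x\in M}\sum_{y\in\widetilde{\pi}^{-1}(x)}\left|\eta(y)\right|^2=\left\|\langle\eta,\eta\rangle_{\mathscr{L}^2\left(\widetilde{M}_{M}\right)}\right\|$, the seminorm $\left\|\cdot\right\|_0$ is precisely the Hilbert $C\left(M\right)$-module norm; moreover the block-triangular shape of \eqref{pi_s_comm_eqn} yields $\left\|\rho^{s-1}_y(\eta)\right\|\le\left\|\rho^{s}_y(\eta)\right\|$ (restrict the operator to the first block of vectors), hence $\left\|\cdot\right\|_0\le\left\|\cdot\right\|_s$ for all $s$. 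Therefore $\left\{\xi_k\right\}$ is $\left\|\cdot\right\|_0$-Cauchy, and since $\mathscr{L}^2\left(\widetilde{M}_{M}\right)$ is a complete Hilbert module contained in $C_0\left(\widetilde{M}\right)$ by Lemma \ref{l2_l_comp}, it converges in norm to a function $a\in C_0\left(\widetilde{M}\right)$. This $a$ is the function $\xi$ corresponds to, and it is independent of the representing sequence because $\left\|\cdot\right\|_0$ is a genuine norm inside the defining family.

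Next I would extract from the definition of $\left\|\cdot\right\|_s$ the pointwise domination
\[
\sup_{y\in\widetilde{M}}\left\|\rho^s_y(\eta)\right\|\le\left\|\eta\right\|_s ,
\]
which holds because a single summand never exceeds the full fibre sum. Applying this to $\eta=\xi_k-\xi_l$ and using linearity of the representations $\rho^s_y$ shows that the operator-valued maps $y\mapsto\rho^s_y(\xi_k)$ are uniformly (in $y$) Cauchy, hence converge uniformly to a field $y\mapsto g^s_y\in B\left(\widetilde{S}^{2^s}_y\right)$; for $s=0$ this limit is just $y\mapsto a(y)$. I would then \emph{define} $\rho^s_y(a):=g^s_y$, the natural extension of $\rho^s$ to the completion, which agrees with the original definition whenever $a$ is already a smooth element.

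It then remains to show, for each fixed $k$ and $s$, that $y\mapsto\rho^s_y(\xi_k)$ is continuous on $\widetilde{M}$; granting this, $f^s(y)=\left\|g^s_y\right\|^2$ is a uniform limit of the continuous functions $y\mapsto\left\|\rho^s_y(\xi_k)\right\|^2$ and is therefore continuous, which is exactly the assertion. For the continuity of a single smooth element I would invoke the Lipschitz estimate \eqref{lip_slased_d_equ}, applied on $\widetilde{M}$ to the pulled-back operator $\widetilde{\slashed D}$: in a trivialising chart it reads $\left\|\rho^s_{y}(\xi_k)-\rho^s_{y'}(\xi_k)\right\|\le C^k_{s+1}\,\mathfrak{dist}_{\widetilde{M}}\left(y,y'\right)$, so local Lipschitz continuity, and hence continuity, follows once the local constant is finite and is controlled by $\sup_{z}\left\|\rho^{s+1}_z(\xi_k)\right\|\le\left\|\xi_k\right\|_{s+1}<\infty$.

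The main obstacle, and the step deserving the most care, is precisely this last point: making the fibrewise representations $\rho^s_y$ on $\widetilde{M}$ and the Lipschitz constants rigorous and finite for a smooth element. The off-diagonal entries of $\pi^{s}$ for $s\ge 2$ involve iterated commutators such as $[\widetilde{\slashed D},[\widetilde{\slashed D},\xi_k]]$, which are not bundle endomorphisms, so $\rho^s_y$ must be read as the operator built from the $s$-jet of $\xi_k$ at $y$ together with the Clifford data at $y$, and one must check that the $s$-times differentiability of a smooth element (Definition \ref{smooth_coh_defn}) forces $\sup_{z}\left\|\rho^{s+1}_z(\xi_k)\right\|\le\left\|\xi_k\right\|_{s+1}<\infty$, bounding the local Lipschitz constant. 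Once this jet picture is fixed, the remaining verifications—linearity of $\rho^s_y$, the block inequality, and the uniform-limit argument—are routine.
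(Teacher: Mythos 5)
Your proposal is correct and takes essentially the same route as the paper's own (two-line) proof: the inclusion $\mathscr{L}^2\left(\widetilde{M}_{M}\right) \subset C_0\left(\widetilde{M}\right)$ from Lemma \ref{l2_comp} produces the function $a$, and continuity of $f^s$ is deduced from the Lipschitz estimate \eqref{lip_slased_d_equ}. What you add --- the observation $\left\|\cdot\right\|_0\le\left\|\cdot\right\|_s$, the uniform-limit argument passing from the smooth span $\Xi$ to the Fr\'echet completion, the extension of $\rho^s_y$ to limit elements, and the $s$-jet reading of $\rho^s_y$ needed because iterated commutators with $\widetilde{\slashed D}$ are not bundle endomorphisms --- is precisely what the paper leaves implicit, so your version is a faithful and more careful expansion of the same argument rather than a different one.
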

\begin{proof}
	From the Lemma \ref{l2_comp} it follows that $\mathscr{L}^2\left(\widetilde{M}_{M}\right) \subset C_0\left(\widetilde{M}\right)$, so 	any element $\xi \in \mathscr{L}^2_\infty\left(\widetilde{M}_{M}\right)$ corresponds to the function $a \in C_0\left(\widetilde{M}\right)$.
Now this lemma follows from the inequality \eqref{lip_slased_d_equ}.
\end{proof}

\begin{defn}\label{coo0_defn}
Let us consider the situation of the Definition \ref{smooth_mod_el_defn}. We say that the element $a \in \Coo\left(\widetilde{M}\right)$ is \textit{zero at infinity with derivations} if for any $s \in \mathbb{N}$, $\varepsilon > 0$ there is a compact set $K_{s, \varepsilon}\subset \widetilde{M}$ such that
\begin{equation*}
\left\|\rho^s_x\left(a\right)\right\| < \varepsilon;~\forall x \in \widetilde{M} \backslash K_{s, \varepsilon}.
\end{equation*}	
Algebra of zero an infinity with derivations elements will be denoted by $\Coo_0\left(\widetilde{M}_M\right)$. 
\end{defn}
\begin{lem}\label{l2_in_smooth}
		Let $M$ be an orientable  Riemannian manifold with a $\mathrm{Spin}^{\mathbf{c}}$ structure. If $\widetilde{\pi} : \widetilde{M} \to M$ is a covering projection and $G\left(\widetilde{M}~|~M\right)$ is countable then $\mathscr{L}^2_\infty\left(\widetilde{M}_{M}\right) \subset \Coo_0\left(\widetilde{M}_M\right)$.
\end{lem}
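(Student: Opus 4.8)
The plan is to reduce the statement to showing, for each derivation order $s$, that the continuous function $f^s\colon \widetilde M \to \mathbb R$, $f^s(x) = \left\|\rho^s_x(a)\right\|^2$ associated with $\xi$ (where $a \in C_0(\widetilde M)$ is the function produced by the preceding lemma) vanishes at infinity on $\widetilde M$; equivalently that $\{x : f^s(x) \ge \varepsilon^2\}$ is relatively compact for every $\varepsilon > 0$. This is exactly the condition $\left\|\rho^s_x(a)\right\| < \varepsilon$ outside a compact set appearing in Definition \ref{coo0_defn}, so establishing it for all $s,\varepsilon$ places $\xi$ in $\Coo_0\left(\widetilde M_M\right)$, the smoothness of $a$ being recovered at the very end from uniform control of all the $\rho^s$.

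First I would treat a single smooth element $\xi$, for which $\varphi^s_\xi(x) = \sum_{y \in \widetilde\pi^{-1}(x)} \left\|\rho^s_y(\xi)\right\|^2 = \sum_{g \in G} f^s(g\widetilde x)$ lies in $C(M)$ by Definition \ref{smooth_mod_el_defn}, hence is bounded since $M$ is compact. I would then re-run the argument of Lemma \ref{l2_comp} with $f^s$ in the role of $|\varphi|^2$: fix a finite basis $\left\{\widetilde{\mathcal U}_\iota\right\}$ of the fundamental covering, put $\widetilde{\mathcal V} = \bigcup_\iota \widetilde{\mathcal U}_\iota$, $K = \mathfrak{cl}(\widetilde{\mathcal V})$, and $K_k = G^k K$ along a $G$-covering $\left\{G^k\right\}$ (Definition \ref{g_cov_defn}), so that $\widetilde M = \bigcup_k G^k \widetilde{\mathcal V}$ with each $K_k$ compact. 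If $f^s$ did not vanish at infinity there would be $\varepsilon>0$ and points $\widetilde x_i \in \widetilde M \setminus K_i$ with $f^s(\widetilde x_i) > \varepsilon^2$; projecting to the compact $M$ and passing to a convergent subsequence $x_{i_j} \to x$, I would choose $\widetilde x \in \widetilde{\mathcal V}$ over $x$ and use convergence of $\sum_{g\in G} f^s(g\widetilde x)$ to fix $r$ with $\sum_{g\in G\setminus G^r} f^s(g\widetilde x) < \varepsilon^2/2$. The function $\psi(y) = \sum_{g\in G\setminus G^r} f^s(g\widetilde y)$ is continuous near $x$ (it equals $\varphi^s_\xi$ minus a finite sum of continuous terms), yet $\psi(x_{i_j}) \ge f^s(\widetilde x_{i_j}) > \varepsilon^2$ for large $j$, because $\widetilde x_{i_j} \notin K_r$ forces $\widetilde x_{i_j} = g'\widetilde x'_{i_j}$ with $\widetilde x'_{i_j}\in\widetilde{\mathcal V}$ and $g'\notin G^r$; continuity then gives $\psi(x) \ge \varepsilon^2$, contradicting the choice of $r$. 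This shows every smooth element lies in $\Coo_0\left(\widetilde M_M\right)$.

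Finally I would pass to the completion. The key elementary inequality is $\left\|\rho^s_x(\eta)\right\| \le \left\|\eta\right\|_s$ for every $x \in \widetilde M$ (a single fibre summand is dominated by the full fibre sum), so convergence in the seminorm $\left\|\cdot\right\|_s$ forces uniform convergence on $\widetilde M$ of the matrix-valued functions $x \mapsto \rho^s_x(\cdot)$. Given $\xi \in \mathscr{L}^2_\infty\left(\widetilde M_M\right)$, I would pick smooth $\xi_n \to \xi$ in all $\left\|\cdot\right\|_s$; uniform convergence of $\rho^s_x(\xi_n) \to \rho^s_x(\xi)$ for every $s$ recovers smoothness of $\xi$, since the $\rho^s$ encode, through the iterated commutators with $\widetilde{\slashed D}$ and the Lipschitz estimate \eqref{lip_slased_d_equ}, all covariant derivatives up to order $s$, and a uniform limit of $C^s$ functions with all derivatives converging is again $C^s$. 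For vanishing at infinity, given $s,\varepsilon$ I would choose $n$ with $\left\|\xi - \xi_n\right\|_s < \varepsilon/2$ and, since $\xi_n \in \Coo_0\left(\widetilde M_M\right)$, a compact $K$ with $\left\|\rho^s_x(\xi_n)\right\| < \varepsilon/2$ off $K$; hence $\left\|\rho^s_x(\xi)\right\| < \varepsilon$ off $K$, so $\xi \in \Coo_0\left(\widetilde M_M\right)$. The main obstacle I anticipate is the middle step, the vanishing-at-infinity for smooth elements: everything hinges on transporting the contradiction argument of Lemma \ref{l2_comp} from the plain modulus to the derivation-twisted fibre density $f^s$, which requires its continuity (the preceding lemma), the fact that its $G$-orbit sum is a genuine element of $C(M)$, and that summation over the fibre is summation over the transitive $G$-action of a regular covering; the closedness-under-completion step is comparatively routine once the fibrewise domination $\left\|\rho^s_x(\cdot)\right\| \le \left\|\cdot\right\|_s$ is observed.
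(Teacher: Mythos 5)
Your proposal is correct, and its engine is exactly the paper's: the paper also proves this lemma by transplanting the escape-to-infinity contradiction of Lemma \ref{l2_comp}, with $\left\|\rho^s_{\cdot}\left(\varphi\right)\right\|^2$ playing the role of $\left|\varphi\right|^2$ --- escaping points $\widetilde{x}_i \in \widetilde{M}\backslash K_i$ are projected to the compact base, a convergent subsequence is extracted, and the tail bound $\sum_{g \in G \backslash G^r}\left\|\rho^s_{g\widetilde{x}}\left(\varphi\right)\right\|^2 < \varepsilon^2/2$ is contradicted through the tail function $\psi$. Where you genuinely diverge is in organization and scope. The paper runs the contradiction directly on an arbitrary $\varphi \in \mathscr{L}^2_\infty\left(\widetilde{M}_{M}\right)$, citing Definitions \ref{smooth_mod_el_defn} and \ref{l2_m_m_norm_defn} for convergence of the fibre series and asserting continuity of $\psi$ without argument; but since $\mathscr{L}^2_\infty\left(\widetilde{M}_{M}\right)$ is a Fr\'echet \emph{completion}, those facts are immediate only for (spans of) smooth elements, and for general elements one must first explain what $\rho^s_x\left(\varphi\right)$ even means. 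Your two-step decomposition supplies precisely this: you prove vanishing at infinity for smooth elements, where the continuity of $\psi$ is transparent (it is $\varphi^s_\xi$ composed with a local inverse of $\widetilde{\pi}$, minus a finite sum of continuous terms --- cleaner than the paper's bare assertion), and then pass to the completion via the fibrewise domination $\left\|\rho^s_x\left(\eta\right)\right\| \le \left\|\eta\right\|_s$, recovering smoothness of the limit from uniform convergence of all the $\rho^s_x\left(\xi_n\right)$ together with the Lipschitz estimate \eqref{lip_slased_d_equ}, and vanishing at infinity by an $\varepsilon/2$ argument. So your route is slightly longer but patches the point where the paper is loosest, while the paper's direct route tacitly assumes what your completion step proves. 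One caveat common to both arguments, which you correctly flag: identifying the fibre over $x$ with the $G$-orbit of $\widetilde{x}$ (so that $\widetilde{x}_{i_j} \notin K_r$ forces $\widetilde{x}_{i_j} = g'\widetilde{x}'_{i_j}$ with $g' \notin G^r$ and $\widetilde{x}'_{i_j} \in \widetilde{\mathcal{V}}$) uses regularity of the covering and compactness of $M$, which the lemma's statement omits but the standing hypotheses of Section \ref{comm_sptr_subs} supply.
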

\begin{proof}
	Proof of this lemma is similar to the proof of the Lemma \ref{l2_comp}.
	Let $\left\{\widetilde{U}_{\iota}\subset\widetilde{M}\right\}_{\iota \in I}$ be a basis of the fundamental covering of $\widetilde{\pi}:\widetilde{M}\to M$. Since $M$ is compact we can select finite family  $\left\{\widetilde{U}_{\iota}\subset\widetilde{M}\right\}_{\iota \in I}$ , i.e.  $\left\{\widetilde{U}_{\iota}\right\}_{\iota \in I} = \left\{\widetilde{U}_1, ..., \widetilde{U}_n\right\}$. Let 
	\begin{equation*}
	G_1 \leftarrow G_2 \leftarrow ...
	\end{equation*}
	be a coherent sequence of finite groups $G_i = G\left(M_i| M\right)$ with epimorphisms $h_i: G \to G_i$, and let $\left\{G^k\subset G\right\}_{k \in \mathbb{N}}$  be a $G$-covering (See the Definition \ref{g_cov_defn}). 
	If $\widetilde{V}=\bigcup_{i = 1,...,n}\widetilde{U}_i$ and $K=\mathfrak{cl}\left(\widetilde{V}\right)$ is the closure of $\widetilde{ V}$ then $K$ is compact. For any $k \in \mathbb{N}$ the set $K_k = G^kK$ is a finite union of compact sets, whence $K_k$ is compact. If $\widetilde{V}_k = \bigcup_{k \in \mathbb{N}}G^k\widetilde{V}$ then from the Definition \ref{fund_domu} it follows that
	$\widetilde{M}= \bigcup_{k \in \mathbb{N}}  \widetilde{V}_k$. Let $\varphi \in \mathscr{L}_\infty^2\left(\widetilde{M}_{M}\right)$ be such that $\varphi \notin  \Coo_0\left(\widetilde{M}_M\right)$. From the Definition \ref{coo0_defn} it follows that there are $s \in \mathbb{N}$ and $\varepsilon > 0$ such that for any compact set $K \subset \widetilde{U}$ there is $\widetilde{x} \in \widetilde{\mathcal X} \backslash K$ such that $\left\|\rho^s_{\widetilde{x}}\left(\varphi\right)\right\| > \varepsilon$, where $\rho^s_{\widetilde{x}}$ is defined in \ref{rho_x_constr}.  From \ref{lip_slased_d_const} it follows that for any $s \in \mathbb{N}$ the function $\widetilde{x} \mapsto \left\|\rho^s_{\widetilde{x}}\right\|$ is continuous. Let us define a sequence $\left\{\widetilde{x}_i\in \widetilde{M} \right\}_{i \in \mathbb{N}}$ such that $\left|\rho^s_x\left(\widetilde{x}\right)\right|> \varepsilon$ and $\widetilde{x}_i\in \ \widetilde{M} \backslash K_i$.
	There is a sequence $\left\{x_i \in M\right\}_{i \in \mathbb{N}}$ given by $x_i = \pi \left(\widetilde{x}_{i_j}\right)$. Since $M$ is  compact the sequence $\left\{x_i \right\}_{i \in \mathbb{N}}$ contains a convergent subsequence $\left\{x_{i_j} \right\}_{j \in \mathbb{N}}$. Let $x = \lim_{j \to \infty} x_{i_j}$. Let $\widetilde{x} \in \widetilde{M}$ be such that $\widetilde{\pi}\left(\widetilde{x}\right)=x$ and $\widetilde{x} \in \widetilde{V}$. From the Definitions \ref{smooth_mod_el_defn}, \ref{l2_m_m_norm_defn}  it follows that the series
	\begin{equation*}
	\sum_{g \in G}\left\|\rho^s_{g\widetilde{x}}\left(\varphi\right)\right\|^2
	\end{equation*}
	is convergent, whence there is $r \in \mathbb{N}$ such that
	\begin{equation}\label{min_phi_2s}
	\sum_{g \in G \backslash G^r}\left\|\rho^s_{g\widetilde{x}}\left(\varphi\right)\right\|^2 < \frac{\varepsilon^2}{2}.
	\end{equation}
	If $\widetilde{W}$ is an open connected neighborhood of $\widetilde{x}$ which is mapped homeomorphicaly onto $W=\widetilde{\pi}\left(\widetilde{W}\right)$ and $\widetilde{W} \subset \widetilde{V}$ then there is  a real continuous function $\psi:\widetilde{W} \to \mathbb{R}$ given by
	\begin{equation*}
	\psi\left(y\right)= \sum_{g \in G \backslash G^r} \left\|\rho^s_{g\widetilde{y}}\left(\varphi\right)\right\|^2; \text{ where } \widetilde{y} \in \widetilde{W} \text{ and } \widetilde{\pi}\left(\widetilde{y}\right)=y.
	\end{equation*}
	There is $r \in \mathbb{N}$ such that $i_r > r$ and $x_{i_j} \in  W$  for any $j \ge r$. If $j > r$  then from  $\left|\varphi\left(\widetilde{x}_{i_j}\right)\right| > \varepsilon$ and $\widetilde{x}_{i_j} \notin K_r$ it follows that 
	\begin{equation*}
	\psi\left(x_{i_j}\right) = \sum_{g \in G \backslash G^r} \left\|\rho^s_{g\widetilde{x}'_{i_j}}\left(\varphi\right)\right\|^2 \ge \left\|\rho^s_{g\widetilde{x}_{i_j}}\left(\varphi\right)\right\|^2 > \varepsilon^2; \text{ where } \widetilde{x}'_{i_j} \in \widetilde{W} \text{ and } \widetilde{\pi}\left(\widetilde{x}'_{i_j}\right)=x_{i_j}.
	\end{equation*}
	Since $\psi$ is continuous and $x = \lim_{j \to \infty} x_{i_j}$ we have $\psi\left(x\right) > \varepsilon^2$. This fact contradicts to the equation \eqref{min_phi_2s}, and the contradiction proves the lemma.
	
\end{proof}

\begin{lem}\label{l2_sub_x_inf_lem}
	 
 If $X_{\Coo\left(M\right)}^\infty$ is the connected smooth module (Definition \ref{smooth_mod_defn}) of the coherent sequence described in the Lemma \ref{comm_coherent_lemma} then there is the natural isomorphism of $\Coo\left(M\right)$-modules
 \begin{equation*}
\mathscr{L}^2_\infty\left(\widetilde{M}_{M}\right) \approx X_{\Coo\left(M\right)}^\infty. 
 \end{equation*}
\end{lem}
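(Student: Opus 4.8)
The plan is to promote the continuous-level identification ${}_{\widetilde A}X_{C_0(M)} = \mathscr{L}^2\left(\widetilde{M}_{M}\right)$ of Theorem \ref{thm_comm_1}(a) to the smooth level, showing that the isomorphism $\alpha$ constructed in Lemma \ref{l2_eqiv_lem} restricts to a Fr\'echet isomorphism between the two smooth submodules. First I would fix a coherent sequence $\left\{a_n \in \Coo(M_n)\right\}$ representing some $\xi \in X_{\Coo(M)}$ and let $\varphi = \alpha(\xi) \in \mathscr{L}^2\left(\widetilde{M}_{M}\right) \subset C_0(\widetilde{M})$ be the associated function. For $s = 0$ the identification is already contained in Lemma \ref{l2_eqiv_lem}, since there the $C_0(M)$-valued products agree: $\lim_n \langle a_n, a_n\rangle_{C(M_n)}(x) = \sum_{\widetilde{y} \in \widetilde{\pi}^{-1}(x)} |\varphi(\widetilde{y})|^2$. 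The task is to run the same matching for every differentiable seminorm $\|\cdot\|_s$.

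The decisive step is a locality/jet-preservation argument. Because $\widetilde{\pi}:\widetilde{M}\to M$ carries the covering metric it is a local isometry, so the pullback $\widetilde{\slashed D} = \mathfrak{pullback}_{\widetilde{\pi}}(\slashed D)$ agrees with $\slashed D$ on every one-to-one subset (Definition \ref{pullback_spin_defn} and Remark \ref{pull_equiv_rem}). Since moreover each commutator $[\slashed D, a] = -i\,c(da)$ is Clifford multiplication, hence a bundle endomorphism, the operator $\pi^s(a_n)$ built inductively by \eqref{pi_s_comm_eqn} is itself a section of $\End(S_n^{2^s})$ whose fibre at $y \in M_n$ is exactly $\rho^s_y(a_n)$. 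Combining these two facts, for every $\widetilde{y}$ lying over $x$ with local descent $y = \pi_n(\widetilde{y})$ one gets $\rho^s_{\widetilde{y}}(\varphi) = \rho^s_y(a_n)$ as soon as $\varphi$ and the lift of $a_n$ coincide near $\widetilde{y}$; consequently $(\pi^s(a_n))^*\pi^s(a_n)$ is the positive bundle endomorphism $y \mapsto \rho^s_y(a_n)^*\rho^s_y(a_n)$, and its $G_n$-average over the base encodes precisely the partial fibre data of $\varphi$ over the finite cover $M_n$.

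I would then let $n \to \infty$. Exactly as in the proof of Lemma \ref{l2_comp} and its differentiable analogue Lemma \ref{l2_in_smooth}, the finite fibres $(\pi_n)^{-1}(x)$ exhaust $\widetilde{\pi}^{-1}(x)$, and the convergence control supplied by membership in $\mathscr{L}^2\left(\widetilde{M}_{M}\right)$ together with the Lipschitz estimate \eqref{lip_slased_d_equ} lets me pass the fibre sums to the limit and obtain
\begin{equation*}
\lim_{n\to\infty}\frac{1}{|G_n|}\sum_{g\in G_n} g\big((\pi^s(a_n))^*\pi^s(a_n)\big)(x) \ \longleftrightarrow \ \sum_{\widetilde{y}\in\widetilde{\pi}^{-1}(x)}\left\|\rho^s_{\widetilde{y}}(\varphi)\right\|^2 = \varphi_\xi(x),
\end{equation*}
where $\varphi_\xi$ is the function of Definition \ref{smooth_mod_el_defn} and $G_n = G(M_n|M)$. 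This simultaneously shows that $\left\{a_n\right\}$ is $s$-times differentiable (Definition \ref{smooth_coh_defn}) precisely when $\varphi$ is a smooth element of $\mathscr{L}^2\left(\widetilde{M}_{M}\right)$ (Definition \ref{smooth_mod_el_defn}), the continuity of $\varphi_\xi$ following from \eqref{lip_slased_d_equ} and Lemma \ref{l2_in_smooth}, and that the two families of seminorms $\|\cdot\|_s$ induce the same Fr\'echet topology under $\alpha$.

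Finally, since $\alpha$ is a $\Coo(M)$-module bijection carrying the generating smooth coherent sequences onto the smooth generators of $\mathscr{L}^2\left(\widetilde{M}_{M}\right)$ and respecting each $\|\cdot\|_s$, it maps the linear span $X'$ of Definition \ref{smooth_mod_defn} onto the span $\Xi$ of Definition \ref{l2_m_m_norm_defn}; passing to Fr\'echet completions and intersecting with the connected module yields the asserted natural isomorphism $\mathscr{L}^2_\infty\left(\widetilde{M}_{M}\right) \approx X_{\Coo(M)}^\infty$. The main obstacle I anticipate is the clean interchange of the $n\to\infty$ limit with the infinite fibre sum while tracking the $\tfrac{1}{|G_n|}$ normalisations and reconciling the base-averaged (a priori endomorphism-valued) quantity defining $\|\{a_n\}\|_s$ with the scalar fibrewise sum defining $\|\varphi\|_s$; it is here that the local-isometry identification of the $\rho^s$ and the exhaustion estimates modelled on Lemma \ref{l2_comp} carry the argument.
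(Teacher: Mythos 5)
Your plan hinges on a single seminorm-matching identity, and that identity fails as written because of the $\tfrac{1}{\left|G_n\right|}$ normalisation built into the definition of $s$-times differentiability (Definition \ref{smooth_coh_defn}). Concretely, take $\varphi\in\Coo\bigl(\widetilde{M}\bigr)$ supported in a single sheet over an evenly covered open set and let $\left\{a_n\right\}$ be its descent; then $a_n$ is supported on one sheet of $M_n\to M$, so for $x$ under the support
\begin{equation*}
\frac{1}{\left|G_n\right|}\sum_{g\in G_n} g\bigl(\left(\pi^s\left(a_n\right)\right)^*\pi^s\left(a_n\right)\bigr)\left(x\right)
=\frac{1}{\left|G_n\right|}\,\rho^s_{\widetilde{y}}\left(\varphi\right)^*\rho^s_{\widetilde{y}}\left(\varphi\right)\longrightarrow 0,
\end{equation*}
whereas the right-hand side of your displayed correspondence is $\left\|\rho^s_{\widetilde{y}}\left(\varphi\right)\right\|^2\neq 0$. (The paper is itself ambivalent about this factor --- compare \eqref{fin_form_a} with \eqref{comm_pr_l2} --- but your argument makes the equality load-bearing, so the ambiguity must be resolved, not inherited.) Moreover, even after fixing the normalisation, locality gives $\rho^s_y\left(a_n\right)=\sum_{\widetilde{y}}\rho^s_{\widetilde{y}}\left(\varphi\right)$, the sum running over preimages of $y$ under $\widetilde{M}\to M_n$, so $\left(\pi^s\left(a_n\right)\right)^*\pi^s\left(a_n\right)$ contains cross terms $\rho^s_{\widetilde{y}}\left(\varphi\right)^*\rho^s_{\widetilde{y}'}\left(\varphi\right)$ with $\widetilde{y}\neq\widetilde{y}'$; proving these vanish in the limit, uniformly on $M$, is precisely the analytic content of the lemma, and it is exactly the step you defer to ``exhaustion estimates.''

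There is a second gap: even granting a seminorm correspondence on represented elements, you obtain only an isometric embedding of the smooth span; the assertion that $\alpha$ carries it \emph{onto} a dense subspace of the smooth elements of $\mathscr{L}^2_\infty\bigl(\widetilde{M}_{M}\bigr)$ is stated, not proved. The paper's proof does not use any seminorm identity at all: it establishes the two inclusions by different arguments. For $\mathscr{L}^2_\infty\bigl(\widetilde{M}_{M}\bigr)\subset X^\infty_{\Coo\left(M\right)}$ it expands $\xi_\varphi=\sum_{i,g}g\,\mathfrak{Rep}\left(\widetilde{e}_i\right)\left\langle\xi_\varphi,g\,\mathfrak{Rep}\left(\widetilde{e}_i\right)\right\rangle$ over a smooth partition of unity and shows this series converges in the Fr\'echet topology using the decay $\left\|\rho^s_{\widetilde{x}}\left(\varphi\right)\right\|<\varepsilon/C_s$ off a compact set (cf.\ Lemma \ref{l2_in_smooth}); for the reverse inclusion it runs a compactness contradiction on $M$, modelled on Lemma \ref{l2_comp} and powered by the Lipschitz estimate \eqref{lip_slased_d_equ}. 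Your sketch assembles the right raw ingredients --- locality of $\slashed D$, the estimate \eqref{lip_slased_d_equ}, the exhaustion idea --- but supplies a workable substitute for neither argument, so as it stands the proposal reduces the lemma to an identity that is false under the paper's normalisation and, once corrected, still carries the entire burden of proof.
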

\begin{proof}
	1) Inclusion $\mathscr{L}^2_\infty\left(\widetilde{M}_{M}\right) \subset X_{\Coo\left(M\right)}^\infty$.
	\newline
Let $G = G\left(\widetilde{M}|M\right)$ be the	group of covering transformations.
Since $M$ is compact there is a finite  basis $\left\{\widetilde{U}_1, ..., \widetilde{U}_n\right\}$ of the fundamental covering of $\widetilde{\pi}:\widetilde{M}\to M$. From the Proposition \ref{smooth_part_unity_prop} it follows that there exist a partition of unity
\begin{equation*}
1_{C_b\left(\widetilde{M}\right)} = \sum_{g \in G }  \sum_{i=1}^{n}  g\widetilde{a}_i = \sum_{\left(g,\iota\right)\in G \times \{1,...,n\}}\widetilde{a}_{\left(g,i\right)}
\end{equation*}
dominated by $\left\{\widetilde{U}_1, ..., \widetilde{U}_n\right\}$ such that $\widetilde{a}_i \in \Coo\left(\widetilde{M}\right)$ for any $i \in \{1,...,n\}$.

If $\widetilde{e}_i = \sqrt{\widetilde{a}_i}$  then $\widetilde{e}_i \in \Coo\left(\widetilde{M}\right)$ and from the Corollary  \ref{cor_xi1} it follows that
\begin{equation}\label{smooth_unity_eqn}
\sum_{i \in \{1,...,n\}, g \in G} g \mathfrak{Rep}\left(\widetilde{e}_i \right) \rangle \langle g \mathfrak{Rep}\left(\widetilde{e}_i \right)  = 1_{M\left(\mathcal{K}\left(X_{C\left( M\right)}\right)\right)}.
\end{equation}
From $\widetilde{e}_i \in \Coo\left(\widetilde{M}\right)$ it follows that the descent   $\mathfrak{Desc}\left(\widetilde{e}_i\right)$ of $\widetilde{e}_i$ (Definition \ref{desc_defn}) is a smooth coherent sequence (Definition \ref{smooth_coh_defn}), whence $\mathfrak{Rep}\left(\widetilde{e}_i\right) \in X^\infty_{\Coo\left(M\right)}$.
If for any $a \in \Coo\left(M\right)$ and $s \in \mathbb{N}$ we denote 
\begin{equation*}
\left\|a\right\|_s= \left\|\pi^s\left(a\right)\right\|;~ \text{ where } \pi^s \text{ is given by \eqref{pi_s_comm_eqn}}
\end{equation*}
then from 
\begin{equation*}
C_s = \max_{i \in \{1,...,n\}}\left\|\widetilde{e}_i\right\|_s.
\end{equation*}
it follows that
\begin{equation*}
\left\|\widetilde{e}_ia\right\|_s < C_s\left\|a\right\|_s,
\end{equation*}
whence $ \mathfrak{Rep}\left(g\widetilde{e}_i \right)a\in X^\infty_{\Coo\left(M\right)}$. If $\varphi \in \mathscr{L}^2_\infty\left(\widetilde{M}_{M}\right)$ then form $\mathscr{L}^2_\infty\left(\widetilde{M}_{M}\right)\subset \mathscr{L}^2\left(\widetilde{M}_{M}\right)$
and from the Lemma \ref{l2_comp} it follows that $\varphi$ defines a unique element $\xi_\varphi \in X_{C(M)}$. From \eqref{smooth_unity_eqn} it follows that 
\begin{equation*}
\xi_\varphi = \sum_{i \in \{1,...,n\}, g \in G} g \mathfrak{Rep}\left(\widetilde{e}_i \right)  \left\langle \xi, g \mathfrak{Rep}\left(\widetilde{e}_i \right)\right\rangle_{X_{C\left(M\right)}}
\end{equation*}
All summands of the above series belong to $X^\infty_{\Coo\left(M\right)}$. Let us prove that the series is convergent in the  Fr\'echet topology given by the Definition \ref{smooth_mod_defn}. Let $s \in \mathbb{N}$ be any natural number and $\varepsilon > 0$. From the Definition \ref{smooth_mod_el_defn} it follows that there is a compact set $K$ such that for any $\widetilde{x}\in \widetilde{M} \backslash K$ following condition hold
\begin{equation*}
\rho^s_{\widetilde{x}}\left(\varphi\right) < \frac{\varepsilon}{C_s}
\end{equation*}
Since $K$ is compact there is a finite subset $G' \in G$ such that
\begin{equation*}
\left(\sum_{g \in G' }  \sum_{i=1}^{n}  g\widetilde{a}_i\right)\left(K\right)= \{1\}.
\end{equation*}
 From above equations it follows that
 \begin{equation}
 \left\|\xi_\varphi - \sum_{g \in G' }  \sum_{i=1}^{n} g \mathfrak{Rep}\left(\widetilde{e}_i \right)  \left\langle \xi, g \mathfrak{Rep}\left(\widetilde{e}_i \right)\right\rangle_{\overline{X}_A}\right\|_s < \varepsilon.
 \end{equation}
 So the series is convergent in the Fr\'echet topology, whence $\varphi_\xi \in \overline{X}^\infty_{\Coo\left(M\right)}$.
 \newline
 	2) Inclusion $X_{\Coo\left(M\right)}^\infty \subset\mathscr{L}^2_\infty\left(\widetilde{M}_{M}\right)$.
 	\newline
 	If $\xi \in X_{\Coo\left(M\right)}^\infty$   then form $X_{\Coo\left(M\right)}^\infty\subset X_{C\left(M\right)}$
 	and from the Lemma \ref{l2_comp} it follows that $\xi$ defines a unique element $\varphi_\xi \in \mathscr{L}^2\left(\widetilde{M}_{M}\right)$. If $\varphi_\xi \notin \mathscr{L}^2_\infty\left(\widetilde{M}_{M}\right)$ then there are $s \in \mathbb{N}$ and $\varepsilon > 0$ such that for any compact set $K$ there is $\widetilde{x} \in \widetilde{M} \backslash K$ such that a following condition holds
 	\begin{equation*}
 \left\|\rho^s_{\widetilde{x}}
 \left({\widetilde{\varphi}_\xi}\right)\right\|_s> \varepsilon.
 	\end{equation*}
 Let us define a sequence $\left\{\widetilde{x}_i\in \widetilde{M} \right\}_{i \in \mathbb{N}}$ such that $\left|\left|\rho^s_{\widetilde{x}_i}\left(\varphi\right)\right|\right|> \varepsilon$ and $\widetilde{x}_i\in \ \widetilde{M} \backslash K_i$.
There is a sequence $\left\{x_i \in \mathcal X\right\}_{i \in \mathbb{N}}$ given by $x_i = \pi \left(\widetilde{x}_{i_j}\right)$. Since $M$ is  compact the sequence $\left\{x_i \right\}_{i \in \mathbb{N}}$ contains a convergent subsequence $\left\{x_{i_j} \right\}_{j \in \mathbb{N}}$. Let $x = \lim_{j \to \infty} x_{i_j}$. Let $\widetilde{x} \in \widetilde{M}$ be such that $\widetilde{\pi}\left(\widetilde{x}\right)=x$ and $\widetilde{x} \in \widetilde{V}$. From \eqref{hilb_cond_comm} it follows that the series
\begin{equation*}
	\sum_{g \in G}\left|\varphi\left(g\widetilde{x}\right)\right|^2
\end{equation*}
is convergent, whence there is $r \in \mathbb{N}$ such that
\begin{equation}\label{min_phi_2_pi}
	\sum_{g \in G \backslash G^r}\left\|\rho^s_{g\widetilde{x}}\left(\varphi\right)\right\|^2 < \frac{\varepsilon^2}{2}.
\end{equation}
Let $\widetilde{W}$ be an open connected neighborhood of $\widetilde{x}$ which is mapped homeomorphicaly onto $W=\widetilde{\pi}\left(\widetilde{W}\right)$ such that following conditions hold:
\begin{enumerate}
	\item If $\widetilde{y} \in \widetilde{W}$ then $\mathfrak{dist}\left(\widetilde{y}, \widetilde{x}\right) < \frac{\varepsilon}{4 \left\|\left[\slashed D~,~\pi^{s+1}(\varphi)\right]\right\|}$. From this condition and \eqref{lip_slased_d_equ} it follows that
	\begin{equation}\label{deriv_estimate}
	\left\|\rho^s_{g\widetilde{y}}\left(\varphi\right)-\rho^s_{g\widetilde{x}}\left(\varphi\right)\right\| < \frac{\varepsilon}{4}; ~ \forall y \in \widetilde{W}.
	\end{equation}
	\item  $\widetilde{W} \subset \widetilde{ V}$.
\end{enumerate}

There is  a real continuous function $\psi:\widetilde{W} \to \mathbb{R}$ given by
\begin{equation*}
	\psi\left(y\right)= \sum_{g \in G \backslash G^r} \left\|\rho^s_{g\widetilde{y}}\left(\varphi\right)\right\|^2; \text{ where } \widetilde{y} \in \widetilde{W} \text{ and } \widetilde{\pi}\left(\widetilde{y}\right)=y.
\end{equation*}
There is $s \in \mathbb{N}$ such that $i_s > r$ and $x_{i_j} \in  W$  for any $j \ge s$. If $j > s$  then from  $\left\|\rho^s_{g\widetilde{x}_{i_j}}\right\| > \varepsilon$ and $\widetilde{x}_{i_j} \notin K_r$ it follows that 
\begin{equation*}
	\psi\left(x_{i_j}\right) = \sum_{g \in G \backslash G^r} \left\|\rho^s_{g\widetilde{x}'_{i_j}}\left(\varphi\right)\right\|^2 \ge \left\|\rho^s_{g\widetilde{x}_{i_j}}\left(\varphi\right)\right\|^2 > \varepsilon^2; \text{ where } \widetilde{x}'_{i_j} \in \widetilde{\mathcal W} \text{ and } \widetilde{\pi}\left(\widetilde{x}'_{i_j}\right)=x_{i_j}.
\end{equation*}
Since $\psi$ is continuous and $x = \lim_{j \to \infty} x_{i_j}$ we have $\psi\left(x\right) > \varepsilon^2$. This fact contradicts to equations \eqref{min_phi_2_pi},\eqref{deriv_estimate}, and the contradiction proves the lemma.  
\end{proof}
\begin{lem}
 Let us consider the coherent sequence described in the Lemma \ref{comm_coherent_lemma}. If  $\mathcal{K}^\infty\left(X^\infty_{\Coo\left(M\right)}\right)$ is the smoothly compact subalgebra (Definition \ref{smoothly_comp_sub_defn}) then
 \begin{equation}
\Coo_0\left(\widetilde{M}_M\right) =C_0\left(\widetilde{M}\right)\bigcap\mathcal{K}^\infty\left(X_{\Coo\left(M\right)}\right).
 \end{equation}
 
\end{lem}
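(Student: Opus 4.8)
The plan is to prove the two inclusions separately, after first reducing both algebras to statements about functions on $\widetilde{M}$ by means of the identifications already established. Recall from Theorem \ref{thm_comm_1} that the connected covering algebra is $\widetilde{A} = C_0(\widetilde{M})$ and the connected module is $_{\widetilde{A}}X_{C_0(\mathcal{X})} = \mathscr{L}^2(\widetilde{M}_M) \subset C_0(\widetilde{M})$, while Lemma \ref{l2_sub_x_inf_lem} identifies the connected smooth module $X^\infty_{\Coo(M)}$ with $\mathscr{L}^2_\infty(\widetilde{M}_M)$. Thus every rank-one smooth operator $\xi \rangle\langle \eta$ with $\xi, \eta \in X^\infty_{\Coo(M)}$ may, exactly as in the proof of Theorem \ref{thm_comm_1}(a), be identified with the pointwise product $\eta^* \xi \in \Coo(\widetilde{M})$ whenever it lies in $C_0(\widetilde{M}) \subset \mathcal{K}(_{\widetilde{A}}X_{C_0(\mathcal{X})})$. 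This dictionary turns the asserted operator identity into the statement that the function algebra generated (in the Fr\'echet topology) by products of elements of $\mathscr{L}^2_\infty(\widetilde{M}_M)$ coincides with $\Coo_0(\widetilde{M}_M)$.

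First I would prove $\Coo_0(\widetilde{M}_M) \subseteq C_0(\widetilde{M}) \cap \mathcal{K}^\infty(X^\infty_{\Coo(M)})$. Fix $a \in \Coo_0(\widetilde{M}_M)$; taking $s=0$ in Definition \ref{coo0_defn} together with the fibrewise identity $\|\rho^0_x(a)\| = |a(x)|$ gives $a \in C_0(\widetilde{M})$. To exhibit $a$ as a smoothly compact operator, I would choose a finite basis $\{\widetilde{U}_1,\dots,\widetilde{U}_n\}$ of the fundamental covering and a smooth partition of unity as in Lemma \ref{l2_sub_x_inf_lem}, so that \eqref{smooth_unity_eqn} holds, and then write
\begin{equation*}
a = \sum_{i \in \{1,\dots,n\},\, g \in G} \bigl(a\,g\mathfrak{Rep}(\widetilde{e}_i)\bigr) \rangle \langle\, g\mathfrak{Rep}(\widetilde{e}_i),
\end{equation*}
each summand being rank-one smooth because $a\,g\widetilde{e}_i \in \Coo(\widetilde{M})$ is supported in a single fundamental-domain translate and hence lies in $\mathscr{L}^2_\infty(\widetilde{M}_M) \approx X^\infty_{\Coo(M)}$. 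Convergence of this series in the Fr\'echet topology of $\mathcal{K}^\infty$ is then forced by the zero-at-infinity-with-derivations property: given $s$ and $\varepsilon$ I pick the compact set $K_{s,\varepsilon}$ controlling $\|\rho^s_x(a)\|$, cover it by finitely many translates $g\widetilde{U}_i$, and estimate the tail of the series in each seminorm $\left\|\cdot\right\|_s$ exactly as in the first half of Lemma \ref{l2_sub_x_inf_lem}. Hence $a \in \mathcal{K}^\infty(X^\infty_{\Coo(M)})$.

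For the reverse inclusion I would take $\kappa \in C_0(\widetilde{M}) \cap \mathcal{K}^\infty(X^\infty_{\Coo(M)})$ and write it as a Fr\'echet-convergent series $\kappa = \sum_i \eta_i \rangle \langle \xi_i$ of rank-one smooth operators, which under the dictionary above corresponds to the function $\sum_i \eta_i^* \xi_i$. Since $\eta_i, \xi_i \in \mathscr{L}^2_\infty(\widetilde{M}_M) \subseteq \Coo_0(\widetilde{M}_M)$ by Lemma \ref{l2_in_smooth}, every partial sum is a finite sum of products of zero-at-infinity-with-derivations functions and therefore again lies in $\Coo_0(\widetilde{M}_M)$. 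It remains to pass to the limit, for which I need the analog, at every derivation order, of the $C^*$-norm inequality \eqref{norm_ineq}: namely that the function seminorm $\sup_x \|\rho^s_x(\cdot)\|$ is dominated by the operator seminorm $\left\|\cdot\right\|_s$ of Definition \ref{comp_smooth_defn}. Granting this domination, the Fr\'echet convergence of the operator series yields uniform convergence of the $\rho^s_x$ of the partial sums, so $\kappa$ inherits the zero-at-infinity-with-derivations condition and $\kappa \in \Coo_0(\widetilde{M}_M)$.

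The main obstacle is precisely this last domination step: translating convergence measured by the operator seminorms $\left\|\kappa\right\|_s = \sup_{\left\|\xi\right\|_s=1}\left\|\kappa\xi\right\|_s$ into uniform pointwise control of the differentiated representations $\rho^s_x$ on the function side. I expect this to require combining the Leibniz structure of the iterated representation $\pi^s$ in \eqref{pi_s_comm_eqn} with the Lipschitz estimate \eqref{lip_slased_d_equ} and the fibrewise identity $\|\rho_x(a)\| = |a(x)|$, so as to extend to all orders $s$ the single $C^*$-norm argument that sufficed in Theorem \ref{thm_comm_1}(a) via Lemma \ref{l2_comp}. Once the seminorm domination is established, both inclusions close and the two algebras coincide.
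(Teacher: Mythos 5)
Your proposal is correct and follows essentially the same route as the paper: one inclusion by expanding $a \in \Coo_0\left(\widetilde{M}_M\right)$ against the partition-of-unity elements $g\widetilde{e}_i$ of Lemma \ref{l2_sub_x_inf_lem}, the other by writing $\kappa$ as a series of rank-one smooth operators, passing to the function series $\sum_i \eta_i^*\xi_i$ via \eqref{norm_ineq}, and invoking Lemma \ref{l2_in_smooth} for the partial sums. The seminorm-domination step you single out as the main obstacle is precisely the point the paper dispatches with a bare appeal to Definition \ref{smoothly_comp_sub_defn} (asserting the series converges in the Fr\'echet topology induced by the seminorms $\left\|\cdot\right\|_s$, hence the limit lies in $\Coo_0\left(\widetilde{M}_M\right)$), so your caution identifies a step the paper leaves implicit rather than a divergence in method.
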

\begin{proof}

	1) Inclusion $C_0\left(\widetilde{M}\right)\bigcap\mathcal{K}^\infty\left(X_{\Coo\left(M\right)}\right)\subset \Coo_0\left(\widetilde{M}_M\right)$.
	\newline
If $a \in C_0\left(\widetilde{M}\right)\bigcap\mathcal{K}^\infty\left(X_{\Coo\left(M\right)}\right)$  given by 
\begin{equation*}
a = \sum_{i=1}^{\infty} \xi_i \rangle\langle \eta_i; ~~ \xi_i, \eta_i \in \overline{X}^\infty_{\Coo\left(M\right)}
\end{equation*}
then from \eqref{norm_ineq} it follows that
\begin{equation}\label{smooth_comp_series}
a = \sum_{i=1}^{\infty} \xi^*_i \eta_i
\end{equation}
where $\xi_i, \eta_i$ are being regarded as elements of $C_0\left(\widetilde{M}\right)$. From the Lemma \ref{l2_in_smooth} it follows that $\xi_i \eta_i \in \Coo_0\left(\widetilde{M}_M\right)$ whence $a_n = \sum_{i=1}^{n} \xi^*_i \eta_i \in \Coo_0\left(\widetilde{M}_M\right)$ for any $n \in \mathbb{N}$.  From the Definition \ref{smoothly_comp_sub_defn} it follows that the series \eqref{smooth_comp_series} is convergent in the Fr\'echet topology induced by seminorms $\left\|\cdot\right\|_s$, therefore $a \in \Coo_0\left(\widetilde{M}_M\right)$.
\newline
2) Inclusion
$\Coo_0\left(\widetilde{M}_M\right) \subset C_0\left(\widetilde{M}\right)\bigcap\mathcal{K}^\infty\left(X_{\Coo\left(M\right)}\right)$.
\newline
Let $a \in  \Coo_0\left(\widetilde{M}_M\right)$. If $\widetilde{e}_1,...,\widetilde{e}_n\in C^\infty\left(\widetilde{M}\right)$ are defined in the Lemma \ref{l2_sub_x_inf_lem} then following condition hold
\begin{equation*}
a = \sum_{g \in G}\sum_{i = 1}^{n} g\widetilde{e}_i\langle g \widetilde{e}_i,a\rangle_{\overline{X}_{C\left(M\right)}}= \sum_{g \in G}\sum_{i = 1}^{n} g\widetilde{e}_i
\rangle\langle g \widetilde{e}_ia = \sum_{g \in G}\sum_{i = 1}^{n} \left(g\widetilde{e}_i\right)\left(g\widetilde{e}_ia\right)
\end{equation*}
and the series is convergent in the Fr\'echet topology induced by seminorms $\left\|\cdot\right\|_s$. From this fact and it follows that $a \in C_0\left(\widetilde{M}\right)\bigcap\mathcal{K}^\infty\left(X_{\Coo\left(M\right)}\right)$.

\end{proof}
\begin{lem}
	The coherent sequence described in the Lemma \ref{comm_coherent_lemma} is regular.
\end{lem}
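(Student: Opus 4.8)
The plan is to reduce the assertion to the classical density of compactly supported smooth functions in $C_0\left(\widetilde{M}\right)$ under the uniform norm. By the definition of regularity I must show that the smooth covering algebra $\widetilde{\A} = \mathcal{K}^\infty\left(X^\infty_{\Coo\left(M\right)}\right) \bigcap \widetilde{A}$ is dense in $\widetilde{A}$ for the $C^*$-norm; recall that the very notion of regularity applies here because the coherent sequence is local by the Lemma \ref{comm_local_lemma}. By part (a) of the Theorem \ref{thm_comm_1} the connected covering algebra is $\widetilde{A} = C_0\left(\widetilde{M}\right)$, and the immediately preceding lemma identifies the intersection $C_0\left(\widetilde{M}\right) \bigcap \mathcal{K}^\infty\left(X^\infty_{\Coo\left(M\right)}\right)$ with $\Coo_0\left(\widetilde{M}_M\right)$. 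Hence $\widetilde{\A} = \Coo_0\left(\widetilde{M}_M\right)$, and it suffices to prove that $\Coo_0\left(\widetilde{M}_M\right)$ is uniformly dense in $C_0\left(\widetilde{M}\right)$.

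First I would establish the sandwich $\Coo_c\left(\widetilde{M}\right) \subset \Coo_0\left(\widetilde{M}_M\right) \subset C_0\left(\widetilde{M}\right)$, where $\Coo_c\left(\widetilde{M}\right)$ denotes the compactly supported smooth functions. The right-hand inclusion is immediate. For the left-hand one, note that $\slashed D$ is a first-order differential operator, so by the commutator formula \eqref{eq:comm-Dirac} each commutator $[\slashed D, \cdot\,]$ differentiates its argument only once; consequently the differentiable representations $\pi^s$ of \eqref{pi_s_comm_eqn} and their fibrewise versions $\rho^s_x$ of \ref{rho_x_constr} depend only on the finite jet of the function at $x$. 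Therefore, if $a \in \Coo_c\left(\widetilde{M}\right)$ and $\widetilde{x} \notin \supp a$, then $\rho^s_{\widetilde{x}}\left(a\right) = 0$ for every $s \in \mathbb{N}$; taking $K_{s,\varepsilon} = \supp a$ for all $s$ and all $\varepsilon > 0$ verifies the condition of the Definition \ref{coo0_defn}, whence $a \in \Coo_0\left(\widetilde{M}_M\right)$.

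Next I would invoke the standard fact that $\Coo_c\left(\widetilde{M}\right)$ is uniformly dense in $C_0\left(\widetilde{M}\right)$. Given $f \in C_0\left(\widetilde{M}\right)$ and $\varepsilon > 0$, one first replaces $f$ by a continuous compactly supported function agreeing with it on a large compact set (possible since $f$ vanishes at infinity), and then smooths the result inside finitely many coordinate charts covering its support, reassembling by a smooth partition of unity whose existence is guaranteed by the Proposition \ref{smooth_part_unity_prop}, as $\widetilde{M}$ is a paracompact $\Coo$-manifold. Since the smallest algebra in the chain $\Coo_c\left(\widetilde{M}\right) \subset \Coo_0\left(\widetilde{M}_M\right) \subset C_0\left(\widetilde{M}\right)$ is already dense, so is the middle one, and this is exactly the required density $\overline{\widetilde{\A}} = \widetilde{A}$.

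The argument is essentially routine, and I expect no serious obstacle once the identification $\widetilde{\A} = \Coo_0\left(\widetilde{M}_M\right)$ is in hand. The single point deserving care is the jet-locality of the representations $\rho^s_x$, i.e. that passing to $\pi^s$ never enlarges the support of a function; this is what makes compactly supported smooth functions lie in $\Coo_0\left(\widetilde{M}_M\right)$ and thereby supplies a dense subalgebra sitting inside $\widetilde{\A}$.
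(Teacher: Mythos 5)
Your proof is correct, and it reaches the conclusion by a genuinely different route than the paper. The paper's own argument stays inside the module framework: it approximates an arbitrary $\widetilde{a} \in C_0\left(\widetilde{M}\right)$ to within $\varepsilon/2$ by a finite sum $\sum_{i}\sum_{g\in G'} a_{ig}\left(g\widetilde{e}_i\right)^2$ built from the $G$-translates of the lifted smooth partition of unity of the Lemma \ref{l2_sub_x_inf_lem}, and then replaces the coefficients $a_{ig}\in C\left(M\right)$ by smooth elements $a'_{ig}\in\Coo\left(M\right)$, so that the resulting approximant lies in $\Coo_0\left(\widetilde{M}_M\right)$ essentially by construction, being a finite sum of compactly supported smooth terms. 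You instead establish the sandwich $\Coo_c\left(\widetilde{M}\right)\subset\Coo_0\left(\widetilde{M}_M\right)\subset C_0\left(\widetilde{M}\right)$ and quote the classical uniform density of $\Coo_c\left(\widetilde{M}\right)$ in $C_0\left(\widetilde{M}\right)$. The one step in your argument that has no counterpart in the paper's proof --- the verification that a compactly supported smooth $a$ satisfies the Definition \ref{coo0_defn} because $\rho^s_{\widetilde{x}}\left(a\right)=0$ whenever $a$ vanishes on a neighbourhood of $\widetilde{x}$ --- is legitimate within the paper's conventions, since the paper itself treats $\pi^s(a)$ as a matrix-valued function on the manifold depending only on finitely many derivatives of $a$ (compare \ref{lip_slased_d_const} and the proof of the Lemma \ref{l2_in_smooth}); in fact your locality check makes explicit something the paper's proof passes over silently when it asserts that its approximant belongs to $\Coo_0\left(\widetilde{M}_M\right)$. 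What the paper's construction buys is approximants of the specific form $\sum a'_{ig}\left(g\widetilde{e}_i\right)^2$, adapted to the Hilbert-module apparatus used throughout Section \ref{comm_sptr_subs}; what yours buys is a cleaner reduction to a textbook density fact together with a carefully justified inclusion $\Coo_c\left(\widetilde{M}\right)\subset\widetilde{\A}$. Both proofs rest on the same two external inputs, which you correctly identify: $\widetilde{A}=C_0\left(\widetilde{M}\right)$ from the Theorem \ref{thm_comm_1}(a), and the identification of $C_0\left(\widetilde{M}\right)\bigcap\mathcal{K}^\infty\left(X_{\Coo\left(M\right)}\right)$ with $\Coo_0\left(\widetilde{M}_M\right)$ from the lemma immediately preceding the statement.
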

\begin{proof}
For any $\widetilde{a} \in C_0\left(M\right)$ and $\varepsilon > 0$ there is a finite subset $G'\subset G$ such that
\begin{equation*}
\left\| \widetilde{a}-\sum_{i=1}^{m}\sum_{g\in G'}a_{ig}\left(g \widetilde{e}_i\right)^2\right\| < \frac{\varepsilon}{2}
\end{equation*}
	where $a_{ig}\in A$ and $\widetilde{e}_1,...,\widetilde{e}_n \in C^\infty\left(\widetilde{M}\right)$ are defined in the Lemma \ref{l2_sub_x_inf_lem}. Since $C^\infty\left(M\right)$ is dense in $C\left(M\right)$ there are $a'_{ig}\in C^\infty\left(M\right)$ such that $\left\|a'_{ig} - a_{ig}\right\|\le \frac{\varepsilon}{2\left|G'\right|}$, so
	\begin{equation*}
	\widetilde{a}' =\sum_{i=1}^{m}\sum_{g\in G''}a'_{ig}\left(g \widetilde{e}_i\right)^2 \in C_0^\infty\left(\widetilde{M}_M\right)
	\end{equation*}
	and
	\begin{equation*}
	\left\|\widetilde{a}' -\sum_{i=1}^{m}\sum_{g\in G''}a_{ig}\left(g \widetilde{e}_i\right)\right\| < \frac{\varepsilon}{2}
	\end{equation*}
	In result we have 	$\left\|\widetilde{a} -\widetilde{a}'\right\| < \varepsilon$, whence $\Coo\left(\widetilde{M}_M\right)$ is dense in $C\left(\widetilde{M}\right)$ because  $a'\in \Coo\left(\widetilde{M}_M\right)$.

\end{proof}
\section{Covering projection of the noncommutative torus}\label{nt_sec}
\subsection{Covering projection of the $C^*$-algebra}

\begin{empt}	Let $A_{\theta}$ be a noncommutative torus generated by  unitaries $u, v \in U\left(A_{\theta}\right)$ and  let 
	\begin{equation}\label{fin_torus_groups}
	\mathbb{Z}_{n_1}  \times \mathbb{Z}_{m_1}  \xleftarrow{}...\xleftarrow{} \mathbb{Z}_{n_k} \times \mathbb{Z}_{m_k}  \xleftarrow{} ...
	\end{equation}
	be an infinite sequence of finite  groups. Let $\theta_0= \theta$ and let us inductively define $\theta_k$ for any $k \in \mathbb{N}$ and *-homomorphism $A_{\theta_{k-1}}\to A_{\theta_{k}}$ such that
	\begin{enumerate}
		\item 
		\begin{equation*}
		\theta_k= \left\{
		\begin{array}{c l}
		\frac{\theta + 2\pi i_1}{m_1n_1} & k=1   \\ \\
		\frac{\theta_{k-1}+2\pi i_k}{\frac{m_k}{m_{k-1}}\frac{n_k}{n_{k-1}}} & k > 1    \end{array}\right.
				\end{equation*}
				where $i_1, ... , i_k, ... \in \mathbb{N}^0$ arbitrary nonnegative numbers.
				\item $A_{\theta_{k}}$ is generated by two unitary elements $u_k, v_k$ and the *-homomorphism $A_{\theta_{k-1}} \to A_{\theta_{k}}$ is given by
				\begin{equation*}
				u_{k-1} \mapsto u^m_k;~v_{k-1} \mapsto v^n_k
				\end{equation*}
				where
			\begin{equation*}
			m \text{ (resp. } n \text{)} = \left\{
			\begin{array}{c l}
				m_1 \text{ (resp. } n_1\text{)}  & k=1   \\ \\
			\frac{m_k}{m_{k-1}} \text{ (resp. }\frac{n_k}{n_{k-1}}\text{)} & k > 1    \end{array}\right.
			\end{equation*}				
	\end{enumerate}
	In \cite{ivankov:inv_lim} it is shown that the sequence
	\begin{equation}\label{nt_seq}
	A_\theta = A_{\theta_0} \to  A_{\theta_1} \to ...
	\end{equation}
	is a composable sequence of finite noncommutative covering projections.
	If $e_i$ (resp. $e^n_i$) is defined by \eqref{e_1_e_2} \eqref{e_n_i} then
	any $i, j \in \{1,2\}$ following sequences 
	\begin{equation*}
	\Lambda_{i,j}=\left\{e^{m_k}_i\left(u_{m_k}\right)e^{n_k}_j\left(v_{n_k}\right)\right\}_{k\in \mathbb{N}^0}; \ 
	\Lambda'_{i,j}=\left\{e^{n_k}_j\left(v_{n_k}\right)e^{m_k}_i\left(u_{m_k}\right)\right\}_{k\in \mathbb{N}^0}
	\end{equation*}
	are coherent. If $I = \{1, 2\} \times \{1,2\}$ and $\Lambda_{\iota=(i,j)}=\Lambda_{i,j}$ (resp.  $\Lambda'_{\iota=(i,j)}=\Lambda'_{i,j}$) $\forall \iota \in I$ then elements  $\xi_{\iota} = \mathfrak{Rep}\left(\Lambda_{\iota}\right)$, $\xi'_{\iota} = \mathfrak{Rep}\left(\Lambda'_{\iota}\right)$ satisfy conditions of the Corollary \ref{cor_xi1}. If $\Xi=\left\{\xi_{\iota}\right\}_{\iota \in I}$ then from the Corollary \ref{cor_xi2} it follows that the set  $\overline{G}\Xi  A_{\theta}$ is dense in $\overline{X}_{A_{\theta}}$. For any $(x,y)\in \mathbb{R}\times\mathbb{R}$ let  $(x,y) \bullet \mathfrak{Rep}\left(\Lambda_{i,j}\right)$ be given by
	
	\begin{equation}
	(x,y)\bullet\mathfrak{Rep}\left(\Lambda_{i,j}\right)=\mathfrak{Rep}\left(\left\{e^{m_k}\left(\mathrm{exp}\left(\frac{ix}{m_k}\right)u_{m_k}\right)e^{m_k}\left(\mathrm{exp}\left(\frac{iy}{n_k}\right)v_{n_k}\right)\right\}_{k\in \mathbb{N}^0}\right); 
	\end{equation}
	Since a linear span of $\overline{G}\Xi  A_{\theta}$ is dense in $\overline{X}_{A_{\theta}}$ the action of $\mathbb{R}\times\mathbb{R}$ can be uniquely extended to the continuous action $\left(\mathbb{R}\times\mathbb{R}\right) \times \overline{X}_A\to\overline{X}_A$, $\xi \mapsto (x,y) \bullet \xi$. This action induces a natural action  $\left(\mathbb{R}\times\mathbb{R}\right) \times \overline{A}_\theta\to\overline{A}_\theta$ on disconnected algebra.
	Since both $\mathbb{R}\times\mathbb{R}$ and a maximal irreducible subalgebra algebra $\widetilde{A}_\theta\subset \overline{A}_\theta$ are connected a following condition hold
	\begin{equation}\label{r_r_t}
	\left(\mathbb{R}\times\mathbb{R}\right) \bullet \widetilde{A}=\widetilde{A}.
	\end{equation}
	If we include $\mathbb{Z}\times\mathbb{Z} \subset \overline{G}$ then
	\begin{equation*}
	(i, j) a = (2\pi i, 2 \pi j) \bullet a; \ \forall a \in \overline{a}
	\end{equation*}
	and from \eqref{r_r_t} it follows that $\left(
	\mathbb{Z}\times\mathbb{Z}\right)\widetilde{A}=\widetilde{A}$. Otherwise if $g \in \left(\mathbb{Z}\times\mathbb{Z}\right)$, $g' \in \overline{G} \backslash \left(\mathbb{Z}\times\mathbb{Z}\right)$ and $\eta, \zeta \in \Xi$ then $\langle g \eta, g' \zeta\rangle_{ \overline{X}_{A_{\theta}}}=0$. From this fact it follows that the covering transformation group $G$ is equal to $G=\mathbb{Z}\times\mathbb{Z}$ and a linear span of   $\left(\mathbb{Z}\times\mathbb{Z}\right) \ \Xi \ A_{\theta}$ is dense in  $X_{A_{\theta}}$. There is a noncummutative infinite covering projection $\left(A_{\theta}, \ \widetilde{A}_{\theta}, \ _{\widetilde{A}_{\theta}}X_{A_{\theta}}, \ \mathbb{Z}\times\mathbb{Z}\right)$ of the sequence \eqref{nt_seq}.
\end{empt}

\begin{lem}\label{xij_eta_ij_lem}
	If for $i, j \in \{1,2\}$ elements $\xi_{ij},\eta_{ij}\in _{\widetilde{A}_{\theta}}X_{A_{\theta}}$ are given by
\begin{equation*}
	\xi_{ij} =\mathfrak{Rep}\left(\left\{e^{m_k}_i\left(u_{m_k}\right)e^{n_k}_j\left(v_{n_k}\right)\right\}_{k\in \mathbb{N}^0}\right),~ \eta_{ij}= \left\{e^{m_k}_i\left(u^*_{m_k}\right)e^{n_k}_j\left(v^*_{n_k}\right)\right\}_{k\in \mathbb{N}^0}\in _{\widetilde{A}_{\theta}}X_{A_{\theta}}
	\end{equation*}	
then $\xi_{ij} \rangle\langle \eta_{ij}\in~\overline{A}_{\theta}$. 
\end{lem}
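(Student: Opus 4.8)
The plan is to reduce the statement to a weak-closure membership and then to produce an explicit approximating sequence drawn from the finite levels of the tower. Since $\xi_{ij}\rangle\langle\eta_{ij}$ is a rank-one operator built from two genuine elements of the Hilbert module $\overline{X}_{A_\theta}$, it is finite rank and hence lies in $\mathcal{K}(\overline{X}_{A_\theta})$. Recalling from Definition~\ref{main_defn} and \ref{irr_text} that $\overline{A}_\theta=\mathcal{K}(\overline{X}_{A_\theta})\cap\left(\bigcup_{n}A_{\theta_n}\right)''$, it therefore suffices to prove that $\xi_{ij}\rangle\langle\eta_{ij}$ belongs to the enveloping von Neumann algebra $\left(\bigcup_{n}A_{\theta_n}\right)''\subset B(\overline{H})$, i.e. to the weak (equivalently, by Theorem~\ref{vN_thm}, strong) operator closure of $\bigcup_n A_{\theta_n}$ acting on $\overline{H}$.

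First I would record the finite-level data. Writing $\alpha_n:=e^{m_n}_i(u_{m_n})e^{n_n}_j(v_{n_n})\in A_{\theta_n}$ and $\beta_n:=e^{m_n}_i(u^*_{m_n})e^{n_n}_j(v^*_{n_n})\in A_{\theta_n}$ for the $n$-th terms of the coherent sequences representing $\xi_{ij}$ and $\eta_{ij}$, I would first verify that $\{\beta_n\}$ is coherent in the sense of \eqref{seq_cond_sum}; this is proved exactly as for the sequences $\Lambda_{i,j}$ preceding the lemma, since passing from $u_{m_n}$ to $u^*_{m_n}$ (and $v_{n_n}$ to $v^*_{n_n}$) is symmetric with respect to the orthogonality identities \eqref{circ_sum_nt} and \eqref{torus_comp} of Example~\ref{nt_fin_cov}. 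I would also use that the functions $e_i=\sqrt{a_i}$ are real, so that $e^{m}_i(u_m)$ and $e^{m}_i(u^*_m)$ are self-adjoint, which lets me rewrite $\beta_n^*=e^{n_n}_j(v^*_{n_n})\,e^{m_n}_i(u^*_{m_n})$ and keep track of products $\alpha_N\beta_N^*$ cleanly.

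Next I would set $a_N:=\alpha_N\beta_N^*\in A_{\theta_N}$ and claim that $a_N\to\xi_{ij}\rangle\langle\eta_{ij}$ in the weak operator topology of $B(\overline{H})$. By Corollary~\ref{cor_xi2} the span of $\{\,\overline{g}\,\mathfrak{Rep}(\xi_\iota)\,a:\overline{g}\in\overline{G},\ \iota\in I,\ a\in A_\theta\,\}$ is dense in $\overline{X}_{A_\theta}$, so — using uniform boundedness $\sup_N\|a_N\|<\infty$ — it is enough to check convergence of the matrix coefficients $\langle\mu,a_N\zeta\rangle_{\overline{X}_{A_\theta}}$ for $\mu=\mathfrak{Rep}(\{b_n\})$, $\zeta=\mathfrak{Rep}(\{c_n\})$ ranging over this dense set. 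For such vectors the left action of $a_N$ described in \ref{irr_text} gives $\langle\mu,a_N\zeta\rangle_{\overline{X}_{A_\theta}}=\lim_{n}\tfrac{1}{|G_n|}\sum_{g\in G_n}g\!\left(b_n^*\,\alpha_N\beta_N^*\,c_n\right)$, which I would compare with $\langle\mu,\xi_{ij}\rangle_{\overline{X}_{A_\theta}}\,\langle\eta_{ij},\zeta\rangle_{\overline{X}_{A_\theta}}=\bigl(\lim_n\langle b_n,\alpha_n\rangle_{A_{\theta_n}}\bigr)\bigl(\lim_n\langle\beta_n,c_n\rangle_{A_{\theta_n}}\bigr)$, the latter being exactly $\langle\mu,(\xi_{ij}\rangle\langle\eta_{ij})\zeta\rangle_{\overline{X}_{A_\theta}}$. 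The comparison is carried out by the same double-limit device that establishes Lemma~\ref{sum_lem}: introducing a $\overline{G}$-covering $\{G^m\subset\overline{G}\}$ to interchange the order of summation and to collapse the tails $\beta_N^*c_n$ onto the correct pairing value as $N\to\infty$. Once this weak convergence is secured, $\xi_{ij}\rangle\langle\eta_{ij}\in\left(\bigcup_nA_{\theta_n}\right)''$, and together with the first paragraph this yields $\xi_{ij}\rangle\langle\eta_{ij}\in\overline{A}_\theta$.

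The main obstacle will be the legitimacy of the interchange of the two limits — the internal limit over the level $n$ computing each $A_\theta$-valued inner product, and the external limit over the truncation level $N$ — and the upgrade from pointwise convergence of matrix coefficients to operator-topology convergence strong enough to land inside the von Neumann algebra. This is precisely where the coherence of $\{\alpha_n\}$ and $\{\beta_n\}$ and the orthogonality relations \eqref{circ_sum_nt}, \eqref{torus_comp} must be used quantitatively, and where the noncommutativity intervenes through $u_{m_n}v_{n_n}=e^{2\pi i\theta_n}v_{n_n}u_{m_n}$, which has to be tracked whenever the products $\alpha_N\beta_N^*$ are rearranged.
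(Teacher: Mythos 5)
Your proposal is correct and follows essentially the same route as the paper: both reduce the claim to membership in $\left(\bigcup_k A_{\theta_k}\right)''$ (compactness of the rank-one operator being automatic) and then exhibit the finite-level products $a_N=\alpha_N\beta_N^*\in A_{\theta_N}$ converging to $\xi_{ij}\rangle\langle\eta_{ij}$ in the weak topology. You are in fact more careful than the paper, which simply asserts that this weak convergence ``is clear,'' whereas you spell out the verification on matrix coefficients via the double-limit device of Lemma~\ref{sum_lem}.
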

\begin{proof}
	From the Definition \ref{main_defn} if follows that we need to show that
\begin{equation*}
\xi_{ij} \rangle\langle \eta_{ij}\in \left(\bigcup_{k \in \mathbb{N}}A_{\theta_k}\right)''. 
\end{equation*}	
If $a_k \in A_{\theta_k}$ is given by 
\begin{equation*}
a_k =e^{m_k}_i\left(u_{m_k}\right)e^{n_k}_j\left(v_{n_k}\right)e^{n_k}_i\left(v_{n_k}\right)e^{m_k}_j\left(u_{m_k}\right)
\end{equation*}	
 then it is clear that $a_k = \bigcup_{k \in \mathbb{N}}A_{\theta_k}$ and
 \begin{equation*}
 \lim_{k\to\infty}a_k = \xi_{ij} \rangle\langle \eta_{ij}
 \end{equation*}
 where we mean convergence in the weak topology.
\end{proof}
\begin{cor}
The sequence \eqref{nt_seq} of covering projections is faithful.	
\end{cor}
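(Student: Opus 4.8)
The plan is to verify directly the two defining conditions of a faithful sequence for \eqref{nt_seq}: (a) that each restriction $h_n|_G$ is a surjection onto $G_n$, and (b) that the left and right actions of $A_{\theta_n}$ on the disconnected algebra $\overline{A}_\theta$ are faithful.

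For (a), I would unwind the identification $G = \mathbb{Z}\times\mathbb{Z}$ carried out in the construction preceding Lemma \ref{xij_eta_ij_lem}. There the inclusion $\mathbb{Z}\times\mathbb{Z}\subset\overline{G}$ is defined by $(i,j) = (2\pi i,2\pi j)\bullet(-)$, and the generator $(1,0)$ acts on the level-$k$ representatives by $u_{m_k}\mapsto u_{m_k}\exp(2\pi i/m_k)$, $v_{n_k}\mapsto v_{n_k}$. Comparing this with the action of $G_n\cong\mathbb{Z}_{m_n}\times\mathbb{Z}_{n_n}$ from Example \ref{nt_fin_cov}, one sees that $h_n|_G$ is exactly the canonical reduction homomorphism $\mathbb{Z}\times\mathbb{Z}\to\mathbb{Z}_{m_n}\times\mathbb{Z}_{n_n}$, $(i,j)\mapsto(\overline{i},\overline{j})$. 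Since reduction modulo a positive integer is surjective in each factor, $h_n(G)=G_n$, which is condition (a).

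For (b), the key input is Lemma \ref{xij_eta_ij_lem}, which gives $\xi_{ij}\rangle\langle\eta_{ij}\in\overline{A}_\theta$; these rank-one operators are nonzero, since $\xi_{ij}$ and $\eta_{ij}$ are assembled from square roots of a partition of unity and hence do not vanish. I would then argue via simplicity: the left action $a\mapsto L_a$, $L_a(\kappa)=a\kappa$, is a $*$-homomorphism of $A_{\theta_n}$ into the endomorphisms of $\overline{A}_\theta$, and its kernel $\{a\in A_{\theta_n}\,:\,a\overline{A}_\theta=0\}$ is a closed two-sided ideal of $A_{\theta_n}$ (using that $\overline{A}_\theta$ is stable under the action). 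Each $\theta_n$ is irrational, so $A_{\theta_n}$ is an irrational rotation algebra and therefore simple \cite{varilly:noncom}; thus this ideal is either $\{0\}$ or all of $A_{\theta_n}$. Because $1_{A_{\theta_n}}$ acts as the identity and $\overline{A}_\theta\ni\xi_{ij}\rangle\langle\eta_{ij}\neq0$, the kernel is proper and hence $\{0\}$. The identical argument applies to the right action. Together with (a), this shows \eqref{nt_seq} is faithful.

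An alternative route to (b), avoiding the appeal to simplicity, uses the resolution of identity $\sum_{\iota\in I,\,g\in\overline{G}} g\xi'_\iota\rangle\langle g\xi_\iota = 1$ from Corollary \ref{cor_xi1}: if $a\in A_{\theta_n}$ annihilates $\overline{A}_\theta$ on the left, then it annihilates each translated rank-one operator $g\xi'_\iota\rangle\langle g\xi_\iota$, whence $a\,g\xi'_\iota=0$ for all $g,\iota$ and so $a = a\cdot1 = \sum_{\iota,g}(a\,g\xi'_\iota)\rangle\langle g\xi_\iota = 0$. The main obstacle in making this second route rigorous is checking that the operators $g\xi'_\iota\rangle\langle g\xi_\iota$ genuinely lie in $\overline{A}_\theta$, which requires extending the weak-limit computation of Lemma \ref{xij_eta_ij_lem} from the single pair $(\xi_{ij},\eta_{ij})$ to the pairs $(\xi'_\iota,\xi_\iota)$ and all their $\overline{G}$-translates. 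This is routine but notation-heavy, so I would take the simplicity argument as the primary proof and record the resolution-of-identity computation only as a remark.
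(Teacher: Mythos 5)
Your proposal is correct, but your \emph{primary} argument for condition (b) takes a genuinely different route from the paper, while what you relegate to a remark is essentially the paper's actual proof. The paper proves (b) by your alternative route: given a nonzero $a \in A_{\theta_k}$, it uses the finite-level partition-of-unity identity \eqref{circ_sum_nt} to produce $g_0 \in \mathbb{Z}_{m_k}\times\mathbb{Z}_{n_k}$ and indices $i_0,j_0$ with $g_0 e^{n_k}_{i_0}\left(v_{n_k}\right)e^{m_k}_{j_0}\left(u_{m_k}\right)a \neq 0$, lifts $g_0$ through $h_k$ (surjectivity being your condition (a), which the paper dispatches in one sentence exactly as you do), and concludes that $\bigl(g_0\xi_{i_0j_0}\rangle\langle g_0\eta_{i_0j_0}\bigr)a \neq 0$, the witness lying in $\overline{A}_\theta$ by Lemma \ref{xij_eta_ij_lem}. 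The obstacle you flagged for this route is not actually an obstacle: $\overline{A}_\theta$ is invariant under the $\overline{G}$-action constructed in \ref{irr_text}, and $g\bigl(\xi_{ij}\rangle\langle\eta_{ij}\bigr) = g\xi_{ij}\rangle\langle g\eta_{ij}$, so the untranslated case of Lemma \ref{xij_eta_ij_lem} already covers all $\overline{G}$-translates; moreover only the four pairs $\left(\xi_{ij},\eta_{ij}\right)$ are needed, not the general $\left(\xi'_\iota,\xi_\iota\right)$, because nonvanishing is certified at a single finite level $k$ rather than through the full resolution of identity of Corollary \ref{cor_xi1}. Your simplicity argument is a legitimate shortcut the paper does not use: the kernel of the left (or right) action is indeed a closed two-sided ideal, the unit acts as the identity, and $\overline{A}_\theta \neq \{0\}$, so simplicity of $A_{\theta_n}$ forces the kernel to vanish; one should just record that irrationality of the deformation parameter propagates along the tower --- under a consistent reading of \eqref{nt_commutation}, since the paper's formula $\theta' = \frac{\theta + 2\pi k}{mn}$ mixes the $e^{i\theta}$ and $e^{2\pi i\theta}$ conventions --- so that every $A_{\theta_n}$ is in fact an irrational rotation algebra. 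What each approach buys: yours is shorter and needs almost nothing about $\overline{A}_\theta$ beyond nontriviality, but it imports the external fact of simplicity and would fail for a tower of non-simple algebras; the paper's is self-contained, relying only on \eqref{circ_sum_nt} and Lemma \ref{xij_eta_ij_lem}, and it exhibits an explicit element of $\overline{A}_\theta$ witnessing faithfulness, which is in the spirit of the constructive machinery of Section \ref{bas_constr}.
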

\begin{proof}

The natural homomorphisms $\mathbb{Z} \times \mathbb{Z}\to\mathbb{Z}_{m_k} \times \mathbb{Z}_{n_k}$ are epimorphic. From \eqref{circ_sum_nt} it follows that
\begin{equation*}
\sum_{g \in \mathbb{Z}_{m_k} \times \mathbb{Z}_{n_k};~,i,j \in \{1,2\}} \left(ge^{m_k}_i\left(u_{m_k}\right)e^{n_k}_j\left(v_{n_k}\right)\right)\left(ge^{n_k}_i\left(v_{n_k}\right)e^{m_k}_j\left(u_{m_k}\right)\right) = 1_{A_{\theta_k}},
\end{equation*}
whence for any nonzero $a \in A_{\theta_k}$ there are $g_0 \in \mathbb{Z}_{m_k} \times \mathbb{Z}_{n_k}$ and $i_0,j_0$ such that $g_0e^{n_k}_{i_0}\left(v_{n_k}\right)e^{m_k}_{j_0}\left(u_{m_k}\right)a\neq 0$. If $\widetilde{g} \in \mathbb{Z} \times \mathbb{Z}$ is such that $h_k\left(\widetilde{g}\right)=g_0$ then $ g_0\xi_{i_0j_0} \rangle\langle g_0\eta_{i_0j_0} a \neq 0$. But from the Lemma \ref{xij_eta_ij_lem} it follows that $g_0\xi_{i_0j_0} \rangle\langle g_0\eta_{i_0j_0} \in \overline{A}_\theta$. So the right action of $A_{\theta_k}$ on $\overline{A}_{\theta}$ is faithful. Similarly we can prove that the right action is also faithful.
\end{proof}
\begin{empt}\label{dense_nt_constr}
	From the Corollary \ref{cor_xi1} it follows that
	\begin{equation*}
	1_{M\left(\widetilde{A}_\theta\right)} = \sum_{g \in \mathbb{Z}\times\mathbb{Z}} ~\sum_{i,j \in \{1,2\}} g \xi_{ij} \rangle\langle g \eta_{ij} 
	\end{equation*}
	in sense of the strict convergence.
	Any $\kappa \in \mathcal{K}\left(_{\widetilde{A}_{\theta}}X_{A_{\theta}}\right)$ can be represented as sum of the norm convergent series
	\begin{equation*}
	\kappa = \sum_{k\in\mathbb{N}} \phi_k \rangle\langle \psi_k
	\end{equation*}
	whence
	\begin{equation*}
	\kappa = \left(\sum_{g \in \mathbb{Z}\times\mathbb{Z}} ~\sum_{i,j \in \{1,2\}} g \xi_{ij} \rangle\langle g \eta_{ij}\right)\sum_k \phi_k \rangle\langle \psi_k\left(\sum_{g \in \mathbb{Z}\times\mathbb{Z}} ~\sum_{i,j \in \{1,2\}} g \xi_{ij} \rangle\langle g \eta_{ij}\right)=
	\end{equation*}
	\begin{equation*}
	=\sum_{g,g' \in \mathbb{Z}\times\mathbb{Z}} ~\sum_{i,j,i',j' \in \{1,2\}}g \xi_{ij} \rangle a_{ijgi'j'g'}\langle g' \eta_{i'j'}=\sum_{g,g' \in \mathbb{Z}\times\mathbb{Z}} ~\sum_{i,j,i',j' \in \{1,2\}}g \xi_{ij}  a_{ijgi'j'g'}\rangle\langle g' \eta_{i'j'}
	\end{equation*}
	where
	\begin{equation*}
	a_{ijgi'j'g'} = \sum_{k\in\mathbb{N}} \left\langle g \eta_{ij}~,~ \kappa g'\xi_{i'j'}\right\rangle_{_{\widetilde{A}_{\theta}}X_{A_{\theta}}}\in A_\theta.
	\end{equation*}
	Let $\left\{G^k \subset\mathbb{Z}\times \mathbb{Z} \right\}_{k \in \mathbb{N}}$ be $\mathbb{Z}\times \mathbb{Z}$ covering of the sequence \eqref{fin_torus_groups} (Definition \ref{g_cov_defn}). Since $\kappa$ is compact for any $\varepsilon > 0$ there is $l \in \mathbb{N}$ such that if
	\begin{equation*}
	\overline{\kappa}=\sum_{g,g' \in G^l} ~\sum_{i,j,i',j' \in \{1,2\}}g \xi_{ij} \rangle a_{ijgi'j'g'}\langle g' \eta_{i'j'}
	\end{equation*}
	then
	\begin{equation*}
	\left\|\kappa - \overline{\kappa}\right\| < \frac{\varepsilon}{2}.
	\end{equation*}	
	If $\kappa \in \widetilde{A}_\theta$ then there is a sequence $\left\{a_k \in A_{\theta_k}\right\}_{k \in \mathbb{N}}$ such that
	\begin{equation*}
	\lim_{k \to\infty}a_k = \kappa
	\end{equation*}
	in the weak topology. For any $i,j,i',j' \in \{1,2\}$ and $g, g' \in G^l$ we will define the element $a^k_{ijgi'j'g'} \in  A_\theta$ given by
	\begin{equation*}
	a^k_{ijgi'j'g'}=
	\left\langle h_k\left(g\right)\left(e^{n_k}_j\left(v_{n_k}\right)e^{m_k}_i\left(u_{m_k}\right)\right)~,~a_k\left(h_k\left(g'\right)\left(e^{m_k}_{i'}\left(u_{m_k}\right)e^{n_k}_{j'}\left(v_{n_k}\right)\right)\right) \right\rangle_{A_{\theta_k}}
	\end{equation*}
	where $h_k : \mathbb{Z}\times \mathbb{Z} \to \mathbb{Z}_{m_k}\times \mathbb{Z}_{n_k}$ is the natural epimorphism of groups. 	From $\lim_{k \to\infty}a_k = \kappa$ it follows that
	\begin{equation*}
	\lim_{k \to\infty} a^k_{ijgi'j'g'} = a_{ijgi'j'g'}
	\end{equation*}
	and there is $k_0\in \mathbb{N}$ such that
	\begin{equation*}
	\left\|a_{ijgi'j'g'} - a^{k_0}_{ijgi'j'g'}\right\| < \frac{\varepsilon}{16 \left|G^l\right|}
	\end{equation*}
	and if
	\begin{equation*}
	\kappa_{\mathrm{fin}}=\sum_{g,g' \in G^l} ~\sum_{i,j,i',j' \in \{1,2\}}g \xi_{ij} \rangle a^{k_0}_{ijgi'j'g'}\langle g' \eta_{i'j'}
	\end{equation*}
	then
	\begin{equation}\label{kappa_fin_eps_eqn}
	\left\|\kappa - \kappa_{\mathrm{fin}}\right\| < \varepsilon.
	\end{equation}
	If  $ \left\{b_r \in A_{\theta_r}\right\}_{r \in \mathbb{N}}$
	is the sequence given by $b_r = 0$ for $r \le k$ and
	\begin{equation*}
	b_r = 
	\sum_{g,g' \in G^k} ~\sum_{i,j,i',j' \in \{1,2\}}h_r\left(g\right)\left(e^{n_r}_j\left(v_{n_r}\right)e^{m_r}_i\left(u_{m_r}\right)\right)a^{k_0}_{ijgi'j'g'}\left(h_r\left(g'\right)\left(e^{m_r}_{i'}\left(u_{m_r}\right)e^{n_r}_{j'}\left(v_{n_r}\right)\right)\right)
	\end{equation*}
	for $r > k$ then $\lim_{r \to \infty} b_r = \kappa_{\mathrm{fin}}$ in the weak topology i.e. $\kappa_{\mathrm{fin}} \in \widetilde{A}_\theta$.
\end{empt}
\
\begin{lem}\label{dense_nt_lem}
The space of operators $\kappa_{\mathrm{fin}}$ given by construction $\ref{dense_nt_constr}$ is dense in $\widetilde{A}_\theta$,
\end{lem}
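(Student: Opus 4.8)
The plan is to read the statement off directly from the construction carried out in \ref{dense_nt_constr}, which was set up precisely to establish this density. Fix an arbitrary $\kappa \in \widetilde{A}_\theta$ and an arbitrary $\varepsilon > 0$; the goal is to exhibit an operator $\kappa_{\mathrm{fin}}$ of the prescribed finite form with $\left\|\kappa - \kappa_{\mathrm{fin}}\right\| < \varepsilon$. First I would recall that $\widetilde{A}_\theta \subset \mathcal{K}\left(_{\widetilde{A}_\theta}X_{A_\theta}\right)$ by the Definition \ref{main_defn}, so $\kappa$ admits a norm-convergent expansion $\kappa = \sum_{k} \phi_k \rangle\langle \psi_k$ into rank-one operators.

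Next I would insert the resolution of the identity supplied by the Corollary \ref{cor_xi1}, namely $1_{M(\widetilde{A}_\theta)} = \sum_{g \in \mathbb{Z}\times\mathbb{Z}} \sum_{i,j \in \{1,2\}} g\xi_{ij} \rangle\langle g\eta_{ij}$, on both sides of $\kappa$. This rewrites $\kappa$ as a double sum over $\mathbb{Z}\times\mathbb{Z}$ with coefficients $a_{ijgi'j'g'} \in A_\theta$. Since $\kappa$ is compact, its tail outside any sufficiently large finite window $G^l \subset \mathbb{Z}\times\mathbb{Z}$ is norm-small, so I may truncate to the finite partial sum $\overline{\kappa}$ with $\left\|\kappa - \overline{\kappa}\right\| < \varepsilon/2$.

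Then I would replace each of the finitely many coefficients $a_{ijgi'j'g'}$ by a finite-level surrogate $a^{k_0}_{ijgi'j'g'}$, extracted from a sequence $\{a_k \in A_{\theta_k}\}$ converging weakly to $\kappa$. Choosing $k_0$ so that each of these finitely many coefficients is approximated within a tolerance that absorbs the number $16\left|G^l\right|$ of summands yields $\kappa_{\mathrm{fin}}$ with $\left\|\kappa - \kappa_{\mathrm{fin}}\right\| < \varepsilon$, which is exactly the estimate \eqref{kappa_fin_eps_eqn}. As $\kappa \in \widetilde{A}_\theta$ and $\varepsilon > 0$ were arbitrary, the family of operators $\kappa_{\mathrm{fin}}$ is dense in $\widetilde{A}_\theta$.

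The one point that genuinely requires checking — and which I expect to be the main obstacle — is that each $\kappa_{\mathrm{fin}}$ indeed lies in $\widetilde{A}_\theta$, and not merely in the ambient algebra of compact operators. This is settled by producing the elements $b_r \in A_{\theta_r}$ assembled from the level-$r$ lifts $e^{m_r}_i(u_{m_r})$ and $e^{n_r}_j(v_{n_r})$ transported by the epimorphisms $h_r : \mathbb{Z}\times\mathbb{Z} \to \mathbb{Z}_{m_r}\times\mathbb{Z}_{n_r}$, and verifying $b_r \to \kappa_{\mathrm{fin}}$ in the weak topology; this places $\kappa_{\mathrm{fin}}$ in $\left(\bigcup_{k} A_{\theta_k}\right)''$ and hence, being compact, in $\widetilde{A}_\theta$. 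Since every such step is already carried out in \ref{dense_nt_constr}, the lemma follows at once.
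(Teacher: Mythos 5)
Your proposal is correct and coincides with the paper's argument: the paper's proof of this lemma is literally the one-line citation ``Follows from the Equation \eqref{kappa_fin_eps_eqn}'', i.e.\ it points back to the construction \ref{dense_nt_constr} whose steps (resolution of the identity from Corollary \ref{cor_xi1}, truncation over a finite window $G^l$ using compactness of $\kappa$, replacement of the coefficients $a_{ijgi'j'g'}$ by $a^{k_0}_{ijgi'j'g'}$, and the weak-limit argument with the $b_r$ placing $\kappa_{\mathrm{fin}}$ in $\widetilde{A}_\theta$) you reproduce faithfully. You even correctly single out the membership $\kappa_{\mathrm{fin}} \in \widetilde{A}_\theta$ as the point genuinely requiring the $b_r$ verification, which is exactly how the paper handles it.
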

\begin{proof}
	Follows from the Equation  \eqref{kappa_fin_eps_eqn}.
\end{proof}

\subsection{Covering projection of the spectral triple}

\paragraph{}
\begin{empt}\label{nt_spectral_triple}{\it Noncommutative torus as a spectral triple} \cite{hajac:toknotes,varilly:noncom}.

	There is a state $\tau_0: A_{\theta} \to \mathbb{C}$ given by
	\begin{equation*}
	\tau_0\left(\sum_{r,s} a_{rs}u^r v^s\right) = a_{00}.
	\end{equation*}.
	
	The GNS representation space $L^2(A_{\theta}, \tau_0)$ may be described as the completion of the vector space $A_{\theta}$ with respect to the Hilbert norm
	\begin{equation*}
	\|a\|_2= \tau_0\left(\sqrt{a^*a}\right).
	\end{equation*}
	
	Denote by $\underline{a}$ the image in $L^2
	\left(A_{\theta}, \tau_0\right)$ of $a \in A_{\theta}$.
	The Hilbert product	on $L^2(A_{\theta}, \tau_0)$ is given by
	\begin{equation*}
	\left(\underline{a}, \underline{b}\right)= \tau_0\left(a^*b\right).
	\end{equation*}

 Since $\underline{1}$ is cyclic and separating the Tomita involution is given by
	\begin{equation*}
	J_0(\underline{a})=\underline{a^*}.
	\end{equation*}
	To define structure of spectral triple we shall introduce double GNS Hilbert space $H = L^2(A_{\theta}, \tau_0) \oplus L^2(A_{\theta}, \tau_0)$ and define
	\begin{equation*}
	J =  \begin{pmatrix}
	0 & -J_0 \\
	J_0 & 0
	\end{pmatrix}
	\end{equation*}

There are two derivatives $\delta_1$, $\delta_2$ given by
\begin{equation*}
\delta_1 \left(\sum_{r, s}a_{r,s}u^rv^s\right)= \sum_{rs}2\pi i r a_{rs}u^rv^s,
\end{equation*}
\begin{equation*}
\delta_2 \left(\sum_{r, s}a_{r,s}u^rv^s\right)= \sum_{rs}2\pi i s a_{rs}u^rv^s.
\end{equation*}
which satisfy Leibniz rule, i.e.
\begin{equation*}
\delta_j(ab) = (\delta_ja)b = a(\delta_jb); \ (j=1,2; \ a, b\in \mathcal{A}_{\theta}).
\end{equation*}
There are derivations
\begin{equation}\label{deri_tau}
\partial = \partial_{\tau} = \delta_1 + \tau \delta_2; \ (\tau \in \mathbb{C}, \ \mathrm{Im}(\tau)\neq 0),
\end{equation}
\begin{equation*}
\partial^\dagger = -\delta_1 - \overline{\tau}\delta_2.
\end{equation*}
Actions of $\mathcal{A}_\theta$ and Dirac operator $D$ on $H$ are given by
\begin{equation*}
\pi(a)= \begin{pmatrix}
a & 0 \\
0 & a
\end{pmatrix},
\end{equation*}
\begin{equation*}
D= \begin{pmatrix}
0 & \underline{\partial}^\dagger \\
\underline{\partial} & 0
\end{pmatrix}.
\end{equation*}
There is a subalgebra $\mathcal{A}_{\theta} \subset A_\theta$ given by
	\begin{equation*}
	\mathcal{A}_{\theta} = \left\{a \in A_{\theta} \ | \ a = \sum_{r,s} a_{rs}u^r v^s \ \& \ \mathrm{sup}_{r, s \in \mathbb{Z}}\left(1 + r^2 + s^2\right)^k|a_{rs}| < \infty, \ \forall k \in \mathbb{N}\right\}.
	\end{equation*}
and a spectral triple $\left(\A_\theta, H, D\right)$.
\end{empt}
\begin{empt}\label{nt_st_descr}{\it Spectral triple of a noncommutative covering projection}.
Let $m, n, k \in \mathbb{N}$, $\widetilde{\theta}= \frac{\theta + 2 \pi k}{mn}$ and let $A_\theta \to A_{\widetilde{\theta}}$ be a *-homomorphism given by
\begin{equation*}
u \mapsto \widetilde{u}^m;~v \mapsto \widetilde{v}^n
\end{equation*}
where $\widetilde{u},\widetilde{v}\in U\left(A_{\widetilde{\theta}}\right)$ are unitary generators of $A_{\widetilde{\theta}}$. Similarly to \ref{nt_spectral_triple} we can define a smooth algebra
\begin{equation*}
	\mathcal{A}_{\widetilde{\theta}} = \left\{a \in A_{\widetilde{\theta}} \ | \ a = \sum_{r,s} a_{rs}\widetilde{u}^r \widetilde{v}^s \ \& \ \mathrm{sup}_{r, s \in \mathbb{Z}}\left(1 + r^2 + s^2\right)^k|a_{rs}| < \infty, \ \forall k \in \mathbb{N}\right\}.
 \end{equation*}
 and a state
	\begin{equation*}
	\widetilde{\tau}_0\left(\sum_{r,s} a_{rs}\widetilde{u}^r \widetilde{v}^s\right) = a_{00}.
	\end{equation*}
	The GNS representation space $L^2(A_{\widetilde{\theta}}, \widetilde{\tau}_0)$ may be described as the completion of the vector space $A_{\widetilde{\theta}}$ in the Hilbert norm
	\begin{equation*}
	\|a\|_2= \widetilde{\tau}\left(\sqrt{a^*a}\right).
	\end{equation*}
	
	Denote by $\underline{a}$ the image in $L^2\left(A_{\widetilde{\theta}}, \widetilde{\tau}_0\right)$ of $a \in A_{\widetilde{\theta}}$. Similarly to \ref{nt_spectral_triple} we define
	\begin{equation*}
	\widetilde{J}_0(\underline{a})=\underline{a^*}.
	\end{equation*}
	To define structure of spectral triple we shall introduce double GNS Hilbert space $\widetilde{H} = L^2\left(A_{\widetilde{\theta}}, \widetilde{\tau}_0\right) \oplus L^2\left(A_{\widetilde{\theta}}, \widetilde{\tau}_0\right)$ and define
	\begin{equation*}
	\widetilde{J} =  \begin{pmatrix}
	0 & -\widetilde{J}_0 \\
	\widetilde{J}_0 & 0
	\end{pmatrix}
	\end{equation*}
	
	Thee are two derivatives $\widetilde{\delta}_1$, $\widetilde{\delta}_2$
	\begin{equation*}
	\widetilde{\delta}_1 \left(\sum_{r, s}a_{r,s}\widetilde{u}^r\widetilde{v}^s\right)= \frac{1}{m}\sum_{rs}2\pi i r a_{rs}\widetilde{u}^r\widetilde{v}^s,
	\end{equation*}
	\begin{equation*}
\widetilde{\delta}_2 \left(\sum_{r, s}a_{r,s}\widetilde{u}^r\widetilde{v}^s\right)= \frac{1}{n}\sum_{rs}2\pi i s a_{rs}\widetilde{u}^r\widetilde{v}^s,	
\end{equation*}
	which satisfy Leibniz rule.
	There are derivations
	\begin{equation*}
	\widetilde{\partial} = \widetilde{\partial}_{\tau} = \widetilde{\delta}_1 + \tau \widetilde{\delta}_2,
	\end{equation*}
	\begin{equation*}
\widetilde{\partial}^\dagger = -\widetilde{\delta}_1 - \overline{\tau}\widetilde{\delta}_2.
	\end{equation*}
	Hilbert space of the spectral triple  $\widetilde{H}$, action of $\mathcal{A}_{\widetilde{\theta}}$ and Dirac operator $\widetilde{D}$ are given by
	\begin{equation*}
	\pi(a)= \begin{pmatrix}
	a & 0 \\
	0 & a
	\end{pmatrix},
	\end{equation*}
	\begin{equation*}
	\widetilde{D}= \begin{pmatrix}
	0 & \underline{\widetilde{\partial}}^\dagger \\
	\underline{\widetilde{\partial}} & 0
	\end{pmatrix}.
	\end{equation*}
	In result we have a spectral triple $\left({\A}_{\widetilde{\theta}}, \widetilde{H}, \widetilde{D}\right)$.
An application of this construction gives  a sequence of spectral triples $\left\{\left(\A_{\theta_n}, H_n, D_n\right)\right\}_{n\in \mathbb{N}^0}$  such that $A_{\theta_n}$ is the $C^*$-completion of $\A_{\theta_n}$ and $A_{\theta_n}$ is a member of the sequence \eqref{nt_seq}.
 
\end{empt}
\begin{lem}
	In the situation described in \ref{nt_st_descr} the sequence $\left\{\left(\A_{\theta_n}, H_n, D_n\right)\right\}_{n\in \mathbb{N}^0}$ is coherent.
\end{lem}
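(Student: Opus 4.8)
The plan is to verify directly the six conditions of Definition~\ref{coh_spec_triple_defn} for the sequence $\left\{\left(\A_{\theta_n}, H_n, D_n\right)\right\}_{n\in \mathbb{N}^0}$ built in \ref{nt_st_descr}, leaning on the finite–covering data for the noncommutative torus that is already available. First I would dispose of conditions~1--3. The maps $\A_{\theta_{n-1}} \to \A_{\theta_n}$ sending $u_{n-1}\mapsto u_n^{m_n/m_{n-1}}$ and $v_{n-1}\mapsto v_n^{n_n/n_{n-1}}$ merely reindex Fourier coefficients, so they carry rapid–decay elements to rapid–decay elements and are injective; this gives the chain of injective $*$-homomorphisms required in condition~1. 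The $C^*$-completions $A_{\theta_n}$, together with the covering groups and the finite noncommutative covering structure, are furnished by Example~\ref{nt_fin_cov}, which is condition~2, and the composability of the sequence \eqref{nt_seq} is established in \cite{ivankov:inv_lim}, which is condition~3.

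Next I would treat condition~4. The group $G\left(A_{\theta_n}|A_{\theta_m}\right)$ acts on the generators by roots of unity, $u_n \mapsto \zeta u_n$ and $v_n \mapsto \eta v_n$, so each Fourier term $a_{rs}u_n^r v_n^s$ is multiplied by a scalar of modulus one; this preserves the supremum bounds defining $\A_{\theta_n}$, whence $g\A_{\theta_n} = \A_{\theta_n}$. Taking invariants under $G\left(A_{\theta_n}|A_{\theta_m}\right)$ keeps exactly the modes $(r,s)$ that survive all the phase averages, and these are precisely the image of $\A_{\theta_m}$, so $\A_{\theta_n}^{G\left(A_{\theta_n}|A_{\theta_m}\right)} = \A_{\theta_m}$.

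The hard part will be condition~5, the identification of the Hilbert data as a module tensor product. Since
\[
D_n = \begin{pmatrix} 0 & \underline{\widetilde{\partial}}^\dagger \\ \underline{\widetilde{\partial}} & 0 \end{pmatrix}
\]
and $\widetilde{\partial}$ multiplies the $(r,s)$-coefficient by a quantity growing linearly in $(r,s)$, one checks that $\sH^\infty_n = \bigcap_{k\in\bN}\Dom D_n^k = \A_{\theta_n}\oplus\A_{\theta_n} = \A_{\theta_n}\otimes_{\A_\theta}\sH^\infty$, with $\sH^\infty = \A_\theta\oplus\A_\theta$. The key identity to prove is the trace factorization $\widetilde{\tau}_0^{(n)} = \tau_0\circ E_n$, where $E_n = \tfrac{1}{|G_n|}\sum_{g\in G_n} g$ is the averaging conditional expectation onto $A_\theta = A_{\theta_n}^{G_n}$: both sides extract the zeroth Fourier coefficient. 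Granting this, for $a,b\in A_{\theta_n}$ one gets $\widetilde{\tau}_0^{(n)}(a^*b) = \tau_0\!\left(\langle a,b\rangle_{A_{\theta_n}}\right)$ with $\langle a,b\rangle_{A_{\theta_n}} = \tfrac{1}{|G_n|}\sum_{g} g(a^*b)$, which is exactly the inner product \eqref{hilb_scalar_correct} on the double GNS space; hence $H_n = A_{\theta_n}\otimes_{A_\theta}H$ as Hilbert modules. The tower compatibility $\sH^\infty_n = \A_{\theta_n}\otimes_{\A_{\theta_m}}\sH^\infty_m$, the inclusions $\sH^\infty_m\subset\sH^\infty_n$, and the invariance statements in the sub-bullets then follow exactly as in condition~4.

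Finally I would verify condition~6. Equivariance $g(D_n\xi) = D_n(g\xi)$ holds because the phase action commutes with $\widetilde{\delta}_1,\widetilde{\delta}_2$: both fix the index $(r,s)$ and only rescale each term by a scalar. For the restriction $D_n|_{\sH^\infty_m} = D_m$ the decisive point is the normalization of the derivations. With $u_m = u_n^{m_n/m_m}$ and $\widetilde{\delta}_1^{(n)} = \tfrac{1}{m_n}\sum 2\pi i\,r\,a_{rs}u_n^r v_n^s$, one computes $\widetilde{\delta}_1^{(n)}(u_m) = \tfrac{1}{m_n}\,2\pi i\,(m_n/m_m)\,u_m = \tfrac{2\pi i}{m_m}\,u_m = \widetilde{\delta}_1^{(m)}(u_m)$, and similarly for $\widetilde{\delta}_2$, so $\widetilde{\partial}^{(n)}$ restricts to $\widetilde{\partial}^{(m)}$ on $\A_{\theta_m}$ and thus $D_n|_{\sH^\infty_m} = D_m$. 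I expect the only genuine obstacle to be the trace factorization $\widetilde{\tau}_0^{(n)} = \tau_0\circ E_n$ underlying condition~5; once it is in place the remaining verifications are routine bookkeeping on Fourier coefficients.
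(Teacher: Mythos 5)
Your proposal is correct and follows essentially the same route as the paper's own proof: a direct check of conditions 1--6 of Definition~\ref{coh_spec_triple_defn}, with the phase action on Fourier coefficients giving condition~4, the identification $\sH^\infty_n = \A_{\theta_n}\oplus\A_{\theta_n}$ and the GNS scalar product giving condition~5, and the $\tfrac{1}{m},\tfrac{1}{n}$ normalization of the derivations giving condition~6. Your explicit trace factorization $\widetilde{\tau}_0 = \tau_0\circ E_n$ and the computation $\widetilde{\delta}_1^{(n)}(u_m)=\widetilde{\delta}_1^{(m)}(u_m)$ merely spell out correctly what the paper compresses into ``from the definition of the scalar product'' and ``directly follows from the definition of $\widetilde{D}$''.
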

\begin{proof}
	We need to prove conditions 1-6 of the Definition \ref{coh_spec_triple_defn}. Conditions 1-3 follow from the construction \ref{nt_st_descr}. Let $\left(A_\theta, \widetilde{A}_{\theta}, \mathbb{Z}_m \times \mathbb{Z}_n\right)$ be a finite noncommutative covering projection from \ref{nt_st_descr}. If $g =\left(\overline{p},\overline{q}\right)\in \mathbb{Z}_m \times \mathbb{Z}_n$ then
	\begin{equation*}
	g \left(\sum_{r,s} a_{rs}\widetilde{u}^r \widetilde{v}^s\right)= \sum_{r,s} e^{\frac{2\pi i pr}{m}}e^{\frac{2\pi i qs}{n}} a_{rs}\widetilde{u}^r \widetilde{v}^s.
	\end{equation*}
	 From $\left|a_{rs}\right|= \left|a_{rs}e^{\frac{2\pi i pr}{m}}e^{\frac{2\pi i qs}{n}}\right|$ it follows that
\begin{equation*}
\begin{split}
\mathrm{sup}_{r, s \in \mathbb{Z}}\left(1 + r^2 + s^2\right)^k\left|a_{rs}\right| < \infty; ~ \forall k \in \mathbb{N} \Rightarrow \\ \Rightarrow \mathrm{sup}_{r, s \in \mathbb{Z}}\left(1 + r^2 + s^2\right)^k\left|e^{\frac{2\pi i pr}{m}}e^{\frac{2\pi i qs}{n}}a_{rs}\right| < \infty; ~ \forall k \in \mathbb{N},
\end{split}
\end{equation*}	
whence $g\A_{\widetilde{\theta}} =\A_{\widetilde{\theta}}$ and the Condition 4 of \ref{nt_st_descr} is proven. From $\sH^\infty = \bigcap_{k\in\bN} \Dom D^k = \A_\theta \oplus \A_\theta$ and from the definition of the scalar product on $L^2\left(A_{\widetilde{\theta}}, \widetilde{\tau}_0\right)$ it follows that the Condition 5 hold. Condition 6 directly follows from the definition of the operator $\widetilde{D}$.
\end{proof}
\begin{empt}\label{nt_local_constr}
	Let us consider a case of $\theta = 0$. In this case $A_{\theta=0}$ is a universal algebra generated  by unitary elements $\widehat{u}, \widehat{v}$ with the single relation $\widehat{u} \widehat{v} = \widehat{v} \widehat{u}$ and 
\begin{equation*}
\A_\theta = \A_0= \left\{a \in A_{0} \ | \ a = \sum_{r,s} a_{rs}u^r v^s \ \& \ \mathrm{sup}_{r, s \in \mathbb{Z}}\left(1 + r^2 + s^2\right)^k|a_{rs}| < \infty, \ \forall k \in \mathbb{N}\right\}=
\end{equation*}	
\begin{equation*}
=C^\infty\left(\mathbb{T}^2 = S^1 \times S^1\right)
\end{equation*}	
	There is a state $\widehat{\tau}_0: A_{0} \to \mathbb{C}$ given by
	\begin{equation*}
	\widehat{\tau}_0\left(\sum_{r,s} a_{rs}\widehat{u}^r \widehat{v}^s\right) = a_{00}.
	\end{equation*}
	and these is a GNS Hilbert space $L^2\left(A_0,\widehat{\tau}_0\right)$.
There are two derivatives $\widehat{\delta_1}$, $\widehat{\delta_2}$ given by
\begin{equation*}
\widehat{\delta_1} \left(\sum_{r, s}a_{r,s}\widehat{u}^r\widehat{v}^s\right)= \sum_{rs}2\pi i r a_{rs}\widehat{u}^r\widehat{v}^s,
\end{equation*}
\begin{equation*}
\widehat{\delta_2} \left(\sum_{r, s}a_{r,s}\widehat{u}^r\widehat{v}^s\right)= \sum_{rs}2\pi i s a_{rs}\widehat{u}^r\widehat{v}^s.
\end{equation*}
There is the natural covering projection $\mathbb{R}^2 \to \mathbb{T}^2$. On can select unbounded functions $x, y$ on $\mathbb{R}^2$ such that $\widehat{u}$ (resp. $\widehat{v}$) corresponds to $e^{2\pi ix}$ (resp $e^{2\pi iy}$) and it is clear that 
\begin{equation*}
\widehat{\delta_1}=  \frac{\partial}{\partial x}~,
\end{equation*}
\begin{equation*}
\widehat{\delta_2} =  \frac{\partial}{\partial y}~,
\end{equation*}
i.e. both $\widehat{\delta_1}$ and $\widehat{\delta_1}$ are first-order differential operators.
Similarly to \ref{nt_spectral_triple} one can  introduce double GNS Hilbert space $\widehat{H} = L^2(A_{0}, \widehat{\tau}_0) \oplus  L^2(A_{0}, \widehat{\tau}_0)$ and define derivations
\begin{equation*}
\widehat{\partial} = \widehat{\partial}_{\tau} = \widehat{\delta}_1 + \tau {\delta}_2; \ (\tau \in \mathbb{C}, \ \mathrm{Im}(\tau)\neq 0),
\end{equation*}
\begin{equation*}
\widehat{\partial}^\dagger = -\widehat{\delta}_1 - \overline{\tau}\widehat{\delta}_2.
\end{equation*}

\begin{equation*}
\widehat{D}= \begin{pmatrix}
0 & \underline{\widehat{\partial}}^\dagger \\
\underline{\widehat{\partial}} & 0
\end{pmatrix}.
\end{equation*}
There is the isomorphism $ L^2\left(A_{0}, \widehat{\tau}_0\right)\approx L^2\left(A_{\theta}, \tau_0\right)$ of Hilbert spaces given by
\begin{equation*}
\sum_{m \in \mathbb{Z}, n \in \mathbb{Z}} a_{mn}u^mv^n \mapsto \sum_{m \in \mathbb{Z}, n \in \mathbb{Z}} a_{mn}\widehat{u}^m\widehat{v}^n.
\end{equation*}
which naturally induces an isomorphism of direct sums  $\varphi: \widehat{H}= L^2\left(A_{0}, \widehat{\tau}_0\right)\bigoplus L^2\left(A_{0}, \widehat{\tau}_0\right)\approx  H = L^2\left(A_{\theta}, \tau_0\right) \bigoplus L^2\left(A_{\theta}, \tau_0\right)$.
A direct calculation shows that following conditions hold
\begin{enumerate}
	\item $\left(\xi, \eta \right)= \left(\varphi\left(\xi\right),\varphi\left(\eta\right)\right)$ $\forall \xi, \eta \in L^2\left(A_{\theta}, \tau_0\right)$,
\item $\mathrm{Dom}~D= \varphi\left(\mathrm{Dom}~\widehat{D}\right)$, 
\item $D \varphi\left(\psi\right)= \varphi\left(\widehat{D}\psi\right)$.
\end{enumerate}
If $A_\theta \to A_{\theta'}$ is given by
\begin{equation*}
u\mapsto u'^m; ~ v \mapsto v'^n
\end{equation*}
then there is a Hilbert space $\widetilde{H}=A_{\theta'} \otimes_{A_\theta} H$
and the natural action of $G\left(A_{\theta'}|A_{\theta}\right)\approx \mathbb{Z}_m \times \mathbb{Z}_n$ on $\widetilde{H}$. Similarly there is a the homomorphism $A_0 \to A'_0$ given by
\begin{equation*}
\widehat{u}\mapsto \widehat{u}'^m; ~ \widehat{v} \mapsto \widehat{v}'^n
\end{equation*}

 and the Hilbert space $\widetilde{\widehat{H}}=A'_{0} \otimes_{A_0}\widehat{H}$. There is an isomorphism $\widetilde{\varphi}:\widetilde{H}\approx\widetilde{\widehat{H}}$ such that $ \widetilde{\varphi}\left(g \xi\right) = g \widetilde{\varphi}\left( \xi\right)$ for any $\xi \in \widetilde{\widehat{H}}$ and $g \in \mathbb{Z}_m \times \mathbb{Z}_n$. 
\end{empt}
\begin{rem}\label{nt_local_rem}
The construction \ref{nt_local_constr} means that some properties of Hilbert space  and its Dirac operator of the noncommutative torus are the same as a Hilbert space and Dirac operator of  the commutative torus.
\end{rem}
\begin{lem}
	In the situation described in \ref{nt_st_descr} the coherent sequence $\left\{\left(\A_{\theta_n}, H_n, D_n\right)\right\}_{n\in \mathbb{N}^0}$ is local.
\end{lem}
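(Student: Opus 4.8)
The plan is to verify the conditions of Definition \ref{local_sec_thiples_defn}, exactly as was done for the commutative case in the proof of Lemma \ref{comm_local_lemma}, by exploiting the key observation of construction \ref{nt_local_constr} and Remark \ref{nt_local_rem}: the Hilbert space $H = L^2(A_\theta,\tau_0)\oplus L^2(A_\theta,\tau_0)$ of the noncommutative torus is isometrically isomorphic (via $\varphi$) to the Hilbert space $\widehat H$ of the \emph{commutative} torus $\mathbb{T}^2$, and under this isomorphism the Dirac operator $D$ is intertwined with the commutative Dirac operator $\widehat D$. First I would fix, for each $n$, the finite covering projection $(A_\theta, A_{\theta_n}, \mathbb{Z}_{m_n}\times\mathbb{Z}_{n_n})$ and the corresponding $\widetilde\varphi_n : \widetilde H_n \xrightarrow{\approx} \widetilde{\widehat H}_n$ that is $G(A_{\theta_n}|A_\theta)$-equivariant (as established at the end of \ref{nt_local_constr}). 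This transports the entire problem to the commutative torus $\mathbb{T}^2$ with its sequence of finite covering tori, where the sequence of spectral triples is precisely the $\pi_n$-pullback sequence treated in Lemma \ref{comm_coherent_lemma}.

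Next I would define the subspace $\widehat H^n \subset H_n$ by pulling back, through $\widetilde\varphi_n^{-1}$, the subspace constructed in the proof of Lemma \ref{comm_local_lemma} from the cut locus fundamental domain on $\mathbb{T}^2$. Concretely, choosing a point $x \in \mathbb{T}^2$ and its cut locus $\Om_x$ with the associated fundamental domain $\Om^n_{\cdot}$ of the covering $\mathbb{T}^2_n \to \mathbb{T}^2$, one has $1_{\mathbb{T}^2_n} = \sum_{g} g\,1_{\Om^n}$ and hence $L^2(\mathbb{T}^2_n, S_n) = \bigoplus_{g} g\,\widehat H^n_{\mathrm{comm}}$ with $\widehat H^n_{\mathrm{comm}} = 1_{\Om^n}L^2(\mathbb{T}^2_n,S_n)$. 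Transporting via $\widetilde\varphi_n$ gives the required decomposition $H_n = \bigoplus_{g\in G(A_{\theta_n}|A)} g\widehat H^n$, and the isomorphism $\varphi^n : \widehat H^n \to H$ of \ref{local_sp_tr_constr}. Because $\widetilde\varphi_n$ intertwines $D_n$ with the commutative $\widehat D_n$ and $\widehat D_n$ is a first-order (hence local) differential operator, condition (a)--(c) of Definition \ref{local_defn} transfer verbatim, so each $(\mathfrak A, \mathfrak A_n, \mathfrak B_n)$ is a local covering projection of spectral triples.

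It then remains to check the two conditions of Definition \ref{local_sec_thiples_defn}. Condition (a), that $\sum_{g\in G(A_n|A_{n-1})} g\xi \in \widehat H^{n-1}$ for $\xi \in \widehat H^n$, follows from the compatibility of fundamental domains $1_{\Om^{n-1}} = \sum_{g\in G(A_n|A_{n-1})} g\,1_{\Om^n}$ on the commutative side, again transported through the equivariant isomorphisms $\widetilde\varphi_n$. Condition (b), that a special sequence $\{\xi_n\}$ with $\xi_n\in\widehat H^n$ represents an element of $\widetilde H$, follows from the fact that the fundamental domain lifts to a genuine fundamental domain $\widetilde\Om \subset \mathbb{T}^2 \approx \widetilde{\mathbb{R}^2/\Lambda}$ inside the connected covering, exactly as in the final display of the proof of Lemma \ref{comm_local_lemma}, using $_{\widetilde A}X_A = \widetilde A \otimes_{\overline A}\overline X_A$.

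The main obstacle I anticipate is making rigorous the claim that the commutative-torus isomorphism $\widetilde\varphi_n$ of \ref{nt_local_constr} is genuinely compatible with the inverse-limit structure across \emph{all} levels $n$ simultaneously, i.e. that the $\widehat H^n$ so defined are nested and $\overline H$-coherent in the sense of Definition \ref{hilbert_coh}. The subtlety is that $\widetilde\varphi_n$ is defined level-by-level via the GNS construction for each $\theta_n$, and one must confirm that these intertwiners commute with the descent maps $\xi_n \mapsto \sum_{g} g\xi_{n+1}$; this reduces, on the commutative side, to the fact that the Fourier-basis identification $u^m v^s \leftrightarrow \widehat u^m\widehat v^s$ is preserved under the inclusions $A_{\theta_{n}} \hookrightarrow A_{\theta_{n+1}}$ given by $u_n \mapsto u_{n+1}^m$, $v_n\mapsto v_{n+1}^n$, so that the whole tower transports coherently to the tower of commutative covering tori. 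Once this compatibility is in place, the locality is a direct consequence of Lemma \ref{comm_local_lemma}.
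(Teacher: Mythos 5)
Your proposal takes essentially the same route as the paper's own proof, which likewise reduces the problem to the commutative torus $\mathbb{T}^2$ via the intertwining isomorphism of \ref{nt_local_constr} (cf.\ Remark \ref{nt_local_rem}) and then invokes the locality (first-order differential nature) of the commutative Dirac operator through Lemma \ref{comm_local_lemma}. Your write-up is considerably more detailed than the paper's two-sentence argument --- in particular the cross-level compatibility of the intertwiners $\widetilde\varphi_n$ with the descent maps, which you rightly flag, is left implicit in the paper --- but the underlying idea is identical.
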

\begin{proof}
	From \ref{nt_local_constr} and the Remark \ref{nt_local_rem} it follows that general case can be reduced to commutative one, i.e. $\theta = 0$  and $A_0 = C\left(\mathbb{T}^2\right)$. However $\mathbb{T}^2$ is a Riemannian manifold and Dirac operator is local (differential), so an application of the Lemma \ref{comm_local_lemma} completes the proof of this lemma. 
\end{proof}
\begin{lem}\label{nt_algebra_dense_lem}
	The $\widetilde{\A}_{\theta}$ is a dense subalgebra of $\widetilde{A}_{\theta}$.
\end{lem}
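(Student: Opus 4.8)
The plan is to show that the smooth covering algebra $\widetilde{\A}_{\theta}=\mathcal{K}^\infty\left(X^\infty_{\A_\theta}\right)\bigcap\widetilde{A}_\theta$ is dense in the $C^*$-algebra $\widetilde{A}_\theta$ with respect to the $C^*$-norm, which is exactly the regularity condition of the coherent sequence $\left\{\left(\A_{\theta_n},H_n,D_n\right)\right\}_{n\in\mathbb{N}^0}$. The idea is to combine the explicit approximation already available for elements of $\widetilde{A}_\theta$ from the construction $\ref{dense_nt_constr}$ (which produced operators $\kappa_{\mathrm{fin}}$ dense in $\widetilde{A}_\theta$ by Lemma $\ref{dense_nt_lem}$) with the smoothness of the building blocks $\xi_{ij},\eta_{ij}$ and the coefficients $a^{k_0}_{ijgi'j'g'}\in A_\theta$.

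First I would recall that by Lemma $\ref{dense_nt_lem}$ every $\kappa\in\widetilde{A}_\theta$ can be approximated in $C^*$-norm by a finite sum
\begin{equation*}
\kappa_{\mathrm{fin}}=\sum_{g,g'\in G^l}\ \sum_{i,j,i',j'\in\{1,2\}} g\xi_{ij}\rangle a^{k_0}_{ijgi'j'g'}\langle g'\eta_{i'j'},
\end{equation*}
so it suffices to prove that each such $\kappa_{\mathrm{fin}}$ lies in $\widetilde{\A}_\theta$, after possibly replacing the coefficients $a^{k_0}_{ijgi'j'g'}\in A_\theta$ by nearby smooth elements of $\A_\theta$. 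Since $\A_\theta$ is dense in $A_\theta$ in the $C^*$-norm, I can choose $a'_{ijgi'j'g'}\in\A_\theta$ with $\left\|a'_{ijgi'j'g'}-a^{k_0}_{ijgi'j'g'}\right\|$ as small as needed; the resulting operator $\kappa'_{\mathrm{fin}}$ differs from $\kappa_{\mathrm{fin}}$ in $C^*$-norm by an arbitrarily small amount, so $\kappa'_{\mathrm{fin}}$ still approximates $\kappa$.

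The substantive step is to verify $\kappa'_{\mathrm{fin}}\in\widetilde{\A}_\theta$, i.e. that $\kappa'_{\mathrm{fin}}$ is a smoothly compact operator lying in $\widetilde{A}_\theta$. It already lies in $\widetilde{A}_\theta$ by the same weak-limit argument as at the end of $\ref{dense_nt_constr}$. For smoothness I would argue that each generating element $\mathfrak{Rep}\left(\Lambda_{ij}\right)=\xi_{ij}$ is built from descents of the functions $e^{m_k}_i\left(u_{m_k}\right)e^{n_k}_j\left(v_{n_k}\right)$, which arise from the smooth partition-of-unity elements $e_i=\sqrt{a_i}\in\Coo(S^1)$ of Example $\ref{circle_fin}$; hence the associated coherent sequences are smooth in the sense of Definition $\ref{smooth_coh_defn}$, so $\xi_{ij},\eta_{ij}\in X^\infty_{\A_\theta}$. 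Then by $\ref{smooth_smooth}$ the rank-one operators $g\xi_{ij}\rangle\langle g'\eta_{i'j'}$ are smooth, and since $a'_{ijgi'j'g'}\in\A_\theta$ acts smoothly, each summand $g\xi_{ij}a'_{ijgi'j'g'}\rangle\langle g'\eta_{i'j'}$ is a rank-one smooth operator in the sense of Definition $\ref{smoothly_comp_sub_defn}$. A finite sum of rank-one smooth operators belongs to $\mathcal{K}^\infty\left(X^\infty_{\A_\theta}\right)$, so $\kappa'_{\mathrm{fin}}\in\widetilde{\A}_\theta$, completing the proof.

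The main obstacle I anticipate is confirming rigorously that the basic generators $\xi_{ij}$ are genuinely smooth coherent sequences, i.e. that the $s$-times differentiability norms $\left\|\cdot\right\|_s$ of the representations $\pi^s$ stay finite and convergent along the sequence; this requires checking that applying the derivations $\delta_1,\delta_2$ (equivalently the commutators with $D_n$) to the fixed smooth functions $e^{m_k}_i,e^{n_k}_j$ produces uniformly controlled norms as $n\to\infty$. This should follow because the $e_i$ are fixed smooth functions on the circle and the covering simply rescales the derivatives by $1/m_k,1/n_k$, keeping all Sobolev-type seminorms bounded, but it is the one place where the smoothness estimate must be made explicit rather than merely invoked.
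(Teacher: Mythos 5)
Your proposal is correct and follows essentially the same route as the paper: approximate $\widetilde{a}\in\widetilde{A}_\theta$ by the finite sum from \ref{dense_nt_constr}, smooth the coefficients using density of the smooth subalgebra, observe the $\xi_{ij},\eta_{ij}$ are smooth since $e^{m_k}_i,e^{n_k}_j\in\Coo\left(S^1\right)$, and confirm membership in $\widetilde{A}_\theta$ via the same weak-limit argument. The only cosmetic difference is that the paper smooths at the level of $a_k\in A_{\theta_k}$ (replacing it by $\overline{a}_k\in\A_{\theta_k}$ before forming the inner-product coefficients $\overline{a}^k_{ijgi'j'g'}$), whereas you approximate the coefficient $a^{k_0}_{ijgi'j'g'}\in A_\theta$ directly by an element of $\A_\theta$ --- the two are interchangeable here.
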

\begin{proof}
	From the construction \ref{dense_nt_constr} it follows that for any $\widetilde{a} \in \widetilde{A}_{\theta}$ and $\varepsilon > 0$ there is a finite subset $G' \subset \mathbb{Z} \times \mathbb{Z}$ and an operator $\widetilde{a}' \in \widetilde{A}_{\theta}$ given by
	\begin{equation*}
	\widetilde{a}' = \sum_{g,g' \in G'} ~\sum_{i,j,i',j' \in \{1,2\}}g \xi_{ij}  \rangle a^k_{ijgi'j'g'}\langle g' \eta_{i'j'}
	\end{equation*} 
	such that
	\begin{equation*}
	\left\|\widetilde{a} - \widetilde{a}'\right\| < \frac{\varepsilon}{2}.
	\end{equation*}.
	The element $a^k_{ijgi'j'g'}$ is given by
\begin{equation*}
a^k_{ijgi'j'g'}=\left\langle h_k\left(g\right)\left(e^{n_k}_j\left(v_{n_k}\right)e^{m_k}_i\left(u_{m_k}\right)\right),a_k\left(h_k\left(g'\right)\left(e^{m_k}_{i'}\left(u_{m_k}\right)e^{n_k}_{j'}\left(v_{n_k}\right)\right)\right) \right\rangle_{A_{\theta_k}}
\end{equation*}
where $a_k \in A_{\theta_k}$. Since $\A_{\theta_k}$ is dense subalgebra of $A_{\theta_k}$ whence there is $\overline{a}_k \in \A_{\theta_k}$ such that
\begin{equation*}
	\left\|a_k- \overline{a}_k\right\| < \frac{\varepsilon}{8\left|G'\right|}.
\end{equation*}
Since $e^{m_k}_i, e^{n_k}_j \in \Coo\left(S^1\right)$ we have $e^{m_k}_i\left(u\right) e^{n_k}_j\left(v\right), e^{n_k}_j\left(v\right) e^{m_k}_i\left(u\right)\in \A_{\theta_k}$ and 
\begin{equation*}
\overline{a}^k_{ijgi'j'g'}=\left\langle h_k\left(g\right)\left(e^{n_k}_j\left(v_{n_k}\right)e^{m_k}_i\left(u_{m_k}\right)\right),\overline{a}_k\left(h_k\left(g'\right)\left(e^{m_k}_{i'}\left(u_{m_k}\right)e^{n_k}_{j'}\left(v_{n_k}\right)\right)\right) \right\rangle_{A_{\theta_k}} \in \A_\theta,
\end{equation*}
If 
\begin{equation*}
	\widetilde{a}'' = \sum_{g,g' \in G'} ~\sum_{i,j,i',j' \in \{1,2\}}g \xi_{ij}  \rangle \overline{a}^k_{ijgi'j'g'}\langle g' \eta_{i'j'}
\end{equation*}
then
\begin{equation}\label{nt_dense_eq}
\left\|\widetilde{a} - \widetilde{a}''\right\| < \varepsilon
\end{equation}
and
\begin{equation*}
 \widetilde{a}''\in \mathcal{K}^\infty\left(\overline{X}^\infty_{\A_\theta}\right).
\end{equation*}
Otherwise if  $ \left\{b_r \in A_{\theta_r}\right\}_{r \in \mathbb{N}}$
	is the sequence such that $b_r = 0$ for $r \le k$ and
	\begin{equation*}
	b_r = 
\sum_{g,g' \in G^k} ~\sum_{i,j,i',j' \in \{1,2\}}h_r\left(g\right)\left(e^{n_r}_j\left(v_{n_r}\right)e^{m_r}_i\left(u_{m_r}\right)\right)\overline{a}^k_{ijgi'j'g'}\left(h_r\left(g'\right)\left(e^{m_r}_{i'}\left(u_{m_r}\right)e^{n_r}_{j'}\left(v_{n_r}\right)\right)\right)
		\end{equation*}
		for $r > k$ then $\lim_{r \to \infty} b_r = \widetilde{a}''$ in the weak topology i.e. $\widetilde{a}'' \in \left(\bigcup A_{\theta_k}\right)''$, whence $\widetilde{a}'' \in \widetilde{\A}_\theta$. From \eqref{nt_dense_eq} it follows that $\widetilde{\A}_\theta$ is dense in $\widetilde{A}_\theta$.

\end{proof}
From the Lemma \ref{nt_algebra_dense_lem} it follows that the described in \ref{nt_st_descr} sequence of spectral triples is regular.

\section{Epilogue}
\paragraph*{} There is a good noncommutative generalization of  covering projections \cite{hajac:toknotes} based on monads, comonads and adjoint functors. But this generalization cannot describe coverings of locally compact spaces. In the Definition \ref{fin_def} finite sums and projective modules are \textit{manually} replaced with infinite sums and compact operators. In result we have no a beautiful theory, but we have new constructions. This replacement can be regarded as illegal. However Max Planck \textit{manually} replaced integrals with sums \cite{planck:law} and the quantum mechanics had been obtained.

\end{document}